\def\positn{0.5}
  \tikzset{decoration={
    markings,
    mark=at position \positn with {\arrow{>}}}}
\newtheorem{theorem}{Theorem}[section]
\newtheorem{proposition}[theorem]{Proposition}
\newtheorem{corollary}[theorem]{Corollary}
\theoremstyle{definition}
\newtheorem{remark}[theorem]{Remark}
\theoremstyle{definition}
\newtheorem{definition}[theorem]{Definition}
\newtheorem{conjecture}[theorem]{Conjecture}
\theoremstyle{definition}
\theoremstyle{definition}
\newtheorem{example}[theorem]{Example}
\def\Tcal{\mathcal{T}}
\def\C{\mathbb{C}}
\def\R{\mathbb{R}}
\def\Z{\mathbb{Z}}
\def\Q{\mathbb{Q}}
\def\P{\mathbb{P}}
\newcommand\parr[1]{{({#1})}}
\def\<{{\langle}}
\def\>{{\rangle}}
\def\l{{\lambda}}
\def\m{{\mu}}
\def\op{{ \operatorname{op}}}
\def\Conv{ \operatorname{Conv}}
\def\proj{ \operatorname{proj}}
\def\wt{\operatorname{wt}}
\begin{document}
\numberwithin{equation}{section}

\title{$R$-systems}

\author{Pavel Galashin}
\address{Department of Mathematics, Massachusetts Institute of Technology,
Cambridge, MA 02139, USA}
\email{{\href{mailto:galashin@mit.edu}{galashin@mit.edu}}}

\author{Pavlo Pylyavskyy}
\address{Department of Mathematics, University of Minnesota,
Minneapolis, MN 55414, USA}
\email{{\href{mailto:ppylyavs@umn.edu}{ppylyavs@umn.edu}}}

\date{\today}

\subjclass[2010]{
Primary:
37K10. 
Secondary:
13F60, 
05E99. 
}

\keywords{Birational rowmotion, toggle, Laurent phenomenon, cluster algebra, singularity confinement, algebraic entropy, arborescence, superpotential}

\begin{abstract}
Birational toggling on Gelfand-Tsetlin patterns appeared first in the study of geometric crystals and geometric Robinson-Schensted-Knuth correspondence. Based on these birational toggle relations, Einstein and Propp introduced a discrete dynamical system called \emph{birational rowmotion} associated with a partially ordered set. We generalize birational rowmotion to the class of arbitrary strongly connected directed graphs, calling the resulting discrete dynamical system \emph{the $R$-system}.
We study its integrability from the points of view of singularity confinement and algebraic entropy.  We show that in many cases, singularity confinement in an $R$-system reduces to the Laurent phenomenon either in a cluster algebra, or in a Laurent phenomenon algebra, or beyond both of those generalities, giving rise to many new sequences with the Laurent property possessing rich groups of symmetries. Some special cases of $R$-systems reduce to Somos and Gale-Robinson sequences.
\end{abstract}

\maketitle

\setcounter{tocdepth}{1}
\tableofcontents

\newgeometry{margin=1.35in}

\newcommand{\arxiv}[1]{arXiv preprint \href{https://arxiv.org/abs/#1}{\textup{\texttt{arXiv:#1}}}}

\def\ring{\mathbb{S}}
\def\field{\mathbb{K}} 
\def\fast{\field^\ast}
\def\Init{\mathcal{I}}
\def\X{X}
\def\x{{\mathbf x}}
\newcommand{\RP}[1]{\mathbb{P}^{#1}(\field)}
\newcommand{\AFFF}[2]{\left(#1\right)^{#2}}
\newcommand{\AFF}[1]{\AFFF{\field}{#1}}
\newcommand{\wtt}[2]{\wt(#1,#2)}
\newcommand{\edge}[2]{(#1,#2)}
\def\proj{\pi}

  \def\z{\mathbf{z}}
  \def\X{X}
  \def\y{y}
  \def\Y{Y}
  \def\x{\mathbf{x}}
  \def\P{P}
  \def\QQQ{Q}
  
\def\ta{u}
  
\def\proj{\pi}
\def\rowm{\phi}
\def\bto{\dashrightarrow}
\def\sp{\mathcal{F}_G}

\def\arb{T}
\def\Arb{\Tcal}
  

\def\is{\mathbf{y}}

\allowdisplaybreaks

\section{Introduction}

Recall that a {\it {Gelfand-Tsetlin pattern}} is a collection of nonnegative integers $\{t_{i,j}\}$ for $n \geq i \geq j \geq 1$ satisfying inequalities
\[t_{i,j} \leq t_{i+1,j}\quad \text{and} \quad t_{i,j} \leq t_{i-1,j-1}.\]
A (piecewise-linear) {\it {toggling}} of $t_{i,j}$ was defined by Berenstein and Kirillov in~\cite{KB} to be an operation of replacing $t_{i,j}$ with $t'_{i,j}$ defined by 
\[t'_{i,j}+t_{i,j} = \min(t_{i+1,j}, t_{i-1,j-1}) + \max(t_{i+1,j+1}, t_{i-1,j}).\]
A successive application of piecewise-linear toggle operations  to the vertices of Gelfand-Tsetlin patterns yields an alternative description of the Robinson-Schensted-Knuth correspondence~\cite{Knuth,Schensted}. 
The birational version of the same operation is defined as follows.
$$t'_{i,j}t_{i,j} = \frac{t_{i+1,j}+t_{i-1,j-1}}{t_{i+1,j+1}^{-1} + t_{i-1,j}^{-1}}.$$
It first appeared in the work of Kirillov \cite[Eq.~(4.1)]{Kirillov}. Both versions have been studied extensively ever since, see for example the works of Noumi--Yamada and O'Connell--Sepp\"al\"ainen--Zygouras \cite{NY,OCSZ}. 

Separately, a  combinatorial action of \emph{rowmotion} on order ideals of arbitrary partially ordered sets (\emph{posets} for short) has been studied. Rowmotion consists of {\it {combinatorial toggling}} of poset elements in and out of the ideal, performed at each element 
once in a natural order. Rowmotion was implicitly studied by Fon-Der-Flaass~\cite{FDF}, Sch\"utzenberger~\cite{Schutznbrgr}, Brouwer--Schrijver~\cite{BS},
Panyushev~\cite{Pan}, and Stanley~\cite{StanleyPE}. It was a recent work by Striker and Williams \cite{SW} however that coined the term and systematized the study of rowmotion. 

The two notions have been joined by Einstein and Propp in \cite{PEabstr,PEfull}, where they defined a discrete dynamical system called the \emph{birational rowmotion}. 
It is realized by applying {\it {birational toggling}} operators to parameters associated with poset elements according to the formula
\[t'(v)t(v) = \frac{\sum_{v \lessdot u} t(u)}{\sum_{w \lessdot v} t^{-1}(w)}.\]
Here $u,v,w$ are the elements of the poset and $\lessdot$ denotes its covering relation. The toggle operations are applied once to each element of the poset  in a natural order.
Clearly, in the special case of Gelfand-Tsetlin patterns, this coincides with Kirillov's birational toggling operator.

For certain posets, birational rowmotion is periodic. It was shown by Grinberg and Roby \cite{GR1,GR2} that this is the case for rectangular posets. As it was observed by Max Glick (private communication), their result can be deduced from Zamolodchikov periodicity~\cite{Keller,Volkov,GP1} for rectangular $Y$-systems by relating the values of birational rowmotion to the values of the $Y$-system via a monomial transformation. 
In essence, this can be seen as an application of the \emph{Laurentification} technique introduced by Hone~\cite{HoneSuper} (the term ``Laurentification'' is taken from~\cite{HoneQRT}). Hone uses Laurentification to study the integrability of certain discrete dynamical systems such as the \emph{discrete Painlev\'e equation} of Ramani, Grammaticos, and Hietarinta~\cite{RGH}. In a lot of discrete dynamical systems of interest, Laurentification allows one to translate the \emph{singularity confinement} of Ohta--Tamizhmani--Grammaticos--Ramani~\cite{OTGR} and \emph{algebraic entropy} of Bellon--Viallet~\cite{BV} in terms of  a certain sequence (called the \emph{$\tau$-sequence} in~\cite{HoneSuper}), all of whose entries are Laurent polynomials in their original values. In a lot of cases, the $\tau$-sequence arises from a \emph{cluster algebra} of Fomin-Zelevinsky~\cite{FZ,FZ2,FZ3,FZ4}. 

In this paper, we introduce a new discrete dynamical system which we call \emph{the $R$-system}. It directly  generalizes birational rowmotion of Einstein-Propp from the class of posets to the class of arbitrary strongly connected directed graphs, and its definition is primarily based on the birational toggle relation of~\cite{Kirillov}. Namely, given a directed graph $G=(V,E)$, the $R$-system consists of iterating the map $X\mapsto X'$, where $X=(X_v)_{v\in V}$ and $X'=(X'_v)_{v\in V}$ are assignments of rational functions to the vertices of $G$ satisfying the \emph{toggle relations}
\begin{equation}\label{eq:toggle_intro}
\X_v\X'_v=\left(\sum_{\edge v w\in E} X_w\right)
    \left(\sum_{\edge u v\in E}\frac{1}{X'_u}\right)^{-1},\quad \text{for all $v\in V$}.
  \end{equation}
For example, the values of $X$ and $X'$ satisfying~\eqref{eq:toggle_intro} are given in Figure~\ref{fig:somos_5}. It turns out that if $G$ is not \emph{strongly connected} then the system~\eqref{eq:toggle_intro} either does not make sense or has no solutions $X'$ for generic $X$. When $G$ is strongly connected, $X'$ is uniquely determined as an element of the projective space, and we give an explicit combinatorial formula for $X'$ in terms of $X$ as a certain weighted sum over \emph{arborescences} of $G$ in Theorem~\ref{thm:arb}. This sum has appeared earlier in the context of the \emph{Abelian Sandpile Model} as we discuss in Remark~\ref{rmk:sand}.

\begin{figure}
 \def\scl{0.3}
    \def\lw{0.25}
    \def\tikzscl{2}
    \def\arrwidth{0.4}
    \def\arrht{0.5}
    \def\arrlw{1}
\scalebox{1}{
\begin{tabular}{ccc}
\begin{tikzpicture}[scale=\tikzscl,baseline=(a.base)]
\node[label=below left:{$a$},draw,circle,fill=black,scale=\scl] (a) at (0,0) {};
\node[label=above left:{$b$},draw,circle,fill=black,scale=\scl] (b) at (0,1) {};
\node[label=above right:{$c$},draw,circle,fill=black,scale=\scl] (c) at (1,1) {};
\node[label=below right:{$d$},draw,circle,fill=black,scale=\scl] (d) at (1,0) {};
\draw[postaction={decorate}] (a)--(b);
\draw[postaction={decorate}] (b)--(c);
\draw[postaction={decorate}] (c)--(d);
\draw[postaction={decorate}] (d)--(a);
\draw[postaction={decorate}] (b)--(d);
\end{tikzpicture}&
\begin{tikzpicture}[scale=\tikzscl,baseline=(a.base)]
  \draw[white] (0,-\arrht)--(0,\arrht);
  \node[white] (a) at (0,-\arrht) {};
  \draw[->,line width=\arrlw] (0,0) -- (\arrwidth,0);
\end{tikzpicture}&
\begin{tikzpicture}[scale=\tikzscl,baseline=(a.base)]
\node[label=below:{$bc$},draw,circle,fill=black,scale=\scl] (a) at (0,0) {};
\node[label=above:{$c(c+d)$},draw,circle,fill=black,scale=\scl] (b) at (0,1) {};
\node[label=above:{$d(c+d)$},draw,circle,fill=black,scale=\scl] (c) at (1,1) {};
\node[label=below:{$ac$},draw,circle,fill=black,scale=\scl] (d) at (1,0) {};
\draw[postaction={decorate}] (a)--(b);
\draw[postaction={decorate}] (b)--(c);
\draw[postaction={decorate}] (c)--(d);
\draw[postaction={decorate}] (d)--(a);
\draw[postaction={decorate}] (b)--(d);
\end{tikzpicture}
  \\
  $X$ & & $X'$
\end{tabular}
}
  \caption{\label{fig:somos_5}One iteration of the $R$-system.}
\end{figure}
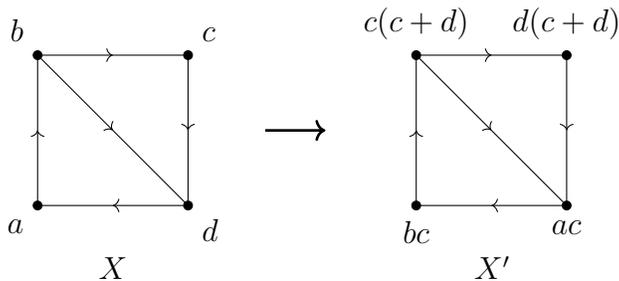

The values of the $R$-system naturally lie in the projective space so one cannot directly talk about the Laurent phenomenon. However, we observe in many cases that the $R$-system possesses a closely related property of \emph{singularity confinement}, which reduces via the same Laurentification technique to the Laurent property of a suitable $\tau$-sequence. Just as the Laurent property, singularity confinement can only occur if a lot of unexpected cancellations happen as the dynamical system proceeds. We show how in many cases, the $\tau$-sequence is a special case of a cluster algebra dynamics, or, more generally, a \emph{Laurent phenomenon algebra}~\cite{LP} dynamics. However, for the most symmetric directed graphs (such as \emph{toric} or \emph{bidirected graphs} that we consider in Sections~\ref{sec:toric-graphs} and~\ref{sec:bidirected-graphs} respectively), the $\tau$-sequence still conjecturally has the Laurent property, but is not a special case of any other Laurent system studied before. Thus one advantage of generalizing birational rowmotion from posets to strongly connected directed graphs is that it provides access to much more symmetric objects one can start with to produce a $\tau$-sequence all of whose entries are Laurent polynomials.

The first part of the paper is devoted to studying some general properties of $R$-systems. We define $R$-systems in Section~\ref{sec:definition-r-system}, and then in Section~\ref{sec:sing-conf-algebr} we discuss the singularity confinement and algebraic entropy properties, as well as how they can be proved using $\tau$-sequences. We then demonstrate Hone's Laurentification procedure in the case of the discrete Painlev\'e equation and compare it to a similar construction for an $R$-system in Section~\ref{sec:laurentification}. We introduce a conserved quantity 
\[\sp(\X)=\sum_{(u,w)\in E}\frac{\X_w}{\X_u}\]
of the $R$-system which we call the \emph{superpotential} in Section~\ref{sec:mirr-symm-superp}. 
It generalizes the superpotential for Type $A$ partial flag varieties studied in the mirror symmetry literature, see \cite{Givental, BCKS, Rietsch1, MR,  RW,LT}. We also discuss the critical points of $\sp$ and return to the $R$-systems associated with  Gelfand-Tsetlin patterns in the same section. 

The second part of the paper is concerned with various special classes of directed graphs. We start by considering several simple sporadic examples in Section~\ref{sec:small-examples}, and then pass to a more systematic approach. In Section~\ref{sec:somos-gale-robinson}, we show how the Somos and Gale-Robinson sequences~\cite{Gale,Robinson} arise as $\tau$-sequences for $R$-systems associated to some directed graphs. We then prove in Section~\ref{sec:circles-with-doubled} that the $R$-system associated with any subgraph of a bidirected cycle admits a $\tau$-sequence coming from a $T$-system in the sense of Nakanishi~\cite{Nakanishi} in a cluster algebra associated with a certain quiver. Just like for rectangular posets the $R$-system reduces to a periodic $Y$-system dynamics, we show in Section~\ref{sec:cylindr-posets-octag} that the $R$-system for a cylindric poset naturally reduces in a similar way to a Laurent phenomenon algebra dynamics. We then continue this series of examples of graphs on surfaces by studying toric directed graphs in Section~\ref{sec:toric-graphs} for which the $\tau$-sequence still appears to have the Laurent property but is not a special case of either a cluster algebra or a Laurent phenomenon algebra. Finally, in Section~\ref{sec:bidirected-graphs}, we study the $R$-system dynamics on bidirected graphs. We show that the \emph{coefficient-free} $R$-system is periodic for all such graphs, however, the \emph{$R$-system with coefficients} conjecturally produces even more symmetric and mysterious $\tau$-sequences with Laurent property. In particular, the $R$-system with coefficients for the complete bidirected graph is universal in the sense that any other $R$-system (in particular, those reducing to Somos and Gale Robinson sequences) can be obtained from it by a suitable specialization of the coefficients, and thus some questions regarding arbitrary $R$-systems can be reduced to this universal case, which, however, appears to be very hard.

{\large\part{General $R$-systems}}

\section{The definition of the $R$-system}\label{sec:definition-r-system}
Let $G=(V,E)$ be a directed simple graph (\emph{digraph} for short), that is, $G$ is not allowed to have loops or multiple edges with the same start and end. We say that $G$ is \emph{strongly connected} if for any two vertices $u,v\in V$, there exists a directed path from $u$ to $v$ in $G$. Let $\ring$ be a ring and $\field$ be its field of fractions.\footnote{We always assume that $\ring$ is a unique factorization domain (UFD) with unity and that it is a ring of characteristic zero. The only rings that we consider in this text are (Laurent) polynomial rings over $\Z$ or $\Z$ itself.}
A \emph{weighted digraph} $G=(V,E,\wt)$ is a digraph with a \emph{weight function} $\wt:E\to \field^\ast$ taking non-zero values in $\field$.  
Every digraph has a canonical weight function that assigns a weight of one to every edge.

Let $U$ be a non-empty finite set. We denote by $\RP{U}$ the \emph{$(|U|-1)$-dimensional projective space over $\field$}, that is, the set of all vectors $\X=(\X_u)_{u\in U}\in \field^U\setminus\{0\}$ modulo simultaneous rescalings by non-zero scalars $\l\in\fast$. If $U=[n]:=\{1,2,\dots,n\}$ then we also write $\X=(\X_1:\X_2:\dots:\X_n)$. 

We now introduce our main object of study. Let $G=(V,E,\wt)$ be a weighted digraph. We  consider the following system of equations in the variables $\X=(\X_v)_{v\in V}$ and $\X'=(\X'_v)_{v\in V}$:
\begin{equation}\label{eq:toggle}
\X_v\X'_v=\left(\sum_{\edge v w\in E}\wtt v w X_w\right)
    \left(\sum_{\edge u v\in E}\frac{\wtt u v}{X'_u}\right)^{-1},\quad \text{for all $v\in V$}.
\end{equation}

A more symmetric way of writing down these equations is
\begin{equation}\label{eq:toggle_sym}
  \sum_{\edge u v\in E}\wtt u v\frac{X'_v}{X'_u}
  =\sum_{\edge v w\in E}\wtt v w \frac{X_w}{X_v}
,\quad \text{for all $v\in V$}.
\end{equation}
It is clear  that $\X$ and $\X'$ give a solution to~\eqref{eq:toggle} if and only if $\l\X$ and $\m\X'$ give a solution to~\eqref{eq:toggle}. Here $\l,\m\in\fast$ are non-zero scalars and $\l\X:=(\l\X_v)_{v\in V}$. Thus~\eqref{eq:toggle} can be considered as a system of equations on $\RP V$ (where we treat $X$ as the given input and $X'$ as the output that we need to find).

To explain our first main result, we need to introduce the notion of an~\emph{arborescence}.
\begin{definition}
  Given a strongly connected digraph $G$ and a vertex $v\in V$, an \emph{arborescence rooted at $v$} is a map: $\arb:V\setminus \{v\}\to V$ such that 
\begin{itemize}
 \item for any $u\in V\setminus \{v\}$, we have $(u,\arb(u))\in E$;
 \item for any $u\in V\setminus \{v\}$, there exists $k \in \mathbb Z_{>0}$ such that $\arb^k(u)  = v$. 
\end{itemize}
\end{definition}
In other words, an arborescence rooted at $v$ is a collection of edges of $G$ that together form a spanning tree oriented towards $v$. The set of all arborescences rooted at $v$ is denoted by $\Arb(G,v)$. Given a point $\X\in\RP{V}$ with non-zero coordinates, the \emph{weight} $\wt(\arb;\X)\in\field$ of $\arb$ is defined to be
\[\wt(\arb;\X):=\prod_{u\in V\setminus\{v\}}\wtt{u}{\arb(u)} \frac{\X_{\arb(u)}}{\X_u}.\]

\begin{example}
  \label{ex:somos_4}
  Let $\field=\Q(x_1,x_2,x_3)$ be the field of rational functions in three variables. Consider the digraph $G$ with vertex set $V=\{1,2,3\}$ and edge set $E=\{(1,2),(2,3),(3,1),(1,3)\}$ shown in Figure~\ref{fig:somos_4} (left). We consider the \emph{coefficient-free} $R$-system, i.e., we assume that all edge weights are equal to $1$.
  
Suppose that $\X=(x_1:x_2:x_3)$. There are four arborescences $T^\parr1,\dots,T^\parr4$ in $G$, shown in Figure~\ref{fig:somos_4} (right) together with their weights.


    \begin{figure}
      \centering
      \def\tikzsc{1.2}
      \def\sclbx{0.8}
      \begin{tabular}{c|cccc}
\scalebox{\sclbx}{
      \begin{tikzpicture}[scale=\tikzsc] 
        \node[draw,circle] (1) at (-30:1) {$1$};
        \node[draw,circle] (2) at (90:1) {$2$};
        \node[draw,circle] (3) at (210:1) {$3$};
        \draw[postaction={decorate}] (1) -- (2);
        \draw[postaction={decorate}] (2) -- (3);
        \draw[postaction={decorate}] (1) to[bend right=10] (3);
        \draw[postaction={decorate}] (3) to[bend right=10] (1);
      \end{tikzpicture}} & 
\scalebox{\sclbx}{
      \begin{tikzpicture}[scale=\tikzsc] 
        \node[draw,circle] (1) at (-30:1) {$1$};
        \node[draw,circle] (2) at (90:1) {$2$};
        \node[draw,circle] (3) at (210:1) {$3$};
        \draw[postaction={decorate}] (2) -- (3);
        \draw[postaction={decorate}] (3) to[bend right=10] (1);
      \end{tikzpicture}} &
\scalebox{\sclbx}{
      \begin{tikzpicture}[scale=\tikzsc] 
        \node[draw,circle] (1) at (-30:1) {$1$};
        \node[draw,circle] (2) at (90:1) {$2$};
        \node[draw,circle] (3) at (210:1) {$3$};
        \draw[postaction={decorate}] (1) -- (2);
        \draw[postaction={decorate}] (3) to[bend right=10] (1);
      \end{tikzpicture}} &
\scalebox{\sclbx}{
      \begin{tikzpicture}[scale=\tikzsc] 
        \node[draw,circle] (1) at (-30:1) {$1$};
        \node[draw,circle] (2) at (90:1) {$2$};
        \node[draw,circle] (3) at (210:1) {$3$};
        \draw[postaction={decorate}] (1) -- (2);
        \draw[postaction={decorate}] (2) -- (3);
      \end{tikzpicture}} &
\scalebox{\sclbx}{
      \begin{tikzpicture}[scale=\tikzsc] 
        \node[draw,circle] (1) at (-30:1) {$1$};
        \node[draw,circle] (2) at (90:1) {$2$};
        \node[draw,circle] (3) at (210:1) {$3$};
        \draw[postaction={decorate}] (2) -- (3);
        \draw[postaction={decorate}] (1) to[bend right=10] (3);
      \end{tikzpicture}} \\
        $G$ & $\arb^\parr1$ & $\arb^\parr2$& $\arb^\parr3$ & $\arb^\parr4$\\
            &  $\wt=\frac{x_3x_1}{x_2x_3}$ & $\wt=\frac{x_1x_2}{x_3x_1}$& $\wt=\frac{x_2x_3}{x_1x_2}$ & $\wt=\frac{x_3^2}{x_1x_2}$\\
      \end{tabular}
      \caption{\label{fig:somos_4}A strongly connected digraph $G$ from Example~\ref{ex:somos_4} (left). Its four arborescences and their weights (right).}
    \end{figure}
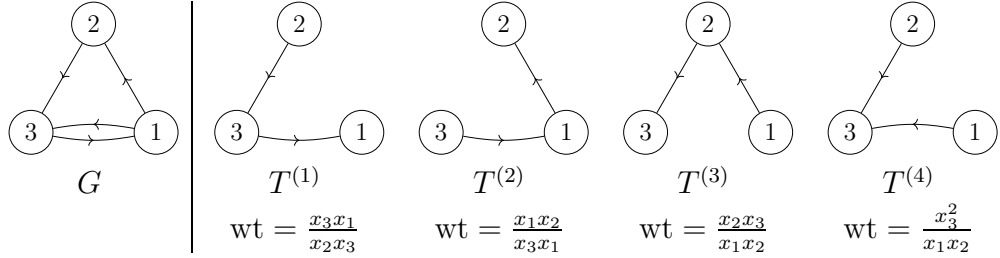

  \end{example}

\def\DOM{O}
\begin{theorem}
  \label{thm:arb}
  Let $G=(V,E,\wt)$ be a strongly connected weighted digraph. Then there exists a birational map $\rowm:\RP V\bto \RP V$ defined on some Zariski open subset $\DOM\subset\RP V$ such that for each $\X\in\DOM$, there exists a unique $\X'\in\RP V$ that gives a solution to~\eqref{eq:toggle}, and in this case we have $\rowm(\X)=\X'$. Explicitly, $\X'=(\X'_v)_{v\in V}$ is given by
    \begin{equation}
      \label{eq:arb} 
      \X'_v=\frac{\X_v}{\sum_{\arb\in\Arb(G,v)}\wt(\arb;\X)}.
    \end{equation}
  \end{theorem}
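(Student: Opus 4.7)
The plan is to linearize the toggle relations~\eqref{eq:toggle_sym} and recognize the resulting system as the stationarity equations for a gauged Markov chain on $G$; then the Matrix-Tree theorem will simultaneously deliver the uniqueness of $X'$ in $\RP V$ and the explicit arborescence formula~\eqref{eq:arb}.

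Concretely, I would introduce the substitution $\pi_v := X_v/X'_v$. With $X'_v/X'_u = (X_v\pi_u)/(X_u\pi_v)$, multiplying equation~\eqref{eq:toggle_sym} by $\pi_v$ turns it into the \emph{linear} system
\[
  \sum_{(u,v)\in E} \wt(u,v)\,\frac{X_v}{X_u}\,\pi_u \;=\; \pi_v \sum_{(v,w)\in E} \wt(v,w)\,\frac{X_w}{X_v}, \qquad v \in V,
\]
which is exactly the left-eigenvector equation $\pi\cdot L(X) = 0$, where $L(X)$ is the weighted Laplacian of $G$ with gauged edge weights $q(u\to v) := \wt(u,v)\,X_v/X_u$. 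Since $G$ is strongly connected, the chain with transition weights $q$ is irreducible, so Tutte's directed Matrix-Tree theorem tells us that the left kernel of $L(X)$ is one-dimensional over $\field$ and is spanned by
\[
  \pi_v \;=\; \sum_{T\in \Arb(G,v)} \prod_{u\ne v} q\bigl(u \to T(u)\bigr) \;=\; \sum_{T\in \Arb(G,v)} \wt(T;X).
\]
Translating back via $X'_v = X_v/\pi_v$ yields~\eqref{eq:arb}, and the one-dimensionality of the kernel gives uniqueness of $X' \in \RP V$.

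It remains to identify the Zariski open domain $O$. Each $\pi_v$ is a sum of monomials in the coordinates $X_u$ with coefficients in $\ring$ built from the $\wt(u,v)$; strong connectivity guarantees $\Arb(G,v)\ne\emptyset$, so $\pi_v$ is a non-zero rational function on $\RP V$. Let $O$ be the complement inside $\RP V$ of the union of the hypersurfaces $\{\pi_v = 0\}$ and $\{X_v = 0\}$ for $v \in V$. On $O$, the formula $X'_v = X_v/\pi_v$ is everywhere defined and has no vanishing coordinates, so $\rowm : X \mapsto X'$ is a well-defined birational map of $\RP V$. I expect the only real obstacle to be spotting the correct substitution $\pi_v = X_v/X'_v$; once~\eqref{eq:toggle_sym} is recognized as a Laplacian eigenvector equation, every claim of the theorem reduces to a single application of the Matrix-Tree theorem.
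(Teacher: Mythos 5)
Your proposal is correct and follows essentially the same route as the paper: the substitution $\pi_v = X_v/X'_v$ is exactly the paper's change of variables $T=(X_u/X'_u)_{u\in V}$, which turns the toggle relations into the kernel equation for the weighted Laplacian with edge weights $\wt(u,v)X_v/X_u$, after which the directed Matrix-Tree theorem gives both uniqueness and the arborescence formula. Your additional remarks on the domain $O$ and the Markov-chain interpretation are consistent with, and slightly more explicit than, the paper's argument.
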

  
\begin{example}\label{ex:somos_4_cont}
For the digraph from Example~\ref{ex:somos_4}, according to Theorem~\ref{thm:arb}, the unique solution to~\eqref{eq:toggle} is given by
    \begin{equation}
     \label{eq:solution_somos} 
    x_1'=\frac{x_1x_2x_3}{x_1x_3}=x_2;\quad
      x_2'=\frac{x_1x_2x_3}{x_1x_2}=x_3;\quad
      x_3'=\frac{x_1x_2x_3}{x_2x_3+x_3^2}=\frac{x_1x_2}{x_2+x_3}.
    \end{equation}
    Let us check that setting $\X':=(x_1':x_2':x_3')$ indeed gives a solution to~\eqref{eq:toggle}:
    \begin{equation}
      \begin{split}
        x_1x_1'&=(x_2+x_3) \left(\frac1{x_3'}\right)^{-1}= x_1x_2;\\
        x_2x_2'&=(x_3) \left(\frac1{x_1'}\right)^{-1}= x_2x_3;\\
        x_3x_3'&=(x_1) \left(\frac1{x_1'}+\frac1{x_2'}\right)^{-1}= \frac{x_1x_2x_3}{x_2+x_3}.\\
      \end{split}
    \end{equation}
    One easily verifies directly that this solution is unique as an element of $\RP{V}$. Thus for the digraph in Figure~\ref{fig:somos_4}, the map $\rowm$ sends $(x_1:x_2:x_3)$ to $\left(x_2:x_3:\frac{x_1x_2}{x_2+x_3}\right)$.
\end{example}

\begin{definition}\label{dfn:R}
  Let $G=(V,E,\wt)$ be a strongly connected weighted digraph. The \emph{$R$-system} associated with $G$ is a discrete dynamical system consisting of iterative application of the map $\rowm$. More precisely, for  $\Init\in\RP V$, the $R$-system is a family $(R(t))_{t\geq 0}$ of elements of $\RP V$ defined for each nonnegative integer $t$ via $R(t)=\rowm^t(\Init)$.
\end{definition}

\begin{remark}\label{rmk:subtr_free}
For each $t\geq 0$, $R(t)$ is defined for all $\Init$ belonging to some Zariski open set in $\RP V$. However, if $\field$ is a rational field in some variables and $\Init$ can be written as a subtraction-free rational expression in these variables then the same is true for $\rowm(\Init)$ and thus in this case $R(t)$ is defined for all $t\geq 0$.
\end{remark}

\def\tb{s}
\begin{remark}\label{rmk:poset}

  Let us explain how our notion of an $R$-system is a direct generalization of birational rowmotion of~\cite{PEabstr}. Given a finite poset $(P,\leq)$, denote by $\hat P$ the poset obtained from $P$ by attaching a minimum $\hat0$ and a maximum $\hat1$. One then constructs a digraph $G=G(P)$ as follows: first let $G'$ be obtained from the Hasse diagram of $\hat P$ by orienting every edge upwards. In other words, $G'$ contains an edge $(u,v)$ if and only if $u\lessdot v$ in $\hat P$. Then, $G$ is obtained from $G'$ by identifying the vertices $\hat0$ and $\hat1$ into a new vertex $\tb$. Thus the vertex set of $G$ is equal to $V=P\cup\{\tb\}$. See Figure~\ref{fig:poset} for an example. We consider the canonical weight function assigning weight $1$ to every edge.
  
  Clearly $G$ is a strongly connected digraph, and it is a non-trivial consequence of Theorem~\ref{thm:arb} that the $R$-system dynamics associated with $G$ is identical to the birational rowmotion dynamics associated with $P$. To see that, note that one iteration of birational rowmotion gives the unique solution to the system of equations obtained from~\eqref{eq:toggle} by removing the equation corresponding to $\tb$. However, by Theorem~\ref{thm:arb}, the solution to the whole system~\eqref{eq:toggle} exists and thus it has to coincide with the output of birational rowmotion. We also note that the $R$-system operates on $\RP V$ while birational rowmotion operates on $\field^P$ so in order to perform the reduction one needs to rescale the entries of $R(t)$ so that $R_\tb(t)=1$ for all $t\geq 0$.

      \def\nodesc{0.3}
      \def\tikzsc{0.5}
  \begin{figure}
    \begin{tabular}{ccccc}

  \begin{tikzpicture}[scale=\tikzsc]
    \node[draw=white,circle,fill=white,scale=\nodesc] (Z) at (0,-2) {$ $};
    \node[draw=white,circle,fill=white,scale=\nodesc] (O) at (0,6) {$ $};
    \node[anchor=north,text=white] (ZZ) at (Z.south) {$\hat0$};
    \node[anchor=south,text=white] (OO) at (O.north) {$\hat1$};
    \node[draw,circle,fill=black,scale=\nodesc] (A) at (0,0) {$ $};
    \node[draw,circle,fill=black,scale=\nodesc] (B) at (-1,2) {$ $};
    \node[draw,circle,fill=black,scale=\nodesc] (C) at (1,2) {$ $};
    \node[draw,circle,fill=black,scale=\nodesc] (D) at (0,4) {$ $};
    \node[draw,circle,fill=black,scale=\nodesc] (E) at (2,4) {$ $};
    \node[draw,circle,fill=black,scale=\nodesc] (F) at (1,4) {$ $};
    \node[draw,circle,fill=black,scale=\nodesc] (G) at (2,0) {$ $};
    \node[draw,circle,fill=black,scale=\nodesc] (H) at (3,2) {$ $};
    \draw (G)--(H);
    \draw (H)--(E);
    \draw (A)--(B);
    \draw (A)--(C)--(D);
    \draw (C)--(E);
    \draw (C)--(F);
    \draw (G)--(C);
  \end{tikzpicture}
      &
&
  \begin{tikzpicture}[scale=\tikzsc]
    \node[draw,circle,fill=black,scale=\nodesc] (Z) at (0,-2) {$ $};
    \node[draw,circle,fill=black,scale=\nodesc] (O) at (0,6) {$ $};
    \node[anchor=north] (ZZ) at (Z.south) {$\hat0$};
    \node[anchor=south] (OO) at (O.north) {$\hat1$};
    \node[draw,circle,fill=black,scale=\nodesc] (A) at (0,0) {$ $};
    \node[draw,circle,fill=black,scale=\nodesc] (B) at (-1,2) {$ $};
    \node[draw,circle,fill=black,scale=\nodesc] (C) at (1,2) {$ $};
    \node[draw,circle,fill=black,scale=\nodesc] (D) at (0,4) {$ $};
    \node[draw,circle,fill=black,scale=\nodesc] (E) at (2,4) {$ $};
    \node[draw,circle,fill=black,scale=\nodesc] (F) at (1,4) {$ $};
    \node[draw,circle,fill=black,scale=\nodesc] (G) at (2,0) {$ $};
    \node[draw,circle,fill=black,scale=\nodesc] (H) at (3,2) {$ $};
    \draw (G)--(H);
    \draw (H)--(E);
    \draw (A)--(B);
    \draw (A)--(C)--(D);
    \draw (C)--(E);
    \draw (C)--(F);
    \draw (G)--(C);
    \draw (Z)--(A);
    \draw (Z)--(G);
    \draw (B)--(O);
    \draw (D)--(O);
    \draw (E)--(O);
    \draw (F)--(O);
  \end{tikzpicture}
      & &

  \begin{tikzpicture}[scale=\tikzsc    ] 
    \node[draw=white,circle,fill=white,scale=\nodesc] (Z) at (0,-2) {$ $};
    \node[draw=white,circle,fill=white,scale=\nodesc] (O) at (0,6) {$ $};
    \node[anchor=north,text=white] (ZZ) at (Z.south) {$\hat0$};
    \node[anchor=south,text=white] (OO) at (O.north) {$\hat1$};
    \node[draw,circle,fill=black,scale=\nodesc] (A) at (0,0) {$ $};
    \node[draw,circle,fill=black,scale=\nodesc] (B) at (-1,2) {$ $};
    \node[draw,circle,fill=black,scale=\nodesc] (C) at (1,2) {$ $};
    \node[draw,circle,fill=black,scale=\nodesc] (D) at (0,4) {$ $};
    \node[draw,circle,fill=black,scale=\nodesc] (E) at (2,4) {$ $};
    \node[draw,circle,fill=black,scale=\nodesc] (F) at (1,4) {$ $};
    \node[draw,circle,fill=black,scale=\nodesc] (G) at (2,0) {$ $};
    \node[draw,circle,fill=black,scale=\nodesc] (H) at (3,2) {$ $};
    \draw[postaction={decorate}] (G)--(H);
    \draw[postaction={decorate}] (H)--(E);
    
    \node[draw,circle,fill=black,scale=\nodesc] (TB) at (-3,2) {$ $};
    \node[anchor=east] (TBTB) at (TB.west) {$\tb$};
    
    \draw[postaction={decorate}] (A)--(B);
    \draw[postaction={decorate}] (A)--(C);
    \draw[postaction={decorate}] (C)--(D);
    \draw[postaction={decorate}] (C)--(E);
    \draw[postaction={decorate}] (C)--(F);
    \draw[postaction={decorate}] (G)--(C);
    \draw[postaction={decorate}] (B)--(TB);
    \draw[postaction={decorate}] (D)--(TB);
    \draw[postaction={decorate}] (E) to[bend right=70] (TB);
    \draw[postaction={decorate}] (F) to[bend right=50] (TB);
    \draw[postaction={decorate}] (TB) to (A);
    \draw[postaction={decorate}] (TB) to[bend right=70] (G);
    
  \end{tikzpicture}\\
      $P$ & & $\hat P$ && $G(P)$

    \end{tabular}
  \caption{\label{fig:poset} Transforming a poset $P$ (left) into a strongly connected digraph $G(P)$ (right). Using this construction, birational rowmotion of Einstein-Propp~\cite{PEabstr} for  $P$ becomes a special case of the $R$-system associated with $G(P)$, see Remark~\ref{rmk:poset}.}
\end{figure}
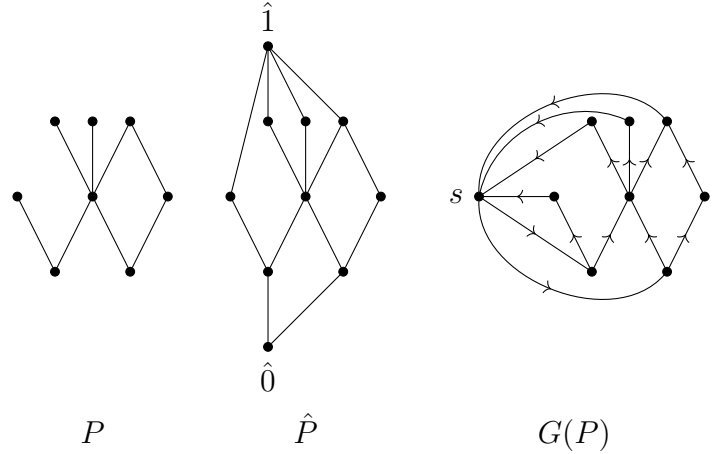
\end{remark}

\begin{example}
  Consider a poset $P$ whose Hasse diagram is shown in Figure~\ref{fig:birational_rowmotion} (left). We see that after applying birational rowmotion to the initial values $a,b,c,d$, the toggle relation at the extra vertex $\tb$ of $G(P)$ shown in Figure~\ref{fig:birational_rowmotion} (right) is automatically satisfied:
  \[1\cdot 1 = (a+b)\left(\left(\frac{c+d}{ac}\right)^{-1}+\left(\frac{c+d}{ad+bc+bd}\right)^{-1}\right)^{-1}.\]
  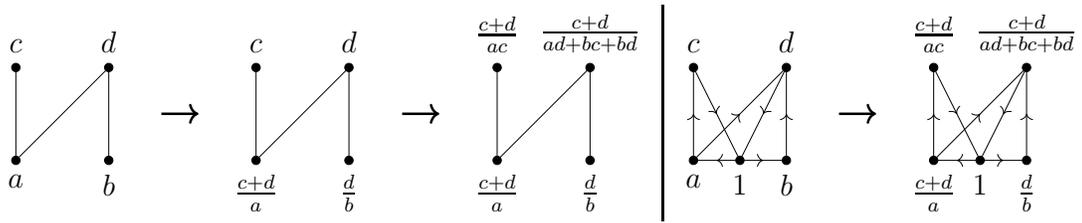
\begin{figure}
    \def\scl{0.3}
    \def\lw{0.25}
    \def\tikzscl{1.3}
    \def\arrwidth{0.4}
    \def\arrht{0.5}
    \def\arrlw{1}
\scalebox{0.95}{
\begin{tabular}{ccccc|ccc}
\begin{tikzpicture}[scale=\tikzscl,baseline=(a.base)]
\node[label=below:{$a$},draw,circle,fill=black,scale=\scl] (a) at (0,0) {};
\node[label=below:{$b$},draw,circle,fill=black,scale=\scl] (b) at (1,0) {};
\node[label=above:{$c$},draw,circle,fill=black,scale=\scl] (c) at (0,1) {};
\node[label=above:{$d$},draw,circle,fill=black,scale=\scl] (d) at (1,1) {};
\draw[line width=\lw] (c)--(a)--(d)--(b);
\end{tikzpicture}&
\begin{tikzpicture}[scale=\tikzscl,baseline=(a.base)]
  \draw[white] (0,-\arrht)--(0,\arrht);
  \node[white] (a) at (0,-\arrht) {};
  \draw[->,line width=\arrlw] (0,0) -- (\arrwidth,0);
\end{tikzpicture}&
\begin{tikzpicture}[scale=\tikzscl,baseline=(a.base)]
\node[label=below:{$\frac{c+d}a$},draw,circle,fill=black,scale=\scl] (a) at (0,0) {};
\node[label=below:{$\frac d b$},draw,circle,fill=black,scale=\scl] (b) at (1,0) {};
\node[label=above:{$c$},draw,circle,fill=black,scale=\scl] (c) at (0,1) {};
\node[label=above:{$d$},draw,circle,fill=black,scale=\scl] (d) at (1,1) {};
\draw[line width=\lw] (c)--(a)--(d)--(b);
\end{tikzpicture}&
\begin{tikzpicture}[scale=\tikzscl,baseline=(a.base)]
  \draw[white] (0,-\arrht)--(0,\arrht);
  \node[white] (a) at (0,-\arrht) {};
  \draw[->,line width=\arrlw] (0,0) -- (\arrwidth,0);
\end{tikzpicture}&
\begin{tikzpicture}[scale=\tikzscl,baseline=(a.base)]
\node[label=below:{$\frac{c+d}a$},draw,circle,fill=black,scale=\scl] (a) at (0,0) {};
\node[label=below:{$\frac d b$},draw,circle,fill=black,scale=\scl] (b) at (1,0) {};
\node[label=above:{$\frac{c+d}{ac}$},draw,circle,fill=black,scale=\scl] (c) at (0,1) {};
\node[label=above:{$\frac{c+d}{ad+bc+bd}$},draw,circle,fill=black,scale=\scl] (d) at (1,1) {};
\draw[line width=\lw] (c)--(a)--(d)--(b);
\end{tikzpicture}&
\begin{tikzpicture}[scale=\tikzscl,baseline=(a.base)]
\node[label=below:{$1$},draw,circle,fill=black,scale=\scl] (tb) at (0.5,0) {};
\node[label=below:{$a$},draw,circle,fill=black,scale=\scl] (a) at (0,0) {};
\node[label=below:{$b$},draw,circle,fill=black,scale=\scl] (b) at (1,0) {};
\node[label=above:{$c$},draw,circle,fill=black,scale=\scl] (c) at (0,1) {};
\node[label=above:{$d$},draw,circle,fill=black,scale=\scl] (d) at (1,1) {};
\draw[postaction={decorate}] (a)--(c);
\draw[postaction={decorate}] (a)--(d);
\draw[postaction={decorate}] (b)--(d);
\draw[postaction={decorate}] (c)--(tb);
\draw[postaction={decorate}] (d)--(tb);
\draw[postaction={decorate}] (tb)--(a);
\draw[postaction={decorate}] (tb)--(b);
\end{tikzpicture}&
\begin{tikzpicture}[scale=\tikzscl,baseline=(a.base)]
  \draw[white] (0,-\arrht)--(0,\arrht);
  \node[white] (a) at (0,-\arrht) {};
  \draw[->,line width=\arrlw] (0,0) -- (\arrwidth,0);
\end{tikzpicture}&
\begin{tikzpicture}[scale=\tikzscl,baseline=(a.base)]
\node[label=below:{$1$},draw,circle,fill=black,scale=\scl] (tb) at (0.5,0) {};
\node[label=below:{$\frac{c+d}a$},draw,circle,fill=black,scale=\scl] (a) at (0,0) {};
\node[label=below:{$\frac d b$},draw,circle,fill=black,scale=\scl] (b) at (1,0) {};
\node[label=above:{$\frac{c+d}{ac}$},draw,circle,fill=black,scale=\scl] (c) at (0,1) {};
\node[label=above:{$\frac{c+d}{ad+bc+bd}$},draw,circle,fill=black,scale=\scl] (d) at (1,1) {};
\draw[postaction={decorate}] (a)--(c);
\draw[postaction={decorate}] (a)--(d);
\draw[postaction={decorate}] (b)--(d);
\draw[postaction={decorate}] (c)--(tb);
\draw[postaction={decorate}] (d)--(tb);
\draw[postaction={decorate}] (tb)--(a);
\draw[postaction={decorate}] (tb)--(b);
\end{tikzpicture}
\end{tabular}
}
  \caption{\label{fig:birational_rowmotion} Applying birational rowmotion to a poset $P$ with four elements (left) produces a solution to the $R$-system associated with $G(P)$ (right).}
\end{figure}
\end{example}

\begin{remark} \label{rmk:sand}
The denominator in the right hand side of~\eqref{eq:arb} has a meaning in the theory of \emph{sandpiles}~\cite{Biggs,BN,HLMPP}. For example, after setting $X_u=1$ for all $u\in V$ and $\wtt u w=1$ for all $u,w\in V$, the value of the denominator equals the size of the \emph{critical group} (also known as the \emph{sandpile group} or the \emph{Jacobian group}) of $G$ with $v$ chosen as the designated sink. Without setting $X_u=1$, this formula gives an expression for the cokernel of the weighted Laplacian matrix. It would be interesting to see if there is a deeper relationship between these two areas.
\end{remark}

We say that the $R$-system is \emph{coefficient-free} if $\wt(e)=1$ for all $e\in E$, otherwise we sometimes call it the \emph{$R$-system with coefficients}.

Let us define two formal sets of variables $\x_V=(x_v)_{v\in V}$ and $\x_E=(x_e)_{e\in E}$.
\begin{definition}
Let $\ring=\Q[\x_V,\x_E]$, and consider a strongly connected digraph $G=(V,E)$. Define $\wt',\wt'':E\to \ring$  by $\wt'(e)=1$, $\wt''(e)=x_e$. The $R$-system associated with $(V,E,\wt'')$ given by initial conditions $R(0)=\x_V$ is called the \emph{universal $R$-system with coefficients associated with $G$}. Similarly, the $R$-system associated with $(V,E,\wt')$ with initial conditions $R(0)=\x_V$ is called the \emph{universal coefficient-free $R$-system associated with $G$}. 
\end{definition}

  \section{Singularity confinement and algebraic entropy}\label{sec:sing-conf-algebr}
  
  Our main motivation for studying $R$-systems comes from various properties that they share with known integrable systems. Let us first give the necessary definitions.

  Recall that $\field$ is the field of fractions of some unique factorization domain $\ring$. We denote by $\ring^\ast$ the set of invertible elements of $\ring$ and we say that an element $r\in\ring\setminus \ring^\ast$ is \emph{irreducible} if whenever $r=r_1r_2$, either $r_1$ or $r_2$ is an invertible element of $\ring$. Two elements $r_1,r_2\in\ring$ are \emph{coprime} if every $r\in\ring$ that divides both $r_1$ and $r_2$ is invertible, i.e., $r\in\ring^\ast$.

\def\multop{\operatorname{mult}}
\newcommand{\mult}[2]{\multop_{#1}(#2)}

  \begin{definition}
Let $x\in \ring$ be an element and let $r\in\ring$ be an irreducible element. Define the \emph{multiplicity} $\mult r x$ to be the maximal integer $m\geq 0$ such that $x$ is divisible by $r^m$.
  \end{definition}

  \def\Xt{\tilde{\X}}
  \def\allones{(1,1,\dots,1)}
  \begin{definition}
Let $V$ be a set. Given an element $\X=(\X_v)_{v\in V}\in \RP V$, its \emph{canonical form} is any vector $\Xt=(\Xt_v)_{v\in V}\in \ring^V$ such that the greatest common divisor of the elements $\{\Xt_v\mid v\in V\}$ is $1$ and $\Xt=\X$ in $\RP V$.
\end{definition}
For instance, the canonical form of $\left(x_2:x_3:\frac{x_1x_2}{x_2+x_3}\right)$ from Example~\ref{ex:somos_4} is
\[\left(x_2(x_2+x_3),x_3(x_2+x_3),x_1x_2\right).\]
Note that the canonical form of an element of $\RP V$ is defined uniquely up to a multiplication by a common unit element $r\in \ring^\ast$.
\begin{definition}
  Given an element $\X=(\X_v)_{v\in V}\in \RP V$ and an irreducible element $r\in \ring$, define $\mult r \X=(\mult r {\Xt_v})_{v\in V}\in\Z^V$, where $\Xt$ is the canonical form of $\X$.
\end{definition}
Thus $\mult r \X$ is a vector of nonnegative integers with at least one zero coordinate.

When $\ring$ is a polynomial ring in some number of variables, we denote by $\deg(r)$ the degree of the polynomial $r\in\ring$, and for $\X\in\RP V$, we let $\deg(\X)$ be the maximal degree of $\Xt_v$ over all $v\in V$, where again  $\Xt\in\ring^V$ is the canonical form of $\X$.

We are now ready to define the two important properties that are widely regarded as integrability tests in the area of integrable systems. 

\begin{definition}
  Let $G=(V,E,\wt)$ be a strongly connected weighted digraph and let $R(t)$ be the associated $R$-system. We say that $R(t)$ has the \emph{singularity confinement property} if for any irreducible element $r\in \ring$, we have $\mult r {R(t)}=0\in \Z^V$ for infinitely many $t\geq 0$. 
\end{definition}

\begin{definition}
   Let $G=(V,E)$ be a strongly connected digraph. The \emph{algebraic entropy} $d(G)$ of the associated universal $R$-system with coefficients is the limit
  \begin{equation}\label{eq:entropy_def}
    d(G):=\lim_{t\to\infty} \frac{\log\deg(R(t))}{t}.
  \end{equation}
\end{definition}

In a lot of cases, the properties of singularity confinement and having zero algebraic entropy indicate that the discrete dynamical system in question is integrable in the sense of possessing a large amount of symmetries, or a large number of conserved quantities. 
Singularity confinement was introduced by Grammaticos, Ramani, and Papageorgiou~\cite{GRP}, and algebraic entropy was introduced by Bellon-Viallet~\cite{BV}.

\begin{remark}
  We note that singularity confinement actually is a surprising property of an $R$-system: indeed, suppose that $r\in \ring$ is an irreducible factor that appears in $\X_v(t)$ for some $v\in V$ but does not appear in $\X_u(t+1)$ for any $u\in V$. Since $r$ cannot appear in $\X_u(t)$ for all $u\in V$ when $\X(t)$ is written in canonical form, let us assume that $r$ does not appear in $\X_u(t)$ for at least one outgoing neighbor $u$ of $v$. Then the left hand side of~\eqref{eq:toggle} is trivially divisible by $r$, and therefore the same should be true for the right hand side, however, none of the two factors in the right hand side is such that every term in it is divisible by $r$. So in order for the right hand side to be divisible by $r$, we need to add up several terms, each of which is not divisible by $r$, so that their sum is divisible by $r$. This is exactly the kind of a ``fortuitous cancellation'' that usually is associated with the Laurent phenomenon.
\end{remark}

  \section{Laurentification}\label{sec:laurentification}

  In a lot of cases, studying singularity confinement and algebraic entropy of a discrete dynamical system reduces to studying the same phenomena for another recursive sequence (called the \emph{$\tau$-sequence} in~\cite{HoneSuper}) that has the \emph{Laurent property}, i.e., all of its entries are Laurent polynomials in the initial variables. In~\cite{HoneQRT}, this technique is called \emph{Laurentification}.   Let us illustrate it via a well-studied example of the \emph{discrete Painlev\'e equation $d-P_I$} of Ramani-Grammaticos-Hietarinta~\cite{RGH}.

\begin{example}
  Let us consider a recursive sequence $(u_n)_{n\geq 0}$ defined by $u_0=x_0$, $u_1=x_1$, $u_2=x_2$, and 
\begin{equation}\label{eq:QRT}
u_{n+1}=-\frac1{u_n}-\frac1{u_{n-1}}+\frac{\alpha}{u_{n}}\quad \text{for $n\geq 2$.}
\end{equation}
Here $\alpha\in\field^\ast$ is an arbitrary non-zero element. The map $(u_{n-2},u_{n-1},u_n)\mapsto (u_{n-1},u_n,u_{n+1})$ is known as the \emph{DTKQ-$2$ map} (cf.~\cite{DTKQ}) and is a special case of both $d-P_I$ and the \emph{additive $QRT$ map}~\cite[Eq.~(3)]{HoneQRT}. Let us now consider the substitution
\[u_n=\frac{\tau_n\tau_{n+3}}{\tau_{n+1}\tau_{n+2}}.\]
It imposes the following recurrence relation on the sequence $(\tau_n)_{n\geq 0}$:
\[\tau_n\tau_{n-3}^2\tau_{n-4}+\tau_{n-1}^2\tau_{n-4}^2+\tau_{n-1}\tau_{n-2}^2\tau_{n-5}=\alpha\tau_{n-2}^2\tau_{n-3}^2.\]
As it is shown in~\cite{HoneQRT}, the entries of the sequence $(\tau_n)$ are Laurent polynomials in the variables $\tau_0,\tau_1,\dots,\tau_4$. The degrees of these polynomials form a sequence $2,4,6,9,12,16,20,25,30,\dots$ which is easily shown to be equal to $\lfloor n^2/4 \rfloor$. This immediately implies that the discrete dynamical system~\eqref{eq:QRT} has zero algebraic entropy: the degree sequence grows quadratically rather than exponentially. Showing singularity confinement in this case is harder: one possible way is to prove that for each $n\geq 0$, $\tau_n$ is an \emph{irreducible} Laurent polynomial in the variables $\tau_0,\tau_1,\dots,\tau_4$ and that $\tau_n$ and $\tau_m$ are coprime for $n\neq m$.
\end{example}

This is a prototypical example of an integrable system of interest. See~\cite{HoneQRT} or~\cite{HoneSuper} for many other instances of applying the Laurentification technique to study integrability. Let us now compare this with our computation in Example~\ref{ex:somos_4}. Recall that we had a map
\[\rowm:(x_1:x_2:x_3) \mapsto \left(x_2:x_3:\frac{x_1x_2}{x_2+x_3}\right).\]
This suggests introducing the sequence $(u_n)_{n\geq 0}$ defined by $u_0=x_1$, $u_1=x_2$, $u_2=x_3$, and
\[u_{n+1}=\frac{u_{n-1}u_{n-2}}{u_{n-1}+u_{n}},\quad \text{for $n\geq 2$}.\]
Thus for all $t\geq 0$, the $R$-system associated with the digraph in Figure~\ref{fig:somos_4} satisfies $R(t)=(u_{t}:u_{t+1}:u_{t+2})$.
After making the substitution
\[u_n=\frac{\tau_n}{\tau_{n+1}},\]
we get that the sequence $(\tau_n)$ is defined by a recurrence relation
\[\tau_n\tau_{n-4}=\tau_{n-1}\tau_{n-3}+\tau_{n-2}^2,\quad \text{for $n\geq 4$,}\]
with initial data $\tau_0=x_1x_2x_3$, $\tau_1=x_2x_3$, $\tau_2=x_3$, $\tau_3=1$. This is the well-studied Somos-$4$ sequence~\cite{Gale} which is a certain reduction of the Hirota-Miwa equation~\cite{Miwa}. In particular, its entries are irreducible Laurent polynomials in $x_1,x_2,x_3$ that are pairwise coprime, see~\cite{FZCube} and~\cite[Theorem~3]{KMMT}. Their degrees, moreover, are well known to grow quadratically (cf. Proposition~\ref{prop:GR_quadratic}), and, as a corollary, we obtain the following result:

\begin{proposition}
The coefficient-free $R$-system associated with the digraph from Figure~\ref{fig:somos_4} has the singularity confinement property and its algebraic entropy is zero.
\end{proposition}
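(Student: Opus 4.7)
The plan is to leverage the Laurent property and quadratic degree growth of the Somos-$4$ $\tau$-sequence, both already supplied in the text preceding the proposition. Using the substitution $u_n = \tau_n/\tau_{n+1}$ with initial data $\tau_0 = x_1x_2x_3$, $\tau_1 = x_2x_3$, $\tau_2 = x_3$, $\tau_3 = 1$, and clearing denominators, one obtains
\[
R(t) = (u_t : u_{t+1} : u_{t+2}) = \bigl(\tau_t \tau_{t+2}\tau_{t+3} \,:\, \tau_{t+1}^2 \tau_{t+3} \,:\, \tau_{t+1}\tau_{t+2}^2\bigr).
\]

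The key preparatory step is to verify that this triple is already the canonical form, up to a unit of $\ring$. For each $n \in \{t, t+1, t+2, t+3\}$, the factor $\tau_n$ is absent from at least one of the three coordinates: $\tau_t$ from the second and third, $\tau_{t+1}$ from the first, $\tau_{t+2}$ from the second, and $\tau_{t+3}$ from the third. Combined with the pairwise coprimality and irreducibility of the $\tau_n$ as Laurent polynomials in $x_1, x_2, x_3$ recorded in the discussion before the proposition, this forces any common divisor of the three coordinates to be a unit, so the displayed triple is canonical.

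With the canonical form secured, the two conclusions fall out directly. For singularity confinement, any irreducible $r \in \ring$ is associate to at most one $\tau_{n_0}$ (by pairwise coprimality), so $r$ can contribute to the canonical form of $R(t)$ only when $n_0 \in \{t, t+1, t+2, t+3\}$, that is, for at most four consecutive values of $t$; hence $\mult{r}{R(t)} = 0$ for all but finitely many $t$. For zero algebraic entropy, the quadratic bound $\deg(\tau_n) = O(n^2)$ cited as Proposition~\ref{prop:GR_quadratic} together with the displayed canonical form yields $\deg(R(t)) = O(t^2)$, so $\log \deg(R(t))/t \to 0$, giving $d(G) = 0$ (and $d(G) \geq 0$ is trivial).

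No step in the plan requires a serious new idea. The one point worth double-checking is that the irreducibility and pairwise coprimality of the $\tau_n$ genuinely persist after specializing the generic Somos-$4$ initial variables to the specific monomials $\tau_0 = x_1x_2x_3, \tau_1 = x_2x_3, \tau_2 = x_3, \tau_3 = 1$; but this is exactly the way the cited Laurent-phenomenon results for Somos-$4$ are to be applied inside $\Z[x_1^\pm, x_2^\pm, x_3^\pm]$, so no additional argument is required.
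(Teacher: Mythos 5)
Your argument is correct and follows the same route as the paper: the paper's justification for this proposition is precisely the two sentences preceding it (irreducibility and pairwise coprimality of the Somos-$4$ entries from the cited references, plus quadratic degree growth), and you have simply written out the details — the explicit canonical form $(\tau_t\tau_{t+2}\tau_{t+3}:\tau_{t+1}^2\tau_{t+3}:\tau_{t+1}\tau_{t+2}^2)$ and the window-of-four-values observation — that the paper leaves implicit. The only caveat, which you correctly flag and which the paper also resolves only by citation, is that irreducibility and coprimality must hold for the specialized initial data $\tau_0=x_1x_2x_3,\dots,\tau_3=1$ in $\Z[x_1^{\pm1},x_2^{\pm1},x_3^{\pm1}]$, not merely for generic Somos-$4$ initial variables.
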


\def\Kbid{K^{\leftrightarrow}}

\def\rowm{\phi}
\def\bto{\dashrightarrow}

\section{Mirror symmetry, superpotential critical points, and GT patterns}\label{sec:mirr-symm-superp}
In this section, for any strongly connected digraph $G$, we define a certain Laurent polynomial $\sp$ called the \emph{superpotential} of $G$. We give two results on the number of fixed points of $\sp$ and then explain how our results are related to the superpotentials that appear in the mirror symmetry literature.

Throughout this section, we assume $G=(V,E,\wt)$ to be a strongly connected weighted digraph.

\begin{definition}
The \emph{superpotential} $\sp: \RP{V}\bto \field$ is defined by
\begin{equation}\label{eq:superp}
\sp(\X)=\sum_{(u,w)\in E} \wtt u w \frac{\X_w}{\X_u}.
\end{equation}

\end{definition}

One immediate observation is that $\sp$ is a \emph{conserved quantity} of the $R$-system associated with $G$:

\begin{proposition}
Let $\X,\X'\in\RP V$ be such that $\rowm(\X)=\X'$. Then we have
  \[\sp(\X)=\sp(\X').\]
\end{proposition}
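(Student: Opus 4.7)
The plan is to deduce this directly from the symmetric form~\eqref{eq:toggle_sym} of the toggle relations by summing it over all vertices $v \in V$. Since any birational solution $\X'=\rowm(\X)$ satisfies the system~\eqref{eq:toggle}, it equivalently satisfies the symmetric reformulation
\[
\sum_{(u,v)\in E}\wtt{u}{v}\frac{\X'_v}{\X'_u}
=\sum_{(v,w)\in E}\wtt{v}{w}\frac{\X_w}{\X_v}, \quad \text{for every } v\in V.
\]

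Next I would sum this identity over all $v\in V$. The key observation is that each edge $e=(a,b)\in E$ appears exactly once on each side of the resulting sum: on the left, the term $\wtt{a}{b}\frac{\X'_b}{\X'_a}$ is contributed when the outer index equals $v=b$ (the head of $e$), and on the right, the term $\wtt{a}{b}\frac{\X_b}{\X_a}$ is contributed when $v=a$ (the tail of $e$). Hence
\[
\sp(\X')=\sum_{(u,v)\in E}\wtt{u}{v}\frac{\X'_v}{\X'_u}
=\sum_{(v,w)\in E}\wtt{v}{w}\frac{\X_w}{\X_v}=\sp(\X),
\]
which is the desired equality.

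The only thing to check carefully is that the equality takes place in $\field$ and is well-defined on $\RP V$, but this is immediate: the superpotential $\sp$ is a sum of ratios $\X_w/\X_u$ and is thus manifestly invariant under simultaneous rescaling of all coordinates, so it descends to a map $\RP V \bto \field$ as claimed. There is essentially no obstacle here; the entire content of the proposition is that the toggle relations were set up precisely so that the ``outflow'' at $v$ with respect to $\X$ matches the ``inflow'' at $v$ with respect to $\X'$, and invariance of $\sp$ is the global consequence of this local balance. I would present this as a two-line proof with a brief remark that summing the symmetric toggle relation is what makes the cross-terms telescope into the reindexing above.
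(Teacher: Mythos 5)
Your proof is correct and is essentially identical to the paper's: both sum the symmetric toggle relation~\eqref{eq:toggle_sym} over all $v\in V$ and observe that each edge contributes exactly once to each side, yielding $\sp(\X)=\sp(\X')$. No further comment is needed.
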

\begin{proof}
  Using~\eqref{eq:toggle_sym}, we get
  \begin{equation*}\label{eq:}
\begin{split}
\sp(\X)=\sum_{v\in V}\sum_{(v,w)\in E} \wtt v w \frac{\X_w}{\X_v}=\sum_{v\in V}\sum_{(u,v)\in E} \wtt u v \frac{\X_v'}{\X_u'}=\sp(\X'),
\end{split}
\end{equation*}
which finishes the proof of the proposition.
\end{proof}
For the case of Example~\ref{ex:somos_4}, where $\X=(x_1:x_2:x_3)$ and $\X'=\left(x_2:x_3:\frac{x_1x_2}{x_2+x_3}\right)$, we have
\[\sp(\X)=\frac{x_2}{x_1}+\frac{x_3}{x_2}+\frac{x_3}{x_1}+\frac{x_1}{x_3},\]
while 
\[\sp(\X')=\frac{x_3}{x_2}+\frac{x_1x_2}{x_3(x_2+x_3)}+\frac{x_1x_2}{x_2(x_2+x_3)}+\frac{x_2(x_2+x_3)}{x_1x_2}.\]
The reader is encouraged to check that the two expressions coincide.

Another relation between the $R$-system and the superpotential is that the fixed points of the former are the critical points of the latter:

\begin{proposition}
We have $\rowm(\X)=\X$ if and only if the gradient of $\sp$ vanishes on $\X$. 
\end{proposition}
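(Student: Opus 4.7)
My plan is a direct computation matching the critical point equations of $\sp$ with the toggle relations~\eqref{eq:toggle} at $\X' = \X$. For each $v \in V$, the variable $\X_v$ enters $\sp(\X) = \sum_{(u,w) \in E} \wtt{u}{w}\X_w/\X_u$ in two distinct ways: as the tail of outgoing edges $(v,w)$, contributing terms $\wtt{v}{w}\X_w/\X_v$, and as the head of incoming edges $(u,v)$, contributing terms $\wtt{u}{v}\X_v/\X_u$. A single differentiation yields
\[
\frac{\partial \sp}{\partial \X_v}(\X) \;=\; \sum_{(u,v) \in E}\frac{\wtt{u}{v}}{\X_u} \;-\; \frac{1}{\X_v^2}\sum_{(v,w) \in E}\wtt{v}{w}\,\X_w.
\]

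Setting this expression to zero at every $v \in V$ and clearing the $\X_v^2$ in the denominator, one obtains
\[
\X_v \cdot \X_v \;=\; \left(\sum_{(v,w) \in E}\wtt{v}{w}\,\X_w\right)\!\left(\sum_{(u,v) \in E}\frac{\wtt{u}{v}}{\X_u}\right)^{-1},\quad \text{for all } v \in V,
\]
which is precisely the toggle system~\eqref{eq:toggle} under the substitution $\X' = \X$. By the uniqueness clause of Theorem~\ref{thm:arb}, this system is satisfied if and only if $\rowm(\X) = \X$, which is exactly the claim of the proposition.

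The only bookkeeping issue is that $\sp$ is naturally a function on $\RP V$, so the phrase ``the gradient of $\sp$ vanishes on $\X$'' must be interpreted intrinsically. Since each summand $\wtt{u}{w}\X_w/\X_u$ is homogeneous of degree zero in $\X$, Euler's identity forces the linear relation $\sum_{v \in V}\X_v\,\partial\sp/\partial \X_v \equiv 0$; hence the gradient computed in the affine cover $\field^V$ is automatically orthogonal to the radial direction at any representative, and its vanishing is independent of the chosen scaling. I do not anticipate any substantive obstacle beyond this observation, as the identification of the two systems is a one-line calculation once the derivative is written out.
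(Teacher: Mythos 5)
Your proof is correct and is essentially the computation the paper has in mind: the paper's own proof simply asserts the fact is obvious from the definition of $\sp$, and your differentiation of $\sp$ recovers exactly the toggle relations~\eqref{eq:toggle} specialized to $\X'=\X$, with the uniqueness clause of Theorem~\ref{thm:arb} and the degree-zero homogeneity remark tying up the remaining details.
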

\begin{proof}
This fact is obvious from the definition of $\sp$.
\end{proof}

\def\ff{h}
\def\hh{g}
\def\RPP{\R \mathbb P} 
 
Our next result concerns positive critical points of the superpotential. Let us define $\RPP^V_{>0}$ to be the subset of $\mathbb{P}^V(\R)$ that consists of all points $\X=(\X_v)_{v\in V}$ such that $\X_v>0$ for all $v\in V$. Similarly, let $\R^V_{>0}$ be the subset of $\R^V$ consisting of points with all coordinates positive. When $\field=\R$, we denote by $\rowm_{>0}$ the restriction of $\rowm$ to $\RPP^V_{>0}$. Recall that by Remark~\ref{rmk:subtr_free}, $\rowm_{>0}$ is defined on the whole $\RPP^V_{>0}$ rather than on a Zariski open subset of it.

\def\ex{\exp}
\begin{proposition}\label{prop:fixpts_positive}
Suppose that $\field=\R$ and for all $(u,w)\in E$, the weight $\wtt u w$ is a positive real number. Then there exists a unique fixed point of $\rowm_{>0}:\RPP^{V}_{>0}\to\RPP^{V}_{>0}$ (also the unique positive critical point of $\sp$).
\end{proposition}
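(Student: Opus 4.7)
My plan is to reduce the problem to the existence and uniqueness of a minimum of a strictly convex, coercive function on a finite-dimensional vector space via logarithmic coordinates. Parametrize $\RPP^V_{>0}$ by $y=(y_v)_{v\in V}\in\R^V/\R\cdot\mathbf{1}$ via $\X_v=e^{y_v}$ (the projective rescaling $\X\mapsto \l\X$ corresponds to the translation $y\mapsto y+(\log\l)\mathbf{1}$). Then
\[
\sp(y)=\sum_{(u,w)\in E}\wtt u w\, e^{y_w-y_u},
\]
with all coefficients $\wtt u w$ strictly positive. Each summand is manifestly well-defined on the quotient, convex in $y$, and smooth.

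Next I would establish strict convexity of $\sp$ on $\R^V/\R\cdot\mathbf{1}$. A direct Hessian computation gives
\[
\nabla^2 \sp(y)=\sum_{(u,w)\in E}\wtt u w\, e^{y_w-y_u}(e_w-e_u)(e_w-e_u)^T,
\]
so the null space of $\nabla^2 \sp(y)$ consists of vectors $z\in\R^V$ satisfying $z_u=z_w$ for every edge $(u,w)\in E$. Since $G$ is strongly connected, its underlying undirected graph is connected, forcing $z\in\R\cdot\mathbf{1}$. Hence $\nabla^2 \sp$ is positive definite on the quotient, and $\sp$ is strictly convex there.

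The main work is coercivity of $\sp$ on the quotient. Represent the quotient by the hyperplane $H=\{y:\sum_v y_v=0\}$, so that $\max_v y_v\ge 0\ge \min_v y_v$ and the range $M(y):=\max_v y_v-\min_v y_v$ controls $\|y\|$ (indeed $M(y)\ge\|y\|_\infty\ge \|y\|_2/\sqrt{|V|}$). Pick $v_{\min}$ and $v_{\max}$ attaining the minimum and maximum of $y_v$. By strong connectivity there is a directed path $v_{\min}=u_0,u_1,\dots,u_k=v_{\max}$ of length $k\le |V|$, along which the total increase is $M(y)$, so at least one edge $(u_i,u_{i+1})$ in this path satisfies $y_{u_{i+1}}-y_{u_i}\ge M(y)/|V|$. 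Since all weights are positive, this gives $\sp(y)\ge c\,e^{M(y)/|V|}$ for $c=\min_{e\in E}\wt(e)>0$, so $\sp(y)\to\infty$ as $\|y\|\to\infty$ in $H$. Combined with strict convexity, $\sp$ attains a unique minimum on $H$, which is its unique critical point. This critical point lies in $\RPP^V_{>0}$ by construction, and, by the previous proposition identifying critical points of $\sp$ with fixed points of $\rowm$, it is the unique fixed point of $\rowm_{>0}$. The principal obstacle is the coercivity step, which is precisely where strong connectivity (rather than mere weak connectivity used for strict convexity) is essential.
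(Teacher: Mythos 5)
Your proof is correct and follows essentially the same route as the paper: pass to logarithmic coordinates on $\R^V/\R\cdot\mathbf{1}$, check that the Hessian of $\sp$ is positive definite there, and combine strict convexity with coercivity to obtain a unique minimum, which is then the unique critical point and hence the unique fixed point of $\rowm_{>0}$. The only difference is that you supply the details the paper leaves implicit — the rank-one decomposition of the Hessian together with the connectivity argument for its kernel, and the directed-path argument showing that strong connectivity forces $\sp(y)\ge c\,e^{M(y)/|V|}$ — where the paper instead cites Rietsch for the Hessian and simply asserts that $\sp$ tends to $+\infty$ at the boundary of $\RPP^V_{>0}$.
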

\begin{proof}
  Let $W=\R^V/\<\allones\>$ be a $(|V|-1)$-dimensional space and define a diffeomorphism $\ex:W\to \RPP^V_{>0}$ sending $\l=(\l_v)_{v\in V}\in W$ to $\X=(\exp(\l_v))_{v\in V}\in \RPP^V_{>0}$. In the $\l$-coordinates, $\sp$ has a positive definite Hessian by a direct computation, see~\cite[p.~137]{Rietsch1}, and thus the unique critical point of $\sp$ is the one where it attains its minimum. Such a point exists since $\sp$ takes positive values on $\RPP^V_{>0}$ and tends to $+\infty$ at the boundary of $\RPP^V_{>0}$. We are done with the proof.
\end{proof}

We now pass to the case $\field=\C$ and let the weights $\wtt u w$ be generic complex numbers. In this setting, the number of critical points of $\sp$ is given by Kouchnirenko's theorem~\cite{Kouchnirenko}:

\newcommand{\Newton}[1]{\operatorname{N}(#1)}
\begin{theorem}\label{thm:kouch}
  When the weights $\wtt u v$ are generic complex numbers, the number of  critical points of $\sp$ (all of which are non-degenerate) in $\mathbb{P}^V(\C)$ equals the \emph{normalized $(|V|-1)$-dimensional volume} of the \emph{Newton polytope} $\Newton{\sp}\subset \R^V$ of $\sp$ defined by
  \[\Newton{\sp}:=\Conv\left\{e_v-e_u\mid (u,v)\in E\right\}.\]
  For arbitrary weights, the number of critical points counted with multiplicities is at most the volume of $\Newton{\sp}$.
\end{theorem}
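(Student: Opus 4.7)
The plan is to deduce the theorem as a standard application of Kouchnirenko's theorem, after reducing the problem on $\mathbb{P}^V(\C)$ to one on an algebraic torus. Every monomial of $\sp$ has the form $\wtt u w \X^{e_w-e_u}$, so the support of $\sp$ lies in the hyperplane $H=\{z\in\R^V:\sum_v z_v = 0\}$; equivalently, $\sp$ is homogeneous of degree zero, and $\Newton{\sp}\subset H$ is at most $(|V|-1)$-dimensional. Thus $\sp$ descends to a rational function on $\mathbb{P}^V(\C)$, and I may work in the affine chart $\X_{v_0}=1$ for a chosen $v_0\in V$. Let $\pi\colon H\to\R^{V\setminus\{v_0\}}$ be the coordinate projection forgetting the $v_0$-entry; since $\pi$ restricts to an isomorphism of lattices $\Z^V\cap H\to\Z^{V\setminus\{v_0\}}$, it preserves normalized volume, and under $\pi$ the polynomial $\sp$ becomes a Laurent polynomial $\tilde\sp$ in $n:=|V|-1$ variables whose Newton polytope $\pi(\Newton{\sp})$ has the same normalized volume as $\Newton{\sp}$.

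Next I identify the critical points of $\sp$ on the open torus $(\C^*)^V/\C^*\subset\mathbb{P}^V(\C)$ with the common zeros in $(\C^*)^n$ of the $n$ equations $\partial\tilde\sp/\partial\X_v=0$ for $v\neq v_0$. This is immediate from Euler's identity $\sum_{v\in V}\X_v\,\partial\sp/\partial\X_v=0$, which follows from the degree-zero homogeneity of $\sp$ and makes the equation at $v=v_0$ automatic once the remaining $n$ hold. Now I invoke Kouchnirenko's theorem~\cite{Kouchnirenko}: for a Laurent polynomial in $n$ variables with generic coefficients supported on a full-dimensional polytope $P\subset\R^n$, the critical points in $(\C^*)^n$ are non-degenerate and number exactly $n!\Vol(P)$, and for arbitrary coefficients supported on $P$ the same quantity is an upper bound on the number of critical points counted with multiplicities. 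Applied to $\tilde\sp$ with $P=\pi(\Newton{\sp})$, this gives both assertions of the theorem.

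Two subsidiary points need to be checked. First, for generic weights no critical point on $\mathbb{P}^V(\C)$ can lie on a coordinate hyperplane: this follows because restricting $\sp$ to such a hyperplane gives the superpotential of a strictly smaller digraph, whose critical point count is bounded by the normalized volume of a proper face of $\Newton{\sp}$, and generic perturbation of the weights then shifts all critical points off every such face. Second, one has to verify that varying the weights $\wtt u w$ independently produces a genericity class large enough to satisfy Kouchnirenko's hypothesis; this is straightforward, since the weights provide independent coefficients for the distinct monomials of $\sp$, so any Zariski open condition on the coefficient vector is achieved on a Zariski open set of weights.

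The main obstacle is simply pinning down the precise form of Kouchnirenko's result used here, namely the version for critical points of a single Laurent polynomial (as opposed to the mixed-volume BKK bound for systems of generic Laurent polynomials with prescribed Newton polytopes); the cleanest statement, giving the count $n!\Vol(P)$ directly in terms of $\Newton{\sp}$ rather than via the mixed volume of the (sub-)Newton polytopes of the logarithmic derivatives $\X_v\,\partial\tilde\sp/\partial\X_v$, is contained in Kouchnirenko's original paper.
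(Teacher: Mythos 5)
Your argument is correct and coincides with the paper's, which offers no written proof beyond citing Kouchnirenko's theorem; your reduction to the affine chart $\X_{v_0}=1$ via the lattice isomorphism and the Euler identity is exactly the routine verification the paper leaves implicit. Two small remarks: your first ``subsidiary point'' is vacuous rather than merely unnecessary, since by strong connectivity every vertex has an outgoing edge, so $\sp$ has a pole along every coordinate hyperplane and cannot be restricted there (its critical points are by definition counted in the torus $(\C^\ast)^V/\C^\ast$); and you should record that $\Newton{\sp}$ is full-dimensional in the hyperplane $\sum_{v}z_v=0$, which holds because the vectors $e_v-e_u$ span that hyperplane and, $G$ being strongly connected, their convex hull contains the origin, so Kouchnirenko's hypothesis is indeed met.
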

Here $\{e_v\}_{v\in V}$ is the standard linear basis of $\R^V$ and $\Conv$ denotes the convex hull. The polytope $\Newton{\sp}$ belongs to the hyperplane with zero sum of coordinates, and the normalized volume of $\Newton{\sp}$ equals $(|V|-1)!$ times the standard $(|V|-1)$-dimensional volume of $\Newton{\sp}$ in $\R^V$. Since $G$ is strongly connected, $\Newton{\sp}$ contains the origin. The polytope $\Newton{\sp}$ is closely related to the \emph{root polytope} of Postnikov~\cite{Postnikov_Perm} which is the convex hull of some subset of positive roots of the Type $A_n$ root system together with the origin. In fact, we can give a simple combinatorial interpretation of this volume by constructing a \emph{central triangulation} of $\Newton{\sp}$ as in~\cite[Section~13]{Postnikov_Perm}.

\def\tree{T}
\begin{definition}
Given a digraph $G=(V,E)$, an \emph{oriented spanning tree} of $G$ is a subset $T\subset E$ such that the undirected graph $(V,T)$ is a tree. In other words, $T$ is an oriented spanning tree if $|T|=|V|-1$ and the graph $(V,T)$ is connected.
\end{definition}

\begin{definition}
We say that an oriented spanning tree $T\subset E$ is \emph{admissible} if there is no path $u_1,u_2,\dots,u_k\in V$ such that $(u_i,u_{i+1})\in T$ for $1\leq i<k$ but $(u_1,u_k)\in E$. We say that two oriented spanning trees $T_1,T_2\subset E$ are \emph{compatible} if all the oriented cycles of the digraph $(V,T_1\cup T_2^\op)$ have length $2$. 
Here we denote $T_2^\op:=\{(v,u)\mid (u,v)\in T_2\}$.
\end{definition}

We obtain the following combinatorial interpretation for the number of critical points of $\sp$ when the weights are generic:
\begin{proposition}
The normalized volume of $\Newton{\sp}$ equals the maximal size of a collection of admissible and pairwise compatible oriented spanning trees of $G$.
\end{proposition}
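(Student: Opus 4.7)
The plan is to establish the proposition by exhibiting a central triangulation of $\Newton{\sp}$ whose top-dimensional simplices are indexed by a maximum admissible, pairwise compatible collection of oriented spanning trees of $G$. Since $G$ is strongly connected, $0 \in \Newton{\sp}$, and to each oriented spanning tree $T$ I would associate the simplex
\[\Delta_T := \Conv\bigl(\{0\} \cup \{e_v - e_u : (u,v) \in T\}\bigr) \subseteq \Newton{\sp}.\]
Using that the signed vertex-edge incidence matrix of a tree has determinant $\pm 1$, each $\Delta_T$ has normalized $(|V|-1)$-dimensional volume equal to $1$. The approach mirrors the central triangulation framework of~\cite[Section 13]{Postnikov_Perm}, adapted from undirected root polytopes to our directed setting.

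The first lemma to prove is that admissibility of $T$ is equivalent to the outer facet $F_T := \Conv\{e_v - e_u : (u,v) \in T\}$ of $\Delta_T$ being a genuine facet of $\Newton{\sp}$. Suppose $T$ contains a directed path $u_1 \to u_2 \to \cdots \to u_k$ with $k \geq 3$ and $(u_1, u_k) \in E$. The telescoping identity
\[e_{u_k} - e_{u_1} \;=\; \sum_{i=1}^{k-1}(e_{u_{i+1}} - e_{u_i})\]
expresses the vertex $e_{u_k} - e_{u_1}$ of $\Newton{\sp}$ as a nonnegative combination of edge vectors of $T$ with total coefficient $k - 1 \geq 2$, placing this vertex strictly on the opposite side of $\operatorname{aff}(F_T)$ from the origin; hence $F_T$ is not a facet and $\Delta_T$ cannot appear in a central triangulation. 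Conversely, admissibility directly rules out such shortcuts, and a short check confirms that all other vertices of $\Newton{\sp}$ lie on the origin's side of $\operatorname{aff}(F_T)$, making $F_T$ a facet.

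The second lemma asserts that two admissible trees $T_1, T_2$ are compatible if and only if $\Delta_{T_1}$ and $\Delta_{T_2}$ have disjoint interiors. An interior overlap yields positive coefficients with $\sum_{T_1}\alpha_e(e_v - e_u) = \sum_{T_2}\beta_e(e_v - e_u)$, which rewrites as a nontrivial positive linear dependence on the edge vectors of the digraph $(V, T_1 \cup T_2^{\op})$ summing to zero, with full support $T_1 \cup T_2^{\op}$. By the standard flow-decomposition theorem, such a balanced positive flow is a conic combination of simple directed cycles of $(V, T_1 \cup T_2^{\op})$. If all such cycles have length $2$, then every directed edge of $T_1 \cup T_2^{\op}$ has its reverse also in $T_1 \cup T_2^{\op}$; since neither tree can contain both orientations of an undirected edge, this forces $T_1 = T_2$. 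Hence for $T_1 \neq T_2$, overlap is equivalent to the existence of a directed cycle of length $\geq 3$ in $T_1 \cup T_2^{\op}$, which is exactly the failure of compatibility; the converse is an explicit construction scaling the cycle sum by a small positive $\epsilon$.

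Combining the two lemmas, a maximum admissible, pairwise compatible collection of oriented spanning trees is exactly the set of top-dimensional simplices of a central triangulation of $\Newton{\sp}$, and summing the unit normalized volumes yields $\Vol(\Newton{\sp})$. The main technical obstacle is the flow-decomposition step in the compatibility lemma: careful sign bookkeeping is needed to guarantee that a nontrivial positive dependence produces a genuine directed cycle (rather than a signed circulation) of length $\geq 3$, with attention paid to common directed edges of $T_1$ and $T_2$ contributing extra $2$-cycles. A secondary subtlety is ensuring that some maximum admissible, pairwise compatible collection actually realizes a full triangulation (not merely a partial one), which I would handle by exhibiting a pulling triangulation with respect to a generic linear order on $E$ and verifying that all of its top-dimensional simplices correspond to admissible trees.
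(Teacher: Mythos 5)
Your overall architecture --- unimodular simplices $\Delta_T:=\Conv(\{0\}\cup\{e_v-e_u:(u,v)\in T\})$ indexed by oriented spanning trees, a boundary criterion tied to admissibility, and an intersection criterion tied to compatibility --- is exactly the route the paper intends: its proof is a one-line citation of Postnikov's Lemmas~12.6 and~13.2, and you are supplying the adaptation. Several pieces of your sketch are sound: each $\Delta_T$ has normalized volume $1$ by unimodularity of tree incidence matrices; your telescoping argument correctly shows that a non-admissible tree is excluded from every central triangulation; and your flow-decomposition argument correctly shows that distinct compatible trees have simplices with disjoint interiors, which already yields the upper bound (a pairwise compatible family of unit-volume simplices inside $\Newton{\sp}$ has at most $\Vol(\Newton{\sp})$ members --- admissibility plays no role there).

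The genuine gap is in your first lemma: admissibility does \emph{not} imply that $F_T$ lies on the boundary of $\Newton{\sp}$ (nor that it is a facet --- facets of $\Newton{\sp}$ need not be simplices, but that is the lesser issue). Take $V=\{1,\dots,6\}$, $T=\{(1,2),(2,3),(4,3),(4,5),(5,6)\}$ (the underlying graph is a path, hence a spanning tree), and $E=T\cup\{(1,6),(6,1),(3,4)\}$; this $G$ is strongly connected, and $T$ is admissible since its only directed paths of length $\geq 2$ are $1\to2\to3$ and $4\to5\to6$ while neither $(1,3)$ nor $(4,6)$ lies in $E$. Yet the linear functional equal to $1$ on every $e_v-e_u$ with $(u,v)\in T$ has potentials $(0,1,2,1,2,3)$, so it takes the value $3>1$ on the point $e_6-e_1\in\Newton{\sp}$, which therefore lies strictly beyond $\operatorname{aff}(F_T)$: this admissible tree belongs to no central triangulation. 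The defect is that admissibility only forbids shortcuts over \emph{directed} paths of $T$, whereas the relevant potential difference is computed along the (generally non-directed) tree path. Consequently your concluding claim that a maximum admissible pairwise-compatible collection ``is exactly'' a central triangulation is unjustified and in such examples false. The proposition still holds, but you must restructure the argument as two inequalities: the upper bound as above, and for the lower bound cone an arbitrary boundary triangulation over the origin (which is interior since $G$ is strongly connected); its cells are $\Delta_T$ for trees that are automatically admissible by the contrapositive of your telescoping argument, and pairwise compatible --- for which your cycle construction also needs an upgrade, since the point $\epsilon\sum_{e\in C\cap T_1}(e_v-e_u)$ lies in both simplices but generally on their boundaries, so it witnesses failure of face-to-face intersection rather than an interior overlap; one must then argue that a common face containing it would force the cycle $C$ into $(T_1\cap T_2)\cup(T_1\cap T_2)^\op$ and hence have length $2$.
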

\begin{proof}
This follows by adapting the proofs of~\cite[Lemmas~12.6 and~13.2]{Postnikov_Perm} to the case when $G$ is strongly connected in a straightforward way.
\end{proof}

For instance, the map $\rowm$ from Example~\ref{ex:somos_4} has $4$ fixed points in $\mathbb{P}^V(\C)$ when the weights are generic complex numbers. The normalized volume of $\Newton{\sp}$ in this case is also equal to $4$, see Figure~\ref{fig:somos_newton}. The digraph $G$ has five oriented spanning trees and all of them are admissible except for $\{(1,2),(2,3)\}$ since $(1,3)$ is contained in $E$. The other four trees are pairwise compatible and thus form a central triangulation of $\Newton{\sp}$ shown in Figure~\ref{fig:somos_newton}.

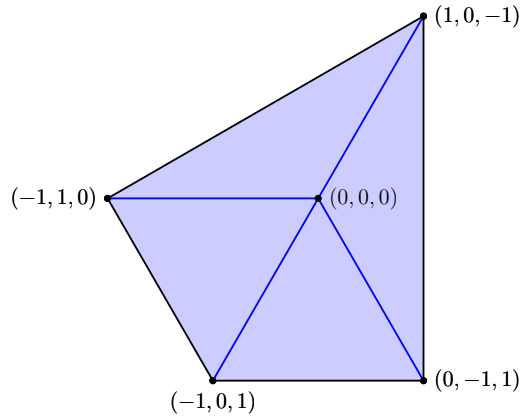
\begin{figure}
  \def\nodesc{0.3}
  \scalebox{0.7}{
\begin{tikzpicture}[scale=4]
\node[draw,circle,fill=black,scale=\nodesc,label=right:{$(0,0,0)$}] (O) at (0,0) { };
\node[draw,circle,fill=black,scale=\nodesc,label=right:{$(1,0,-1)$}] (A) at (60:1) { };
\node[draw,circle,fill=black,scale=\nodesc,label=left:{$(-1,1,0)$}] (B) at (180:1) { };
\node[draw,circle,fill=black,scale=\nodesc,label=below:{$(-1,0,1)$}] (C) at (-120:1) { };
\node[draw,circle,fill=black,scale=\nodesc,label=right:{$(0,-1,1)$}] (D) at (-60:1) { };
\draw[fill=blue!20,line width=1pt] (A.center)--(B.center)--(C.center)--(D.center)--(A.center);
\draw[line width=1pt,blue] (O)--(A);
\draw[line width=1pt,blue] (O)--(B);
\draw[line width=1pt,blue] (O)--(C);
\draw[line width=1pt,blue] (O)--(D);

\node[draw,circle,fill=black,scale=\nodesc,label=right:{$(0,0,0)$}] (O) at (0,0) { };
\node[draw,circle,fill=black,scale=\nodesc,label=right:{$(1,0,-1)$}] (A) at (60:1) { };
\node[draw,circle,fill=black,scale=\nodesc,label=left:{$(-1,1,0)$}] (B) at (180:1) { };
\node[draw,circle,fill=black,scale=\nodesc,label=below:{$(-1,0,1)$}] (C) at (-120:1) { };
\node[draw,circle,fill=black,scale=\nodesc,label=right:{$(0,-1,1)$}] (D) at (-60:1) { };

\end{tikzpicture}}
  \caption{\label{fig:somos_newton} The Newton polytope of the superpotential $\sp$ for the digraph $G$ from Figure~\ref{fig:somos_4}. Its central triangulation into $4$ simplices is shown in blue, thus its normalized volume equals $4$.}
\end{figure}

\subsection{Frozen variables and the connection with mirror symmetry}
Givental~\cite{Givental} gave a mirror construction for the full flag variety of Type $A$ which was later generalized in~\cite{BCKS} to the case of general partial flag varieties of Type $A$. It turns out that the phase function on the mirror model (also called the \emph{superpotential} in the mirror symmetry literature) is a Laurent polynomial which coincides with $\sp$ for a certain strongly connected weighted digraph $G$. Consequently, the fixed points of the $R$-system associated with $G$ get an interpretation in this mirror symmetric context: the number of complex critical points of $\sp$ equals the dimension of the quantum cohomology ring of the corresponding partial flag variety, see~\cite[p.~11]{Givental} or~\cite[Corollary~4.2]{Rietsch2}. In particular, the superpotential for the full flag variety in Type $A_n$ has $(n+1)!$ complex critical points. The existence of a totally positive critical point was shown in~\cite{Rietsch1} (Type $A$) and~\cite{Rietsch3} (general type). Its uniqueness was shown in~\cite{Rietsch1} (Type $A$) and~\cite{LR} (general type). 

\def\nucal{\mathcal{V}}
\def\acal{\mathcal{A}}

Let us briefly explain which digraphs generate superpotentials for Type $A$ partial flag varieties. We follow the exposition in~\cite[\S5]{Rietsch1}. Recall that a partial flag variety in $\C^{n+1}$ is associated to any strictly increasing sequence $0=n_0<n_1<\dots<n_k<n_{k+1}=n+1$ of integers. Define a digraph $G_{n_1,\dots,n_k}=(\nucal,\acal)$ as follows. The vertex set $\nucal$ is a disjoint union of $\nucal_\bullet$ and $\nucal_\star$. We refer to the elements of the latter as \emph{frozen vertices}. Explicitly, these sets are given by
\begin{equation}\label{eq:nucal}
\begin{split}
\nucal_\bullet&=\bigcup_{j=1}^k \{(m,r)\in\Z^2_{\geq0}\mid n_j\leq m<n_{j+1}, 1\leq r\leq n_j\};\\
\nucal_\star&= \{\star_j:=(n_j-1,n_{j-1}+1)\mid j=1,\dots,k+1\};
\end{split}
\end{equation}
The edge set $\acal$ consists of all pairs $((m+1,r),(m,r))$ and $((m,r+1),(m,r))$ whenever both vertices belong to $\nucal$. An example for $k=3$ and $(n_0,n_1,n_2,n_3,n_4)=(0,2,5,6,8)$ is given in Figure~\ref{fig:partial_flag} (left).

\def\spm{\mathcal{F}_{n_1,\dots,n_k}}
\def\qt{{\tilde q}}

\begin{figure}

\begin{tabular}{cc}
\scalebox{1.4}{
\begin{tikzpicture}[scale=0.6]
\node[scale=0.7] (N2x1) at (1.00,-2.00) {$\bullet$};
\node[scale=0.7] (N2x2) at (2.00,-2.00) {$\bullet$};
\node[scale=0.7] (N3x1) at (1.00,-3.00) {$\bullet$};
\node[scale=0.7] (N3x2) at (2.00,-3.00) {$\bullet$};
\node[scale=0.7] (N4x1) at (1.00,-4.00) {$\bullet$};
\node[scale=0.7] (N4x2) at (2.00,-4.00) {$\bullet$};
\node[scale=0.7] (N5x1) at (1.00,-5.00) {$\bullet$};
\node[scale=0.7] (N5x2) at (2.00,-5.00) {$\bullet$};
\node[scale=0.7] (N5x3) at (3.00,-5.00) {$\bullet$};
\node[scale=0.7] (N5x4) at (4.00,-5.00) {$\bullet$};
\node[scale=0.7] (N5x5) at (5.00,-5.00) {$\bullet$};
\node[scale=0.7] (N6x1) at (1.00,-6.00) {$\bullet$};
\node[scale=0.7] (N6x2) at (2.00,-6.00) {$\bullet$};
\node[scale=0.7] (N6x3) at (3.00,-6.00) {$\bullet$};
\node[scale=0.7] (N6x4) at (4.00,-6.00) {$\bullet$};
\node[scale=0.7] (N6x5) at (5.00,-6.00) {$\bullet$};
\node[scale=0.7] (N6x6) at (6.00,-6.00) {$\bullet$};
\node[scale=0.7] (N7x1) at (1.00,-7.00) {$\bullet$};
\node[scale=0.7] (N7x2) at (2.00,-7.00) {$\bullet$};
\node[scale=0.7] (N7x3) at (3.00,-7.00) {$\bullet$};
\node[scale=0.7] (N7x4) at (4.00,-7.00) {$\bullet$};
\node[scale=0.7] (N7x5) at (5.00,-7.00) {$\bullet$};
\node[scale=0.7] (N7x6) at (6.00,-7.00) {$\bullet$};
\node[scale=0.7] (N1x1) at (1.00,-1.00) {$\bigstar$};
\node[anchor=south west,inner sep=5pt,scale=0.7] (QN1x1) at (N1x1.center) {$\qt_1$};
\node[scale=0.7] (N4x3) at (3.00,-4.00) {$\bigstar$};
\node[anchor=south west,inner sep=5pt,scale=0.7] (QN4x3) at (N4x3.center) {$\qt_2$};
\node[scale=0.7] (N5x6) at (6.00,-5.00) {$\bigstar$};
\node[anchor=south west,inner sep=5pt,scale=0.7] (QN5x6) at (N5x6.center) {$\qt_3$};
\node[scale=0.7] (N7x7) at (7.00,-7.00) {$\bigstar$};
\node[anchor=south west,inner sep=5pt,scale=0.7] (QN7x7) at (N7x7.center) {$\qt_4$};
\draw[postaction={decorate}, line width=0.4] (1.00,-2.00) -- (1.00,-1.00);
\draw[postaction={decorate}, line width=0.4] (2.00,-2.00) -- (1.00,-2.00);
\draw[postaction={decorate}, line width=0.4] (1.00,-3.00) -- (1.00,-2.00);
\draw[postaction={decorate}, line width=0.4] (2.00,-3.00) -- (1.00,-3.00);
\draw[postaction={decorate}, line width=0.4] (2.00,-3.00) -- (2.00,-2.00);
\draw[postaction={decorate}, line width=0.4] (1.00,-4.00) -- (1.00,-3.00);
\draw[postaction={decorate}, line width=0.4] (2.00,-4.00) -- (1.00,-4.00);
\draw[postaction={decorate}, line width=0.4] (2.00,-4.00) -- (2.00,-3.00);
\draw[postaction={decorate}, line width=0.4] (3.00,-4.00) -- (2.00,-4.00);
\draw[postaction={decorate}, line width=0.4] (1.00,-5.00) -- (1.00,-4.00);
\draw[postaction={decorate}, line width=0.4] (2.00,-5.00) -- (1.00,-5.00);
\draw[postaction={decorate}, line width=0.4] (2.00,-5.00) -- (2.00,-4.00);
\draw[postaction={decorate}, line width=0.4] (3.00,-5.00) -- (2.00,-5.00);
\draw[postaction={decorate}, line width=0.4] (3.00,-5.00) -- (3.00,-4.00);
\draw[postaction={decorate}, line width=0.4] (4.00,-5.00) -- (3.00,-5.00);
\draw[postaction={decorate}, line width=0.4] (5.00,-5.00) -- (4.00,-5.00);
\draw[postaction={decorate}, line width=0.4] (6.00,-5.00) -- (5.00,-5.00);
\draw[postaction={decorate}, line width=0.4] (1.00,-6.00) -- (1.00,-5.00);
\draw[postaction={decorate}, line width=0.4] (2.00,-6.00) -- (1.00,-6.00);
\draw[postaction={decorate}, line width=0.4] (2.00,-6.00) -- (2.00,-5.00);
\draw[postaction={decorate}, line width=0.4] (3.00,-6.00) -- (2.00,-6.00);
\draw[postaction={decorate}, line width=0.4] (3.00,-6.00) -- (3.00,-5.00);
\draw[postaction={decorate}, line width=0.4] (4.00,-6.00) -- (3.00,-6.00);
\draw[postaction={decorate}, line width=0.4] (4.00,-6.00) -- (4.00,-5.00);
\draw[postaction={decorate}, line width=0.4] (5.00,-6.00) -- (4.00,-6.00);
\draw[postaction={decorate}, line width=0.4] (5.00,-6.00) -- (5.00,-5.00);
\draw[postaction={decorate}, line width=0.4] (6.00,-6.00) -- (5.00,-6.00);
\draw[postaction={decorate}, line width=0.4] (6.00,-6.00) -- (6.00,-5.00);
\draw[postaction={decorate}, line width=0.4] (1.00,-7.00) -- (1.00,-6.00);
\draw[postaction={decorate}, line width=0.4] (2.00,-7.00) -- (1.00,-7.00);
\draw[postaction={decorate}, line width=0.4] (2.00,-7.00) -- (2.00,-6.00);
\draw[postaction={decorate}, line width=0.4] (3.00,-7.00) -- (2.00,-7.00);
\draw[postaction={decorate}, line width=0.4] (3.00,-7.00) -- (3.00,-6.00);
\draw[postaction={decorate}, line width=0.4] (4.00,-7.00) -- (3.00,-7.00);
\draw[postaction={decorate}, line width=0.4] (4.00,-7.00) -- (4.00,-6.00);
\draw[postaction={decorate}, line width=0.4] (5.00,-7.00) -- (4.00,-7.00);
\draw[postaction={decorate}, line width=0.4] (5.00,-7.00) -- (5.00,-6.00);
\draw[postaction={decorate}, line width=0.4] (6.00,-7.00) -- (5.00,-7.00);
\draw[postaction={decorate}, line width=0.4] (6.00,-7.00) -- (6.00,-6.00);
\draw[postaction={decorate}, line width=0.4] (7.00,-7.00) -- (6.00,-7.00);
\end{tikzpicture}}
&
\scalebox{1.4}{
\begin{tikzpicture}[scale=0.6]
\node[scale=0.7] (N2x1) at (1.00,-2.00) {$\bullet$};
\node[scale=0.7] (N2x2) at (2.00,-2.00) {$\bullet$};
\node[scale=0.7] (N3x1) at (1.00,-3.00) {$\bullet$};
\node[scale=0.7] (N3x2) at (2.00,-3.00) {$\bullet$};
\node[scale=0.7] (N4x1) at (1.00,-4.00) {$\bullet$};
\node[scale=0.7] (N4x2) at (2.00,-4.00) {$\bullet$};
\node[scale=0.7] (N5x1) at (1.00,-5.00) {$\bullet$};
\node[scale=0.7] (N5x2) at (2.00,-5.00) {$\bullet$};
\node[scale=0.7] (N5x3) at (3.00,-5.00) {$\bullet$};
\node[scale=0.7] (N5x4) at (4.00,-5.00) {$\bullet$};
\node[scale=0.7] (N5x5) at (5.00,-5.00) {$\bullet$};
\node[scale=0.7] (N6x1) at (1.00,-6.00) {$\bullet$};
\node[scale=0.7] (N6x2) at (2.00,-6.00) {$\bullet$};
\node[scale=0.7] (N6x3) at (3.00,-6.00) {$\bullet$};
\node[scale=0.7] (N6x4) at (4.00,-6.00) {$\bullet$};
\node[scale=0.7] (N6x5) at (5.00,-6.00) {$\bullet$};
\node[scale=0.7] (N6x6) at (6.00,-6.00) {$\bullet$};
\node[scale=0.7] (N7x1) at (1.00,-7.00) {$\bullet$};
\node[scale=0.7] (N7x2) at (2.00,-7.00) {$\bullet$};
\node[scale=0.7] (N7x3) at (3.00,-7.00) {$\bullet$};
\node[scale=0.7] (N7x4) at (4.00,-7.00) {$\bullet$};
\node[scale=0.7] (N7x5) at (5.00,-7.00) {$\bullet$};
\node[scale=0.7] (N7x6) at (6.00,-7.00) {$\bullet$};
\node[scale=0.7] (NST) at (7.00,-1.00) {$\bigstar$};
\draw[postaction={decorate}, line width=0.4] (2.00,-2.00) -- (1.00,-2.00);
\draw[postaction={decorate}, line width=0.4] (1.00,-3.00) -- (1.00,-2.00);
\draw[postaction={decorate}, line width=0.4] (2.00,-3.00) -- (1.00,-3.00);
\draw[postaction={decorate}, line width=0.4] (2.00,-3.00) -- (2.00,-2.00);
\draw[postaction={decorate}, line width=0.4] (1.00,-4.00) -- (1.00,-3.00);
\draw[postaction={decorate}, line width=0.4] (2.00,-4.00) -- (1.00,-4.00);
\draw[postaction={decorate}, line width=0.4] (2.00,-4.00) -- (2.00,-3.00);
\draw[postaction={decorate}, line width=0.4] (1.00,-5.00) -- (1.00,-4.00);
\draw[postaction={decorate}, line width=0.4] (2.00,-5.00) -- (1.00,-5.00);
\draw[postaction={decorate}, line width=0.4] (2.00,-5.00) -- (2.00,-4.00);
\draw[postaction={decorate}, line width=0.4] (3.00,-5.00) -- (2.00,-5.00);
\draw[postaction={decorate}, line width=0.4] (4.00,-5.00) -- (3.00,-5.00);
\draw[postaction={decorate}, line width=0.4] (5.00,-5.00) -- (4.00,-5.00);
\draw[postaction={decorate}, line width=0.4] (1.00,-6.00) -- (1.00,-5.00);
\draw[postaction={decorate}, line width=0.4] (2.00,-6.00) -- (1.00,-6.00);
\draw[postaction={decorate}, line width=0.4] (2.00,-6.00) -- (2.00,-5.00);
\draw[postaction={decorate}, line width=0.4] (3.00,-6.00) -- (2.00,-6.00);
\draw[postaction={decorate}, line width=0.4] (3.00,-6.00) -- (3.00,-5.00);
\draw[postaction={decorate}, line width=0.4] (4.00,-6.00) -- (3.00,-6.00);
\draw[postaction={decorate}, line width=0.4] (4.00,-6.00) -- (4.00,-5.00);
\draw[postaction={decorate}, line width=0.4] (5.00,-6.00) -- (4.00,-6.00);
\draw[postaction={decorate}, line width=0.4] (5.00,-6.00) -- (5.00,-5.00);
\draw[postaction={decorate}, line width=0.4] (6.00,-6.00) -- (5.00,-6.00);
\draw[postaction={decorate}, line width=0.4] (1.00,-7.00) -- (1.00,-6.00);
\draw[postaction={decorate}, line width=0.4] (2.00,-7.00) -- (1.00,-7.00);
\draw[postaction={decorate}, line width=0.4] (2.00,-7.00) -- (2.00,-6.00);
\draw[postaction={decorate}, line width=0.4] (3.00,-7.00) -- (2.00,-7.00);
\draw[postaction={decorate}, line width=0.4] (3.00,-7.00) -- (3.00,-6.00);
\draw[postaction={decorate}, line width=0.4] (4.00,-7.00) -- (3.00,-7.00);
\draw[postaction={decorate}, line width=0.4] (4.00,-7.00) -- (4.00,-6.00);
\draw[postaction={decorate}, line width=0.4] (5.00,-7.00) -- (4.00,-7.00);
\draw[postaction={decorate}, line width=0.4] (5.00,-7.00) -- (5.00,-6.00);
\draw[postaction={decorate}, line width=0.4] (6.00,-7.00) -- (5.00,-7.00);
\draw[postaction={decorate}, line width=0.4] (6.00,-7.00) -- (6.00,-6.00);
\draw[postaction={decorate}, line width=0.4] (1,-2) to[bend right=-25] node[midway,below,scale=0.5, inner sep=7pt] {$\qt_1$} (7,-1);
\draw[postaction={decorate}, line width=0.4] (7,-1) to[bend right=0] node[midway,above,scale=0.5, inner sep=10pt] {$1/\qt_2$} (2,-4);
\draw[postaction={decorate}, line width=0.4] (3,-5) to[bend right=0] node[midway,right,scale=0.5, inner sep=5pt] {$\qt_2$} (7,-1);
\draw[postaction={decorate}, line width=0.4] (7,-1) to[bend right=-25] node[pos=0.65,left,scale=0.5, inner sep=5pt] {$1/\qt_3$} (5,-5);
\draw[postaction={decorate}, line width=0.4] (6,-6) to[bend right=25] node[midway,right,scale=0.5, inner sep=5pt] {$\qt_3$} (7,-1);
\draw[postaction={decorate}, line width=0.4] (7,-1) to[bend right=-50] node[midway,right,scale=0.5, inner sep=5pt] {$1/\qt_4$} (6,-7);
\end{tikzpicture}}\\

\end{tabular}
  \caption{\label{fig:partial_flag}The digraph $G_{2,5,6}$ (left) and the corresponding strongly connected weighted digraph $G$ (right). The unlabeled edges of $G$ have weight $1$. The coordinates are in matrix notation. The left figure is reproduced from~\cite[Figure~1]{Rietsch1}.}
\end{figure}
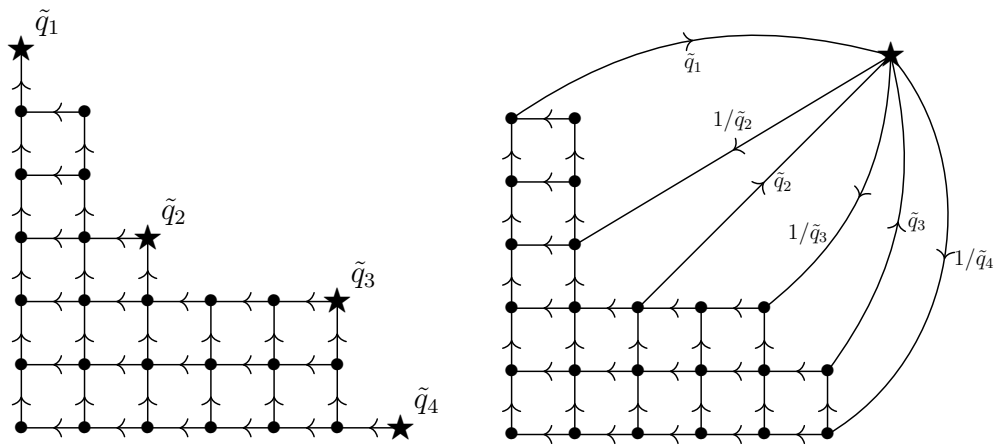

\def\Xb{{\tilde X}}
Additionally, there are $k$ fixed \emph{frozen parameters}\footnote{It is more convenient for us to consider $k+1$ parameters $(\qt_1,\dots,\qt_k,\qt_{k+1})$ representing a point in the projective space, rescaled so that $\qt_{k+1}=1$.} $(\qt_1,\dots,\qt_k)\in\field^k$ and the \emph{superpotential} is a Laurent polynomial $\spm:\field^{\nucal_\bullet}\to\field$ which is, loosely speaking, given by~\eqref{eq:superp}. More precisely, for $\X\in\field^{\nucal_\bullet}$, extend $\X$ to a point $\Xb\in \RP \nucal$ by setting $\Xb_{\star_j}:=\qt_j$ for $1\leq j\leq k$ and $\Xb_{\star_{k+1}}:=1$, and then we define
\[\spm(\X):=\sum_{(u,w)\in \acal} \frac{\Xb_w}{\Xb_u}.\]
To reduce this formula to a special case of~\eqref{eq:superp}, let us create a strongly connected weighted digraph $G=(V,E,\wt)$ from $G_{n_1,\dots,n_k}$. We identify the vertices $\star_1,\dots,\star_{k+1}$ of $G_{n_1,\dots,n_k}$ into one new vertex $\star$ of $G$, and introduce the weights $\wt:E\to \field$ as follows. For every edge $(\star_j, v)\in\acal$ of $G_{n_1,\dots,n_k}$, set $\wtt {\star_j} v:=1/\qt_j$. For every edge $(v,\star_j)\in\acal$ of $G_{n_1,\dots,n_k}$, set $\wtt v{\star_j}:=\qt_j$. Finally, for all the remaining edges of $(u,v)\in\acal$, set $\wtt u v:=1$. See Figure~\ref{fig:partial_flag} (right). It is clear that $\sp:\RP {\nucal_\bullet\cup\{\star\}}\bto\field$ now coincides with the projectivization of $\spm:\field^{\nucal_\bullet}\bto\field$ after setting $\spm(\star):=1$. Note also that one can consider \emph{$R$-systems with frozen variables} in a similar way, and the above reduction procedure shows that this is the same generality as just $R$-systems with coefficients introduced in Definition~\ref{dfn:R}.

\begin{remark}
We note that the equality in Theorem~\ref{thm:kouch} does not directly apply to the superpotentials coming from mirror symmetry since their coefficients are not ``generic enough''. This can already be seen for the full flag variety in $\C^3$. Here we have $k=2$ and $(n_0,n_1,n_2,n_3)=(0,1,2,3)$, so we get the Gelfand-Tsetlin triangle digraph $G_{1,2}$ shown in Figure~\ref{fig:full_flag} (left). It corresponds to a strongly connected weighted digraph $G$ in Figure~\ref{fig:full_flag} (middle). The number of critical points of $\spm$ and therefore of $\sp$ equals to $(n+1)!=6$, however, substituting generic coefficients into $\sp$ yields a Laurent polynomial with $8$ critical points. Equivalently, $\Newton{\sp}$ has volume $8$ as we can see from Figure~\ref{fig:full_flag} (right). Thus Theorem~\ref{thm:kouch} only yields an upper bound that is not optimal already in this simple case.
\end{remark}

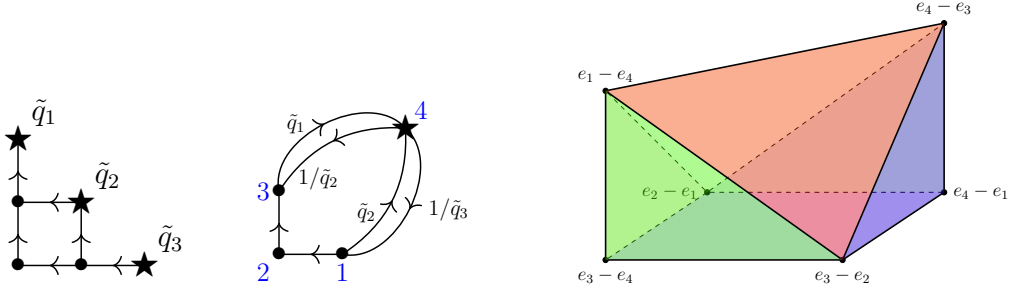
\begin{figure}

\begin{tabular}{ccc}
\scalebox{1.4}{
\begin{tikzpicture}[scale=0.6]
\node[scale=0.7] (N1x1) at (1.00,-1.00) {$\bullet$};
\node[scale=0.7] (N2x1) at (1.00,-2.00) {$\bullet$};
\node[scale=0.7] (N2x2) at (2.00,-2.00) {$\bullet$};
\node[scale=0.7] (N0x1) at (1.00,0.00) {$\bigstar$};
\node[anchor=south west,inner sep=5pt,scale=0.7] (QN0x1) at (N0x1.center) {$\qt_1$};
\node[scale=0.7] (N1x2) at (2.00,-1.00) {$\bigstar$};
\node[anchor=south west,inner sep=5pt,scale=0.7] (QN1x2) at (N1x2.center) {$\qt_2$};
\node[scale=0.7] (N2x3) at (3.00,-2.00) {$\bigstar$};
\node[anchor=south west,inner sep=5pt,scale=0.7] (QN2x3) at (N2x3.center) {$\qt_3$};
\draw[postaction={decorate}, line width=0.4] (1.00,-1.00) -- (1.00,0.00);
\draw[postaction={decorate}, line width=0.4] (2.00,-1.00) -- (1.00,-1.00);
\draw[postaction={decorate}, line width=0.4] (1.00,-2.00) -- (1.00,-1.00);
\draw[postaction={decorate}, line width=0.4] (2.00,-2.00) -- (1.00,-2.00);
\draw[postaction={decorate}, line width=0.4] (2.00,-2.00) -- (2.00,-1.00);
\draw[postaction={decorate}, line width=0.4] (3.00,-2.00) -- (2.00,-2.00);
\end{tikzpicture}}
  &
    \def\textscl{0.6}
\scalebox{1.4}{
\begin{tikzpicture}[scale=0.6]
\node[scale=0.7] (N1x1) at (1.00,-1.00) {$\bullet$};
\node[scale=0.7] (N2x1) at (1.00,-2.00) {$\bullet$};
\node[scale=0.7] (N2x2) at (2.00,-2.00) {$\bullet$};
\node[scale=0.7] (NST) at (3.00,0.00) {$\bigstar$};
\node[text=blue,anchor=north,scale=\textscl] (L22) at (2,-2) {$1$}; 
\node[text=blue,anchor=north east,scale=\textscl] (L12) at (1,-2) {$2$}; 
\node[text=blue,anchor=east,scale=\textscl] (L11) at (1,-1) {$3$}; 
\node[text=blue,anchor=south west,scale=\textscl] (L30) at (3,-0) {$4$}; 
\draw[postaction={decorate}, line width=0.4] (1.00,-2.00) -- (1.00,-1.00);
\draw[postaction={decorate}, line width=0.4] (2.00,-2.00) -- (1.00,-2.00);
\draw[postaction={decorate}, line width=0.4] (1,-1) to[bend right=-75] node[midway,left,scale=0.5, inner sep=10pt] {$\qt_1$} (3,-0);
\draw[postaction={decorate}, line width=0.4] (3,-0) to[bend right=30] node[pos=0.9,right,scale=0.5, inner sep=5pt] {$1/\qt_2$} (1,-1);
\draw[postaction={decorate}, line width=0.4] (2,-2) to[bend right=30] node[pos=0.4,left,scale=0.5, inner sep=5pt] {$\qt_2$} (3,-0);
\draw[postaction={decorate}, line width=0.4] (3,-0) to[bend right=-75] node[midway,right,scale=0.5, inner sep=10pt] {$1/\qt_3$} (2,-2);
\end{tikzpicture}}&

\def\opp{0.2}
                    \def\oppp{0.3}
                    \def\linew{0.6pt}
                    \def\lineww{1pt}
  \def\nodesc{0.3}
  \scalebox{0.6}{
\begin{tikzpicture}[scale=1.5]
\node[draw,circle,fill=black,scale=\nodesc,label=right:{$e_4-e_1$}] (A14) at (2.5,0.5) { };
\node[draw,circle,fill=black,scale=\nodesc,label=above:{$e_4-e_3$}] (A34) at (2.5,3) { };
\node[draw,circle,fill=black,scale=\nodesc,label=above:{$e_1-e_4$}] (A41) at (-2.5,2) { };
\node[draw,circle,fill=black,scale=\nodesc,label=left:{$e_2-e_1$}] (A12) at (-1,0.5) { };
\node[draw,circle,fill=black,scale=\nodesc,label=below:{$e_3-e_4$}] (A43) at (-2.5,-0.5) { };
\node[draw,circle,fill=black,scale=\nodesc,label=below:{$e_3-e_2$}] (A23) at (1,-0.5) { };
\draw[line width=\linew,dashed] (A43)--(A12)--(A14);
\draw[line width=\linew,dashed] (A41)--(A12)--(A34);
\fill[blue!50!red,opacity=\opp] (A12.center)--(A14.center)--(A23.center)--(A43.center)--(A12.center);
\fill[red!50!green,opacity=\opp] (A12.center)--(A34.center)--(A14.center)--(A12.center);
\fill[blue!10!yellow,opacity=\opp] (A12.center)--(A41.center)--(A34.center)--(A12.center);
\fill[red!30!yellow,opacity=\opp] (A12.center)--(A41.center)--(A43.center)--(A12.center);
\fill[blue,opacity=\oppp] (A14.center)--(A34.center)--(A23.center)--(A14.center);
\fill[red,opacity=\oppp] (A23.center)--(A34.center)--(A41.center)--(A23.center);
\fill[green,opacity=\oppp] (A23.center)--(A41.center)--(A43.center)--(A23.center);
\draw[line width=\lineww] (A43)--(A41)--(A34)--(A14)--(A23)--(A43);
\draw[line width=\lineww] (A41)--(A23)--(A34);

\end{tikzpicture}}

\end{tabular}

\caption{\label{fig:full_flag} The digraph $G_{1,2}$ (left), the corresponding strongly connected weighted digraph $G$ (middle), and the Newton polytope $\Newton{\sp}\subset\R^3$ (right). It has normalized volume $8$ (the number of simplices in any triangulation of its boundary). The blue labels in the middle picture indicate the indexing of the vertices of $G$ so that an edge $(i,j)$ of $G$ yields a vertex $e_j-e_i$ of $\Newton{\sp}$.}
\end{figure}

\subsection*{Acknowledgments}
The material in this section is largely based on conversations with Thomas Lam. We are grateful to him for introducing us to this beautiful subject. We also thank Alex Postnikov for suggesting using Kouchnirenko's theorem to compute the number of critical points of the superpotential.

\section{Proof of Theorem~\ref{thm:arb}}\label{sec:proof-theor-arb}
Let us rewrite~\eqref{eq:toggle} even more symmetrically:
\begin{equation}\label{eq:toggle_very_sym}
  \sum_{\edge u v\in E}\wtt u v\frac{\X_v}{\X'_u}
  =\sum_{\edge v w\in E}\wtt v w \frac{\X_w}{\X'_v}
,\quad \text{for all $v\in V$}.
\end{equation}
We would like to represent it as a linear system of equations in the variables $T:=\left(\frac{\X_u}{\X'_u}\right)_{u\in V}$. To do so, we rewrite it one more time as follows:
\begin{equation}\label{eq:toggle_very_very_sym}
  \sum_{\edge u v\in E}\wtt u v\frac{\X_v}{\X_u}\frac{\X_u}{\X'_u}
  =\sum_{\edge v w\in E}\wtt v w \frac{\X_w}{\X_v}\frac{\X_v}{\X'_v}
,\quad \text{for all $v\in V$}.
\end{equation}

The matrix $A=(a_{vu})$ of this system is given by
\[a_{vu}=
  \begin{cases}
    \sum_{\edge v w\in E}\wtt v w  \frac{\X_w}{\X_v} , &\text{if $u=v$;}\\
    -\wtt u v\frac{\X_v}{\X_u}, &\text{if $(u,v)\in E$;}\\
    0,&\text{otherwise.}
 \end{cases}
\]
Clearly $A$ is  a weighted Laplacian matrix of $G$ with edge weights $\wtt u v \frac{\X_v}{\X_u}$, so its \emph{cokernel} (i.e., the unique up to a scalar  solution $T$ to the linear system $AT=0$) is given by the Matrix-Tree theorem, see, e.g.,~\cite{Chaiken}. We obtain the formula~\eqref{eq:arb}.\qed

\begin{remark}
Solving the system~\eqref{eq:toggle_very_sym} in the variables $(X_v)_{v\in V}$ yields a formula analogous to~\eqref{eq:arb} for $\rowm^{-1}$. Thus $\rowm$ is a birational map.
\end{remark}

\def\trop{\operatorname{trop}}
\def\Xtr{\X^\oplus}
\def\rowmt{\phi_\oplus}
\def\prj{\proj}
\def\lt{{\tilde{\l}}}
\section{Tropical dynamics}\label{sec:tropical-dynamics}
Since~\eqref{eq:arb} is a subtraction-free expression, one can apply \emph{tropicalization} to it, i.e., replace multiplication by addition, division by subtraction, and addition by taking the maximum. To give a formal definition, we fix some strongly connected digraph $G=(V,E)$ and consider the $(|V|-1)$-dimensional space $W:=\R^V/\<\allones\>$. Let $\prj:\R^V\to W$ be the natural projection map. For $\l\in W$, we denote by $\lt\in\R^V$ its \emph{canonical form}, that is, the unique vector such that $\prj(\lt)=\l$ and all coordinates of $\lt$ are nonnegative, and at least one of them is zero.
\begin{definition}
  Define the map $\rowmt:W\to W$ as follows. Let $\l\in W$ be a vector and choose any representative $\l'=(\l'_v)_{v\in V}\in\R^V$ of $\l$ (i.e., $\prj(\l')=\l$). Then we set $\rowmt(\l):=\prj(\m')$, where the vector $\m'=(\m'_v)_{v\in V}\in\R^V$ is defined by
  \[\m'_v:=\l'_v-\max_{\arb\in\Arb(G,v)}\left(\sum_{u\in V\setminus\{v\}}(\l'_{\arb(u)}-\l'_u)\right).\]
\end{definition}
Clearly $\rowmt(\l)$ does not depend on the choice of $\l'$.

\begin{definition}
  Let $\l\in W$ be a vector. The \emph{tropical $R$-system associated with $G$ with initial conditions $\l$} is a family $(\Xtr(t))_{t\geq0}$ of elements of $W$ defined by $\Xtr(t)=\rowmt^t(\l)$.
\end{definition}

It is a well known fact that the tropical $R$-system describes the degrees (more generally, the \emph{Newton polytope}) of the values $\X_v(t)$ of the $R$-system associated with $G$, see, e.g.,~\cite[Section~6.1]{GP1}. Thus, for example, one can deduce the information about the algebraic entropy of the (coefficient-free) $R$-system by studying the tropical $R$-system associated with $G$. This relationship allows one  to tropicalize other subtraction-free formulas that are true for $R$-systems, for example:

\begin{corollary}
  The values $\Xtr(t)$ of the tropical $R$-system satisfy the \emph{tropical toggle relation:}
  \begin{equation}\label{eq:toggle_trop}
    \Xtr_v(t)+\Xtr_v(t+1)=\max_{(v,w)\in E}\Xtr_w(t) +\min_{(u,v)\in E}\Xtr_u(t+1).
\end{equation}
\end{corollary}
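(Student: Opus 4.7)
The plan is to exploit the general principle that any identity between subtraction-free rational expressions tropicalizes term-by-term. Recall that tropicalization sends a subtraction-free rational function $f(y_1,\dots,y_k)$ (with positive real coefficients) to a piecewise-linear function $f_{\trop}(\l_1,\dots,\l_k)$ by the substitutions $xy\mapsto x+y$, $x/y\mapsto x-y$, $x+y\mapsto \max(x,y)$, and constants $\mapsto 0$ (in the coefficient-free setting). Since tropicalization respects equalities of such expressions, any subtraction-free identity holds tropically after replacing its operations.

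First, I would recast the toggle relation~\eqref{eq:toggle} as the subtraction-free identity
\begin{equation*}
  \X_v\X'_v\cdot \sum_{\edge u v\in E}\frac{1}{\X'_u}
  \;=\;\sum_{\edge v w\in E}\X_w
\end{equation*}
(taking $\wtt\cdot\cdot=1$, i.e., the coefficient-free setting in which $\rowmt$ is defined). Both sides are manifestly subtraction-free rational expressions in the variables $\{\X_u,\X'_u\}_{u\in V}$. By Theorem~\ref{thm:arb}, this identity holds for the pair $(\X,\X')=(\X,\rowm(\X))$, where $\X'_v$ is itself given by the subtraction-free formula~\eqref{eq:arb}.

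Next, I would tropicalize both sides. The left-hand side tropicalizes to
\begin{equation*}
\Xtr_v(t)+\Xtr_v(t+1)+\max_{(u,v)\in E}\bigl(-\Xtr_u(t+1)\bigr)
=\Xtr_v(t)+\Xtr_v(t+1)-\min_{(u,v)\in E}\Xtr_u(t+1),
\end{equation*}
while the right-hand side tropicalizes to $\max_{(v,w)\in E}\Xtr_w(t)$. Rearranging yields exactly~\eqref{eq:toggle_trop}. The one subtlety I want to flag is the compatibility between the definition of $\rowmt$ (which tropicalizes~\eqref{eq:arb}) and the naive tropicalization of~\eqref{eq:toggle}; but because both steps are instances of the same functorial tropicalization applied to an identity provided by Theorem~\ref{thm:arb}, they are automatically compatible, and one does not need to verify the resulting max/min equality by a separate combinatorial argument. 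The main (minor) obstacle is simply to articulate the tropicalization principle cleanly and to check that the specific rearrangement used to clear the reciprocal sum remains subtraction-free, which it does.
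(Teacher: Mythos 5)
Your proposal is correct and matches the paper's intended justification: the corollary is stated there as an immediate consequence of the principle that subtraction-free identities for the $R$-system (here, the toggle relation with $\X'$ given by the subtraction-free arborescence formula~\eqref{eq:arb}) tropicalize term-by-term, which is exactly the argument you carry out. Your clearing of the reciprocal sum and the resulting $\max$/$\min$ bookkeeping are both right, and your remark that the compatibility with the definition of $\rowmt$ is automatic (since $\rowmt$ is the tropicalization of~\eqref{eq:arb}) correctly disposes of the only subtle point.
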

This tropical toggle relation is the main building block of the Robinson-Schensted-Knuth correspondence for semistandard Young tableaux, see~\cite[Definition~0.1]{KB}.

\begin{remark}\label{rmk:striker_williams}
The \emph{rowmotion} on order ideals of posets consists of basically applying~\eqref{eq:toggle_trop} to the vertices of the poset $P$, under the assumption that $\Xtr_{\hat0}=1$, $\Xtr_{\hat1}=0$, and the rest of the values are equal to either $0$ or $1$ so that we have $\Xtr_v\leq \Xtr_u$ whenever $u\leq v$ in $\hat P$. Recall that to go from posets to strongly connected digraphs, one needs to identify $\hat0$ with $\hat1$, so it is not clear to us how to extend the rowmotion action to our setting.
\end{remark}

\begin{example}
  In the case of the digraph in Figure~\ref{fig:somos_4}, the map $\rowmt$ sends $(x_1,x_{2},x_{3})\in W$ to
  \begin{equation}\label{eq:tropical_somos_4}
(x_1',x_2',x_3'):=(x_{2},x_{3},x_1+x_{2}-\max(x_{2},x_{3})).
  \end{equation}
  
  The tropical toggle relations in this case are
  \begin{equation*}
    \begin{split}
      x_1+x_1'&=\max(x_2,x_3)+x_3';\\
      x_2+x_2'&=x_3+x_1';\\
      x_3+x_3'&=x_1+\min(x_1',x_2').
    \end{split}
  \end{equation*}
  Clearly they are satisfied when $(x_1',x_2',x_3')$ are given by~\eqref{eq:tropical_somos_4}.
\end{example}

Even though tropical dynamics on Gelfand-Tsetlin patterns is important due to its relation to RSK, we leave studying the tropical $R$-system beyond the scope of this paper.

\section{Defining  \texorpdfstring{$\tau$}{tau}-sequences}\label{sec:defin-tau-sequ}
Before we proceed to the next part, where many examples of $R$-systems are studied in detail, we would like to state a precise definition of a $\tau$-sequence. We will also see that if an $R$-system admits a  $\tau$-sequence then it automatically has the singularity confinement property. Thus the existence of a $\tau$-sequence is a rather strong property, and for the vast majority of the examples that we consider, we will either prove or conjecture that the $R$-system admits an explicit $\tau$-sequence.

\def\numberfield{F}
We let $\numberfield$ denote a number field such as $\Q,\R,$ or $\C$.
\begin{definition}
  Let $M,m\geq 0$ be integers and define $\is=(\y_1,\dots,\y_M)$ and $\x=(x_1,\dots,x_m)$ to be the corresponding sets of variables. A \emph{recurrence system} is a pair $(\Y(t),\P)$, where for $t\geq 0$, $\Y(t)=(\Y_1(t),\dots,\Y_M(t))\in\AFFF{\numberfield(\x)}{M}$ is a family of rational functions in $\x$ and $\P=(\P_1,\dots,\P_M)\in\AFFF{\numberfield(\is)}{M}$ is a family of  rational functions in $\is$ such that for all $t\geq 0$ and $1\leq i\leq M$, we have
\[\Y_{i}(t+1)=P_i(\Y_1(t),\Y_2(t),\dots,\Y_M(t)).\]
\end{definition}

\begin{definition}
We say that a recurrence system $(\Y(t),\P)$ is \emph{Laurent} if for all $t\geq 0$, $\Y(t)$ is a \emph{Laurent polynomial} in $\x$, i.e., $\Y_i(t)\in\numberfield[\x^{\pm1}]$ for all $1\leq i\leq M$, where $\x^{\pm1}:=(x_1^{\pm1},x_2^{\pm1},\dots,x_m^{\pm1}).$  We say that $(\Y(t),\P)$ is an \emph{irreducible Laurent recurrence system} if for each $t\geq 0$ and each $1\leq i\leq M$, $\Y_i(t)$ is either a Laurent monomial or an irreducible Laurent polynomial, and, in addition, any two of these Laurent polynomials are coprime elements of $\numberfield[\x^{\pm1}]$.
\end{definition}

We are now ready to state the desired definition.

\begin{definition}\label{dfn:tau}
  Let $G=(V,E,\wt)$ be a strongly connected weighted digraph. A Laurent recurrence system $(\Y(t),\P)$ is said to be a \emph{weak $\tau$-sequence} for the $R$-system associated with $G$ if there exists an $M\times |V|$ matrix $A=(a_{iv})$ such that setting
  
  \[\X_v(t)=\prod_{i=1}^M \Y_i(t)^{a_{iv}}\]
  produces a \emph{solution to the $R$-system}, i.e., we have $\X(t+1)=\rowm(\X(t))$ for all $t\geq 0$. If in addition the Laurent system $(\Y(t),\P)$ is irreducible and the map $\phi:\numberfield^m\to \mathbb{P}^V(\numberfield)$ given by $(x_1,x_2,\dots,x_m)\mapsto (\X_v(0))_{v\in V}$ is \emph{dominant} then $(\Y(t),\P)$ is said to be a \emph{strong $\tau$-sequence} for the $R$-system associated with $G$.
\end{definition}
Here a \emph{dominant map} is a map whose image is Zariski dense in $\numberfield^V$, so, loosely speaking, requiring the map $\phi$ to be dominant means that for (almost) any initial data $\Init$ of the $R$-system, one can find suitable values of $(x_1,\dots,x_m)$ such that the above substitution gives a solution to the $R$-system that coincides with $\Init$ at time $t=0$. We will often have $M=m$ and $\Y_i(0)=x_i$ for all $i=1,2,\dots,m$, in which case clearly $\phi$ is dominant if and only if $\numberfield^V$ is spanned by the columns of $A$ together with the vector $\allones\in\numberfield^V$.

The following is clear from the definitions:
\begin{proposition}\leavevmode
  \begin{itemize}
  \item Suppose that the $R$-system associated with $G$ admits a strong $\tau$-sequence. Then it has the singularity confinement property.
    \item Suppose that the $R$-system associated with $G$ admits a weak $\tau$-sequence with zero algebraic entropy such that the above map $\phi$ is dominant. Then this $R$-system has zero algebraic entropy.
  \end{itemize}
\end{proposition}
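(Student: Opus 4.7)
The plan is to address the two bullets in turn, both reducing to the single monomial expression $\X_v(t) = \prod_i \Y_i(t)^{a_{iv}}$ that underlies every $\tau$-sequence.

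For the singularity-confinement bullet, fix an irreducible $r \in \ring$ and work inside the Laurent polynomial ring $\numberfield[\x^{\pm1}]$, in which Laurent monomials are units. The strong-$\tau$-sequence hypothesis says every $\Y_i(t)$ is either a unit or an irreducible Laurent polynomial, and that the irreducible ones are pairwise coprime across the entire family $\{\Y_i(t) : i, t\}$. A canonical form $\tilde\X(t) \in \ring^V$ of $R(t)$ is obtained by multiplying all coordinates of $(\X_v(t))_{v \in V}$ by the common factor $\prod_i \Y_i(t)^{-\min_u a_{iu}}$ (up to a Laurent-monomial scalar adjustment). For each $i$, some coordinate then carries zero $\Y_i(t)$-exponent, giving $\gcd_v \tilde\X_v(t) = 1$ and confirming that this really is the canonical form. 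Consequently every irreducible divisor of any $\tilde\X_v(t)$ is, up to units, one of the irreducible $\Y_i(t)$'s. By pairwise coprimality, the fixed $r$ is associate to some $\Y_{i_0}(t_0)$ for at most one pair $(i_0, t_0)$, so $\mult{r}{R(t)} = 0 \in \Z^V$ for every $t \neq t_0$, producing infinitely many zero-multiplicity times.

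For the algebraic-entropy bullet, the same factorization delivers the degree bound
\[
\deg(\tilde\X_v(t)) \;\le\; C \cdot \max_i \deg(\Y_i(t)),
\]
for a constant $C = C(A)$ depending only on the matrix $A = (a_{iv})$ and on the bounded degrees of the leading Laurent-monomial parts of the $\Y_i(t)$. The zero-entropy hypothesis on the $\tau$-sequence gives subexponential growth of $\deg(\Y_i(t))$, and hence of $\deg(R(t))$ measured in the variables $\x$. To transport this into vanishing of the universal entropy $d(G)$, I would invoke dominance of $\phi$: the monomial parametrization $\psi : \x_V \mapsto \phi(\x)$ is then generically finite onto a Zariski-dense subset of $\RP V$, and pullback along such a map distorts polynomial degrees only by a bounded multiplicative factor (the Newton polytope of $\psi^* f$ is a linear image of that of $f$ under an invertible lattice map). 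Subexponential growth on the $\tau$-side therefore forces subexponential growth for the universal $R$-system, giving $d(G) = 0$.

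The main obstacle is the transfer step in the second bullet: converting a degree estimate from the specialized $\tau$-sequence-induced $R$-system back to the universal $R$-system, which is exactly where dominance of $\phi$ is decisive. The singularity-confinement argument, in contrast, reduces cleanly to the explicit canonical form $\tilde\X_v(t) = \prod_i \Y_i(t)^{a_{iv} - \min_u a_{iu}}$ combined with pairwise coprimality of the irreducibles appearing across $(i, t)$.
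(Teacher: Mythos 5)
The paper gives no proof of this proposition at all---it is dismissed as ``clear from the definitions''---and your argument supplies exactly the intended details: the canonical form $\tilde\X_v(t)=\prod_i\Y_i(t)^{a_{iv}-\min_u a_{iu}}$ together with pairwise coprimality of the irreducible $\Y_i(t)$ across all $(i,t)$ for the first bullet, and the monomial degree comparison plus dominance of $\phi$ for the second. The only point worth making explicit is that your confinement argument takes $\ring$ to be the Laurent polynomial ring $\numberfield[\x^{\pm1}]$, so that Laurent monomials are units and contribute no irreducible factors; this is the setting the paper permits and clearly intends for $\tau$-sequences, so your proof is correct.
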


\section{Background on cluster algebras, $T$-systems, and LP-algebras}
In Part~\ref{pt:backgr-clust-algebr}, we will consider a lot of examples of $R$-systems and discuss how they admit various $\tau$-sequences consisting of Laurent polynomials. Before we proceed, we briefly review the theory of cluster and Laurent Phenomenon algebras of~\cite{FZ,FZ2,FZ3,FZ4,LP} and how they give rise to \emph{$T$-systems} defined by Nakanishi~\cite{Nakanishi}.

We first describe cluster algebras of Fomin-Zelevinsky. A \emph{quiver} $Q$ is a digraph without directed cycles of length $1$ and $2$. We denote by $I$ its vertex set. A \emph{seed} is a pair $(X,Q)$ consisting of a quiver $Q$ and a family $X=(X_i)_{i\in I}$ of elements of some unique factorization domain attached to the vertices of $Q$. Given a vertex $k\in I$, one can perform a \emph{mutation} at $k$ producing a new seed $(X',Q')$ defined as follows.
\begin{enumerate}[(i)]
\item For each $i\neq k$, let $X'_i:=X_i$, and set
  \begin{equation}\label{eq:cluster_mutation}
  X'_k:=\frac{\prod_{i\to k} X_i+\prod_{k\to j} X_j}{X_k},
  \end{equation}
  where the products are taken over the arrows in $Q$.
\item For each pair $i\to k,k\to j$ of arrows of $Q$, introduce a new arrow $i\to j$.
\item Reverse the direction of all arrows of $Q$ incident to $k$.
\item Remove all directed cycles of length $2$ in $Q$ and denote the resulting quiver by $Q'$.
\end{enumerate}

One of the fundamental results in the theory of cluster algebras is the \emph{Laurent phenomenon}:
\begin{theorem}[\cite{FZ}]\label{thm:FZ_Laurent}
  Let $(X,Q)$ be a seed and let $k_1,k_2,\dots,k_m\in I$ be a sequence of vertices of $Q$. Let
  \[(X',Q'):=\mu_{k_m}\circ\mu_{k_{m-1}}\circ\dots\circ\mu_{k_1}(X,Q).\]
  Then $X'_i$ is a Laurent polynomial in the variables $X$ for each $i\in I$. 
\end{theorem}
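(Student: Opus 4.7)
The plan is to proceed by induction on the length $m$ of the mutation sequence. The case $m=0$ is trivial, and $m=1$ is immediate from the mutation formula~\eqref{eq:cluster_mutation}, which expresses the new $X'_k$ as a Laurent polynomial in the original cluster variables (with only $X_k$ in the denominator). Now fix a sequence $k_1,\dots,k_m$ and let $(X',Q')$ be the resulting seed. If $k_m=k_{m-1}$, then since mutation is involutive at any single vertex we have $\mu_{k_m}\circ\mu_{k_{m-1}}=\Id$, and the claim reduces to a sequence of length $m-2$, to which the inductive hypothesis applies.

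The essential case is $k_m\ne k_{m-1}$. Here the danger is that, by induction, the entries of the seed $\mu_{k_{m-1}}\circ\cdots\circ\mu_{k_1}(X,Q)$ are already generically \emph{non-monomial} Laurent polynomials in $X$; applying~\eqref{eq:cluster_mutation} once more divides by one of these Laurent polynomials, and a priori nothing guarantees that the resulting rational function lies back in $\Z[X_1^{\pm 1},\ldots,X_{|I|}^{\pm 1}]$. The goal is therefore to prove that the needed cancellation is automatic.

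The standard way to organize this is the \emph{caterpillar lemma} of Fomin-Zelevinsky: for any seed $(X,Q)$ and any vertex $k$, if one mutates at $k$, then performs any sequence of mutations none of which is at $k$, and then mutates at $k$ again, every cluster variable that appears throughout is a Laurent polynomial in $X$. Starting from this, a general mutation sequence can be handled by inducting on the number of times a repeat vertex is encountered, in each case reducing to a caterpillar-shaped piece plus a strictly shorter prefix/suffix. The content of the caterpillar lemma itself is a \emph{coprimality} statement: letting $X_k$ denote the variable at vertex $k$ in a seed and $X_k^{\ast}$ its image after mutation at $k$, one proves that $X_k$ and $X_k^{\ast}$ are coprime as elements of $\Z[X_i^{\pm1}\colon i\ne k][X_k]$, so that the ``exchange polynomial'' $\prod_{i\to k}X_i+\prod_{k\to j}X_j$ appearing in~\eqref{eq:cluster_mutation} cannot simultaneously vanish mod $X_k$ and mod $X_k^{\ast}$ after mutations at vertices other than $k$.

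The main obstacle, and the heart of the argument, is precisely this coprimality: one must show that the various exchange polynomials generated along a mutation sequence are coprime in the appropriate Laurent polynomial rings, so that the denominators introduced by successive applications of~\eqref{eq:cluster_mutation} truly telescope away. The bookkeeping is nontrivial because exchange matrices transform in a nonlinear way under mutation, so one must track how the monomial structure of the two terms $\prod_{i\to k}X_i$ and $\prod_{k\to j}X_j$ evolves. A cleaner, more conceptual version of the same argument proves the sharper statement that every cluster variable lies in the \emph{upper} cluster algebra $\bigcap_{X}\Z[X^{\pm 1}]$, the intersection being taken over all clusters accessible by mutation; the Laurent phenomenon is then an immediate corollary, and the coprimality step reappears as the verification that the intersection is closed under mutation.
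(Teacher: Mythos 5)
The paper does not prove this statement at all: it is quoted verbatim from Fomin--Zelevinsky \cite{FZ} as background, so there is no in-paper argument to compare yours against. Judged on its own terms, your outline correctly identifies the architecture of the standard proof --- induction on the length of the mutation sequence, reduction of the essential case to a caterpillar-shaped pattern, and the observation that everything hinges on a coprimality statement for successive exchange binomials (or, in the Berenstein--Fomin--Zelevinsky variant, on showing that the upper cluster algebra is mutation-invariant). That is the right road map.

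However, as a proof it has a genuine gap: the coprimality lemma, which you yourself flag as ``the heart of the argument,'' is only announced, never established. Concretely, one must verify that after mutating at $k$, then at vertices $\ne k$, the element $X_k^{\ast}$ remains coprime to $X_k$ in $\Z[X_i^{\pm1}:i\ne k][X_k]$, and the Fomin--Zelevinsky proof does this by reducing to an explicit check on the first three vertices of the caterpillar spine, tracking how the exchange matrix entries transform under~\eqref{eq:cluster_mutation} and its matrix-mutation counterpart. Without carrying out that computation (or invoking it as a cited lemma), the induction does not close: nothing in what you wrote rules out the exchange binomial vanishing modulo both $X_k$ and $X_k^{\ast}$. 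A secondary imprecision: your paraphrase of the caterpillar lemma (``mutate at $k$, then anywhere else, then at $k$ again, and everything is Laurent'') is not the lemma itself but a consequence of it; the actual lemma is a statement about an entire exchange pattern on the caterpillar tree under hypotheses that must be verified for the initial seed. Since the theorem is standard and the paper treats it as such, citing \cite{FZ} is the appropriate resolution rather than attempting to reconstruct the coprimality bookkeeping from scratch.
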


Let $F_k$ be the binomial in the numerator in the right hand side of~\eqref{eq:cluster_mutation}. One can reconstruct a cluster algebra seed $(X,Q)$ from the data $(X,F)$, where $F=(F_i)_{i\in I}$. More generally, a seed in a \emph{Laurent Phenomenon algebra} (\emph{LP algebra} for short) of~\cite{LP} is a pair $(X,F)$, where $X=(X_i)_{i\in I}$ is a family of elements of some unique factorization domain and $F=(F_i)_{i\in I}$ is a family of \emph{exchange polynomials} in $X$ such that each $F_i$ is irreducible, not divisible by $X_j$ for any $j\in I$, and does not depend on $X_i$. Let us give a quick explanation for how the seeds in an LP algebra are mutated. We give a simplified version of the mutation algorithm from~\cite{LP}, and in order for it to work, we restrict ourselves to the generality of seeds coming from \emph{double quivers} introduced in~\cite{ACH}.

\begin{definition}\label{dfn:double_quiver}
  We say that a seed $(X,F)$ \emph{arises from a double quiver} if there exists an integer matrix $B=(b_{ij})_{i,j\in I}$ such that for all $i\in I$, we have $b_{ii}=0$,  and
  \[F_i(X)=\prod_{j:b_{ij}>0} X_j^{b_{ij}}+\prod_{j:b_{ij}<0} X_j^{-b_{ij}}.\]
\end{definition}
Note that unlike in the case of a cluster algebra, we allow the situation when $b_{ij}=0$ while $b_{ji} \not = 0$.

\def\FFk{{\tilde F}_k}
We describe the mutation procedure for LP algebra seeds that arise from a double quiver (this property will hold in all examples we consider in Part~\ref{pt:backgr-clust-algebr}). Let $(X,F)$ be such a seed and choose some index $k\in I$.
Then the \emph{mutation of $(X,F)$ at $k$} is a new seed $\mu_k(X,F)=(X',F')$ defined via the following algorithm.
\begin{enumerate}
\item For $i\neq k$, set $X'_i:=X_i$.
\item If $F_j\neq F_k$ for each $j\neq k$ then set
  \begin{equation}\label{eq:LP_mutation_1}
    X'_k:=\frac{F_k(X)}{X_k}.
  \end{equation}
\item More generally, for $J:=\{j\in I\mid F_j=F_k\}$, we put
  \begin{equation}\label{eq:LP_mutation_2}
X'_k:=\frac{F_k(X)}{\prod_{j\in J} X_j }.
  \end{equation}
\item For $i\in I$, if $F_i$ does not depend on $X_k$ then set $F'_i:=F_i$ (in particular, set $F'_k:=F_k$).
\item Otherwise, if $F_i$ depends on $X_k$, let $\FFk$ be obtained from $F_k$ by setting $X_i$ to $0$. Then $F_i'$ is obtained from $F_i$ by substituting $$X_k\to \frac{\FFk(X)}{X'_k \prod_{j\in J, j \not = k} X_j}$$ and then multiplying the result by a unique Laurent monomial $M$ in $X'$ so that the product is a polynomial not divisible by $X_j$ for all $j\in I$.
\end{enumerate}
We refer the reader to the next part for numerous examples of applying the above algorithm.

\begin{remark}
It may happen that $\mu_k(X,F)$ does not arise from a double quiver. However, it does if for all $j\in I$ such that $F_j$ depends on $X_k$, we also get that $F_k$ depends on $X_j$. 
In all cases considered below we only mutate at $k$-s to which this restriction applies, i.e., we always stay within the generality of double quivers. 
\end{remark}

Similarly to cluster algebras, the Laurent phenomenon still holds in this case.
\begin{theorem}[\cite{LP}]
  Let $(X,Q)$ be an LP algebra seed and let $k_1,k_2,\dots,k_m\in I$ be a sequence of elements of $I$. Let
  \begin{equation}\label{eq:mutation_sequence_cluster}
(X',Q'):=\mu_{k_m}\circ\mu_{k_{m-1}}\circ\dots\circ\mu_{k_1}(X,Q).
  \end{equation}
  Then $X'_i$ is a Laurent polynomial in the variables $X$ for each $i\in I$. 
\end{theorem}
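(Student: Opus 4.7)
The plan is to adapt the Fomin--Zelevinsky strategy for the cluster-algebra Laurent phenomenon (Theorem~\ref{thm:FZ_Laurent}) to the LP-algebra setting. The overall structure is induction on the length $m$ of the mutation sequence, combined with a \emph{caterpillar lemma} that reduces the inductive step to checking Laurentness only on certain short, specifically structured mutation patterns rather than on arbitrary ones. The base case $m=0$ is trivial, and for $m=1$ formulas~\eqref{eq:LP_mutation_1} and~\eqref{eq:LP_mutation_2} express $X'_k$ as an honest polynomial in $\{X_j : j\neq k\}$ divided by a monomial in the same variables, which is manifestly Laurent in $X$.

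For the inductive step, suppose the claim holds for sequences of length less than $m$, and set $\widetilde X := \mu_{k_{m-1}}\circ\cdots\circ\mu_{k_1}(X,F)$, so by induction each $\widetilde X_i$ is Laurent in $X$. The new seed $X' = \mu_{k_m}(\widetilde X,\widetilde F)$ has entries that are rational in $\widetilde X$; substituting back could a priori introduce denominators beyond the original $X_j$, so Laurentness requires unexpected cancellations. To obtain them, I would prove a caterpillar-type lemma: for any two indices $i,j\in I$, two mutation sequences that agree outside an $\{i,j\}$-segment either both give Laurent outputs at every cluster variable or neither does. Iterating this reduces the general claim to verifying Laurentness for alternating mutation sequences $\mu_i\mu_j\mu_i\cdots$ applied to a single seed, which is a finite computation that can be unfolded explicitly using the exchange-polynomial update rules in steps~(4)--(5), relying crucially on the standing hypotheses that each $F_i$ is irreducible, coprime to each $X_j$, and independent of $X_i$.

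The main obstacle is controlling the exchange-polynomial update in step~(5), where one substitutes $X_k \mapsto \widetilde F_k/(X'_k \prod_{j\in J, j\neq k} X_j)$ into $F_i$ and then multiplies by the unique Laurent monomial in $X'$ making the result a polynomial divisible by no $X'_j$. Verifying that this ``monomial normalization'' is well defined (the compensating monomial both exists and is unique) and that it preserves the three standing hypotheses on the $F'_i$ (irreducibility, non-divisibility by any $X'_j$, independence of $X'_i$) is the technically most delicate step, and is where the LP-algebra argument genuinely departs from its cluster-algebra predecessor: the binomial nature of cluster exchange relations produces the necessary cancellations essentially for free, whereas in the LP setting one must invoke unique factorization in $\numberfield[X^{\pm1}]$ together with the irreducibility of the $F_i$ to make everything line up. Only once this step is secure can one propagate the induction through the caterpillar reduction to conclude Laurentness on arbitrary mutation sequences.
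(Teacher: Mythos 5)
First, a point of order: the paper does not prove this statement at all --- it is imported verbatim from~\cite{LP} as background, so there is no internal proof to compare your proposal against. Judged on its own terms, your plan correctly identifies the strategy that~\cite{LP} actually uses (an adaptation of the Fomin--Zelevinsky caterpillar argument from the proof of Theorem~\ref{thm:FZ_Laurent}) and correctly locates the genuinely new difficulty in the exchange-polynomial update of step~(5) of the mutation algorithm. That much is a sound road map.

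However, as a proof it has two concrete gaps. The ``caterpillar-type lemma'' you state --- that two mutation sequences agreeing outside an $\{i,j\}$-segment are simultaneously Laurent or simultaneously not --- is not the caterpillar lemma and is not something you can iterate to reduce to rank-two alternating sequences; in an LP algebra of rank $\geq 3$ a mutation at $k_m$ genuinely mixes all directions through the exchange-polynomial updates, so the problem does not localize to pairs of indices. The actual reduction shows that the Laurent ring of the base seed contains the intersection of the Laurent rings of a few adjacent seeds, and then closes the induction by verifying explicit coprimality statements between the new cluster variable $X'_k$ (given by~\eqref{eq:LP_mutation_1} or~\eqref{eq:LP_mutation_2}) and the updated exchange polynomials $F'_j$ in the ambient Laurent polynomial ring. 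Without that precise statement and those coprimality checks the inductive step does not close. Second, the ``finite computation that can be unfolded explicitly'' is where the entire content of the theorem lives: one must show that the normalizing monomial $M$ in step~(5) exists and is unique, that mutation is an involution, that the mutated $F'_i$ remain irreducible and not divisible by any $X'_j$ (so that the mutated datum is again a seed), and only then derive the needed coprimalities from unique factorization in $\numberfield[\x^{\pm1}]$. None of this is carried out, and it is exactly the part where the binomial cancellations of the cluster-algebra case are no longer available for free. So the proposal is a plausible outline of the argument in~\cite{LP}, but not yet a proof.
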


\begin{remark}
  The two substantial differences between this LP algebra setting and the above cluster algebra setting are as follows:
\begin{itemize}
\item in a cluster algebra, the matrix $B$ from Definition~\ref{dfn:double_quiver} is required to be skew-symmetric (or, more generally, \emph{skew-symmetrizable}), and
 \item even when $F_j=F_k$ for some $j\neq k$, one still defines $X'_k$ according to~\eqref{eq:LP_mutation_1}, not~\eqref{eq:LP_mutation_2}. Thus even when the matrix $B$ \emph{is} skew-symmetric, the result of an LP algebra seed mutation can differ from the result of a quiver seed mutation defined above.
 \end{itemize}
 Another difference between cluster algebras and LP algebras is that the latter allows seeds where the exchange polynomials $F_i$ have more than two terms, but in this case the mutation algorithm is more involved. Seeds arising from double quivers provide sufficient generality for our purposes, and we refer the reader to~\cite{LP} for the full account of the theory.
\end{remark}

One can construct certain discrete dynamical systems (called $T$-systems and introduced by Nakanishi~\cite{Nakanishi}) using cluster algebras or LP algebras as follows. Suppose that $(X,Q)$ is a cluster algebra seed. Let $k_1,k_2,\dots,k_m\in I$ be a sequence of mutations and let $\nu:I\to I$ be a bijection such that we have $Q'=\nu(Q)$, where $Q'$ is given by~\eqref{eq:mutation_sequence_cluster} and $\nu(Q)$ is the quiver with vertex set $I$ and arrow set $\nu(i)\to\nu(j)$ for each arrow $i\to j$ of $Q$. This data gives rise to a \emph{$T$-system}, i.e., a family of rational functions $T(t)=(T_i(t))_{i\in I}$ for all $t\geq 0$ defined by $T(0)=X$ and
\[(T(t+1),Q)=\nu^{-1}\left(\mu_{k_m}\circ\mu_{k_{m-1}}\circ\dots\circ\mu_{k_1}(T(t),Q)\right)\]
for all $t\geq 0$. Here we denote $\nu(X,Q):=(\nu(X),\nu(Q))$ and $\nu(X):=(X_{\nu(i)})_{i\in I}$.

Similarly, if $(X,F)$ is an LP algebra seed then one can also define a $T$-system whenever we have a sequence of mutations that produces a seed $(X',F')$ such that for some bijection $\nu:I\to I$, we have
\[F'_{\nu(i)}=F_i(\nu(X'))\]
for all $i\in I$.

{\large\part{Specific examples\label{pt:backgr-clust-algebr}}}

We investigate the behavior of the $R$-system for some classes of strongly connected digraphs. We start off by considering several simple cases in Section~\ref{sec:small-examples} and then we gradually increase the complexity from section to section until we arrive at the case of bidirected digraphs in Section~\ref{sec:bidirected-graphs} which turns out to be the hardest one to deal with, producing completely new beautiful recurrence sequences of (conjecturally) Laurent polynomials. We specify in the beginning of each section whether we are considering $R$-systems with or without coefficients.

\section{Small examples}\label{sec:small-examples}
In this section, we give several warm up examples of $R$-systems exhibiting either singularity confinement or zero algebraic entropy or both of these phenomena. We also give examples that do not have each of these two properties. Unless stated otherwise, we assume that the $R$-systems are coefficient-free. This section contains a large amount of computations which we carry out in detail, we refer the reader to the next sections where the needed verifications are less tedious and often omitted.

\subsection{Directed cycles}\label{sec:directed-cycles}
The simplest strongly connected digraph is a \emph{directed cycle}, i.e., a digraph $G=(V,E)$ with $V=[n]$ and $E=\{(i,i+1)\mid i\in [n]\}$, where the indices are taken modulo $n$. Note that $|\Arb(G,i)|=1$ for any $i\in [n]$ and thus the $R$-system consists of iterating a monomial transformation. More explicitly, for $\X=(\X_i)\in\RP {[n]}$, applying~\eqref{eq:arb} yields $\X'_i=\X_{i-1}$, where $\X'=\rowm(\X)$ and the indices again are taken modulo $n$. Thus in this case the (coefficient-free) $R$-system is periodic with period $n$, which is actually a special case of~\cite[Theorem~30]{GR2}. Note also that the $R$-system with coefficients evolves according to 
\[\X_i'=\wtt {i-1}i \X_{i-1}.\]
Thus the universal $R$-system with coefficients is still periodic with period $n$ and clearly has the singularity confinement and zero algebraic entropy properties. 

We now consider three examples of strongly connected digraphs that are obtained from posets using the procedure described in Figure~\ref{fig:poset}.

\subsection{An example with a $\tau$-sequence and zero algebraic entropy}\label{sec:poset_3}
Consider the poset $P$ shown in Figure \ref{fig:example_with_tau_zero}. First, let us build a $\tau$-sequence for the $R$-system associated with the digraph $G(P)$ (which is defined via the construction in Figure~\ref{fig:poset}).
\begin{figure}

\def\scl{0.3}
\begin{tikzpicture}[yscale=1.5]
\node[draw, circle, scale=\scl, fill=black,label=below:{$X_1(t)$}] (1) at (0,0) { };
\node[draw, circle, scale=\scl, fill=black,label=left:{$X_2(t)$}] (2)  at (-1,1) { };
\node[draw, circle, scale=\scl, fill=black,label=right:{$X_3(t)$}] (3)  at (1,1) { };
\node[draw, circle, scale=\scl, fill=black,label=above:{$X_4(t)$}] (4)  at (0,2) { };
\node[draw, circle, scale=\scl, fill=black,label=right:{$X_5(t)$}]  (5) at (2,2) { };
\node[draw, circle, scale=\scl, fill=black,label=right:{$X_6(t)$}] (6)  at (3,3) { };
\draw (1)--(2)--(4)--(3)--(1);
\draw (3)--(5)--(6);
\end{tikzpicture}
    \caption{The poset $P$ from Section~\ref{sec:poset_3}.}
    \label{fig:example_with_tau_zero}
\end{figure}
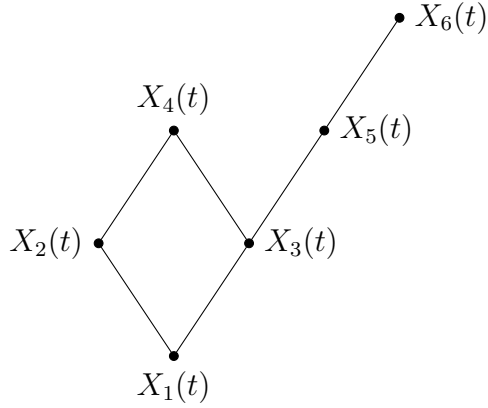

Consider a seed $(Y_t,Y_{t+1},\dots,Y_{t+6},Z_t)$ associated with the quiver $Q$ in Figure~\ref{fig:example_mutations} (left).

\begin{theorem}
Mutating $Q$ at the vertices $v_0$ and $u$ yields a quiver that is isomorphic to $Q$ via a map $\nu$ that sends $v_i$ to $v_{i-1}$ for $i=0,1,\dots,6$ (the indices are taken modulo $7$), so we get a $T$-system. Let us denote by $Y_{t+7}$ and $Z_{t+1}$ the variables obtained after mutating at $v_0$ and $u$ respectively. Then the exchange relations for $Y_{t}$ and $Z_{t}$ are
\begin{equation*}
  \begin{split}
Y_{t} Y_{t+7} &= Y_{t+3}Y_{t+4} + Z_{t},\\
Z_{t} Z_{t+1} &= Y_{t+1} Y_{t+3} Y_{t+5} Y_{t+7}  + Y_{t+2} Y_{t+4}^2 Y_{t+6}.
  \end{split}
\end{equation*}
\end{theorem}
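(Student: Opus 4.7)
The plan is to prove the theorem by explicit verification at the level of the quiver, using only the rules (i)--(iv) for quiver mutation and the exchange relation~\eqref{eq:cluster_mutation}. The statement has two distinct parts: the isomorphism $\mu_u \circ \mu_{v_0}(Q) = \nu(Q)$ and the explicit form of the exchange relations for $Y_t$ and $Z_t$. Neither involves deep structural reasoning; both are combinatorial computations once the arrow set of $Q$ is written down. The pay-off (beyond this theorem) is that the Laurent phenomenon (Theorem~\ref{thm:FZ_Laurent}) then guarantees $Y_{t+7}, Z_{t+1} \in \Z[Y_t^{\pm 1}, \dots, Y_{t+6}^{\pm 1}, Z_t^{\pm 1}]$ for all $t$, which is what one needs for the $\tau$-sequence to have the Laurent property.

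First I would tabulate the arrow set of $Q$ from Figure~\ref{fig:example_mutations} (left), listing for each vertex its in- and out-neighbors. Then I would compute $Q' := \mu_{v_0}(Q)$ step by step: reverse every arrow incident to $v_0$; for every pair $i \to v_0 \to j$ insert a new arrow $i \to j$; and cancel all resulting oriented $2$-cycles. This yields an intermediate quiver on the same vertex set. Next I would compute $Q'' := \mu_u(Q')$ by the same procedure. The isomorphism claim is then checked by verifying that the bijection $\nu$ defined by $\nu(v_i) = v_{i-1}$ (indices mod $7$) and $\nu(u) = u$ sends the arrow set of $Q$ onto the arrow set of $Q''$; this is a finite incidence check, one arrow at a time. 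The main obstacle here is purely clerical, namely ensuring that no $2$-cycles are overlooked in step (iv) of the mutation, since errors there would corrupt both the isomorphism and, in turn, the exchange monomials at $u$ in $Q'$.

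For the exchange relations, I would apply~\eqref{eq:cluster_mutation} twice. At the mutation at $v_0$, the in-neighbors of $v_0$ in $Q$ must (for the claim to match) be precisely $\{v_3, v_4\}$ and the only out-neighbor $v_0 \to u$, giving
\[
Y_t\, Y_{t+7} \;=\; \prod_{i \to v_0} X_i + \prod_{v_0 \to j} X_j \;=\; Y_{t+3}\, Y_{t+4} + Z_t.
\]
At the subsequent mutation at $u$ in $Q'$, I would read off the arrows into and out of $u$, which by the structure of the problem should be $\{v_1, v_3, v_5, v_7\} \to u$ and $u \to \{v_2, v_4, v_4, v_6\}$ (with $v_7$ identified via $\nu$ with $v_0$ in $Q''$), yielding
\[
Z_t\, Z_{t+1} \;=\; Y_{t+1}Y_{t+3}Y_{t+5}Y_{t+7} + Y_{t+2}Y_{t+4}^2 Y_{t+6}.
\]
The doubled factor $Y_{t+4}^2$ is the signature that two parallel arrows $u \to v_4$-type edges are created during the mutation at $v_0$, so a careful count of arrow multiplicities at step (ii) of the first mutation is the crucial ingredient. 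Once both exchange relations are obtained, comparing the target quiver $Q''$ to $\nu(Q)$ confirms we indeed have a $T$-system with shift $\nu$, completing the proof.
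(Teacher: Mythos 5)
Your proposal is correct and is essentially the paper's own argument: the paper's proof is literally "an easy exercise in quiver mutation," carried out exactly as you describe by performing $\mu_{v_0}$ then $\mu_u$ on the arrow set of Figure~\ref{fig:example_mutations} and reading off the exchange relations from~\eqref{eq:cluster_mutation}, with the doubled arrow $v_4\to u$ created at step (ii) of the first mutation accounting for the factor $Y_{t+4}^2$. (One small note: in $\mu_{v_0}(Q)$ the arrows at $u$ point $\{v_2,v_4,v_4,v_6\}\to u$ and $u\to\{v_0,v_1,v_3,v_5\}$, i.e.\ opposite to the orientations you wrote, but since the exchange binomial is the sum of the in-product and the out-product this does not affect the relation.)
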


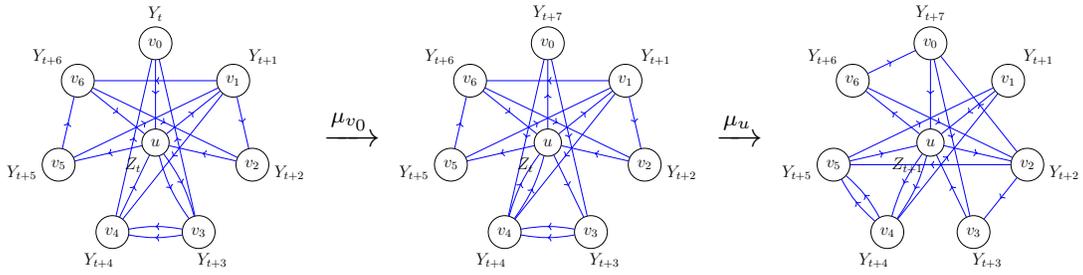
\begin{figure}
  \def\rad{2.5}
  \def\arrscl{2}
  \def\Zanchor{45}
  \resizebox{1.0\textwidth}{!}{
\begin{tabular}{ccccc}
\begin{tikzpicture}[baseline=(7.base)]
  \node[draw,circle] (7) at (0,0) {$u$};
  \node[draw,circle] (0) at ({90-0*360/7}:\rad) {$v_0$};
  \node[draw,circle] (1) at ({90-1*360/7}:\rad) {$v_1$};
  \node[draw,circle] (2) at ({90-2*360/7}:\rad) {$v_2$};
  \node[draw,circle] (3) at ({90-3*360/7}:\rad) {$v_3$};
  \node[draw,circle] (4) at ({90-4*360/7}:\rad) {$v_4$};
  \node[draw,circle] (5) at ({90-5*360/7}:\rad) {$v_5$};
  \node[draw,circle] (6) at ({90-6*360/7}:\rad) {$v_6$};
  \foreach \i/\j in {4/0,0/7,3/0,1/6,1/2,1/4,7/1,5/1,6/2,2/7,4/7,7/5,5/6,6/7}{
    \draw[postaction={decorate},blue] (\i) -- (\j);
  }
  \draw[postaction={decorate},blue] (3) to[bend right=10] (4);
  \draw[postaction={decorate},blue] (3) to[bend right=-10] (4);
  \draw[postaction={decorate},blue] (7) to[bend right=10] (3);
  \draw[postaction={decorate},blue] (7) to[bend right=-10] (3);
  \node[anchor={180+90-0*360/7}] at (0.{90-0*360/7}) {$Y_{t}$};
  \node[anchor={180+90-1*360/7}] at (1.{90-1*360/7}) {$Y_{t+1}$};
  \node[anchor={180+90-2*360/7}] at (2.{90-2*360/7}) {$Y_{t+2}$};
  \node[anchor={180+90-3*360/7}] at (3.{90-3*360/7}) {$Y_{t+3}$};
  \node[anchor={180+90-4*360/7}] at (4.{90-4*360/7}) {$Y_{t+4}$};
  \node[anchor={180+90-5*360/7}] at (5.{90-5*360/7}) {$Y_{t+5}$};
  \node[anchor={180+90-6*360/7}] at (6.{90-6*360/7}) {$Y_{t+6}$};
  \node[anchor={\Zanchor}] at (7.{180+\Zanchor}) {$Z_{t}$};
\end{tikzpicture} &
\scalebox{\arrscl}{$\xrightarrow{\mu_{v_0}}$}
  &                                                
\begin{tikzpicture}[baseline=(7.base)]
  \node[draw,circle] (7) at (0,0) {$u$};
  \node[draw,circle] (0) at ({90-0*360/7}:\rad) {$v_0$};
  \node[draw,circle] (1) at ({90-1*360/7}:\rad) {$v_1$};
  \node[draw,circle] (2) at ({90-2*360/7}:\rad) {$v_2$};
  \node[draw,circle] (3) at ({90-3*360/7}:\rad) {$v_3$};
  \node[draw,circle] (4) at ({90-4*360/7}:\rad) {$v_4$};
  \node[draw,circle] (5) at ({90-5*360/7}:\rad) {$v_5$};
  \node[draw,circle] (6) at ({90-6*360/7}:\rad) {$v_6$};
  \foreach \i/\j in {0/4,7/0,0/3,1/6,1/2,1/4,7/1,5/1,6/2,2/7,7/5,5/6,6/7,7/3}{
    \draw[postaction={decorate},blue] (\i) -- (\j);
  }

  \draw[postaction={decorate},blue] (4) to[bend right=10] (7);
  \draw[postaction={decorate},blue] (4) to[bend right=-10] (7);
  
  \draw[postaction={decorate},blue] (3) to[bend right=10] (4);
  \draw[postaction={decorate},blue] (3) to[bend right=-10] (4);
  \node[anchor={180+90-0*360/7}] at (0.{90-0*360/7}) {$Y_{t+7}$};
  \node[anchor={180+90-1*360/7}] at (1.{90-1*360/7}) {$Y_{t+1}$};
  \node[anchor={180+90-2*360/7}] at (2.{90-2*360/7}) {$Y_{t+2}$};
  \node[anchor={180+90-3*360/7}] at (3.{90-3*360/7}) {$Y_{t+3}$};
  \node[anchor={180+90-4*360/7}] at (4.{90-4*360/7}) {$Y_{t+4}$};
  \node[anchor={180+90-5*360/7}] at (5.{90-5*360/7}) {$Y_{t+5}$};
  \node[anchor={180+90-6*360/7}] at (6.{90-6*360/7}) {$Y_{t+6}$};
  \node[anchor={\Zanchor}] at (7.{180+\Zanchor}) {$Z_{t}$};
\end{tikzpicture}
  & \scalebox{\arrscl}{$\xrightarrow{\mu_{u}}$}
  &
    \begin{tikzpicture}[baseline=(7.base)]
  \node[draw,circle] (7) at (0,0) {$u$};
  \node[draw,circle] (0) at ({90-0*360/7}:\rad) {$v_0$};
  \node[draw,circle] (1) at ({90-1*360/7}:\rad) {$v_1$};
  \node[draw,circle] (2) at ({90-2*360/7}:\rad) {$v_2$};
  \node[draw,circle] (3) at ({90-3*360/7}:\rad) {$v_3$};
  \node[draw,circle] (4) at ({90-4*360/7}:\rad) {$v_4$};
  \node[draw,circle] (5) at ({90-5*360/7}:\rad) {$v_5$};
  \node[draw,circle] (6) at ({90-6*360/7}:\rad) {$v_6$};
  \foreach \i/\j in {5/1,1/7,4/1,2/0,2/3,2/5,7/2,6/2,0/3,3/7,5/7,7/6,6/0,0/7}{
    \draw[postaction={decorate},blue] (\i) -- (\j);
  }
  \draw[postaction={decorate},blue] (4) to[bend right=10] (5);
  \draw[postaction={decorate},blue] (4) to[bend right=-10] (5);
  \draw[postaction={decorate},blue] (7) to[bend right=10] (4);
  \draw[postaction={decorate},blue] (7) to[bend right=-10] (4);
  \node[anchor={180+90-0*360/7}] at (0.{90-0*360/7}) {$Y_{t+7}$};
  \node[anchor={180+90-1*360/7}] at (1.{90-1*360/7}) {$Y_{t+1}$};
  \node[anchor={180+90-2*360/7}] at (2.{90-2*360/7}) {$Y_{t+2}$};
  \node[anchor={180+90-3*360/7}] at (3.{90-3*360/7}) {$Y_{t+3}$};
  \node[anchor={180+90-4*360/7}] at (4.{90-4*360/7}) {$Y_{t+4}$};
  \node[anchor={180+90-5*360/7}] at (5.{90-5*360/7}) {$Y_{t+5}$};
  \node[anchor={180+90-6*360/7}] at (6.{90-6*360/7}) {$Y_{t+6}$};
  \node[anchor={\Zanchor}] at (7.{180+\Zanchor}) {$Z_{t+1}$};
\end{tikzpicture} 
\end{tabular}
}

    \caption{Mutating at $v_0$ and $u$ corresponds to rotating the quiver $Q$ on the left hand side clockwise by $2\pi/7$.}
    \label{fig:example_mutations}
\end{figure}

\begin{proof}
It is an easy exercise in quiver mutation, see Figure \ref{fig:example_mutations}. 
\end{proof}

Now we construct  a monomial transformation from this Laurent recurrence system to the $R$-system associated with $G(P)$ as follows.

\begin{equation}\label{eq:example_with_zero}
  \begin{split}
    X_1(t) &= \frac{Z_{t-1}}{Y_{t+2} Y_{t+3}},\\
X_2(t) &= \frac{Y_{t+1} Y_{t+5}}{Y_{t+2} Y_{t+4}},\\
X_3(t) &= \frac{Y_{t} Y_{t+6}}{Y_{t+3}^2},\\
X_4(t) &= \frac{Y_{t+1} Y_{t+6}}{Z_{t}},\\
X_5(t) &=  \frac{Y_{t+1} Y_{t+6}}{Y_{t+3} Y_{t+4}},\\
X_6(t) &=  \frac{Y_{t+2} Y_{t+6}}{Y_{t+3} Y_{t+5}}.
  \end{split}
\end{equation}

\begin{theorem}
 Assigning $X_i(t)$ to the vertices of $P$ as in Figure~\ref{fig:example_with_tau_zero} and assigning the value of $1$ to the extra vertex $\tb$ of $G(P)$ gives a solution to the $R$-system associated with $G(P)$. Thus, the $Y$-s and $Z$-s constructed above form a weak $\tau$-sequence for this $R$-system.
\end{theorem}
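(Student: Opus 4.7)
The plan is to verify directly that the substitution~\eqref{eq:example_with_zero}, together with the rescaling $X_\tb(t) = 1$ for all $t \geq 0$, satisfies the toggle relations~\eqref{eq:toggle} of the $R$-system on $G(P)$. By Remark~\ref{rmk:poset}, the toggle relation at the auxiliary vertex $\tb$ is automatic once the six toggle relations at the poset vertices $1, \ldots, 6$ are verified, so it suffices to check those six identities.

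Reading off the cover relations of $P$ from Figure~\ref{fig:example_with_tau_zero}, the six relations (with unprimed and primed variables denoting times $t$ and $t+1$ respectively) reduce to
\begin{align*}
 X_1 X_1' &= X_2 + X_3, & X_4 X_4' &= \frac{X_2'X_3'}{X_2'+X_3'},\\
 X_2 X_2' &= X_4 X_1', & X_5 X_5' &= X_6 X_3',\\
 X_3 X_3' &= (X_4 + X_5) X_1', & X_6 X_6' &= X_5'.
\end{align*}

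Substituting~\eqref{eq:example_with_zero}, three of these (at vertices $2$, $5$, $6$) become purely monomial equalities that follow from straightforward cancellation, while the remaining three invoke the exchange relations stated in the theorem. At vertex $1$, the identity clears to $Z_{t-1} Z_t = Y_t Y_{t+2} Y_{t+4} Y_{t+6} + Y_{t+1} Y_{t+3}^2 Y_{t+5}$, which is precisely the $Z$-exchange relation with $t$ replaced by $t-1$. At vertex $3$, the sum $X_4(t) + X_5(t)$ factors out $Y_{t+1} Y_{t+6}(Y_{t+3} Y_{t+4} + Z_t)$, and the parenthetical collapses to $Y_t Y_{t+7}$ via the $Y$-exchange relation. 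At vertex $4$, the sum $X_2(t+1) + X_3(t+1)$ produces $Y_{t+2} Y_{t+4}^2 Y_{t+6} + Y_{t+1} Y_{t+3} Y_{t+5} Y_{t+7}$, which equals $Z_t Z_{t+1}$ by the $Z$-exchange relation.

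The main obstacle is purely bookkeeping: one must confirm that the exponent pattern in~\eqref{eq:example_with_zero} aligns each of the three non-monomial identities with exactly one application of one of the two exchange relations, and that the time-shifts ($t-1$ at vertex $1$; $t$ at vertices $3$ and $4$) are consistent with the mutation sequence defining the $T$-system. Once the six poset-vertex identities are checked, the conclusion follows, and hence the $(Y,Z)$-system is a weak $\tau$-sequence in the sense of Definition~\ref{dfn:tau}.
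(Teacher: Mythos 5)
Your proposal is correct and follows essentially the same route as the paper: both identify the toggle relations at vertices $1$, $3$, $4$ as the only non-trivial ones (the rest being monomial cancellations or, for $\tb$, automatic) and reduce them to the two exchange relations $Y_tY_{t+7}=Y_{t+3}Y_{t+4}+Z_t$ and $Z_tZ_{t+1}=Y_{t+1}Y_{t+3}Y_{t+5}Y_{t+7}+Y_{t+2}Y_{t+4}^2Y_{t+6}$ (the vertex-$1$ identity being the latter shifted by $t\mapsto t-1$). The reductions you describe at each of the three vertices match the paper's computation exactly.
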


\begin{proof}
  The non-trivial toggle relations to verify are
  \begin{equation*}
    \begin{split}
\frac{Z_{t-1}}{Y_{t+2} Y_{t+3}} \frac{Z_{t}}{Y_{t+3} Y_{t+4}} &= \frac{Y_{t+1} Y_{t+5}}{Y_{t+2} Y_{t+4}} + \frac{Y_{t} Y_{t+6}}{Y_{t+3}^2},\\
\frac{Y_{t} Y_{t+6}}{Y_{t+3}^2} \frac{Y_{t+1} Y_{t+7}}{Y_{t+4}^2} &= \frac{    \frac{Y_{t+1} Y_{t+6}}{Z_{t}} + \frac{Y_{t+1} Y_{t+6}}{Y_{t+3} Y_{t+4}}    }{    \frac{Y_{t+3} Y_{t+4}}{Z_{t}}    },\\
\frac{Y_{t+1} Y_{t+6}}{Z_{t}} \frac{Y_{t+2} Y_{t+7}}{Z_{t+1}} &= \frac{1}{    \frac{Y_{t+3} Y_{t+5}}{Y_{t+2} Y_{t+6}}  + \frac{Y_{t+4}^2}{Y_{t+1} Y_{t+7}}   },
    \end{split}
  \end{equation*}
  which are equivalent respectively to
  \begin{align*}
Z_{t-1} Z_{t} &= Y_{t} Y_{t+2} Y_{t+4} Y_{t+6}  + Y_{t+1} Y_{t+3}^2 Y_{t+5},\\
Y_{t} Y_{t+7} &= Y_{t+3}Y_{t+4} + Z_{t},\\
Z_{t} Z_{t+1} &= Y_{t+1} Y_{t+3} Y_{t+5} Y_{t+7}  + Y_{t+2} Y_{t+4}^2 Y_{t+6}.\qedhere\\
    \end{align*}
\end{proof}

\begin{remark}
 Note that the monomial map~\eqref{eq:example_with_zero} is not dominant since we have $X_2(t) X_6(t) = X_5(t)$. There is however a way to add some coefficients to~\eqref{eq:example_with_zero} that depend on the residue of $t$ modulo $9$ so that the reduction becomes dominant.
\end{remark}

As Figure~\ref{fig:zero_entropy} suggests, the degrees of the values of the $R$-system associated with $G(P)$ grow quadratically and thus it has zero algebraic entropy.

\begin{figure}
  \resizebox{1\textwidth}{!}{
    \def\heightt{6cm}
        \def\widthh{6cm}
\begin{tabular}{c|c|c}
\begin{tikzpicture}
  \begin{axis}[
    height=\heightt,width=\widthh,
		xlabel=$t$,
		ylabel=$\deg(X(t))$]
	\addplot[very thick] file {zero_entropy.dat};
	\end{axis}
      \end{tikzpicture}
  &
    \begin{tikzpicture}
	\begin{axis}[
    height=\heightt,width=\widthh,
		xlabel=$t$,
		ylabel=$\sqrt{\deg(X(t))}$]
	\addplot[very thick] file {zero_entropy_sqrt.dat};
      \end{axis}
    \end{tikzpicture}
    &
    \begin{tikzpicture}
	\begin{axis}[
    height=\heightt,width=\widthh,
		xlabel=$t$,
		ylabel=$\log\deg(X(t))$]
	\addplot[very thick] file {zero_entropy_log.dat};
      \end{axis}
    \end{tikzpicture}    
\end{tabular}
}
  \caption{\label{fig:zero_entropy}The quadratic growth is apparent from the plot of the degrees of $X(t)$ from Section~\ref{sec:poset_3} for $0\leq t\leq 100$.}
\end{figure}
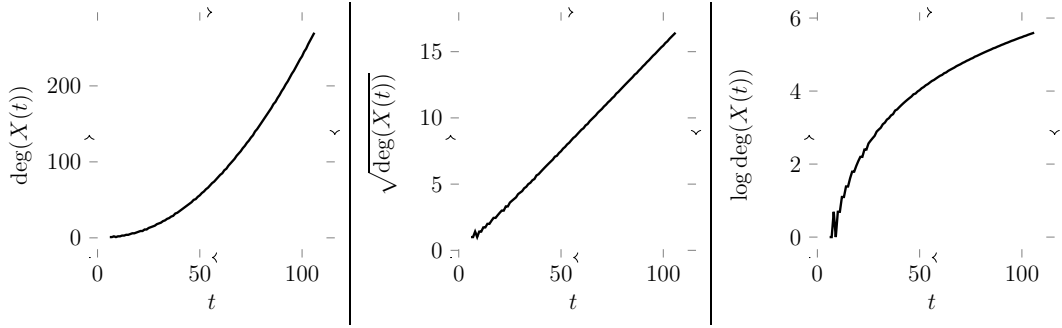

\subsection{An example with a $\tau$-sequence and non-zero algebraic entropy}\label{sec:poset_1}
Consider the poset $P$ shown in Figure~\ref{fig:example_with_tau_nonzero}. First, let us build a $\tau$-sequence for the $R$-system associated with $G(P)$.
\begin{figure}

\def\scl{0.3}
\begin{tikzpicture}[yscale=1.5]
\node[draw, circle, scale=\scl, fill=black,label=below:{$X_1(t)$}] (1) at (0,0) { };
\node[draw, circle, scale=\scl, fill=black,label=below:{$X_2(t)$}] (2)  at (2,0) { };
\node[draw, circle, scale=\scl, fill=black,label=above:{$X_3(t)$}] (3)  at (-1,1) { };
\node[draw, circle, scale=\scl, fill=black,label=right:{$X_4(t)$}] (4)  at (1,1) { };
\node[draw, circle, scale=\scl, fill=black,label=right:{$X_5(t)$}]  (5) at (1,2) { };
\draw (1)--(3);
\draw (1)--(4);
\draw (2)--(4);
\draw (4)--(5);
\end{tikzpicture}

    \caption{The poset $P$ from Section~\ref{sec:poset_1}.}
    \label{fig:example_with_tau_nonzero}
\end{figure}
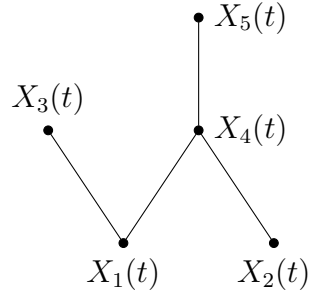

 Consider a Laurent phenomenon algebra with the seed 
$$Y_{t}, Y_{t+1}, Y_{t+2}, Y_{t+3}, Y_{t+4}, Y_{t+5}, Y_{t+6}, Y_{t+7}, Y_{t+8}, Y_{t+9}, Z_{t}, Z_{t+1}, Z_{t+2}$$
and the exchange polynomials 
\begin{align*}
F_{Y_{t}} &= Y_{t+3}Y_{t+7}Z_{t}Z_{t+1}Z_{t+2} + Y_{t+2}^2Y_{t+5}^2Y_{t+8}^2,\\
F_{Y_{t+1}} &= Y_{t+2} Y_{t+4} Y_{t+5} Y_{t+8}^2 + Y_{t+3}Y_{t+6}Z_{t}Y_{t+9},\\
F_{Y_{t+2}} &= Y_{t} Y_{t+5} Y_{t+6} Y_{t+9}^2 + Y_{t+4}Y_{t+7}^2Z_{t}Z_{t+1}^2Z_{t+2},\\
F_{Y_{t+3}} &= Y_{t} Y_{t+4} Z_{t+2}  + Y_{t+2}^2Y_{t+5}Y_{t+6}Z_{t},\\
F_{Y_{t+4}} &= Y_{t+1} Y_{t+2} Y_{t+5}^2 Y_{t+8} Y_{t+9} + Y_{t+3}Y_{t+7}Z_{t}Z_{t+1}^2Z_{t+2},\\
F_{Y_{t+5}} &= Y_{t+1} Y_{t+4}^2 Y_{t+7} Y_{t+8} Z_{t+2} + Y_{t+2}Y_{t+3}Y_{t+6}^2Z_{t}^2Y_{t+9},\\
F_{Y_{t+6}} &= Y_{t+1} Y_{t+2} Y_{t+5}^2 Y_{t+8} Y_{t+9} + Y_{t+3}Y_{t+7}Z_{t}Z_{t+1}^2Z_{t+2},\\
F_{Y_{t+7}} &= Y_{t+2}^2 Y_{t+5} Y_{t+6} Z_{t}  + Y_{t}Y_{t+4}Z_{t+2},\\
F_{Y_{t+8}} &= Y_{t} Y_{t+3} Y_{t+6} Z_{t+1} + Y_{t+1}^2Y_{t+4}Y_{t+5}Y_{t+7},\\
F_{Y_{t+9}} &= Y_{t+1} Y_{t+4} Y_{t+7} Z_{t+2} + Y_{t+2}^2Y_{t+5}Y_{t+6}Y_{t+8},\\
F_{Z_{t}} = F_{Z_{t+1}} = F_{Z_{t+2}} &= Y_{t} Y_{t+3} Y_{t+6} Y_{t+9} + Y_{t+1}Y_{t+2}Y_{t+4}Y_{t+5}Y_{t+7}Y_{t+8}.
\end{align*} 

\begin{theorem}
 Mutating $Y_{t}$ into $Y_{t+10}$ and then $Z_{t}$ into $Z_{t+3}$ preserves the shape of the original seed up to the shift of indices $i \mapsto i+1$. In other words, we have an analog of a $T$-system.
 The exchange relations for $Y_{t}$ and $Z_{t}$ are
 \begin{align*}
  Y_{t} Y_{t+10} &= Y_{t+3}Y_{t+7}Z_{t}Z_{t+1}Z_{t+2} + Y_{t+2}^2Y_{t+5}^2Y_{t+8}^2,\\
 Z_{t} Z_{t+1} Z_{t+2} Z_{t+3} &= Y_{t+2} Y_{t+3} Y_{t+5} Y_{t+6} Y_{t+8} Y_{t+9} + Y_{t+1} Y_{t+4} Y_{t+7} Y_{t+10}.
 \end{align*}
\end{theorem}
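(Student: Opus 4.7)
The plan is to verify the statement as a direct LP--algebra computation: apply the mutation algorithm from Section~9 at $Y_t$ and then at $Z_t$, and check that the resulting thirteen cluster variables together with their new exchange polynomials match the shifted seed obtained by replacing $t$ by $t+1$ throughout the list. Before doing anything, I would check that the given seed arises from a double quiver in the sense of Definition~\ref{dfn:double_quiver}: every $F_{Y_{t+i}}$ and $F_{Z_{t+j}}$ is a binomial with coprime monomials, neither of which involves the variable whose exchange polynomial it is, and all exponents are nonnegative. This legitimizes using the mutation rule as stated, and in particular places us in the regime where the mutation stays within double quivers.

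The first mutation at $Y_t$ is routine: since $F_{Y_t}\neq F_{v}$ for every other cluster variable $v$, rule~\eqref{eq:LP_mutation_1} immediately gives $Y_{t+10}=F_{Y_t}/Y_t$, which is the first asserted exchange relation. The content of this step is really in updating the remaining twelve exchange polynomials: for each $F_{v}$ that depends on $Y_t$, I would form $\tilde F_{Y_t}$ by specializing $v\to 0$ in $F_{Y_t}$, substitute $Y_t\mapsto \tilde F_{Y_t}/Y_{t+10}$ into $F_v$, and then divide by the unique Laurent monomial $M$ in the new cluster that makes the result a polynomial not divisible by any cluster variable. The second mutation, at $Z_t$, requires formula~\eqref{eq:LP_mutation_2} instead of~\eqref{eq:LP_mutation_1}, since $F_{Z_t}=F_{Z_{t+1}}=F_{Z_{t+2}}$ forces $J=\{Z_t,Z_{t+1},Z_{t+2}\}$; the mutation produces
\[
Z_{t+3}\;:=\;\frac{F_{Z_t}}{Z_t Z_{t+1} Z_{t+2}},
\]
which is exactly the second asserted exchange relation. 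One then repeats the substitute-and-clear procedure using $Z_t\mapsto \tilde F_{Z_t}/(Z_{t+3} Z_{t+1} Z_{t+2})$ for every polynomial in which $Z_t$ appears.

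After carrying out both mutations, the final verification is a side-by-side comparison of the resulting thirteen exchange polynomials with the polynomials $F_{Y_{t+i+1}}$ and $F_{Z_{t+j+1}}$ obtained by formally shifting $t\mapsto t+1$ in the list defining the original seed. The main obstacle is the bookkeeping in the substitute-and-clear step: for each of the roughly dozen polynomials that depend on $Y_t$ or on $Z_t$, one has to identify the correct purifying monomial $M$ — that is, the largest Laurent monomial in the new cluster dividing the numerator after substitution. This requires computing the factorization of each substituted numerator, and for a few polynomials (in particular $F_{Y_{t+2}}$, $F_{Y_{t+5}}$ and $F_{Z_t}$ after the first mutation) the numerator is a sum of several monomials in which recognizing the common factor is not immediate. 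Apart from these factorization checks, every individual step is mechanical, and the $T$-system structure is then read off from the resulting list.
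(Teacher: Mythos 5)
Your proposal follows essentially the same route as the paper's proof: mutate at $Y_t$ using~\eqref{eq:LP_mutation_1} (since no other exchange polynomial equals $F_{Y_t}$), then at $Z_t$ using~\eqref{eq:LP_mutation_2} with $J=\{Z_t,Z_{t+1},Z_{t+2}\}$, update each dependent exchange polynomial via the substitute-and-clear rule, and compare the resulting list with the seed shifted by $i\mapsto i+1$. The paper carries out exactly this computation (illustrating the purifying-monomial step in detail for $F_{Y_{t+8}}$), so your plan is correct and matches the paper's argument.
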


\begin{remark}
 We can see the non-cluster nature of this example in both the exchange polynomials and in the exchange relations. For example, $Y_{t+1}$ appears in the exchange polynomial $F_{Y_{t}}$, but ${Y_{t}}$ does not appear in the exchange polynomial $F_{Y_{t+1}}$. 
 Also, the product of the four variables $Z_{t} Z_{t+1} Z_{t+2} Z_{t+3}$ on the left hand side of the exchange relation is something that cannot happen in a cluster algebra. It is explained by the rules of the Laurent Phenomenon algebra dynamics and the fact that $Z_{t}$, $Z_{t+1}$, and $Z_{t+2}$ all have the same exchange polynomial at the moment of mutation of $Z_{t}$, and thus the result of the mutation is given by~\eqref{eq:LP_mutation_2}.
\end{remark}

\begin{proof}
 After mutating at $Y_{t}$, the new exchange polynomials are 
\begin{align*}
F_{Y_{t}} &= Y_{t+3}Y_{t+7}Z_{t}Z_{t+1}Z_{t+2} + Y_{t+2}^2Y_{t+5}^2Y_{t+8}^2,\\
F_{Y_{t+1}} &= Y_{t+2} Y_{t+4} Y_{t+5} Y_{t+8}^2 + Y_{t+3}Y_{t+6}Z_{t}Y_{t+9},\\
F_{Y_{t+2}} &= Y_{t+3} Y_{t+5} Y_{t+6} Y_{t+9}^2 + Y_{t+4}Y_{t+7}Z_{t+1}Y_{t+10},\\
F_{Y_{t+3}} &= Y_{t+4} Y_{t+5} Y_{t+8}^2  Z_{t+2} + Y_{t+6}Z_{t}Y_{t+10},\\
F_{Y_{t+4}} &= Y_{t+1} Y_{t+2} Y_{t+5}^2 Y_{t+8} Y_{t+9} + Y_{t+3}Y_{t+7}Z_{t}Z_{t+1}^2Z_{t+2},\\
F_{Y_{t+5}} &= Y_{t+1} Y_{t+4}^2 Y_{t+7} Y_{t+8} Z_{t+2} + Y_{t+2}Y_{t+3}Y_{t+6}^2Z_{t}^2Y_{t+9},\\
F_{Y_{t+6}} &= Y_{t+1} Y_{t+2} Y_{t+5}^2 Y_{t+8} Y_{t+9} + Y_{t+3}Y_{t+7}Z_{t}Z_{t+1}^2Z_{t+2},\\
F_{Y_{t+7}} &= Y_{t+4} Y_{t+5} Y_{t+8}^2 Z_{t+2}  + Y_{t+6}Z_{t}Y_{t+10},\\
F_{Y_{t+8}} &= Y_{t+1}^2 Y_{t+4} Y_{t+5} Y_{t+10} + Y_{t+3}^2Y_{t+6}Z_{t}Z_{t+1}^2Z_{t+2},\\
F_{Y_{t+9}} &= Y_{t+1} Y_{t+4} Y_{t+7} Z_{t+2} + Y_{t+2}^2Y_{t+5}Y_{t+6}Y_{t+8},\\
F_{Y_{t+10}} &= Y_{t+3}Y_{t+7}Z_{t}Z_{t+1}Z_{t+2} + Y_{t+2}^2Y_{t+5}^2Y_{t+8}^2,\\
F_{Z_{t}} = F_{Z_{t+1}} = F_{Z_{t+2}} &= Y_{t+2} Y_{t+3} Y_{t+5} Y_{t+6} Y_{t+8} Y_{t+9} + Y_{t+1} Y_{t+4} Y_{t+7} Y_{t+10}.
  \end{align*}
Now we mutate at $Z_{t}$, obtaining the new exchange polynomials 
\begin{align*}
F_{Y_{t+1}} &= Y_{t+4}Y_{t+8}Z_{t+1}Z_{t+2}Z_{t+3} + Y_{t+3}^2Y_{t+6}^2Y_{t+9}^2,\\
F_{Y_{t+2}} &= Y_{t+3} Y_{t+5} Y_{t+6} Y_{t+9}^2 + Y_{t+4}Y_{t+7}Z_{t+1}Y_{t+10},\\
F_{Y_{t+3}} &= Y_{t+1} Y_{t+6} Y_{t+7} Y_{t+10}^2 + Y_{t+5}Y_{t+8}^2Z_{t+1}Z_{t+2}^2Z_{t+3},\\
F_{Y_{t+4}} &= Y_{t+1} Y_{t+5} Z_{t+3}  + Y_{t+3}^2Y_{t+6}Y_{t+7}Z_{t+1},\\
F_{Y_{t+5}} &= Y_{t+2} Y_{t+3} Y_{t+6}^2 Y_{t+9} Y_{t+10} + Y_{t+4}Y_{t+8}Z_{t+1}Z_{t+2}^2Z_{t+3},\\
F_{Y_{t+6}} &= Y_{t+2} Y_{t+5}^2 Y_{t+8} Y_{t+9} Z_{t+3} + Y_{t+3}Y_{t+4}Y_{t+7}^2Z_{t+1}^2Y_{t+10},\\
F_{Y_{t+7}} &= Y_{t+2} Y_{t+3} Y_{t+6}^2 Y_{t+9} Y_{t+10} + Y_{t+4}Y_{t+8}Z_{t+1}Z_{t+2}^2Z_{t+3},\\
F_{Y_{t+8}} &= Y_{t+3}^2 Y_{t+6} Y_{t+7} Z_{t+1}  + Y_{t+1}Y_{t+5}Z_{t+3},\\
F_{Y_{t+9}} &= Y_{t+1} Y_{t+4} Y_{t+7} Z_{t+2} + Y_{t+2}^2Y_{t+5}Y_{t+6}Y_{t+8},\\
F_{Y_{t+10}} &= Y_{t+2} Y_{t+5} Y_{t+8} Z_{t+3} + Y_{t+3}^2Y_{t+6}Y_{t+7}Y_{t+9},\\
F_{Z_{t+1}} = F_{Z_{t+2}} = F_{Z_{t+3}} &= Y_{t+1} Y_{t+4} Y_{t+7} Y_{t+10} + Y_{t+2}Y_{t+3}Y_{t+5}Y_{t+6}Y_{t+8}Y_{t+9},\\
  \end{align*} 
which differ from the original seed by the shift $i \mapsto i+1$, as desired. 

Let us consider for example in detail what happens to $F_{Y_{t+8}}$, all other computations are similar. Setting $Y_{t+8}=0$ in the original $F_{Y_{t}}$, we get the substitution $$Y_{t} \leftarrow \frac{Y_{t+3}Y_{t+7}Z_{t}Z_{t+1}Z_{t+2}}{Y_{t+10}}$$
to be performed in $F_{Y_{t+8}}$. Bringing the resulting 
$$\frac{Y_{t+3}Y_{t+7}Z_{t}Z_{t+1}Z_{t+2}}{Y_{t+10}} Y_{t+3} Y_{t+6} Z_{t+1} + Y_{t+1}^2Y_{t+4}Y_{t+5}Y_{t+7}$$
to a common denominator and cancelling the common factor $Y_{t+7}$, we indeed obtain 
$$F_{Y_{t+8}} = Y_{t+1}^2 Y_{t+4} Y_{t+5} Y_{t+10} + Y_{t+3}^2Y_{t+6}Z_{t}Z_{t+1}^2Z_{t+2}.$$
Then, setting $Y_{t+8}=0$ in the new
$$F_{Z_{t}} = Y_{t+2} Y_{t+3} Y_{t+5} Y_{t+6} Y_{t+8} Y_{t+9} + Y_{t+1} Y_{t+4} Y_{t+7} Y_{t+10}$$
(which we assume to have computed already), we 
get the substitution
$$Z_{t} \leftarrow \frac{Y_{t+1} Y_{t+4} Y_{t+7} Y_{t+10}}{Z_{t+1}Z_{t+2}Z_{t+3}}$$
to be performed in $F_{Y_{t+8}}$. Bringing the resulting 
$$Y_{t+1}^2 Y_{t+4} Y_{t+5} Y_{t+10} + Y_{t+3}^2Y_{t+6}\frac{Y_{t+1} Y_{t+4} Y_{t+7} Y_{t+10}}{Z_{t+1}Z_{t+2}Z_{t+3}}Z_{t+1}^2Z_{t+2}$$
to a common denominator and cancelling the common factor 
$Y_{t+1} Y_{t+4} Y_{t+10}$, we indeed obtain
\begin{equation*}
F_{Y_{t+8}} = Y_{t+3}^2 Y_{t+6} Y_{t+7} Z_{t+1}  + Y_{t+1}Y_{t+5}Z_{t+3}.\qedhere
\end{equation*}
\end{proof}

Now, consider a reduction from this Laurent recurrence system to the $R$-system associated with $G(P)$ as follows.
\begin{align*}  
X_1(t) &= \frac{Y_{t} Y_{t+9}}{Y_{t+2} Y_{t+7} Z_{t} Z_{t+1}},\\
X_2(t) &= \frac{Y_{t+1} Y_{t+4} Y_{t+5} Y_{t+8}}{Y_{t+3} Y_{t+6} Z_{t} Z_{t+1}},\\
X_3(t) &= \frac{Y_{t+1} Y_{t+9}}{Y_{t+2} Y_{t+8} Z_{t+1}},\\
X_4(t) &= \frac{Y_{t+1} Y_{t+2} Y_{t+5}^2 Y_{t+8} Y_{t+9}}{Y_{t+3} Y_{t+7} Z_{t} Z_{t+1}^2 Z_{t+2}},\\
X_5(t) &=  \frac{Y_{t+2} Y_{t+5} Y_{t+6} Y_{t+9}}{Y_{t+4} Y_{t+7} Z_{t+1} Z_{t+2}}.
\end{align*}

\begin{theorem}
 The $X_i(t)$-s satisfy the relations of the $R$-system associated with $G(P)$. Thus, the $Y(t)$-s constructed above form a weak $\tau$-sequence for this $R$-system.
\end{theorem}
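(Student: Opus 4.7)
The plan is to verify the toggle relations~\eqref{eq:toggle} for the digraph $G(P)$ by direct substitution of the given monomial expressions for $X_i(t)$ in the variables $Y_j, Z_k$. Since the construction assigns the constant value $1$ to the auxiliary vertex $s \in V(G(P))$, Remark~\ref{rmk:poset} together with Theorem~\ref{thm:arb} reduces the verification to only the five toggle relations at the poset vertices $1,2,3,4,5$: the toggle at $s$ is then automatically satisfied by the uniqueness of the $R$-system's solution, since birational rowmotion on $P$ produces a consistent assignment with $X_s(t) \equiv 1$.

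First I would dispatch the ``easy'' toggles at vertices $2$, $3$, and $5$, each of which has a single outgoing and a single incoming neighbor in $G(P)$ and thus reduces to a purely monomial identity. For instance, at vertex $3$ one checks $X_3(t) X_3(t+1) = X_1(t+1)$, which after substitution collapses by cancellation among the $Y$'s and $Z$'s with no need for either exchange relation. The analogous identities $X_2(t) X_2(t+1) = X_4(t)$ and $X_5(t) X_5(t+1) = X_4(t+1)$ are handled by the same routine telescoping.

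Next I would handle the two toggles involving genuine sums, where the exchange relations from the preceding theorem enter. The vertex-$1$ toggle $X_1(t) X_1(t+1) = X_3(t) + X_4(t)$, after combining the right-hand side over a common denominator, factors as $\frac{Y_{t+1} Y_{t+9}(Y_{t+3}Y_{t+7}Z_tZ_{t+1}Z_{t+2} + Y_{t+2}^2 Y_{t+5}^2 Y_{t+8}^2)}{Y_{t+2}Y_{t+3}Y_{t+7}Y_{t+8}Z_t Z_{t+1}^2 Z_{t+2}}$, and the bracket is precisely $Y_t Y_{t+10}$ by the $Y$-exchange relation, whereupon the identity matches the left-hand side by monomial cancellation. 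For the vertex-$4$ toggle $X_4(t) X_4(t+1) = X_5(t)\, X_1(t+1) X_2(t+1)/(X_1(t+1)+X_2(t+1))$, the main step is to simplify $X_1(t+1)+X_2(t+1)$: its numerator is exactly $Y_{t+1}Y_{t+4}Y_{t+7}Y_{t+10} + Y_{t+2}Y_{t+3}Y_{t+5}Y_{t+6}Y_{t+8}Y_{t+9}$, which equals $Z_t Z_{t+1}Z_{t+2}Z_{t+3}$ by the $Z$-exchange relation. After this replacement, the remaining identity is again monomial.

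The only real obstacle is bookkeeping: the substituted monomials carry up to six $Y_j$-factors and three $Z_k$-factors in both numerator and denominator, so one must track exponents and index shifts carefully. No further algebraic input is required beyond the two exchange relations for $Y_t Y_{t+10}$ and $Z_t Z_{t+1}Z_{t+2}Z_{t+3}$ established in the preceding theorem.
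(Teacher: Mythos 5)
Your proposal is correct and follows essentially the same route as the paper: the toggles at vertices $2$, $3$, $5$ (and at $\tb$, via Remark~\ref{rmk:poset}) are monomial identities, and the two nontrivial toggles at vertices $1$ and $4$ reduce exactly to the exchange relations $Y_tY_{t+10}=Y_{t+3}Y_{t+7}Z_tZ_{t+1}Z_{t+2}+Y_{t+2}^2Y_{t+5}^2Y_{t+8}^2$ and $Z_tZ_{t+1}Z_{t+2}Z_{t+3}=Y_{t+2}Y_{t+3}Y_{t+5}Y_{t+6}Y_{t+8}Y_{t+9}+Y_{t+1}Y_{t+4}Y_{t+7}Y_{t+10}$. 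You also correctly place the incoming neighbors at time $t+1$ in the vertex-$4$ toggle (the paper's displayed formula writes $X_1^{-1}(t)+X_2^{-1}(t)$, an apparent typo), and your intermediate expressions check out.
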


\begin{proof}
 The only non-trivial toggle relations to check are 
 $$X_1(t) X_1(t+1) = X_3(t) + X_4(t) \text{ and } X_4(t)X_4(t+1) = \frac{X_5(t)}{X_1^{-1}(t) + X_2^{-1}(t)},$$
 which are equivalent to
 \begin{align*}
 Y_{t} Y_{t+10} &= Y_{t+3}Y_{t+7}Z_{t}Z_{t+1}Z_{t+2} + Y_{t+2}^2Y_{t+5}^2Y_{t+8}^2 \quad\text{ and}\\
 Z_{t} Z_{t+1} Z_{t+2} Z_{t+3} &= Y_{t+2} Y_{t+3} Y_{t+5} Y_{t+6} Y_{t+8} Y_{t+9} + Y_{t+1} Y_{t+4} Y_{t+7} Y_{t+10}
 \end{align*}
 respectively. 
 \end{proof}

\begin{remark}
 Note that this reduction is not dominant since $X_2(t) X_5(t) = X_4(t)$. 
\end{remark}

As Figure~\ref{fig:nonzero_entropy} suggests, the degrees of the polynomials appearing in the $R$-system associated with $G(P)$ grow exponentially and thus it does not have zero algebraic entropy.

\begin{figure}
  \resizebox{1\textwidth}{!}{
    \def\heightt{6cm}
        \def\widthh{6cm}
\begin{tabular}{c|c|c}
\begin{tikzpicture}
  \begin{axis}[
    height=\heightt,width=\widthh,
		xlabel=$t$,
		ylabel=$\deg(X(t))$]
	\addplot[very thick] file {nonzero_entropy.dat};
	\end{axis}
      \end{tikzpicture}
  &
    \begin{tikzpicture}
        \begin{axis}[
    height=\heightt,width=\widthh,
        	xlabel=$t$,
        	ylabel=$\sqrt{\deg(X(t))}$]
        \addplot[very thick] file {nonzero_entropy_sqrt.dat};
      \end{axis}
    \end{tikzpicture}
    &
    \begin{tikzpicture}
        \begin{axis}[
    height=\heightt,width=\widthh,
        	xlabel=$t$,
        	ylabel=$\log\deg(X(t))$]
        \addplot[very thick] file {nonzero_entropy_log.dat};
      \end{axis}
    \end{tikzpicture}    
\end{tabular}
}
  \caption{\label{fig:nonzero_entropy}The exponential growth is apparent from the plot of the degrees of $X(t)$ from Section~\ref{sec:poset_1} for $0\leq t\leq 100$.}
\end{figure}
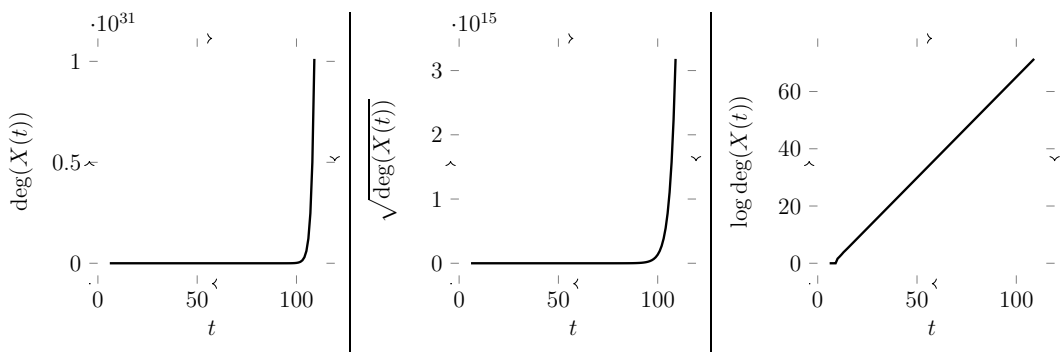

\subsection{An example without singularity confinement}\label{sec:an-example-without}
Consider the poset $P$ shown in Figure \ref{fig:example_without}.
After fourteen iterations of the $R$-system associated with $G(P)$, based on our computer simulation, we get the following values at the vertices.
\begin{align*}
X_1(14) &\sim Y_{23}Y_{15}Y_{7}Y_{0}/(Y_{22}Y_{17}Y_{14}Y_{13}Y_{10}Y_{9}Y_{6}Y_{5}Y_{3}),\\
X_2(14) &\sim Y_{21}Y_{18}Y_{17}Y_{14}Y_{8}Y_{5}Y_{2}Y_{3}^2/(Y_{22}Y_{19}Y_{16}Y_{10}Y_{1}Y_{4}Y_{0}),\\
X_3(14) &\sim Y_{23}Y_{18}Y_{15}Y_{12}Y_{9}Y_{6}Y_{2}Y_{1}/(Y_{22}Y_{19}Y_{16}Y_{14}Y_{13}Y_{11}Y_{10}Y_{8}Y_{5}Y_{3}Y_{4}),\\
X_4(14) &\sim Y_{23}Y_{20}Y_{17}Y_{14}Y_{7}Y_{8}Y_{2}Y_{3}Y_{4}/(Y_{24}Y_{22}Y_{12}Y_{6}Y_{1}),\\
X_5(14) &\sim Y_{23}Y_{20}Y_{15}Y_{12}Y_{11}Y_{8}Y_{1}Y_{0}/(Y_{22}Y_{21}Y_{16}Y_{13}^2Y_{10}^2Y_{5}Y_{4}),  
\end{align*}
where $Y$-s are the irreducible polynomial factors, in the order they appear as factors in $X_i(t)$-s, $1 \leq t \leq 14$, and $A\sim B$ means that $A/B$ is a monomial in the initial variables, i.e., an invertible element of the corresponding Laurent polynomial ring. We see that not a single factor $Y_i$ has disappeared at the fourteenth step, i.e., the corresponding singularity was not confined. Of course, this does not prove that such confinement does not happen in the future. Nevertheless, we conjecture the following.

\begin{figure}
  
\def\scl{0.3}
\begin{tikzpicture}[yscale=1.5]
\node[draw, circle, scale=\scl, fill=black,label=below:{$X_1(t)$}] (1) at (0,0) { };
\node[draw, circle, scale=\scl, fill=black,label=below:{$X_2(t)$}] (2)  at (2,0) { };
\node[draw, circle, scale=\scl, fill=black,label=left:{$X_3(t)$}] (3)  at (-1,1) { };
\node[draw, circle, scale=\scl, fill=black,label=above:{$X_4(t)$}] (4)  at (1,1) { };
\node[draw, circle, scale=\scl, fill=black,label=above:{$X_5(t)$}]  (5) at (-2,2) { };
\draw (1)--(3);
\draw (1)--(4);
\draw (2)--(4);
\draw (3)--(5);
\end{tikzpicture}

    \caption{The poset $P$ from Section~\ref{sec:an-example-without}.}
    \label{fig:example_without}
\end{figure}
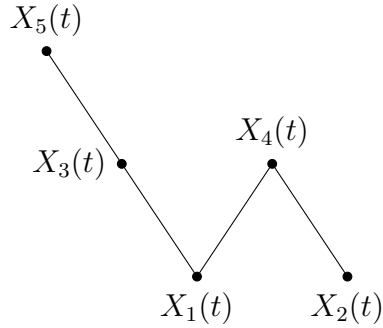

\begin{conjecture}
 The coefficient-free $R$-system associated with $G(P)$ for the poset $P$ from Figure~\ref{fig:example_without} does not possess a strong $\tau$-sequence and does not have the singularity confinement property. 
\end{conjecture}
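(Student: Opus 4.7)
The plan is to reduce both claims to a single negative statement about the tropical dynamics. By the proposition in Section~\ref{sec:defin-tau-sequ}, any strong $\tau$-sequence forces singularity confinement, so it suffices to disprove the latter. Since multiplicities of a fixed irreducible factor $r\in\ring$ evolve under the iteration of the $R$-system by a piecewise-linear map on $\Z^V/\langle\allones\rangle$ that is precisely the tropicalization $\rowmt$ of~\eqref{eq:arb}, the task becomes: exhibit an irreducible factor $r$ appearing at some time $t_0$ whose initial multiplicity vector $\mu_0:=\mult r {R(t_0)}\in W=\R^V/\langle\allones\rangle$ has the property that $\rowmt^k(\mu_0)\neq 0$ for every $k\geq 0$.

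First I would write down $\rowmt$ explicitly for $G(P)$. Since $|V|=6$, $W\cong\R^5$, and $\rowmt$ is a piecewise-linear map whose pieces are indexed by choices of maximizing arborescence at each vertex; the number of combinatorial types of arborescences here is small enough to enumerate by hand, giving an explicit polyhedral decomposition of $W$ on which $\rowmt$ acts linearly piece-by-piece. Next, I would read off the initial multiplicity vectors $\mu_0^{(i)}$ of the first several irreducible factors $Y_0,Y_1,\dots$ from the 14-step computation recorded in the excerpt, and iterate $\rowmt$ symbolically on each $\mu_0^{(i)}$, looking for structural patterns such as eventual linearity, periodicity modulo a common scaling direction, or escape into a specific polyhedral chamber on which $\rowmt$ is an expanding linear map.

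The key intermediate goal would be to construct a Lyapunov-type quantity for $\rowmt$ on the relevant orbit. The most natural candidate is the tropical superpotential $\max_{(u,w)\in E}(\mu_w-\mu_u)$, which by tropicalizing the conservation of $\sp$ is invariant (up to normalization) along $\rowmt$-orbits; if one can show that for some $\mu_0^{(i)}$ this tropical invariant is strictly positive while it necessarily vanishes at $0\in W$, then the orbit of $\mu_0^{(i)}$ is bounded away from $0$, and $Y_i$ persists forever in $R(t)$. A complementary approach is to identify a fundamental chamber in $W$ in which the dynamics is linear with a Perron--Frobenius direction not proportional to $\allones$, so that the orbit grows linearly in at least one coordinate modulo $\langle\allones\rangle$ and cannot return to the origin.

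The main obstacle is the global, negative nature of the statement: even after controlling finitely many steps and establishing a candidate Lyapunov function, one must rule out delicate cancellations at arbitrarily large $t$. In particular, any proof must contend with the fact that the piecewise-linear map $\rowmt$ can transition between chambers unpredictably, and a single mis-timed chamber transition could in principle collapse the multiplicity vector. I expect the hard part will therefore be proving that the orbit of $\mu_0^{(i)}$ remains inside the ``good'' chamber (or a stable cycle of chambers) for all $t$; without such chamber confinement, a clean combinatorial certificate of non-confinement seems difficult to produce, which is presumably why the authors leave the statement as a conjecture.
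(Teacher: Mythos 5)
There is no proof of this statement in the paper for you to match: it is stated as a conjecture, supported only by the computational evidence that after fourteen iterations none of the irreducible factors $Y_0,\dots,Y_{24}$ has disappeared, and the authors explicitly remark that this ``does not prove that such confinement does not happen in the future.'' So the relevant question is whether your proposal would actually close the gap the authors left open, and it would not, for a reason that sits at the very first step of your reduction.

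Your plan rests on the claim that the multiplicity vector $\mult r{R(t)}$ of a fixed irreducible $r$ evolves under iteration ``precisely'' by the tropicalization $\rowmt$ of~\eqref{eq:arb}. That is false in general, and the failure is exactly the phenomenon at stake. For a discrete valuation $\mult r\cdot$ one only has $\mult r{(a+b)}\geq\min(\mult r a,\mult r b)$, with strict inequality precisely when the leading parts cancel; the paper's own remark in Section~\ref{sec:sing-conf-algebr} emphasizes that singularity confinement occurs exactly through such ``fortuitous cancellations,'' and Section~\ref{sec:tropical-dynamics} is careful to say only that the tropical $R$-system describes \emph{degrees} and Newton polytopes, not multiplicities of individual factors. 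Consequently the tropical orbit of $\mu_0$ gives at best a one-sided bound on the true multiplicity orbit, and showing that the tropical orbit never returns to $0$ does not show that the algebraic one never does: a single cancellation at some large $t$ (which is an arithmetic statement about coefficients, invisible to the tropicalization) could confine the singularity. The same objection applies to your proposed tropical Lyapunov function built from $\sp$: the conservation of $\sp$ is an identity of rational functions whose tropicalization controls degrees, not the valuation at a specific prime. Any genuine proof of the conjecture must therefore rule out cancellations in $\ring$ for all $t$, which is not a finite or purely polyhedral computation; you correctly sense this difficulty in your final paragraph, but it undermines the reduction itself rather than merely the last step. The remaining ingredients you cite (that a strong $\tau$-sequence implies singularity confinement, so disproving the latter suffices for both claims) are correct and match the proposition in Section~\ref{sec:defin-tau-sequ}.
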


\section{$R$-systems with Somos and Gale-Robinson $\tau$-sequences}\label{sec:somos-gale-robinson}

In this section, we solve the \emph{inverse problem}:  we start with an interesting Laurent sequence $(\y_N)_{N\geq 0}$ defined by a recurrence relation: 
\begin{equation}\label{eq:y}
  \begin{split}
\y_0&=x_0,\y_1=x_1,\dots,\y_{n-1}=x_{n-1},\quad \text{and}\\
\y_N\y_{N+n}&=P(\y_{N+1},\y_{N+2},\dots,\y_{N+n-1})\quad\text{for $N\geq 0$},
  \end{split}
\end{equation}
where $P$ is a polynomial. We then construct a strongly connected digraph $G$ for which the recurrence system $(\Y(t),P)$ given by $\Y(t)=(\Y_i(t))_{i=0}^n$ with $\Y_i(t)=y_{t+i}$ is conjecturally a strong or a weak $\tau$-sequence for the $R$-system associated with $G$.

\def\GR{\operatorname{GR}}

We will be interested only in the case of \emph{Gale-Robinson sequences} (which includes Somos sequences as a special case).

\begin{definition}
  Let $p<q$ be two distinct positive integers and let $n> q$. The \emph{Three Term Gale-Robinson sequence} $\GR(n;p,q)=(\y_N)_{N\geq0}$ is defined by~\eqref{eq:y} with
  \[P(\y_1,\dots,\y_{n-1}):=\alpha \y_p\y_{n-p}+\beta \y_q\y_{n-q},\]
  where $\alpha,\beta$ are arbitrary elements of $\field^\ast$.
\end{definition}

\begin{definition}
  Let $0<p<q<r$ be three distinct positive integers and let $n=p+q+r$ be their sum. The \emph{Four Term Gale-Robinson sequence} $\GR(p,q,r)=(\y_N)_{N\geq0}$ is defined by~\eqref{eq:y} with
  \begin{equation}\label{eq:GR_four_def}
P(\y_1,\dots,\y_{n-1}):=\alpha \y_p\y_{n-p}+\beta \y_q\y_{n-q}+\gamma\y_r\y_{n-r},
  \end{equation}
  where $\alpha,\beta,\gamma$ are arbitrary elements of $\field^\ast$.
\end{definition}

The following result was conjectured by Gale and Robinson~\cite{Gale}, \cite[E15]{Robinson} and shown for both Three and Four Term Gale-Robinson sequences by Fomin and Zelevinsky.
\begin{proposition}[\cite{FZCube}]\leavevmode
  \begin{enumerate}
  \item For each $N\geq 0$, the entry $\y_N$ of $\GR(n;p,q)$ is a Laurent polynomial in $x_0,\dots,x_{n-1},\alpha,\beta$.
  \item For each $N\geq 0$, the entry $\y_N$ of $\GR(p,q,r)$ is a Laurent polynomial in $x_0,\dots,x_{n-1},\alpha,\beta,\gamma$.
  \end{enumerate}
\end{proposition}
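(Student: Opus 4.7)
The plan is to realize each Gale-Robinson recurrence as a periodic mutation sequence in a cluster or Laurent phenomenon algebra and then invoke the corresponding Laurent phenomenon.

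For part (1) on $\GR(n;p,q)$, I would construct a quiver $Q$ on vertex set $[n]=\{1,\dots,n\}$ together with two frozen vertices carrying the coefficients $\alpha$ and $\beta$. The arrows would be chosen so that the cluster-algebra exchange relation at any vertex $i$ takes the form
\[y_i\,y_i' \;=\; \alpha\, y_{i-p}\, y_{i+p} \;+\; \beta\, y_{i-q}\, y_{i+q},\]
with indices taken modulo $n$; concretely, there are arrows $i\pm p \to i$ contributing the $\alpha$-monomial and arrows $i \to i\pm q$ contributing the $\beta$-monomial. The key verification is that mutation at $i$ followed by the cyclic relabeling $j \mapsto j-1$ restores $Q$. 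Setting $y_{N+n} := y_N'$ then realizes $(y_N)_{N\ge 0}$ as successive cluster variables along a $T$-system built from $Q$, and Theorem~\ref{thm:FZ_Laurent} immediately gives Laurentness of every $y_N$ in $x_0,\dots,x_{n-1},\alpha,\beta$.

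For part (2) the exchange polynomial~\eqref{eq:GR_four_def} has three binomial terms, which cannot arise from a cluster algebra mutation (where the right-hand side has exactly two monomials). The plan here is to use LP algebras instead: construct an LP seed with cluster variables $y_0,\dots,y_{n-1}$ (plus frozen variables for $\alpha,\beta,\gamma$) such that the exchange polynomial at the designated mutable vertex equals~\eqref{eq:GR_four_def}, and choose the remaining exchange polynomials so that a single LP mutation followed by the cyclic relabeling $i \mapsto i-1$ returns the same seed. Laurentness of every $y_N$ would then follow from the LP Laurent phenomenon. An equivalent route, closer to that of~\cite{FZCube}, is to embed $\GR(p,q,r)$ into the three-dimensional \emph{cube recurrence} along a suitable lattice direction and establish the Laurent property for the cube recurrence by the \emph{caterpillar lemma}, a combinatorial criterion guaranteeing that an iterated sequence of rational substitutions preserves Laurentness provided consecutive substitutions obey explicit coprimality hypotheses.

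The main obstacle lies entirely in the Four-Term case: one must verify the coprimality and irreducibility hypotheses needed either to run the LP mutation algorithm consistently through all iterations or to invoke the caterpillar lemma. Concretely, after each application of the recurrence one has to check that the new variable $y_{N+n}$ shares no unexpected common factors with the previous cluster variables beyond those explicitly cancelled by the LP mutation algorithm described above. The Three-Term case is comparatively routine since it fits directly into the standard cluster algebra framework; by contrast, the Four-Term case genuinely requires the extra flexibility of LP mutations, and ensuring that the seed structure is preserved under the rotation is the delicate point. Once this verification is in place, the Laurent property propagates automatically through all iterations, yielding the full proposition.
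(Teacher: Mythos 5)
The paper does not actually prove this proposition: it is quoted from~\cite{FZCube}, and the only in-text indication of an argument is the remark that the Three Term (resp.\ Four Term) sequence arises as a reduction of the octahedron (resp.\ cube) recurrence, whose Laurentness Fomin and Zelevinsky establish via their Caterpillar Lemma. Your second route for part (2) is therefore essentially the argument behind the citation, while your route for part (1) --- realizing $\GR(n;p,q)$ as the $T$-system of a mutation-periodic quiver --- is a genuinely different and by now standard alternative (the Fordy--Marsh construction); it buys immediate access to Theorem~\ref{thm:FZ_Laurent} and a cleaner conceptual picture, at the cost of a periodicity verification that the original caterpillar argument does not need.

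Two soft spots you should not gloss over. First, in part (1) the coefficients $\alpha,\beta$ are not innocuous: if you encode them as frozen vertices, mutation at $i$ reverses the arrows between $i$ and the frozen vertices, so the requirement that $\alpha$ multiply the $p$-monomial and $\beta$ the $q$-monomial in \emph{every} subsequent exchange relation is an extra periodicity condition on the extended quiver, not a consequence of periodicity of the mutable part; it does hold here, but it must be checked (or one can work with general coefficients in a semifield, or run the Caterpillar Lemma, which handles coefficients directly). Second, for part (2) the LP-algebra route is the harder of your two options: a single three-term exchange polynomial does not determine a seed, and producing companion exchange polynomials that are reproduced, up to the cyclic shift, by the LP mutation algorithm is exactly the kind of delicate bookkeeping carried out in Sections~\ref{sec:poset_1} and~\ref{sec:cylindr-posets-octag} of this paper for other examples --- it is not automatic. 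The cube-recurrence reduction is the safer choice, provided you also verify that Propp's linear change of index maps a valid slab of initial conditions for the cube recurrence onto exactly the $n=p+q+r$ initial terms $x_0,\dots,x_{n-1}$, so that Laurentness of the cube recurrence specializes to the claimed Laurentness of $\GR(p,q,r)$.
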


Let us also mention the algebraic entropy of these sequences:
\begin{proposition}\label{prop:GR_quadratic}
The degrees of the Laurent polynomials $(\y_N)$ grow quadratically in $N$. In particular, the Three and Four Term Gale-Robinson sequences have zero algebraic entropy.
\end{proposition}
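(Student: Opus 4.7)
The plan is to reduce to a tropical analysis of the Gale--Robinson recurrence. By the Fomin--Zelevinsky result recalled above, each $\y_N$ is a Laurent polynomial in $x_0,\dots,x_{n-1}$ (and the structure constants $\alpha,\beta,\gamma$), so one may set $d_N:=\deg \y_N$ to be, say, the maximum total degree of a Laurent monomial appearing in $\y_N$. Comparing top-degree monomials on the two sides of the relation $\y_N\y_{N+n}=P(\y_{N+1},\dots,\y_{N+n-1})$ immediately gives the tropical upper bound
\begin{equation}\label{eq:trop_GR_pf}
d_N+d_{N+n}\;\leq\;\max_{i}\bigl(d_{N+p_i}+d_{N+n-p_i}\bigr),
\end{equation}
where $i$ ranges over the two or three binomial factors of $P$, with $p_i$ in $\{p,n-p,q,n-q\}$ or $\{p,n-p,q,n-q,r,n-r\}$ respectively.

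I would then show that any nonnegative sequence satisfying \eqref{eq:trop_GR_pf} with the given initial conditions $d_0=\cdots=d_{n-1}=0$ grows at most quadratically. The key observation is that for each pair $(p,n-p)$ the quantity $(N+p)^2+(N+n-p)^2-N^2-(N+n)^2=2p(p-n)$ is a constant independent of $N$, so every branch of the $\max$ agrees on the leading $N^2$ coefficient. Plugging a quadratic ansatz $D_N=aN^2+bN+c$ into the tropical equation (with $=$ instead of $\leq$) therefore produces a piecewise-linear system for $(a,b,c)$ in which a single choice $a>0$ solves every branch simultaneously, and a standard induction based on \eqref{eq:trop_GR_pf} then yields $d_N\leq D_N=O(N^2)$ for all $N$. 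This step alone already gives the second conclusion of the proposition, since $\log d_N/N\to 0$ whenever $d_N=O(N^2)$.

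To upgrade this to genuine quadratic growth $d_N=\Theta(N^2)$, the cleanest route is to invoke the combinatorial formulas expressing each $\y_N$ as a positive sum of monomials indexed by perfect matchings of a planar bipartite graph (the Speyer pinecone or Aztec-diamond construction for three-term Gale--Robinson, and its pinwheel generalization for the four-term case). Those graphs have $\Theta(N^2)$ vertices, so any single matching contributes a monomial of total degree $\Theta(N^2)$, forcing $d_N\geq cN^2$ for some $c>0$. The main obstacle I anticipate is correctly stating and specializing the four-term combinatorial formula to extract the exact quadratic coefficient; however, for the entropy half of the proposition, only the tropical upper bound is needed, so that half of the argument is essentially complete once the quadratic ansatz in \eqref{eq:trop_GR_pf} has been verified.
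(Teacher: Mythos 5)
There is a genuine gap in the first half of your argument. The tropical inequality $d_N+d_{N+n}\leq\max_{i}\bigl(d_{N+p_i}+d_{N+n-p_i}\bigr)$ is correct, but rearranged it reads $d_{N+n}\leq\max_{i}\bigl(d_{N+p_i}+d_{N+n-p_i}\bigr)-d_N$, and the subtracted term $-d_N$ destroys the monotonicity that a ``standard induction'' would need. Knowing $d_j\leq D_j$ for all $j<N+n$ only gives $d_{N+n}\leq\max_{i}\bigl(D_{N+p_i}+D_{N+n-p_i}\bigr)-d_N$, and to bound this by $D_{N+n}$ you need a \emph{lower} bound $d_N\geq\max_{i}\bigl(D_{N+p_i}+D_{N+n-p_i}\bigr)-D_{N+n}$, which for a quadratic ansatz means $d_N\geq D_N-C$ for a constant $C$, i.e.\ a lower bound matching the upper bound to within an additive constant; the induction does not close. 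Discarding $-d_N$ altogether only yields $d_{N+n}\leq 2\max_{1\leq j\leq n-1}d_{N+j}$, an exponential bound, which is not enough even for the zero-entropy half. Relatedly, the claim that a single choice $a>0$ ``solves every branch simultaneously'' is not right: for $D_N=aN^2+bN+c$ one computes $D_{N+p}+D_{N+n-p}-D_N-D_{N+n}=-2ap(n-p)$, a branch-dependent negative constant, so the branches disagree and at most one can be an equality. To repair the tropical route you must first upgrade the inequality to an equality, which holds because the $y_N$ have nonnegative coefficients (so no top-degree cancellation occurs in the sum, though that positivity is itself usually extracted from the combinatorial formulas), and then analyze the resulting max-plus recurrence exactly, identifying which branch attains the maximum for the given initial data. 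At that point you are doing essentially as much work as quoting the exact degrees.

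The second half of your proposal is, in fact, the paper's entire argument: the three-term (resp.\ four-term) Gale--Robinson sequence is obtained by Propp's reduction from the octahedron (resp.\ cube) recurrence, and the Speyer~\cite{Sp} (resp.\ Carroll--Speyer~\cite{CS}) combinatorial formulas express each $y_N$ as a positive sum of monomials, from which the degree is read off exactly --- giving both the $O(N^2)$ upper bound and the $\Omega(N^2)$ lower bound in one stroke. The cleanest fix is therefore to drop the tropical step and use the combinatorial formulas in both directions, as the paper does via the references \cite{GP3} and \cite{Cube}.
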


These results can be deduced from the following two observations. First, the Three Term (resp., Four Term) Gale-Robinson sequence can obtained by a reduction from the \emph{octahedron recurrence}, also known as the \emph{Hirota-Miwa equation}~\cite[Eq.~(1.1)]{Miwa} (resp., from the \emph{cube recurrence}, also known as the \emph{discrete BKP equation}~\cite[Eq.~(1.4)]{Miwa}), as it was first suggested by J.~Propp, see~\cite{FZCube}. Second, for the octahedron recurrence (resp., for the cube recurrence), Speyer~\cite{Sp} (resp., Carroll and Speyer~\cite{CS}) gave a combinatorial formula from which the quadratic degree growth follows almost trivially. See~\cite[Corollary~3.3]{GP3} (resp., \cite[Theorem~2.12]{Cube}) for details.

In the remainder of this section, we describe which strongly connected digraphs correspond to the sequences $\GR(p+q+2;p,q)$ and $\GR(1,q,r)$, leaving the other cases open. We will see that the Somos-$6$ and Somos-$7$ sequences as special cases of the above, specifically, as $\GR(1,2,3)$ and $\GR(1,2,4)$ respectively. We will see later in Figure~\ref{fig:subgraph_somos_dp3}  that the Somos-$4$ and Somos-$5$ sequences appear as strong $\tau$-sequences for certain $R$-systems. (The Somos-$4$ sequence digraph is also shown in Figure~\ref{fig:somos_4}, while the Somos-$5$ sequence provides a weak $\tau$-sequence for the digraph shown in Figure~\ref{fig:somos_5}.) Thus each of the four Somos sequences arises as a (strong or weak) $\tau$-sequence for an $R$-system.

\subsection{The Three Term Gale-Robinson sequence}\label{sec:three-term-gale}
Let $0<p<q$ and $n:=p+q+2$. We consider the sequence $\GR(n;p,q)=(\y_N)$ of the form~\eqref{eq:y} for $P(\y_1,\dots,\y_{n-1})$ given by
\begin{equation}\label{eq:GR_three_rec}
P(\y_1,\dots,\y_{n-1}):=\alpha\y_{q}\y_{p+2}+\beta\y_{p}\y_{q+2}.
\end{equation}

We now describe a certain digraph $G=(V,E)$. We let
\[V:=\{\tb,A=B_0=C_0,B_1,B_2,\dots,B_{p-1},C_1,C_2,\dots,C_{q-1},D=B_p=C_q\}\]
be its set of vertices, thus $|V|=n-1$. The edges of $G$ are
\[E:=\{(\tb,A),(B_0,B_1),\dots,(B_{p-1},B_{p}),(C_0,C_1),\dots,(C_{q-1},C_{q}),(D,\tb)\}.\]
The weight function $\wt:E\to \field$ is defined by
\[\wtt {B_0} {B_1}=\dots=\wtt{B_{p-1}}{B_P}:=\alpha;\quad \wtt {C_0} {C_1} =\dots=\wtt{C_{q-1}}{C_q}:=\beta,\]
with the weights of the remaining two edges $(\tb,A)$ and $(D,\tb)$ equal to $1$.

Thus $G$ is obtained from an \emph{$O$-shaped poset} by adding a vertex $\tb$ via the procedure described in Figure~\ref{fig:poset}. See Figure~\ref{fig:GR_three} for an example. We now explain how to write down a weak $\tau$-sequence for the $R$-system associated with $G$ in terms of $\Y(t)=(\y_{i+t})_{i=0}^{n-1}$. Consider the lattice $\Z^n$ with basis vectors $e_0,e_1,\dots,e_{n-1}$, and for an element $h=h_0e_0+\dots+h_{n-1}e_{n-1}\in\Z^{n}$, set $(\Y(t))^h:=\Y_0(t)^{h_0}\Y_1(t)^{h_1}\dots\Y_{n-1}(t)^{h_{n-1}}$. We are going to find vectors
\[a=b^\parr0=c^\parr0,b^\parr1,\dots,b^\parr{p-1},c^\parr 1,\dots,c^\parr{q-1},d=b^\parr p=c^\parr q\in\Z^{n}\]
such that substituting
\begin{equation}\label{eq:GR_three_subs}
\X_{B_i}(t):=(\Y(t))^{b^\parr i},\quad \X_{C_j}(t):=(\Y(t))^{c^\parr j}
\end{equation}
for $0\leq i\leq p$ and $0\leq j\leq q$ and $\X_\tb(t):=1$ gives a weak $\tau$-sequence for the $R$-system associated with $G$. Note that the cases $i,j=0$, $i=p$, and $j=q$ include the vertices $A$ and $D$.

In order to describe these vectors, we introduce the operator $\sigma$ on $\Z^n$ defined by
\begin{equation}\label{eq:sigma}
\sigma(h_0,\dots,h_{n-2},h_{n-1}):=(0,h_0,\dots,h_{n-2}).
\end{equation}
Now, set
\[a:=e_1+e_{n-1}-e_{p+1}-e_{q+1},\quad f:=e_{p+2}+e_{q},\quad g:=e_{p}+e_{q+2}, \]
and define $b^\parr i$ and $c^\parr j$ by induction on $i$ and $j$ as follows. Set $b^\parr0:=c^\parr0:=a$, and for each $1\leq i\leq p$ and $1\leq j\leq q$, set
\begin{equation}\label{eq:GR_three_b_c}
b^\parr i:=\sigma(b^\parr{i-1})+a+f-e_0,\quad c^\parr j:=\sigma(c^\parr{j-1})+a+g-e_0.
\end{equation}

One easily verifies that $b^\parr{p}=c^\parr{q}$ so we let $d$ be equal to either one of them. Another direct computation shows the following.
\begin{proposition}
The substitution~\eqref{eq:GR_three_subs} provides a weak $\tau$-sequence for the $R$-system associated with $G$.
\end{proposition}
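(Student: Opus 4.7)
The proof is a direct verification of the toggle relation~\eqref{eq:toggle} at each vertex of $G$ under the substitution $X_{B_i}(t) = (\Y(t))^{b^{(i)}}$, $X_{C_j}(t) = (\Y(t))^{c^{(j)}}$, $X_\tb(t) = 1$. There are four kinds of relations to check, corresponding to the vertices $\tb$, the source $A = B_0 = C_0$, interior $B_i$ ($1 \le i \le p-1$) and $C_j$ ($1 \le j \le q-1$), and the sink $D = B_p = C_q$. The $B$- and $C$-chains are symmetric, so it suffices to treat the $B$-side throughout.

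The key computational tool is the identity
$$(\Y(t+1))^h = (\Y(t))^{\sigma(h)} \cdot \Y_{n-1}(t+1)^{h_{n-1}}, \quad h \in \Z^n,$$
combined with the Gale--Robinson recurrence rewritten as $\Y_{n-1}(t+1) = (\Y(t))^{-e_0}\bigl[\alpha (\Y(t))^f + \beta (\Y(t))^g\bigr]$. Using $a_{n-1} = 1$, the toggle at $A$ then unfolds as
$$X_A X_A' = (\Y(t))^a (\Y(t+1))^a = (\Y(t))^{a+\sigma(a)-e_0}\bigl[\alpha (\Y(t))^f + \beta (\Y(t))^g\bigr] = \alpha (\Y(t))^{b^{(1)}} + \beta (\Y(t))^{c^{(1)}},$$
which is exactly what the toggle at $A$ demands.

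For an interior toggle at $B_i$, the defining recursion~\eqref{eq:GR_three_b_c} gives $b^{(i+1)} + \sigma(b^{(i-1)}) = b^{(i)} + \sigma(b^{(i)})$ (both sides equal $b^{(i)} + \sigma(b^{(i)})$ after substituting $\sigma(b^{(i-1)}) = b^{(i)} - a - f + e_0$), so after substitution the relation collapses to $\Y_{n-1}(t+1)^{\delta_i} = \Y_{n-1}(t+1)^{\delta_{i-1}}$, where $\delta_i := b^{(i)}_{n-1}$. Expanding $b^{(i)} = \sigma^i(a) + \sum_{j=0}^{i-1}\sigma^j(a+f-e_0)$ and tracking which of the nonzero positions of $a+f-e_0 = -e_0 + e_1 + e_{n-1} - e_{p+1} - e_{q+1} + e_q + e_{p+2}$ lie in the window $\{n-i,\ldots,n-1\}$, one verifies $\delta_0 = \delta_1 = \cdots = \delta_{p-1} = 1$ and $\delta_p = 0$, with the analogous statement for the $c^{(j)}$ sequence. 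All interior toggles follow.

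For the toggle at $\tb$, knowing $\delta_p = 0$ yields the closed form $d = -e_0 + e_p + e_q - e_{n-2}$, and the identity $\sigma(d) = -a$ is then immediate, giving $X_A X_D' = (\Y(t))^{a+\sigma(d)} = 1$. For the toggle at $D$, using $\sigma(b^{(p-1)}) = d - a - f + e_0$, $\sigma(c^{(q-1)}) = d - a - g + e_0$, and $b^{(p-1)}_{n-1} = c^{(q-1)}_{n-1} = 1$, the sum $\alpha/X_{B_{p-1}}' + \beta/X_{C_{q-1}}'$ collapses, via one more application of the Gale--Robinson recurrence, to $(\Y(t))^{a-d}$, which equals $1/(X_D X_D')$ since $X_D X_D' = (\Y(t))^{d + \sigma(d)} = (\Y(t))^{d-a}$. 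The only genuinely delicate step is the combinatorial bookkeeping showing that $\delta_i$ and $c^{(j)}_{n-1}$ remain constant at $1$ up to the penultimate index; everything else is a formal manipulation of $\sigma$ together with the recurrence for $\Y_{n-1}(t+1)$.
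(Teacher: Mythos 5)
Your verification is correct and matches the paper's approach: the paper proves this proposition by the same direct check of the toggle relations (it only sketches the general case and works out $p=2$, $q=3$, observing that the relations at $A$ and $D$ reduce to the Gale--Robinson recurrence while all others hold trivially), and your shift identity $(\Y(t+1))^h=(\Y(t))^{\sigma(h)}\Y_{n-1}(t+1)^{h_{n-1}}$ together with the bookkeeping of the last coordinates $\delta_i$ is precisely the general form of that computation. One phrasing quibble: $\delta_p=0$ alone does not ``yield'' the closed form $d=-e_0+e_p+e_q-e_{n-2}$ --- that formula (which is correct, and from which $\sigma(d)=-a$ and hence the toggles at $\tb$ and $D$ follow) comes from telescoping $b^{(p)}=\sigma^p(a)+\sum_{j=0}^{p-1}\sigma^j(a+f-e_0)$, i.e., the same bookkeeping you already acknowledge for the $\delta_i$.
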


\begin{example}
  Let $p=2,q=3,n=7$, so the recurrence relation is
  \begin{equation}\label{eq:GR_three_rec_ex}
\y_{N}\y_{N+7}=\alpha\y_{N+3}\y_{N+4}+\beta\y_{N+2}\y_{N+5}.
  \end{equation}
  
  We calculate the vectors $b^\parr i$ and $c^\parr j$ in Table~\ref{tab:GR_three}.

  \begin{table}
 \begin{center}
 \begin{tabular}{|r|ccccccc|}\hline
 $i=$   &    0  &    1  &    2  &    3  &    4  &    5 &   6 \\\hline
 $f=$   & 0     & 0     & 0     & 1     & 1     & 0 &   0\\
 $g=$   & 0     & 0     & 1     & 0     & 0     & 1 &   0\\\hline
   $b^\parr0=c^\parr0=a=$
        & 0     & 1     & 0    & -1    & -1    & 0 &   1\\\hline
   $b^\parr1=$
        & -1    & 1     & 1    & 0     & -1    & -1 &   1\\
   $d=b^\parr2=$
        & -1    & 0     & 1    & 1     & 0    & -1 &   0\\\hline
   $c^\parr1=$
        & -1    & 1     & 2     & -1    & -2    & 0 &   1\\
   $c^\parr2=$
        & -1    & 0     & 2     & 1    & -2    & -1 &   1\\
   $d=c^\parr3=$
        & -1    & 0     & 1     & 1    & 0    & -1 &   0\\\hline
 \end{tabular}
\end{center}
\caption{\label{tab:GR_three} Computing the vectors $a=b^\parr0,\dots,b^\parr p=d$ and $a=c^\parr0,\dots,c^\parr q=d$ for $p=2$, $q=3$, and $n=7$ using~\eqref{eq:GR_three_b_c}.}
\end{table}

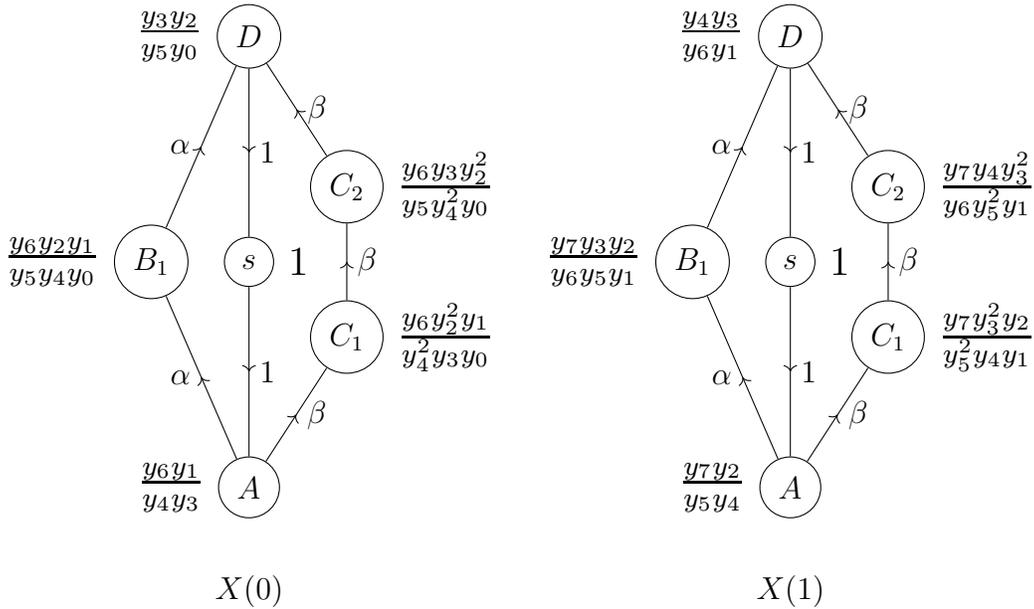
\begin{figure}
  \def\nodesc{1}
  \def\labelsc{1.3}
  \def\tikzscx{1.3}
  \def\tikzscy{1}
  \def\arrsc{1}
  \begin{tabular}{cc}
    \begin{tikzpicture}[xscale=\tikzscx,yscale=\tikzscy]
    \node[scale=\nodesc,draw,circle] (A) at (0,0) {$A$};
    \node[scale=\nodesc,draw,circle] (B1) at (-1,3) {$B_1$};
    \node[scale=\nodesc,draw,circle] (C1) at (1,2) {$C_1$};
    \node[scale=\nodesc,draw,circle] (C2) at (1,4) {$C_2$};
    \node[scale=\nodesc,draw,circle] (D) at (0,6) {$D$};
    \node[scale=\nodesc,draw,circle] (TB) at (0,3) {$\tb$};
    \draw[postaction={decorate}] (A)--(B1) node[pos=0.5,left,scale=\arrsc] {$\alpha$};
    \draw[postaction={decorate}] (B1)--(D) node[pos=0.5,left,scale=\arrsc] {$\alpha$};
    \draw[postaction={decorate}] (A)--(C1) node[pos=0.5,right,scale=\arrsc] {$\beta$};
    \draw[postaction={decorate}] (C1)--(C2)node[pos=0.5,right,scale=\arrsc] {$\beta$};
    \draw[postaction={decorate}] (C2)--(D) node[pos=0.5,right,scale=\arrsc] {$\beta$};
    \draw[postaction={decorate}] (D)--(TB)node[pos=0.5,right,scale=\arrsc] {$1$};
    \draw[postaction={decorate}] (TB)--(A)node[pos=0.5,right,scale=\arrsc] {$1$};
    \node[anchor=east,scale=\labelsc] (Al) at (A.west) {$\frac{\y_6\y_1}{\y_4\y_3}$};
    \node[anchor=east,scale=\labelsc] (B1l) at (B1.west) {$\frac{\y_6\y_2\y_1}{\y_5\y_4\y_0}$};
    \node[anchor=west,scale=\labelsc] (C1l) at (C1.east) {$\frac{\y_6\y^2_2\y_1}{\y_4^2\y_3\y_0}$};
    \node[anchor=west,scale=\labelsc] (C2l) at (C2.east) {$\frac{\y_6\y_3\y^2_2}{\y_5\y^2_4\y_0}$};
    \node[anchor=east,scale=\labelsc] (Dl) at (D.west) {$\frac{\y_3\y_2}{\y_5\y_0}$};
    
    \node[anchor=west,scale=\labelsc] (TBl) at (TB.east) {$1$};
  \end{tikzpicture}&
    \begin{tikzpicture}[xscale=\tikzscx,yscale=\tikzscy]
    \node[scale=\nodesc,draw,circle] (A) at (0,0) {$A$};
    \node[scale=\nodesc,draw,circle] (B1) at (-1,3) {$B_1$};
    \node[scale=\nodesc,draw,circle] (C1) at (1,2) {$C_1$};
    \node[scale=\nodesc,draw,circle] (C2) at (1,4) {$C_2$};
    \node[scale=\nodesc,draw,circle] (D) at (0,6) {$D$};
    \node[scale=\nodesc,draw,circle] (TB) at (0,3) {$\tb$};
    \draw[postaction={decorate}] (A)--(B1) node[pos=0.5,left,scale=\arrsc] {$\alpha$};
    \draw[postaction={decorate}] (B1)--(D) node[pos=0.5,left,scale=\arrsc] {$\alpha$};
    \draw[postaction={decorate}] (A)--(C1) node[pos=0.5,right,scale=\arrsc] {$\beta$};
    \draw[postaction={decorate}] (C1)--(C2)node[pos=0.5,right,scale=\arrsc] {$\beta$};
    \draw[postaction={decorate}] (C2)--(D) node[pos=0.5,right,scale=\arrsc] {$\beta$};
    \draw[postaction={decorate}] (D)--(TB)node[pos=0.5,right,scale=\arrsc] {$1$};
    \draw[postaction={decorate}] (TB)--(A)node[pos=0.5,right,scale=\arrsc] {$1$};
    \node[anchor=east,scale=\labelsc] (Al) at (A.west) {$\frac{\y_7\y_2}{\y_5\y_4}$};
    \node[anchor=east,scale=\labelsc] (B1l) at (B1.west) {$\frac{\y_7\y_3\y_2}{\y_6\y_5\y_1}$};
    \node[anchor=west,scale=\labelsc] (C1l) at (C1.east) {$\frac{\y_7\y^2_3\y_2}{\y_5^2\y_4\y_1}$};
    \node[anchor=west,scale=\labelsc] (C2l) at (C2.east) {$\frac{\y_7\y_4\y^2_3}{\y_6\y^2_5\y_1}$};
    \node[anchor=east,scale=\labelsc] (Dl) at (D.west) {$\frac{\y_4\y_3}{\y_6\y_1}$};
    \node[anchor=west,scale=\labelsc] (TBl) at (TB.east) {$1$};
  \end{tikzpicture}\\ & \\
$\X(0)$ & $\X(1)$
    
  \end{tabular}
  \caption{\label{fig:GR_three} The digraph $G$ corresponding to $\GR(7;2,3)$ with assignments $\X(t)$ for $t=0$ (left) and $t=1$ (right).}
\end{figure}

Using them, we obtain an assignment $\X(t)$ of values of the $R$-system shown in Figure~\ref{fig:GR_three} for $t=0,1$. Let us verify some toggle relations using this data. For the vertices $A$, $C_1$, and $D$, the toggle relation~\eqref{eq:toggle} reads respectively
\begin{equation}\label{eq:GR_three_check}
\begin{split}
\frac{\y_6\y_1}{\y_4\y_3}\cdot\frac{\y_7\y_2}{\y_5\y_4}&=\alpha\frac{\y_6\y_2\y_1}{\y_5\y_4\y_0}+
  \beta\frac{\y_6\y_2^2 \y_1}{\y_4^2\y_3\y_0},\\
\frac{\y_6\y_2^2\y_1}{\y_4^2\y_3\y_0}\cdot\frac{\y_7\y_3^2\y_2}{\y_5^2\y_4\y_1}&=
  \beta\frac{\y_6\y_3\y_2^2}{\y_5\y_4^2\y_0} \left(
    \beta\frac{\y_5 \y_4}{\y_7\y_2}\right)^{-1},\quad\text{and}\\
  \frac{\y_3\y_2}{\y_5\y_0}\cdot\frac{\y_4\y_3}{\y_6\y_1}&=
\left(\alpha\frac{\y_6\y_5 \y_1}{\y_7\y_3\y_2}+\beta\frac{\y_6\y_5^2\y_1}{\y_7\y_4\y_3^2}\right)^{-1}.
\end{split}
\end{equation}
After all cancellations, quite miraculously, the relations for $A$ and $D$ reduce to~\eqref{eq:GR_three_rec_ex} while the relation for $C_1$ holds trivially. It also holds trivially for the remaining vertices $B_1,C_2$, and $\tb$. More generally, for arbitrary $p$ and $q$, the only non-trivial cancellations happen for the vertices $A$ and $D$.
\end{example}

\begin{remark}
When $p=q$, all the constructions in this section make sense. The right hand side of~\eqref{eq:GR_three_rec}, however, becomes a single monomial, so the sequence itself is not that interesting. The values of the $R$-system in this case are periodic by~\cite[Proposition~71]{GR1}, and indeed, its values (which are monomial functions in the entries of $\GR(2p+2;p,p)$) are periodic even though the sequence itself is not periodic. 
\end{remark}

Based on our computations, we find that something much more general appears to hold.
\begin{conjecture}\label{conj:GR_three_master}
For any $0<p<q$ and $n=p+q+2$, the universal $R$-system with coefficients associated with the digraph $G$ admits $\GR(n;p,q)$ as a strong $\tau$-sequence with the substitution that differs from~\eqref{eq:GR_three_subs} by some periodic monomial factors in the edge and vertex variables.
\end{conjecture}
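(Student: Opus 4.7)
The plan is to promote the weak $\tau$-sequence of~\eqref{eq:GR_three_subs} to one valid for the universal $R$-system with coefficients by multiplying each $\X_v(t)$ by a Laurent monomial $M_v(t)$ in the edge weights $\x_E$ and initial vertex variables $\x_V$, with exponents depending only on $t \bmod n$, where $n = p+q+2$. Concretely, I would posit an ansatz
\[
\X_v(t) \;=\; M_v(t)\cdot (\Y(t))^{h_v},\qquad M_v(t)=\prod_{e\in E} x_e^{\mu_{v,e}(t)}\prod_{w\in V} x_w^{\nu_{v,w}(t)},
\]
with $\mu_{v,e},\nu_{v,w}\colon\Z/n\Z\to\Z$ to be determined and with $h_v$ the exponent vectors $b^{\parr i}, c^{\parr j}$ from~\eqref{eq:GR_three_b_c}. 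The initial condition $\X_v(0)=x_v$ pins down one normalization of $M_v(0)$.

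First I would plug this ansatz into the toggle relation~\eqref{eq:toggle} at every vertex. As the computation in~\eqref{eq:GR_three_check} illustrates, for all vertices other than $A$ and $D$ the relation becomes a purely monomial identity in $\Y(t)$, hence a linear identity in the exponents $\mu,\nu$; these $n-3$ equations per time step form a $\Z$-linear system. The two remaining relations at $A$ and $D$, after canceling the $\Y$-monomial parts, must reduce to the Gale-Robinson recurrence~\eqref{eq:GR_three_rec}, with $\alpha$ and $\beta$ reinterpreted as explicit Laurent monomials in $\x_E\cup\x_V$, naturally of the shape
\[
\alpha \;=\; \prod_{i=0}^{p-1} x_{(B_i,B_{i+1})},\qquad \beta \;=\; \prod_{j=0}^{q-1} x_{(C_j,C_{j+1})},
\]
up to a common factor involving $x_{(\tb,A)}x_{(D,\tb)}$ and a few initial vertex variables.

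The central step, and what I expect to be the main obstacle, is showing that the resulting linear system for $\mu,\nu$ is consistently solvable with period $n$. I expect this to encode a $1$-cocycle condition on the cyclic graph obtained from $G$ under the $\Z$-action of $\rowm$: the $R$-system shift should act by a permutation of the $n$ edges (rotating the cycle $\tb\to A\to\dots\to D\to \tb$), and the constraints at vertices $v\neq A,D$ define a coboundary map whose image must contain the ``discrepancy'' produced at $A$ and $D$. Reducing this to a finite-dimensional linear algebra problem of size $O(n)$ and verifying exactness by exhibiting explicit preimages is the combinatorial bookkeeping I anticipate being the bulk of the work; an inductive construction of $M_v(t)$ by following $v$ along the $B$- and $C$-paths, updating the exponents one edge at a time, should work.

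Finally, to upgrade the resulting weak $\tau$-sequence to a \emph{strong} one, I would appeal to two additional facts. Irreducibility and pairwise coprimality of the Laurent polynomials $\y_N$ of $\GR(n;p,q)$ follow from the general irreducibility results for the octahedron recurrence of which three-term Gale-Robinson is a reduction (cf.~\cite{FZCube} and~\cite[Theorem~3]{KMMT}). Dominance of the substitution map would be verified by checking that the exponent vectors $\{h_v\}_{v\in V}$, together with the vector $\allones$ and the exponents of $\x_E$ appearing in $M_v(0)$, span a full-rank sublattice of $\Z^{n+|E|}$; this is a concrete rank/determinant check on an explicit matrix. Both ingredients are routine once the monomial adjustments $M_v(t)$ have been pinned down, so the whole argument ultimately rests on the cocycle-triviality/combinatorial step described above.
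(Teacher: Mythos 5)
This statement is a \emph{conjecture} in the paper: the authors explicitly present it as something that ``appears to hold'' based on computations, and they give no proof. So there is no argument of theirs to compare yours against; what can be assessed is whether your plan closes the gap, and it does not. Your proposal is a programme with its hardest steps left open, and at least one step you label ``routine'' is in fact an acknowledged open problem. Specifically, a \emph{strong} $\tau$-sequence in the sense of Definition~\ref{dfn:tau} requires that the entries $\y_N$ of $\GR(n;p,q)$ be irreducible and pairwise coprime Laurent polynomials. You assert this ``follows from the general irreducibility results for the octahedron recurrence of which three-term Gale-Robinson is a reduction (cf.~\cite{FZCube} and~\cite[Theorem~3]{KMMT}),'' but those references are invoked in the paper only for the Somos-$4$ case, and irreducibility does not automatically survive the specialization of octahedron-recurrence initial data that produces a Gale-Robinson sequence. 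The paper itself states immediately after the conjecture that irreducibility of the entries of $\GR(p+q+2;p,q)$ is \emph{part of} what is being conjectured. Citing it as known makes your argument circular at exactly the point where the conjecture has real content.

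The remaining steps are also not carried out. The existence of periodic monomial corrections $M_v(t)$ is reduced by you to the solvability of a $\Z$-linear system (your ``cocycle-triviality'' step), which you explicitly flag as ``the main obstacle'' and do not resolve; note also that the universal $R$-system has an independent weight $x_e$ for \emph{every} edge, so your ansatz must absorb all of $\x_E\cup\x_V$, not just the two parameters $\alpha=\prod x_{(B_i,B_{i+1})}$ and $\beta=\prod x_{(C_j,C_{j+1})}$, into corrections that are periodic of period dividing $n$ --- the consistency of that system is precisely what needs proof. Likewise, dominance is not automatic: the paper's own examples (e.g.\ the reduction in Section~\ref{sec:poset_3}) show that the bare monomial substitution can fail to be dominant and only becomes dominant after the periodic coefficients are chosen correctly, so the ``rank/determinant check'' cannot be performed before $M_v(t)$ is pinned down. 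In short, the proposal is a reasonable strategy outline consistent with the evidence in the paper (the verification~\eqref{eq:GR_three_check} and the weak $\tau$-sequence of~\eqref{eq:GR_three_subs}), but it proves nothing beyond what the paper already establishes, and it misrepresents the irreducibility/coprimality of Gale-Robinson entries as a known result rather than the core open ingredient of the conjecture.
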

In particular, this includes the conjecture that the entries of $\GR(p+q+2;p,q)$ are irreducible Laurent polynomials. A corollary to Conjecture~\ref{conj:GR_three_master} would be that the $R$-system associated with $G$ has the singularity confinement and zero algebraic entropy properties.

\subsection{The Four Term Gale-Robinson sequence}\label{sec:four-term-gale}
In this section, we focus on the sequences $\GR(1,q,r)$ for $1<q<r$. Note that in the previous section, each strongly connected digraph that we got was obtained from a poset using Remark~\ref{rmk:poset}. We will see that this is no longer the case for the Four Term Gale-Robinson sequences, in particular, for $\GR(1,2,3)$ and $\GR(1,2,4)$, also known as \emph{Somos-$6$} and \emph{Somos-$7$}, respectively. Throughout this section, we assume that
\[\alpha=\beta=\gamma=1.\]

Let us describe the digraph $G=(V,E)$ whose $R$-system will admit $\GR(1,q,r)$ as a weak $\tau$-sequence. The vertex set of $G$ will be
\begin{equation*}\label{eq:}
\begin{split}
  V=&\{A=A_0,A_1,\dots,A_r,A_{r+1}=B=B_0,B_1,\dots,B_{q-2},B_{q-1}=A\}\cup \\
    &\{C=C_0,C_1,\dots,C_q,C_{q+1}=D=D_0,D_1,\dots,D_{r-2},D_{r-1}=C\}.
\end{split}
\end{equation*}
The edges of $G$ are
\begin{equation*}\label{eq:}
\begin{split}
  E=  &\{(A,B),(A,D),(A_0,A_1),\dots,(A_r,A_{r+1}),(B_0,B_1),\dots,(B_{q-2},B_{q-1})\}\cup\\
      &\{(C,B),(C,D),(C_0,C_1),\dots,(C_q,C_{q+1}),(D_0,D_1),\dots,(D_{r-2},D_{r-1})\}.
\end{split}
\end{equation*}

\def\mfun{m}
It remains to find for each vertex $v\in V$ a vector $\mfun'(v)\in\Z^n$ such that substituting $\X_v(t)=(\Y(t))^{\mfun'(v)}$ yields a solution to the $R$-system. Here again $\Y(t)=(\y_{i+t})_{i=0}^{n-1}$ where $(\y_N)_{N\geq0}$ is the sequence $\GR(1,q,r)$. Note that $\X(t)$ is an element of the projective space so equivalently we can just specify for every edge $(u,v)\in E$ a vector $\mfun(u,v):=\mfun'(v)-\mfun'(u)$. In other words, we prefer to specify the \emph{ratios} of the $\X_v(t)$-s rather than the $\X_v(t)$-s themselves.

\def\D{\Delta}
\def\d{\delta}
Recall that $\sigma:\Z^n\to\Z^n$ is the shift operator given by~\eqref{eq:sigma}. For a vector $\D=(\d_1,\dots,\d_{n-1})\in\Z^{n-1}$ and an integer $\d\in\Z$, we write $(\d,\D):=(\d,\d_1,\dots,\d_{n-1})\in\Z^n$ and $(\D,\d):=(\d_1,\dots,\d_{n-1},\d)\in\Z^n$. Finally, we denote by $f,g,h\in\Z^{n-1}$ the exponents in the right hand side of~\eqref{eq:GR_four_def}:
\[f:=e_1+e_{n-1},\quad g:=e_q+e_{n-q},\quad h:=e_r+e_{n-r}.\]

Before we construct the function $\mfun(u,v)$, let us give a simple way to locally verify the toggle relations.

\def\yy{{\tilde{y}}}
\begin{proposition}\label{prop:universes}
  Suppose that we have constructed the function $\mfun(u,v)$ such that for some vertex $v\in V$, one of the following holds:
  \begin{enumerate}
  \item\label{item:expand} the vertex $v$ has three outgoing edges $(v,w_1),(v,w_2),(v,w_3)$ and one incoming edge $(u,v)$, and there is a vector $\D\in\Z^{n-1}$ such that $\mfun(u,v)=(\D,1)$ and 
    \[\mfun(v,w_1)=(-1,\D+f),\quad \mfun(v,w_2)=(-1,\D+g),\quad \mfun(v,w_3)=(-1,\D+h);\]
  \item\label{item:collapse} the vertex $v$ has three incoming edges $(u_1,v),(u_2,v),(u_3,v)$ and one outgoing edge $(v,w)$, and there is a vector $\D\in\Z^{n-1}$ such that $\mfun(v,w)=(1,\D)$ and 
    \[\mfun(u_1,v)=(\D+f,-1),\quad \mfun(u_2,v)=(\D+g,-1),\quad \mfun(u_3,v)=(\D+h,-1);\]
  \item\label{item:zero} the vertex $v$ has one incoming edge $(u,v)$ and one outgoing edge $(v,w)$, and there is a vector $\D\in\Z^{n-1}$ such that $\mfun(u,v)=(\D,0)$ and $\mfun(v,w)=(0,\D)$.
  \end{enumerate}
  Then the toggle relation~\eqref{eq:toggle} is satisfied at $v$.
\end{proposition}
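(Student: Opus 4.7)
\medskip
\noindent\textbf{Proof plan.} I plan to verify the toggle relation~\eqref{eq:toggle} at $v$ by a direct local computation in each of the three cases. The key observation is that under the substitution $\X_v(t)=\Y(t)^{\mfun'(v)}$, every ratio $\X_a(t)/\X_b(t)$ equals the Laurent monomial $\Y(t)^{\mfun(b,a)}$ in the entries of the sliding window $\y_t,\dots,\y_{t+n-1}$, and similarly with $t{+}1$ in place of $t$. After this substitution, cases~\ref{item:expand} and~\ref{item:collapse} reduce to a single application of the Gale--Robinson recurrence $\y_N\y_{N+n}=\y_{N+1}\y_{N+n-1}+\y_{N+q}\y_{N+n-q}+\y_{N+r}\y_{N+n-r}$ at $N=t$, while case~\ref{item:zero} reduces to a monomial tautology.

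For case~\ref{item:expand}, divide~\eqref{eq:toggle} at $v$ by $\X_v(t)\X_u(t+1)$ to obtain the equivalent identity
\[\frac{\X_v(t+1)}{\X_u(t+1)}=\sum_{i=1}^3\frac{\X_{w_i}(t)}{\X_v(t)}.\]
Using $\mfun(v,w_i)=(-1,\D+f_i)$ with $f_i$ ranging over $f,g,h$, the $i$-th term on the right equals $\y_t^{-1}\prod_{j=1}^{n-1}\y_{t+j}^{\d_j}\cdot \Y(t)^{(0,f_i)}$. Since $\Y(t)^{(0,f)}=\y_{t+1}\y_{t+n-1}$, $\Y(t)^{(0,g)}=\y_{t+q}\y_{t+n-q}$, and $\Y(t)^{(0,h)}=\y_{t+r}\y_{t+n-r}$, Gale--Robinson at $N=t$ turns the sum of these three monomials into $\y_t\y_{t+n}$. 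Cancelling the prefactor $\y_t^{-1}$ leaves $\y_{t+n}\prod_{j=1}^{n-1}\y_{t+j}^{\d_j}=\Y(t+1)^{(\D,1)}$, which is exactly the left-hand side. Case~\ref{item:collapse} is completely symmetric: the toggle rearranges to $\X_w(t)/\X_v(t)=\sum_{i=1}^3 \X_v(t+1)/\X_{u_i}(t+1)$, and the same Gale--Robinson identity collapses the right-hand side after expansion using $\mfun(u_i,v)=(\D+f_i,-1)$.

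For case~\ref{item:zero}, the toggle at a vertex of in- and out-degree one simplifies to $\X_v(t)\X_v(t+1)=\X_w(t)\X_u(t+1)$, equivalently $\X_w(t)/\X_v(t)=\X_v(t+1)/\X_u(t+1)$. This is a monomial identity: note that $(0,\D)=\sigma((\D,0))$, and for any $h\in\Z^n$ with vanishing last coordinate one has $\Y(t)^{\sigma(h)}=\Y(t+1)^h$, since the leading zero in $\sigma(h)$ exactly absorbs the absent $\y_{t+n}$-factor. Applying this with $h=(\D,0)$ yields $\Y(t)^{(0,\D)}=\Y(t+1)^{(\D,0)}$, which is the required identity.

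I do not foresee any substantive obstacle: the verification is purely local at $v$, and the hypothesized shapes $(\D,1)$, $(-1,\D+f_i)$, and their transposes are designed precisely so that the Gale--Robinson cancellation at index $N=t$ matches the one-step shift between $\Y(t)$ and $\Y(t+1)$. The remaining bookkeeping amounts to confirming that the $\pm 1$ coordinates land in the correct positions, which is immediate from the definitions of $(\D,\d)$ and $(\d,\D)$.
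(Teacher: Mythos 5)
Your proof is correct and follows essentially the same route as the paper's: a purely local verification at $v$ that, after substituting the monomial expressions $\X_a/\X_b=(\Y)^{\mfun(b,a)}$, reduces cases~(1) and~(2) to a single application of the Gale--Robinson recurrence at $N=t$ and case~(3) to a monomial identity. The paper normalizes $\X_v(t)=\X_v(t+1)=1$ instead of dividing through, and writes out only case~(2) in detail, but the computation is the same.
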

\begin{proof}
  This is just a restatement of the toggle relations. Suppose for instance that Case~\eqref{item:collapse} happens. After rescaling all entries of $\X(t)$ and $\X(t+1)$, we may assume that $\X_v(t)=\X_v(t+1)=1$. Let us put $\yy=(\y_t,\dots,\y_{t+n})$, and for a vector $\D=(\d_1,\dots,\d_{n-1})\in\Z^{n-1}$ and two integers $\d_0,\d_n\in\Z$, write
  \[\yy^{(\d_0,\D,\d_n)}:=\y_t^{\d_0}\y_{t+1}^{\d_1}\dots\y_{t+n}^{\d_n}.\]
    Then~\eqref{eq:GR_four_def} becomes
    \begin{equation}\label{eq:GR_four_yy}
\yy^{(1,0,1)}=\yy^{(0,f,0)}+\yy^{(0,g,0)}+\yy^{(0,h,0)},
    \end{equation}
    while~\eqref{eq:toggle} for $v$ becomes
    \begin{equation*}\label{eq:}
\begin{split}
  1=\X_v(t)\X_v(t+1)&= \X_w(t) \left(\frac1{\X_{u_1}(t+1)}+\frac1{\X_{u_2}(t+1)}+\frac1{\X_{u_3}(t+1)} \right)^{-1}\\
  &=\yy^{(1,\Delta,0)} \left(\yy^{(0,\D+f,-1)}+\yy^{(0,\D+g,-1)}+\yy^{(0,\D+h,-1)}\right)^{-1}\\
  &=\frac{\yy^{(1,\Delta,0)}}{\yy^{(0,\D,-1)}} \left(\yy^{(0,f,0)}+\yy^{(0,g,0)}+\yy^{(0,h,0)}\right)^{-1}.
\end{split}
    \end{equation*}
The result thus follows by~\eqref{eq:GR_four_yy}. The other two cases are handled similarly.
\end{proof}

We are going to construct the function $\mfun$ so that for \emph{every} vertex $v\in V$, either one of the three possibilities~\eqref{item:expand}--\eqref{item:zero} happens (and of course $\mfun$ will be of the form $\mfun(u,v)=\mfun'(v)-\mfun'(u)$ for some function $\mfun':V\to \Z^n$).

Let $e_1,\dots,e_{n-1}$ be the coordinate basis of $\Z^{n-1}$, and define
\[\D_A:=e_{q-1}-e_q-e_{n-1};\quad \D_B:=-e_1-e_{r+1}+e_{r+2};\]
\[\D_C:=e_{r-1}-e_r-e_{n-1};\quad \D_D:=-e_1-e_{q+1}+e_{q+2}.\]
We now define the function $\mfun$:
\begin{equation*}
  \begin{split}
\resizebox{1\textwidth}{!} 
{
 $\displaystyle\mfun(A,B):=(-1,g+\D_A)=(g+\D_B,-1);\quad \mfun(A,D)=(-1,h+\D_A)=(g+\D_D,-1);$ 
}\\
\resizebox{1\textwidth}{!} 
{
$\displaystyle\mfun(C,D):=(-1,h+\D_C)=(h+\D_D,-1);\quad \mfun(C,B)=(-1,g+\D_C)=(h+\D_B,-1),$   
}
  \end{split}
\end{equation*}

and for the remaining edges it is defined as follows:
\begin{equation*}\label{eq:}
\begin{split}
\mfun(A_i,A_{i+1})=\sigma^i(-1,f+\D_A),\quad \mfun(B_j,B_{j+1})=\sigma^j(1,\D_B), 
\end{split}
\end{equation*}
for $0\leq i\leq r$ and $0\leq j\leq q-2$, and 
\begin{equation*}\label{eq:}
\begin{split}
\mfun(C_i,C_{i+1})=\sigma^i(-1,f+\D_C),\quad \mfun(D_j,D_{j+1})=\sigma^j(1,\D_D), 
\end{split}
\end{equation*}
for $0\leq i\leq q$ and $0\leq j\leq r-2$. Note in particular that
\[\mfun(A_r,B)=(f+\D_B,-1),\quad \mfun(B_{q-2},A)=(\D_A,1),\]
\[\mfun(C_q,D)=(f+\D_D,-1),\quad \mfun(D_{r-2},C)=(\D_C,1).\]
It is now trivial to see that one of the three options in Proposition~\ref{prop:universes} holds for each vertex of $G$. One also easily checks that $\mfun$ indeed satisfies $\mfun(u,v)=\mfun'(v)-\mfun'(u)$ for some $\mfun':V\to\Z$ since its (signed) sum over each (undirected) cycle of $G$ is zero.

\begin{corollary}
  There exists a unique assignment $\X(t)\in\RP V$ such that for any edge $(u,v)\in E$, we have
  \[\frac{\X_v(t)}{\X_u(t)}=(\Y(t))^{\mfun(u,v)}\]
  for the function $\mfun$ constructed above. This assignment satisfies the toggle relations~\eqref{eq:toggle} and thus provides a weak $\tau$-sequence for the $R$-system associated with $G$.
\end{corollary}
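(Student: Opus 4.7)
The plan is to derive existence and uniqueness of $\X(t)$ from the coboundary property of $\mfun$, and then to verify every toggle relation through a case analysis that feeds directly into Proposition~\ref{prop:universes}. Since the preceding discussion furnishes a function $\mfun':V\to\Z^n$ with $\mfun(u,v)=\mfun'(v)-\mfun'(u)$, setting $\X_v(t):=(\Y(t))^{\mfun'(v)}$ yields an assignment with the prescribed edge ratios, so existence is immediate. For uniqueness in $\RP V$, if $\X$ and $\tilde\X$ both realize the ratios $(\Y(t))^{\mfun(u,v)}$ then $\X_v\tilde\X_u=\X_u\tilde\X_v$ for every edge, so the scalar $\X_v/\tilde\X_v$ is constant along each edge; connectedness of $G$ forces it to be constant on $V$, and $\X,\tilde\X$ coincide in $\RP V$.

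For the toggle relations I would partition $V$ by vertex type. The trivalent vertices $A$ and $C$ each have one incoming and three outgoing edges, and the formulas defining $\mfun$ on the edges incident to $A$ (resp.\ $C$) match the template of Case~\ref{item:expand} of Proposition~\ref{prop:universes} with $\D=\D_A$ (resp.\ $\D_C$). Dually, $B$ and $D$ each have three incoming and one outgoing edge and fit Case~\ref{item:collapse} with $\D=\D_B$ (resp.\ $\D_D$). In all four situations the required identities on $\mfun$ are built into its definition, so one only needs to read them off. The remaining vertices are the bivalent interior points on the four chains -- namely $A_i$ for $1\le i\le r$, $B_j$ for $1\le j\le q-2$, and the analogous $C$- and $D$-chain vertices -- which we claim satisfy Case~\ref{item:zero}.

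The chain verification amounts to showing that $\mfun(X_{i-1},X_i)=(\D,0)$ and $\mfun(X_i,X_{i+1})=(0,\D)$ share a common $\D\in\Z^{n-1}$. On the $A$-chain both values are consecutive iterates of $\sigma$ applied to $(-1,f+\D_A)$, and by the definition~\eqref{eq:sigma} of $\sigma$ the compatibility collapses to the single condition that the $(n-i)$-th coordinate of $(-1,f+\D_A)$ vanishes. A direct expansion gives $f+\D_A=e_1+e_{q-1}-e_q$, so the support of $(-1,f+\D_A)$ is $\{0,1,q-1,q\}$; since $n=q+r+1$, no element of this set equals $n-i$ for any $1\le i\le r$. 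The three other chains are handled identically after substituting $\D_B$, $f+\D_C$, or $\D_D$ for $f+\D_A$ and updating the range of the running index. With every toggle relation confirmed we obtain $\X(t+1)=\rowm(\X(t))$, and $(\Y(t))$ is by definition a weak $\tau$-sequence. The one real obstacle in this plan is precisely that chain step: one must verify that the shifted indices $n-i$ (or $n-j$) avoid the supports of $f+\D_A$, $\D_B$, $f+\D_C$, and $\D_D$ throughout the permitted ranges, which is a short but essential numerical check.
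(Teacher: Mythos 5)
Your proposal is correct and follows the same route as the paper: existence and uniqueness come from the coboundary property of $\mfun$ (its signed sum over cycles vanishes) together with connectedness, and the toggle relations are checked vertex-by-vertex against the three cases of Proposition~\ref{prop:universes}, with $A,C$ in Case~\eqref{item:expand}, $B,D$ in Case~\eqref{item:collapse}, and the chain vertices in Case~\eqref{item:zero}. The only difference is that you spell out the coordinate-support check (that $n-i$ avoids the supports of $(-1,f+\D_A)$, $(1,\D_B)$, etc., using $n=1+q+r$) which the paper dismisses as trivial; your computation of those supports is accurate.
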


According to our computations, even though the digraphs $G$ constructed in this section admit solutions in terms of Gale-Robinson sequences, the associated $R$-systems do not seem to have the singularity confinement or zero algebraic entropy properties. This means that one cannot in general write every initial data for the $R$-system associated with $G$ in terms of the initial seed for the Gale-Robinson sequence.

\begin{example}
  Consider the case $\GR(1,2,3)$. In this case $(\y_N)$ is the Somos-$6$ sequence defined by the recurrence relation
  \begin{equation}\label{eq:somos_6}
\y_N\y_{N+6}=\y_{N+1}\y_{N+5}+\y_{N+2}\y_{N+4}+\y_{N+3}^2.
  \end{equation}
  
  Thus
  \[f=(1,0,0,0,1),\quad g=(0,1,0,1,0),\quad h=(0,0,2,0,0).\]
  We now calculate
  \[\D_A=(1,-1,0,0,-1),\quad \D_B=(-1,0,0,-1,1),\]
  \[\D_C=(0,1,-1,0,-1),\quad \D_D=(-1,0,-1,1,0).\]
  Using this, it is easy to compute $\X(t)$ for all $t$. For $t=0$, $\X(t)$ is shown in Figure~\ref{fig:GR_four_ex} (left), and for other values of $t$ one just shifts all indices by $1$. For example, we find that 
  \[\frac{\X_{A_1}(0)}{\X_{A}(0)}=\frac{\y_1/\y_3}{\y_2\y_0/\y_3\y_1}=(\Y(0))^{(-1,2,-1,0,0,0)}=(\Y(0))^{(-1,f+\D_A)},\]
  since $f+\D_A=(1,0,0,0,1)+(1,-1,0,0,-1)=(2,-1,0,0,0)$. We can also check that the toggle relations are satisfied as predicted by Proposition~\ref{prop:universes}. For example, for $v=A$, we get
  \[\X_A(0)\X_A(1)=(\X_{A_1}(0)+\X_{B}(0)+\X_D(0))\left(\frac1{\X_B(1)}\right)^{-1}=\frac{\y_1\y_5+\y_2\y_4+\y_3^2}{\y_3\y_5}\cdot\frac{\y_5\y_3}{\y_6\y_4}.\]
  Since the left hand side is equal to
  \[\frac{\y_2\y_0}{\y_3\y_1}\cdot\frac{\y_3\y_1}{\y_4\y_2}=\frac{\y_0}{\y_4},\]
  the toggle relation~\eqref{eq:toggle} again reduces to~\eqref{eq:somos_6}.

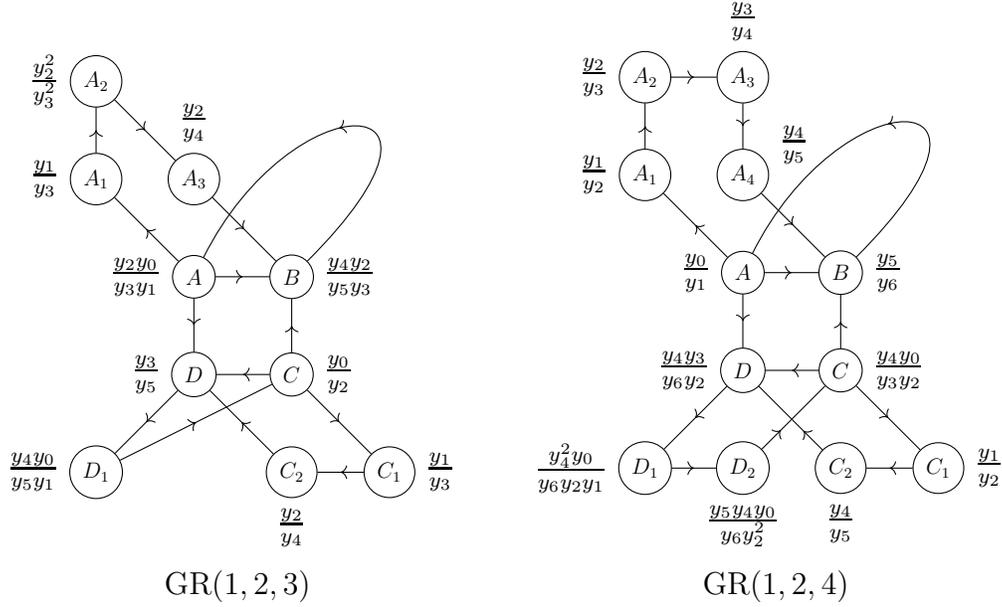
\begin{figure}
  \def\nodesc{0.7}
  \def\labelsc{1}
  \def\tikzscx{1.3}
  \def\tikzscy{1.3}
  \def\arrsc{0.6}
  \def\sclbx{1.0}
 \def\mi{-}
 \begin{tabular}{cc}
   \scalebox{\sclbx}{
    \begin{tikzpicture}[xscale=\tikzscx,yscale=\tikzscy]
    \node[scale=\nodesc,draw,circle] (A) at (0,1) {$A$};
    \node[scale=\nodesc,draw,circle] (B) at (1,1) {$B$};
    \node[scale=\nodesc,draw,circle] (C) at (1,0) {$C$};
    \node[scale=\nodesc,draw,circle] (D) at (0,0) {$D$};
    \node[scale=\nodesc,draw,circle] (A1) at (-1,2) {$A_1$};
    \node[scale=\nodesc,draw,circle] (A2) at (-1,3) {$A_2$};
    \node[scale=\nodesc,draw,circle] (A3) at (0,2) {$A_3$};
    \node[scale=\nodesc,draw,circle] (D1) at (-1,-1) {$D_1$};
    \node[scale=\nodesc,draw,circle] (C1) at (2,-1) {$C_1$};
    \node[scale=\nodesc,draw,circle] (C2) at (1,-1) {$C_2$};
    
    \node[anchor=east,scale=\labelsc] (Al) at (A.west) {$\frac{\y_2\y_0}{\y_3\y_1}$};
    \node[anchor=east,scale=\labelsc] (A1l) at (A1.west) {$\frac{\y_1}{\y_3}$};
    \node[anchor=east,scale=\labelsc] (A2l) at (A2.west) {$\frac{\y_2^2}{\y_3^2}$};
    \node[anchor=south,scale=\labelsc] (A3l) at (A3.north) {$\frac{\y_2}{\y_4}$};
    \node[anchor=west,scale=\labelsc] (Bl) at (B.east) {$\frac{\y_4\y_2}{\y_5\y_3}$};
    \node[anchor=west,scale=\labelsc] (Cl) at (C.east) {$\frac{\y_0}{\y_2}$};
    \node[anchor=west,scale=\labelsc] (C1l) at (C1.east) {$\frac{\y_1}{\y_3}$};
    \node[anchor=north,scale=\labelsc] (C2l) at (C2.south) {$\frac{\y_2}{\y_4}$};
    \node[anchor=east,scale=\labelsc] (D1l) at (D1.west) {$\frac{\y_4\y_0}{\y_5\y_1}$};
    \node[anchor=east,scale=\labelsc] (Dl) at (D.west) {$\frac{\y_3}{\y_5}$};
    \draw[postaction={decorate}] (A)--(B);
    \draw[postaction={decorate}] (A)--(D);
    \draw[postaction={decorate}] (C)--(B);
    \draw[postaction={decorate}] (C)--(D);
    \draw[postaction={decorate}] (A)--(A1);
    \draw[postaction={decorate}] (A1)--(A2);
    \draw[postaction={decorate}] (A2)--(A3);
    \draw[postaction={decorate}] (A3)--(B);
    \draw[postaction={decorate}] (D)--(D1);
    \draw[postaction={decorate}] (D1)--(C);
    \draw[postaction={decorate}] (C)--(C1);
    \draw[postaction={decorate}] (C1)--(C2);
    \draw[postaction={decorate}] (C2)--(D);
    \draw[postaction={decorate}] (B) ..controls (3,3) and (1,3) .. (A); 
  \end{tikzpicture}}
   &
     
   \scalebox{\sclbx}{
    \begin{tikzpicture}[xscale=\tikzscx,yscale=\tikzscy]
    \node[scale=\nodesc,draw,circle] (A) at (0,1) {$A$};
    \node[scale=\nodesc,draw,circle] (B) at (1,1) {$B$};
    \node[scale=\nodesc,draw,circle] (C) at (1,0) {$C$};
    \node[scale=\nodesc,draw,circle] (D) at (0,0) {$D$};
    \node[scale=\nodesc,draw,circle] (A1) at (-1,2) {$A_1$};
    \node[scale=\nodesc,draw,circle] (A2) at (-1,3) {$A_2$};
    \node[scale=\nodesc,draw,circle] (A3) at (0,3) {$A_3$};
    \node[scale=\nodesc,draw,circle] (A4) at (0,2) {$A_4$};
    \node[scale=\nodesc,draw,circle] (D1) at (-1,-1) {$D_1$};
    \node[scale=\nodesc,draw,circle] (D2) at (0,-1) {$D_2$};
    \node[scale=\nodesc,draw,circle] (C1) at (2,-1) {$C_1$};
    \node[scale=\nodesc,draw,circle] (C2) at (1,-1) {$C_2$};
    
    \node[anchor=east,scale=\labelsc] (Al) at (A.west) {$\frac{\y_0}{\y_1}$};
    \node[anchor=east,scale=\labelsc] (A1l) at (A1.west) {$\frac{\y_1}{\y_2}$};
    \node[anchor=east,scale=\labelsc] (A2l) at (A2.west) {$\frac{\y_2}{\y_3}$};
    \node[anchor=south,scale=\labelsc] (A3l) at (A3.north) {$\frac{\y_3}{\y_4}$};
    \node[anchor=south west,scale=\labelsc] (A4l) at (A4.0) {$\frac{\y_4}{\y_5}$};
    \node[anchor=west,scale=\labelsc] (Bl) at (B.east) {$\frac{\y_5}{\y_6}$};
    \node[anchor=west,scale=\labelsc] (Cl) at (C.east) {$\frac{\y_4\y_0}{\y_3\y_2}$};
    \node[anchor=west,scale=\labelsc] (C1l) at (C1.east) {$\frac{\y_1}{\y_2}$};
    \node[anchor=north,scale=\labelsc] (C2l) at (C2.south) {$\frac{\y_4}{\y_5}$};
    \node[anchor=east,scale=\labelsc] (D1l) at (D1.west) {$\frac{\y_4^2\y_0}{\y_6\y_2\y_1}$};
    \node[anchor=north,scale=\labelsc] (D2l) at (D2.south) {$\frac{\y_5\y_4\y_0}{\y_6\y_2^2}$};
    \node[anchor=east,scale=\labelsc] (Dl) at (D.west) {$\frac{\y_4\y_3}{\y_6\y_2}$};
    \draw[postaction={decorate}] (A)--(B);
    \draw[postaction={decorate}] (A)--(D);
    \draw[postaction={decorate}] (C)--(B);
    \draw[postaction={decorate}] (C)--(D);
    \draw[postaction={decorate}] (A)--(A1);
    \draw[postaction={decorate}] (A1)--(A2);
    \draw[postaction={decorate}] (A2)--(A3);
    \draw[postaction={decorate}] (A3)--(A4);
    \draw[postaction={decorate}] (A4)--(B);
    \draw[postaction={decorate}] (D)--(D1);
    \draw[postaction={decorate}] (D1)--(D2);
    \def\positn{0.3}
    \draw[postaction={decorate}] (D2)--(C);
    \draw[postaction={decorate}] (C2)--(D);
    \def\positn{0.5}
    \draw[postaction={decorate}] (C)--(C1);
    \draw[postaction={decorate}] (C1)--(C2);
    \draw[postaction={decorate}] (B) ..controls (3,3) and (1,3) .. (A); 
  \end{tikzpicture}}\\
   $\GR(1,2,3)$ & $\GR(1,2,4)$\\
  \end{tabular}
 
  \caption{\label{fig:GR_four_ex} The digraphs corresponding to the Somos-$6$ (left) and Somos-$7$ (right) sequences shown together with the values of $\X(0)$.}
\end{figure}
\end{example}

\def\spec{I}
\newcommand{\Pa}[1]{P_{#1}}
\newcommand{\Pb}[1]{P'_{#1}}
\newcommand{\Pc}[1]{P''_{#1}}
\newcommand{\Ya}[1]{Y_{#1}}
\newcommand{\Yb}[1]{Y'_{#1}}
\newcommand{\Yc}[1]{Y''_{#1}}
\newcommand{\va}[1]{v_{#1}}
\newcommand{\vb}[1]{v'_{#1}}
\newcommand{\vc}[1]{v''_{#1}}

\section{Subgraphs of the bidirected cycle}\label{sec:circles-with-doubled}
Consider a digraph $G=(V,E)$ with $V=[n]$, where the indices are taken modulo $n$. Fix some subset $\spec\subset [n]$. We let the set of edges be
$$E=\{(i,i+1)\mid i\in [n]\} \cup \{(i+1,i) \mid i \in \spec\}.$$
We call the indices $i \in \spec$ {\it {special}}. Let $n':=|\spec|$ be the total number of special vertices. If $n'=0$ then $G$ is a directed cycle which has been considered in Section~\ref{sec:directed-cycles}. If $n'=n$ then the $R$-system associated with $G$ is periodic as we will see in Theorem~\ref{prop:bidirected_periodic}.  We may thus assume $0 < n' < n$. After a possible cyclic shift, let us also assume that $n\notin \spec$.

\begin{figure}
  \def\scl{0.3}
  \def\textscl{1}
  \def\rad{2}
  \def\bnd{30}
  \def\bndd{60}
  \def\bnddd{10}
  \def\lw{1}
  \def\postn{0.7}
  \begin{tabular}{cc}

\begin{tikzpicture}[baseline=(1.base)]
  \foreach \i in {1,2,...,8}{
    \node[draw,scale=\scl,circle,fill=black] (\i) at ({260-45*\i}:\rad) { };
    \node[scale=\textscl,anchor={260-45*\i},inner sep=7pt] (l\i) at (\i.center) {$\i$};
  } 
  \draw[postaction={decorate},line width=\lw] (1) to[bend right=\bnd] (2);
  \draw[postaction={decorate},line width=\lw] (2) to[bend right=\bnd] (1);
  \draw[postaction={decorate},line width=\lw] (2) to[bend right=0] (3);
  \draw[postaction={decorate},line width=\lw] (3) to[bend right=\bnd] (4);
  \draw[postaction={decorate},line width=\lw] (4) to[bend right=\bnd] (3);
  \draw[postaction={decorate},line width=\lw] (4) to[bend right=\bnd] (5);
  \draw[postaction={decorate},line width=\lw] (5) to[bend right=\bnd] (4);
  \draw[postaction={decorate},line width=\lw] (5) to[bend right=\bnd] (6);
  \draw[postaction={decorate},line width=\lw] (6) to[bend right=\bnd] (5);
  \draw[postaction={decorate},line width=\lw] (6) to[bend right=0] (7);
  \draw[postaction={decorate},line width=\lw] (7) to[bend right=\bnd] (8);
  \draw[postaction={decorate},line width=\lw] (8) to[bend right=\bnd] (7);
  \draw[postaction={decorate},line width=\lw] (8) to[bend right=0] (1);

    \node[scale=\textscl,anchor={180+260-45*1}] (y1) at (1.center) {$\frac{\Ya 8}{\Yc 1}$};
    \node[scale=\textscl,anchor={180+260-45*2}] (y2) at (2.center) {$\frac{\Yb 1}{\Ya 2}$};
    \node[scale=\textscl,anchor={180+260-45*3}] (y3) at (3.center) {$\frac{\Ya 2}{\Yc 3}$};
    \node[scale=\textscl,anchor={180+260-45*4}] (y4) at (4.center) {$\frac{\Yb 3}{\Yc 4}$};
    \node[scale=\textscl,anchor={180+260-45*5}] (y5) at (5.center) {$\frac{\Yb 4}{\Yc 5}$};
    \node[scale=\textscl,anchor={180+260-45*6}] (y6) at (6.center) {$\frac{\Yb 5}{\Ya 6}$};
    \node[scale=\textscl,anchor={180+260-45*7}] (y7) at (7.center) {$\frac{\Ya 6}{\Yc 7}$};
    \node[scale=\textscl,anchor={180+260-45*8}] (y8) at (8.center) {$\frac{\Yb 7}{\Ya 8}$};
\end{tikzpicture}
    &
\begin{tikzpicture}[baseline=(1.base)]
  \foreach \i in {1,2,...,8}{
    \coordinate (\i) at ({260-45*\i}:\rad);
  } 
  \draw[dashed,line width=\lw] (1) to[bend right=\bnd]
  node[scale=\scl,pos=\postn,solid, draw=blue,circle,fill=blue] (Y1) {} (2);
  \draw[dashed,line width=\lw] (2) to[bend right=\bnd]
  node[scale=\scl,pos=\postn,solid, draw=blue,circle,fill=blue] (YY1) {} (1);
  \draw[dashed,line width=\lw] (2) to[bend right=0] 
  node[scale=\scl,pos=0.5,solid, draw=blue,circle,fill=blue] (Y2) {} (3);
  \draw[dashed,line width=\lw] (3) to[bend right=\bnd] 
  node[scale=\scl,pos=\postn,solid, draw=blue,circle,fill=blue] (Y3) {} (4);
  \draw[dashed,line width=\lw] (4) to[bend right=\bnd]
  node[scale=\scl,pos=\postn,solid, draw=blue,circle,fill=blue] (YY3) {} (3);
  \draw[dashed,line width=\lw] (4) to[bend right=\bnd]
  node[scale=\scl,pos=\postn,solid, draw=blue,circle,fill=blue] (Y4) {} (5);
  \draw[dashed,line width=\lw] (5) to[bend right=\bnd]
  node[scale=\scl,pos=\postn,solid, draw=blue,circle,fill=blue] (YY4) {} (4);
  \draw[dashed,line width=\lw] (5) to[bend right=\bnd]
  node[scale=\scl,pos=\postn,solid, draw=blue,circle,fill=blue] (Y5) {} (6);
  \draw[dashed,line width=\lw] (6) to[bend right=\bnd]
  node[scale=\scl,pos=\postn,solid, draw=blue,circle,fill=blue] (YY5) {} (5);
  \draw[dashed,line width=\lw] (6) to[bend right=0]
  node[scale=\scl,pos=0.5,solid, draw=blue,circle,fill=blue] (Y6) {} (7);
  \draw[dashed,line width=\lw] (7) to[bend right=\bnd] 
  node[scale=\scl,pos=\postn,solid, draw=blue,circle,fill=blue] (Y7) {} (8);
  \draw[dashed,line width=\lw] (8) to[bend right=\bnd] 
  node[scale=\scl,pos=\postn,solid, draw=blue,circle,fill=blue] (YY7) {} (7);
  \draw[dashed,line width=\lw] (8) to[bend right=0] 
  node[scale=\scl,pos=0.5,solid, draw=blue,circle,fill=blue] (Y8) {} (1);
  \draw[postaction={decorate},blue] (Y8)--(Y7);
  \draw[postaction={decorate},blue] (Y7)--(Y6);
  \draw[postaction={decorate},blue] (Y6)--(Y5);
  \draw[postaction={decorate},blue] (Y5)--(Y4);
  \draw[postaction={decorate},blue] (Y4)--(Y3);
  \draw[postaction={decorate},blue] (Y3)--(Y2);
  \draw[postaction={decorate},blue] (Y2)--(Y1);
  \draw[postaction={decorate},blue] (Y1)--(Y8);

  \draw[postaction={decorate},red] (YY1)--(Y1);
  \draw[postaction={decorate},red] (YY3)--(Y3);
  \draw[postaction={decorate},red] (YY4)--(Y4);
  \draw[postaction={decorate},red] (YY5)--(Y5);
  \draw[postaction={decorate},red] (YY7)--(Y7);
  
  \draw[postaction={decorate},blue] (Y8) to[bend right=\bndd] (YY7);
  \draw[postaction={decorate},blue] (YY7) to[bend right=\bndd] (Y6);
  \draw[postaction={decorate},blue] (Y6) to[bend right=\bndd] (YY5);
  \draw[postaction={decorate},blue] (YY5) to[bend right=\bndd] (YY4);
  \draw[postaction={decorate},blue] (YY4) to[bend right=\bndd] (YY3);
  \draw[postaction={decorate},blue] (YY3) to[bend right=\bndd] (Y2);
  \draw[postaction={decorate},blue] (Y2) to[bend right=\bndd] (YY1);
  \draw[postaction={decorate},blue] (YY1) to[bend right=\bndd] (Y8);

\draw[postaction={decorate},green!70!black] (Y1) to[bend right=\bnddd] (YY3);
\draw[postaction={decorate},green!70!black] (Y2) to[bend right=\bnddd] (YY4);
\draw[postaction={decorate},green!70!black] (Y3) to[bend right=\bnddd] (YY5);
\draw[postaction={decorate},green!70!black] (Y4) to[bend right=\bnddd] (Y6);
\draw[postaction={decorate},green!70!black] (Y5) to[bend right=\bnddd] (YY7);
\draw[postaction={decorate},green!70!black] (Y6) to[bend right=\bnddd] (Y8);
\draw[postaction={decorate},green!70!black] (Y7) to[bend right=\bnddd] (YY1);
\draw[postaction={decorate},green!70!black] (Y8) to[bend right=\bnddd] (Y2);

\def\Yscl{0.7}

  \foreach \i in {1,3,4,5,7}{
    \node[scale=\Yscl,anchor={260-45*\i}] (lY\i) at (Y\i.center) {$\vb \i$};
    \node[scale=\Yscl,anchor={180+260-45*\i}] (lYY\i) at (YY\i.center) {$\vc \i$};
  }
  
  \foreach \i in {2,6,8}{
    \node[scale=\Yscl,anchor={180+260-45*\i}] (lY\i) at (Y\i.center) {$\va \i$};
    }
\end{tikzpicture}
  \end{tabular}
  \caption{\label{fig:subgraph_bidirected_seed} A subgraph $G$ of the bidirected cycle for $n=8$ and $\spec=\{1,3,4,5,7\}$ (left), shown together with the assignments of the $Y$-variables from~\eqref{eq:subgraph_X}. The quiver $Q(G)$ (right).}
\end{figure}
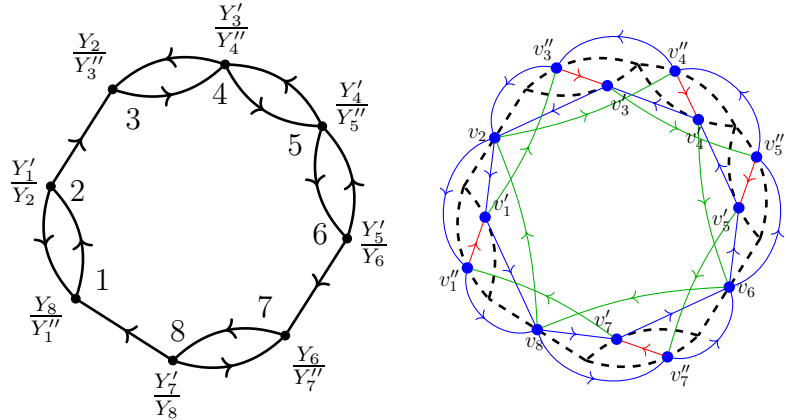 
\begin{example}\label{ex:subgraph_bidirected_cycle}
An example for $n=8$ and $\spec=\{1,3,4,5,7\}$ is shown in Figure~\ref{fig:subgraph_bidirected_seed} (left). 
\end{example}
We will build a Laurent recurrence system using a $T$-system arising from a cluster algebra. Then we will show that this $T$-system forms a weak $\tau$-sequence for the coefficient-free $R$-system associated with $G$.

Let us create a quiver $Q(G)=Q(n,\spec)$ as follows.  For each $i\in\spec$, introduce two vertices $\vb i$ and $\vc i$. For each $i \in [n]\setminus \spec$, introduce a single vertex $\va i=\vb i=\vc i$. Thus, the total number of vertices is $n+n'$, and $\vb i, \vc i$ are defined for each $i\in [n]$ and $\va i$ is defined for each $i\in [n]\setminus \spec$.

The arrows of $Q(G)$ are:
\begin{enumerate}[(a)]
\item\label{edge:a} $\vb i \rightarrow \vb{i-1}$ for all $i\in [n]$;
\item\label{edge:b} $\vc i \rightarrow \vc{i-1}$ for all $i\in [n]$;
\item\label{edge:c} $\vb i \rightarrow \vc{i+2}$ for all $i\in [n]$;
\item\label{edge:d} $\vc i \rightarrow \vb i$ for all $i\in \spec$.
\end{enumerate}

Note that for $n\leq4$, it may happen that $Q(G)$ contains arrows $u\to v$ and $v\to u$ for some vertices $u$ and $v$, in which case such pairs of arrows should be removed.

\begin{example}
For $G$ from Example~\ref{ex:subgraph_bidirected_cycle}, the quiver $Q(G)$ is shown in Figure~\ref{fig:subgraph_bidirected_seed} (right). The edges~\eqref{edge:a} and~\eqref{edge:b} are shown in blue, the edges~\eqref{edge:c} are shown in green, and the edges~\eqref{edge:d} are shown in red.
\end{example}

\begin{figure}
 \def\scl{0.3}
  \def\textscl{1}
  \def\rad{2}
  \def\bnd{30}
  \def\bndd{60}
  \def\bnddd{10}
  \def\lw{1}
  \def\postn{0.7}
  \scalebox{0.95}{
  \begin{tabular}{|c|c|c|c|}\hline
$G$ &

 \begin{tikzpicture}[baseline=(0.base)]
\coordinate (0) at (0,0);
  \foreach \i in {1,2,3}{
    \node[draw,scale=\scl,circle,fill=black] (\i) at ({260-120*\i}:\rad) { };
    \node[scale=\textscl,anchor={180+260-120*\i}] (l\i) at (\i.center) {$\i$};
  } 
  \draw[postaction={decorate},line width=\lw] (1) to[bend right=\bnd] (2);
  \draw[postaction={decorate},line width=\lw] (2) to[bend right=\bnd] (1);
  \draw[postaction={decorate},line width=\lw] (2) to[bend right=0] (3);
  \draw[postaction={decorate},line width=\lw] (3) to[bend right=0] (1);
\end{tikzpicture}
    &

 \begin{tikzpicture}[baseline=(0.base)]
\coordinate (0) at (0,0);
  \foreach \i in {1,2,3,4}{
    \node[draw,scale=\scl,circle,fill=black] (\i) at ({225-90*\i}:\rad) { };
    \node[scale=\textscl,anchor={180+225-90*\i}] (l\i) at (\i.center) {$\i$};
  } 
  \draw[postaction={decorate},line width=\lw] (1) to[bend right=\bnd] (2);
  \draw[postaction={decorate},line width=\lw] (2) to[bend right=\bnd] (1);
  \draw[postaction={decorate},line width=\lw] (2) to[bend right=0] (3);
  \draw[postaction={decorate},line width=\lw] (3) to[bend right=0] (4);
  \draw[postaction={decorate},line width=\lw] (4) to[bend right=0] (1);
\end{tikzpicture}
      &

 \begin{tikzpicture}[baseline=(0.base)]
\coordinate (0) at (0,0);
  \foreach \i in {1,2,3,4}{
    \node[draw,scale=\scl,circle,fill=black] (\i) at ({225-90*\i}:\rad) { };
    \node[scale=\textscl,anchor={180+225-90*\i}] (l\i) at (\i.center) {$\i$};
  } 
  \draw[postaction={decorate},line width=\lw] (1) to[bend right=\bnd] (2);
  \draw[postaction={decorate},line width=\lw] (2) to[bend right=\bnd] (1);
  \draw[postaction={decorate},line width=\lw] (2) to[bend right=0] (3);
  \draw[postaction={decorate},line width=\lw] (3) to[bend right=\bnd] (4);
  \draw[postaction={decorate},line width=\lw] (4) to[bend right=\bnd] (3);
  \draw[postaction={decorate},line width=\lw] (4) to[bend right=0] (1);
\end{tikzpicture}

   \\
   $Q(G)$&   
    
 \begin{tikzpicture}[baseline=(0.base)]
\coordinate (0) at (0,0);

  \foreach \i in {1,2,3}{
    \coordinate (\i) at ({260-120*\i}:\rad);
  } 
  \draw[dashed,line width=\lw] (1) to[bend right=\bnd] 
  node[scale=\scl,pos=\postn,solid, draw=blue,circle,fill=blue] (Y1) {}(2);
  \draw[dashed,line width=\lw] (2) to[bend right=\bnd] 
  node[scale=\scl,pos=\postn,solid, draw=blue,circle,fill=blue] (YY1) {}(1);
  \draw[dashed,line width=\lw] (2) to[bend right=0] 
  node[scale=\scl,pos=\postn,solid, draw=blue,circle,fill=blue] (Y2) {}(3);
  \draw[dashed,line width=\lw] (3) to[bend right=0] 
  node[scale=\scl,pos=\postn,solid, draw=blue,circle,fill=blue] (Y3) {}(1);

  \draw[postaction={decorate},blue] (Y3)--(Y2);
  \draw[postaction={decorate},blue] (Y2)--(Y1);
  \draw[postaction={decorate},blue] (Y1)--(Y3);
  
  \draw[postaction={decorate},red] (YY1)--(Y1);
  
\def\positn{0.7}
\draw[postaction={decorate},blue] (Y2) to[bend right=60, looseness=2] (YY1);
\def\positn{0.5}
  \draw[postaction={decorate},blue] (YY1) to[bend right=0] (Y3);
  \draw[postaction={decorate},blue] (Y3) to[bend right=20] (Y2);

\draw[postaction={decorate},green!70!black] (Y1) to[bend right=\bnddd] (Y3);
\draw[postaction={decorate},green!70!black] (Y3) to[bend right=\bnddd] (Y2);
\draw[postaction={decorate},green!70!black] (Y2) to[bend right=30] (YY1);
\end{tikzpicture}
    &

 \begin{tikzpicture}[baseline=(0.base)]
\coordinate (0) at (0,0);

  \foreach \i in {1,2,3,4}{
    \coordinate (\i) at ({225-90*\i}:\rad);
  } 
  \draw[dashed,line width=\lw] (1) to[bend right=\bnd] 
  node[scale=\scl,pos=\postn,solid, draw=blue,circle,fill=blue] (Y1) {}(2);
  \draw[dashed,line width=\lw] (2) to[bend right=\bnd] 
  node[scale=\scl,pos=\postn,solid, draw=blue,circle,fill=blue] (YY1) {}(1);
  \draw[dashed,line width=\lw] (2) to[bend right=0] 
  node[scale=\scl,pos=\postn,solid, draw=blue,circle,fill=blue] (Y2) {}(3);
  \draw[dashed,line width=\lw] (3) to[bend right=0] 
  node[scale=\scl,pos=\postn,solid, draw=blue,circle,fill=blue] (Y3) {}(4);
  \draw[dashed,line width=\lw] (4) to[bend right=0] 
  node[scale=\scl,pos=\postn,solid, draw=blue,circle,fill=blue] (Y4) {}(1);

  \draw[postaction={decorate},blue] (Y3)--(Y2);
  \draw[postaction={decorate},blue] (Y2)--(Y1);
  \draw[postaction={decorate},blue] (Y1)--(Y4);
  \draw[postaction={decorate},blue] (Y4)--(Y3);
  
  \draw[postaction={decorate},red] (YY1)--(Y1);

\draw[postaction={decorate},blue] (Y2) to[bend right=\bndd] (YY1);
  \draw[postaction={decorate},blue] (YY1) to[bend right=0] (Y4);
  \draw[postaction={decorate},blue] (Y4) to[bend right=20] (Y3);
  \draw[postaction={decorate},blue] (Y3) to[bend right=20] (Y2);

\draw[postaction={decorate},green!70!black] (Y1) to[bend right=0] (Y3);
\draw[postaction={decorate},green!70!black] (Y3) to[bend right=0] (YY1);
\end{tikzpicture}
 &   
 \begin{tikzpicture}[baseline=(0.base)]
\coordinate (0) at (0,0);
  \foreach \i in {1,2,3,4}{
    \coordinate (\i) at ({225-90*\i}:\rad);
  } 
  \draw[dashed,line width=\lw] (1) to[bend right=\bnd] 
  node[scale=\scl,pos=\postn,solid, draw=blue,circle,fill=blue] (Y1) {}(2);
  \draw[dashed,line width=\lw] (2) to[bend right=\bnd] 
  node[scale=\scl,pos=\postn,solid, draw=blue,circle,fill=blue] (YY1) {}(1);
  \draw[dashed,line width=\lw] (2) to[bend right=0] 
  node[scale=\scl,pos=\postn,solid, draw=blue,circle,fill=blue] (Y2) {}(3);
  \draw[dashed,line width=\lw] (3) to[bend right=\bnd] 
  node[scale=\scl,pos=\postn,solid, draw=blue,circle,fill=blue] (Y3) {}(4);
  \draw[dashed,line width=\lw] (4) to[bend right=\bnd] 
  node[scale=\scl,pos=\postn,solid, draw=blue,circle,fill=blue] (YY3) {}(3);
  \draw[dashed,line width=\lw] (4) to[bend right=0] 
  node[scale=\scl,pos=\postn,solid, draw=blue,circle,fill=blue] (Y4) {}(1);

  \draw[postaction={decorate},blue] (Y3)--(Y2);
  \draw[postaction={decorate},blue] (Y2)--(Y1);
  \draw[postaction={decorate},blue] (Y1)--(Y4);
  \draw[postaction={decorate},blue] (Y4)--(Y3);
  
  \draw[postaction={decorate},red] (YY1)--(Y1);
  \draw[postaction={decorate},red] (YY3)--(Y3);
  
\def\positn{0.7}
\draw[postaction={decorate},blue] (Y2) to[bend right=\bndd] (YY1);
  \draw[postaction={decorate},blue] (YY1) to[bend right=0] (Y4);
  \draw[postaction={decorate},blue] (Y4) to[bend right=\bndd] (YY3);
  \draw[postaction={decorate},blue] (YY3) to[bend right=0] (Y2);
\def\positn{0.5}
  
\draw[postaction={decorate},green!70!black] (Y1) to[bend right=0] (YY3);
\draw[postaction={decorate},green!70!black] (Y3) to[bend right=0] (YY1);
\end{tikzpicture} \\
    &&&  \\\hline
    $(n,I)$ & $(3,\{1\})$ & $(4,\{1\})$ & $(4,\{1,3\})$\\\hline
\begin{tabular}{c}
  Name \\
  of $Q(G)$
\end{tabular} & Somos-$4$ & Somos-$5$ & $dP3$\\\hline
                                    
  \end{tabular}
  }
  \caption{\label{fig:subgraph_somos_dp3} Small examples of subgraphs of the bidirected cycle that give rise to some well known quivers. Here $dP3$ stands for the \emph{del Pezzo $3$ quiver} studied in the physics literature, see, e.g.,~\cite{FHHU,Musiker}. Note that the pairs of green arrows connecting two vertices in the opposite directions are removed.}
\end{figure}

Let $\spec=\{i_1<i_2<\dots<i_k\}$, thus $i_k<n$. Let us also introduce a total order $\prec$ on the vertices of $Q(G)$ defined by
\[\vc 1 \preceq \vb 1\preceq \vc 2\preceq \vb 2\preceq\dots\preceq \vc{n}\preceq\vb{n},\]
where we have the equality $\vc i=\vb i$ if and only if $i\notin\spec$. Let the vertices of $Q(G)$ be $u_1\prec u_2\prec\dots\prec u_{n+n'}$. Introduce the map $\nu$ on the vertices of $Q(G)$ given by $\nu(u_i)=u_{i+1}$ for all $i\in [n+n']$, where the indices are taken modulo $n+n'$, i.e., $\nu(u_{n+n'})=u_1$. The following result is an easy direct verification.

\def\QQ{{\tilde{Q}}}
\begin{proposition}\label{prop:mutations_subgraphs_bidirected_cycle}
Let $\QQ$ be the quiver obtained from $Q(G)$ by mutating at the vertices $\vb{i_k},\vb{i_{k-1}},\dots,\vb{i_1}$ in this order. Then $\nu$ is an isomorphism between $Q(G)$ and $\QQ$.
\end{proposition}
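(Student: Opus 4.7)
The plan is a direct verification, tracking each mutation and matching the result to $\nu(Q(G))$ arrow by arrow. First, I would enumerate the arrows of $Q(G)$ incident to $\vb i$ for $i \in \spec$: from the four types in the definition, the incoming arrows are $\vb{i+1}\to \vb i$ (type (a)) and $\vc i \to \vb i$ (type (d)), and the outgoing arrows are $\vb i \to \vb{i-1}$ (type (a)) and $\vb i \to \vc{i+2}$ (type (c)); there are no type (b) arrows at $\vb i$ and no type (c) arrow into $\vb i$, since those go between $\vc$-vertices. An analogous list can be compiled for $\vc i$.

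Next I would compute the single mutation $\mu_{\vb{i_k}}$: reversing the arrows at $\vb{i_k}$ and adding composition arrows $u\to w$ for each pair of an incoming arrow $u\to\vb{i_k}$ and an outgoing arrow $\vb{i_k}\to w$ yields, after $2$-cycle cancellations (which must be checked case-by-case depending on whether $i_k-1$ and $i_k+2$ lie in $\spec$), a predictable local update. A direct comparison with $\nu(Q(G))$ in the neighborhood of $\vb{i_k}$ shows a match there, while the arrows of the quiver disjoint from this neighborhood are untouched and still coincide with the corresponding arrows of $Q(G)$. I would then handle the remaining mutations by reverse induction on $j = k, k-1, \dots, 1$. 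The simplifying observation is that arrows in $Q(G)$ only connect vertices whose cyclic index distance is at most $2$, so the mutations at $\vb{i_j}$ and $\vb{i_{j'}}$ commute whenever $|i_j - i_{j'}| > 2$. This reduces the inductive step to checking interactions between consecutive mutations, which is exactly why the chosen order (processing $\vb{i_j}$'s from largest to smallest) is the correct one: it matches the direction of the type (a) and (b) arrows.

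The main obstacle will be the bookkeeping of $2$-cycle cancellations whenever indices lie close together, in particular when $i_{j+1}=i_j+1$ or $i_j+2\in \spec$, and for the cyclic wraparound between $\vb{i_k}$ and $\vb{i_1}$. In each such case a handful of arrows must be recomputed directly, but in every case the arrows of the resulting quiver match exactly the arrows of $Q(G)$ pushed forward by $\nu$ (which on the vertex set simply shifts each $u_i$ to $u_{i+1}$), and the proposition follows.
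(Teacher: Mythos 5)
Your proposal is correct and takes essentially the same route as the paper, which offers no argument beyond declaring the proposition "an easy direct verification": a case-by-case tracking of the arrows incident to each mutated vertex $\vb{i_j}$, the composite arrows created, and the $2$-cycle cancellations, followed by a comparison with $\nu(Q(G))$. Your additional observations — that the relevant mutations commute when the special indices are far apart, and that the delicate cases are adjacent special indices and the cyclic wraparound — are exactly the bookkeeping one needs to carry out that verification.
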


\begin{example}
  For $G$ from Example~\ref{ex:subgraph_bidirected_cycle}, we have
  \[(u_1,u_2,\dots,u_{13})=(\vc1,\vb1,\va2,\vc3,\vb3,\vc4,\vb4,\vc5,\vb5,\va6,\vc7,\vb7,\va8).\]
  Thus $\nu$ is a cyclic shift of the sequence on the right hand side. One checks that mutating $Q(G)$ at the vertices $\vb7,\vb5,\vb4,\vb3,\vb1$ produces a quiver $\QQ$ obtained from $Q(G)$ by applying this cyclic shift.
\end{example}

Proposition~\ref{prop:mutations_subgraphs_bidirected_cycle} allows us to consider the $T$-system associated with $Q(G)$. Let us denote its values by $Y(t):=(Y_v(t))$, where $t\geq 0$ is an integer and $v$ is a vertex of $Q(G)$. Thus the seed $(Q(G),Y(t+1))$ is obtained from $(Q(G),Y(t))$ by first mutating at $\vb{i_k},\vb{i_{k-1}},\dots,\vb{i_1}$ and then applying the map $\nu$. For $v=\va i,\vb i, \vc i$, we denote $Y_v(t)$ by $\Ya i(t),\Yb i(t), \Yc i(t)$ respectively. Similarly to the proof of Proposition~\ref{prop:mutations_subgraphs_bidirected_cycle}, we obtain the following recurrence relation for $Y(t+1)$.

\begin{proposition}\label{prop:subgraph_Y_recurrence}
For $i\notin\spec$, we have
\[\Ya i(t+1)=\Yb i(t+1)=\Yc i(t+1)=\Yc{i+1}(t).\]
For $i\in \spec$, we still have  $\Yb i(t+1)=\Yc{i+1}(t)$, but the non-trivial relation is
\begin{equation}\label{eq:subgraph_cycle_recurrence}
  \begin{split}
\Yc i(t+1)=\frac{\Yc i(t)\Yc{i+1}(t)+\Yb{i-1}(t)\Yc{i+1}(t+1)}{\Yb i(t)}.
  \end{split}
\end{equation}
\end{proposition}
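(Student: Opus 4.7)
The proof unpacks the $T$-system definition and verifies the recurrence through a single cluster mutation at each $\vb{i_j}$. Set $\mu := \mu_{\vb{i_1}}\circ\cdots\circ\mu_{\vb{i_k}}$, so that $\vb{i_k}$ is mutated first and $\vb{i_1}$ last. Unpacking the definition $(T(t+1),Q(G))=\nu^{-1}(\mu(T(t),Q(G)))$ under the convention that $\nu$ acts on seeds by the cyclic shift along $\prec$, one obtains $Y_v(t+1)=(\mu Y(t))_{\nu(v)}$ for every $v$. Since only vertices of the form $\vb{i_j}$ are ever mutated, we get $Y_v(t+1)=Y_{\nu(v)}(t)$ whenever $\nu(v)\notin\{\vb{i_1},\ldots,\vb{i_k}\}$.

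The first part of the proposition is now immediate: the successor of $\va i$ in $\prec$ (for $i\notin\spec$) is $\vc{i+1}$, and the successor of $\vb i$ (for $i\in\spec$) is also $\vc{i+1}$. Neither is a mutated vertex, giving $\Ya i(t+1)=\Yb i(t+1)=\Yc i(t+1)=\Yc{i+1}(t)$ for $i\notin\spec$ (the inner equalities being tautological) and $\Yb i(t+1)=\Yc{i+1}(t)$ for $i\in\spec$.

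The nontrivial case is $\Yc i(t+1)$ for $i=i_j\in\spec$, where $\nu(\vc i)=\vb i$ is mutated. My plan is to show by induction on $k-j$ that, at the instant of mutating $\vb i$, the arrows incident to $\vb i$ reduce to exactly four: incoming from $\vc i$ (value $\Yc i(t)$) and from a vertex with current value $\Yc{i+1}(t)$, and outgoing to $\vb{i-1}$ (value $\Yb{i-1}(t)$) and to a vertex with current value $\Yc{i+1}(t+1)$. Concretely, the second incoming neighbor is $\va{i+1}=\vc{i+1}$ when $i+1\notin\spec$ and the unmutated $\vc{i+1}$ otherwise; the second outgoing target is $\vc{i+2}$ when $i+1\notin\spec$ (whose value equals $\Yc{i+2}(t)=\Yc{i+1}(t+1)$ by the first part) and the already-mutated $\vb{i+1}$ when $i+1\in\spec$ (whose value equals $\Yc{i+1}(t+1)$ by the same recurrence applied at $i+1$, available by the induction hypothesis). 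When $i+1\in\spec$, the simplification hinges on a $2$-cycle cancellation: the mutation at $\vb{i+1}$ creates a new arrow $\vb{i+2}\to\vb i$ which cancels against $\vb i\to\vc{i+2}$, either directly when $i+2\notin\spec$ (since then $\vc{i+2}=\vb{i+2}$), or after the earlier mutation at $\vb{i+2}$ reverses $\vc{i+2}\to\vb{i+2}$ and introduces $\vc{i+2}\to\vb i$ through the composite $\vc{i+2}\to\vb{i+1}\to\vb i$, completing the cancellation.

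Once this local picture is in place, the cluster mutation formula~\eqref{eq:cluster_mutation} at $\vb i$ gives
\[
\Yc i(t+1) = Y'_{\vb i} = \frac{\Yc i(t)\,\Yc{i+1}(t) + \Yb{i-1}(t)\,\Yc{i+1}(t+1)}{\Yb i(t)},
\]
which is the stated recurrence. The principal obstacle is the quiver bookkeeping through the iterated $2$-cycle cancellations; this is the same style of local case analysis that underlies Proposition~\ref{prop:mutations_subgraphs_bidirected_cycle}, and requires only finitely many cases.
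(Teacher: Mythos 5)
Your proposal is correct and follows the same route the paper takes: the paper derives this proposition as a direct verification of the mutation sequence, ``similarly to the proof of Proposition~\ref{prop:mutations_subgraphs_bidirected_cycle},'' which is exactly your unpacking of the $T$-system, the observation that only the $\vb{i_j}$ are mutated (giving the trivial shifts), and the local analysis of the four arrows at $\vb i$ at the moment of its mutation. Your inductive bookkeeping of the $2$-cycle cancellations supplies the detail the paper omits, and it checks out, including the base case at $\vb{i_k}$ where $i_k+1\notin\spec$ forces the unmutated in-neighbor to carry the value $\Yc{i_k+1}(t)$.
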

This is indeed a recurrence relation: since $n\notin\spec$, it allows one to compute $\Ya n(t+1)=\Yb n(t+1)=\Yc n(t+1)=\Yc 1(t)$, and then to compute $\Ya i(t+1),\Yb i(t+1),\Yc i(t+1)$ for each $i=n-1,n-2,\dots,1$. Using this procedure, for each vertex $v$ of $Q(G)$, $Y_v(t+1)$ becomes a rational function $P_v$ in the variables $Y_u(t)$, where $u$ runs over the vertices of $Q(G)$. We let $Y(t)=(Y_{u_1}(t),\dots,Y_{u_{n+n'}(t)})$ and $P=(P_{u_1},\dots,P_{u_{n+n'}})$. By the Laurent property of cluster algebras (Theorem~\ref{thm:FZ_Laurent}), we get that $(Y(t),P)$ is a Laurent recurrence system.

\begin{example}
  For $G$ from Example~\ref{ex:subgraph_bidirected_cycle}, we have
  \begin{equation*}
    \begin{split}
      \Yc 5(t+1) =& \frac{\Yc 5(t) \Ya 6(t) + \Yb 4(t) \Yc 7(t)}{\Yb 5(t)},\\
      \Yc 4(t+1) =& \frac{\Yc 4(t)\Yc 5(t) + \Yb 3(t)\Yc 5(t+1)}{\Yb 4(t)}\\
                 =&\frac{\Yc 4(t)\Yc 5(t)\Yb 5(t)+\Yb 3(t)\Yc 5(t)\Ya 6(t)+\Yb 3(t)\Yb 4(t)\Yc 7(t)}{\Yb 4(t)\Yb 5(t)},\\
      \Yc 3(t+1) =& \frac{\Yc 3(t)\Yc 4(t) + \Ya 2(t)\Yc 4(t+1)}{\Yb 3(t)}\\
                 =&\frac{\Yc 3(t)\Yc 4(t)\Yb 4(t)\Yb 5(t)+\Ya 2(t)\Yc 4(t)\Yc 5(t)\Yb 5(t)}{\Yb 3(t)\Yb 4(t)\Yb 5(t)}+\\
                  &\frac{\Ya 2(t)\Yb 3(t)\Yc 5(t)\Ya 6(t) + \Ya 2(t)\Yb 3(t)\Yb 4(t)\Yc 7(t)}{\Yb 3(t)\Yb 4(t)\Yb 5(t)}.
    \end{split}
  \end{equation*}
\end{example}

Now let us give a reduction to the $R$-system of $G$. For each $i \in [n]$, let 
\begin{equation}\label{eq:subgraph_X}
X_i(t) = \frac{\Yb{i-1}(t)}{\Yc i(t)}.
\end{equation}
The values of $X_i(t)$ are shown in Figure~\ref{fig:subgraph_bidirected_seed} (left).

\begin{theorem}
 The $X_i(t)$-s satisfy the toggle relations~\eqref{eq:toggle} of the $R$-system associated with $G$. Thus, the Laurent recurrence system $(Y(t), P)$ we constructed is a weak $\tau$-sequence for the $R$-system associated with $G$.  
\end{theorem}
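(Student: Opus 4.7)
The strategy is to verify the toggle relation~\eqref{eq:toggle} of the $R$-system directly at each vertex $i \in [n]$. Since $G$ has incoming edges $(i-1,i)$ (always) and $(i+1,i)$ (only when $i \in I$), and outgoing edges $(i,i+1)$ (always) and $(i,i-1)$ (only when $i-1 \in I$), the toggle identity splits into four cases parametrized by the Booleans $(i \in I,\, i-1 \in I)$. In each case I plan to reduce both sides to identities between $Y$-variables at times $t$ and $t+1$ that follow from Proposition~\ref{prop:subgraph_Y_recurrence}.

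First I would simplify the left-hand side uniformly. Using the identity $Y'_{i-1}(t+1) = Y''_i(t)$, which Proposition~\ref{prop:subgraph_Y_recurrence} asserts for every $i$, the product telescopes:
\[
X_i(t)\,X_i(t+1) \;=\; \frac{Y'_{i-1}(t)\,Y'_{i-1}(t+1)}{Y''_i(t)\,Y''_i(t+1)} \;=\; \frac{Y'_{i-1}(t)}{Y''_i(t+1)}.
\]
When $i \notin I$ this further simplifies via $Y''_i(t+1)=Y''_{i+1}(t)$ to $Y'_{i-1}(t)/Y''_{i+1}(t)$; when $i \in I$ the exchange relation~\eqref{eq:subgraph_cycle_recurrence} converts it to
\[
\frac{Y'_{i-1}(t)\,Y'_i(t)}{Y''_i(t)\,Y''_{i+1}(t) + Y'_{i-1}(t)\,Y''_{i+1}(t+1)}.
\]

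Next I would expand the right-hand side of~\eqref{eq:toggle} in each case by substituting $X_j(\cdot)=Y'_{j-1}(\cdot)/Y''_j(\cdot)$ and rewriting every $Y$-variable at time $t+1$ using Proposition~\ref{prop:subgraph_Y_recurrence}. The factor $X_{i+1}(t+1) = Y''_{i+1}(t)/Y''_{i+1}(t+1)$ is immediate from $Y'_i(t+1) = Y''_{i+1}(t)$. The factor $X_{i-1}(t+1)$ takes the trivial form $X_{i-1}(t+1)=Y''_{i-1}(t)/Y''_i(t)$ when $i-1 \notin I$, and otherwise it is computed by applying~\eqref{eq:subgraph_cycle_recurrence} at index $i-1$. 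In the two cases where $i-1 \in I$, the key cancellation is that the numerator $Y'_i(t)Y''_{i-1}(t) + Y'_{i-2}(t) Y''_{i+1}(t)$ arising from $X_{i+1}(t) + X_{i-1}(t)$ coincides with the denominator $Y''_{i-1}(t)Y''_i(t) + Y'_{i-2}(t)Y''_{i+1}(t)$ arising from $X_{i-1}(t+1)$, either on the nose (when $i \notin I$, because $Y'_i(t)=Y''_i(t)$) or modulo a further application of~\eqref{eq:subgraph_cycle_recurrence} at index $i$ (when $i \in I$).

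The main obstacle is not conceptual but organizational: four cases must be laid out, and for each, one must track a handful of substitutions from Proposition~\ref{prop:subgraph_Y_recurrence}. Once this is done, the verification in each case reduces by elementary algebra to the relation~\eqref{eq:subgraph_cycle_recurrence}, and no further input is needed. Thus the four toggle relations hold simultaneously, so setting $X_i(t)=Y'_{i-1}(t)/Y''_i(t)$ produces a solution to the $R$-system associated with $G$, exhibiting $(Y(t),P)$ as a weak $\tau$-sequence.
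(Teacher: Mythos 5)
Your proposal is correct and follows essentially the same route as the paper: a four-case analysis on whether $i$ and $i-1$ are special, reducing each toggle relation to the identities $\Yb{i-1}(t+1)=\Yc{i}(t)$ from Proposition~\ref{prop:subgraph_Y_recurrence} and the exchange relation~\eqref{eq:subgraph_cycle_recurrence}. The only place needing a touch more care is the case where both $i$ and $i-1$ are special, since there $\Yc{i}(t+1)\neq\Yc{i+1}(t)$ and the denominator coming from $X_{i-1}(t+1)$ is $\Yc{i-1}(t)\Yc{i}(t)+\Yb{i-2}(t)\Yc{i}(t+1)$ rather than the expression you wrote; you correctly flag that an extra application of~\eqref{eq:subgraph_cycle_recurrence} at index $i$ resolves this, which is exactly the term-by-term expansion the paper carries out.
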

\begin{proof}
 There are four cases to consider, depending on whether $i$ is special, and also whether $i-1$ is special.
  Assume $i-1$ is special but $i$ is not. Then the toggling identity reads 
$$\frac{\Yb {i-1}(t)}{\Yc i(t)} \frac{\Yb {i-1}(t+1)}{\Yc i(t+1)} = \frac{ \frac{\Yb {i-2}(t)}{\Yc {i-1}(t)}+\frac{\Yb {i}(t)}{\Yc {i+1}(t)} }{  \left(\frac{\Yb {i-2}(t+1)}{\Yc {i-1}(t+1)}\right)^{-1} }.$$
By Proposition~\ref{prop:subgraph_Y_recurrence}, we have  $\Yb {i-1}(t+1)=\Yc i(t)=\Ya i(t)$,  $\Yb {i-2}(t+1)=\Yc {i-1}(t)$. Using $\Ya {i}(t+1)= \Yb {i}(t+1)=\Yc i(t+1) = \Yc {i+1}(t)$, we see that this is equivalent to 
 $$\Yb {i-1}(t) \Yc {i-1}(t+1) = \Yb i(t) \Yc {i-1}(t) + \Yb {i-2}(t) \Yc{i+1}(t),$$
 which follows from~\eqref{eq:subgraph_cycle_recurrence} because $\Yc {i+1}(t) = \Yc i(t+1)$ for $i\notin \spec$.
 
 Assume $i-1,i\in\spec$, i.e., both $i$ and $i-1$ are special. Then the toggling identity reads
 \begin{equation}\label{eq:subgraph_toggling_both_special}
\frac{\Yb {i-1}(t)}{\Yc i(t)} \frac{\Yb {i-1}(t+1)}{\Yc i(t+1)} = \frac{ \frac{\Yb {i-2}(t)}{\Yc {i-1}(t)}+\frac{\Yb {i}(t)}{\Yc {i+1}(t)} }{  \left(\frac{\Yb {i-2}(t+1)}{\Yc {i-1}(t+1)}\right)^{-1} + \left(\frac{\Yb {i}(t+1)}{\Yc {i+1}(t+1)}\right)^{-1}  }.
 \end{equation}

  By Proposition~\ref{prop:subgraph_Y_recurrence},  we have $\Yb {i-1}(t+1)=\Yc i(t)$,  $\Yb {i-2}(t+1)=\Yc {i-1}(t)$, and $\Yb {i}(t+1) = \Yc {i+1}(t)$. Thus the above identity is equivalent to
   \begin{equation*}
    \begin{split}
      &\Yb{i-1}(t)\left(\Yc{i+1}(t+1)\Yc{i-1}(t)+\Yc{i-1}(t+1)\Yc{i+1}(t)\right)=\\
      &=\Yc i(t+1)\left(\Yb i(t)\Yc{i-1}(t)+\Yb{i-2}(t)\Yc{i+1}(t)\right).
    \end{split}
  \end{equation*}
      
  The left hand side contains the term $\Yb{i-1}(t)\Yc{i-1}(t+1)\Yc{i+1}(t)$ while the right hand side contains the term $\Yc i(t+1)\Yb i(t)\Yc{i-1}(t)$. Expanding both of these terms using~\eqref{eq:subgraph_cycle_recurrence} yields
  \begin{equation*}
    \begin{split}
      &\Yb {i-1}(t)  \Yc{i+1}(t+1)\Yc {i-1}(t) + \Yc {i+1}(t) (  \Yc {i-1}(t) \Yc {i}(t) + \Yb {i-2}(t) \Yc {i}(t+1)  ) =\\
       &=\Yc {i-1}(t) (  \Yc i(t) \Yc {i+1}(t) + \Yb {i-1}(t) \Yc {i+1}(t+1)  ) + \Yc{i}(t+1)\Yb {i-2}(t) \Yc {i+1}(t) ,
    \end{split}
  \end{equation*}
 which is true by term to term comparison.  
 
 Assume $i$ is special but $i-1$ is not. Then the toggling identity reads
$$\frac{\Yb {i-1}(t)}{\Yc i(t)} \frac{\Yb {i-1}(t+1)}{\Yc i(t+1)} = \frac{ \frac{\Yb {i}(t)}{\Yc {i+1}(t)} }{  \left(\frac{\Yb {i-2}(t+1)}{\Yc {i-1}(t+1)}\right)^{-1} + \left(\frac{\Yb {i}(t+1)}{\Yc {i+1}(t+1)}\right)^{-1}  }.$$
By Proposition~\ref{prop:subgraph_Y_recurrence},  we have  $\Yb {i-1}(t+1)=\Yc i(t)$, $\Yb{i-2}(t+1)=\Yc{i-1}(t)$, and $\Yb i(t+1)=\Yc{i+1}(t)$. Using this and the fact that $\Ya{i-1}(t)=\Yb{i-1}(t)=\Yc{i-1}(t)$, the above toggling identity becomes equivalent to
\[\Yb i(t)\Yc i (t+1)=\Yc{i-1}(t+1)\Yc{i+1}(t)+\Yc{i+1}(t+1)\Yc{i-1}(t).\]
This follows from~\eqref{eq:subgraph_cycle_recurrence} since $\Yc{i-1}(t)=\Yb{i-1}(t)$ and $\Yc{i-1}(t+1)=\Yb{i-1}(t+1)=\Yc{i}(t)$.
 
Finally, assume neither $i$ nor $i-1$ is special. Then the toggling identity reads
 $$\frac{\Yb {i-1}(t)}{\Yc i(t)} \frac{\Yb {i-1}(t+1)}{\Yc i(t+1)} = \frac{ \frac{\Yb {i}(t)}{\Yc {i+1}(t)} }{  \left(\frac{\Yb {i-2}(t+1)}{\Yc {i-1}(t+1)}\right)^{-1} }.$$
We see that this is true since by Proposition~\ref{prop:subgraph_Y_recurrence},  we have  $\Yb {i-1}(t+1)=\Yc i(t)$, $\Yb {i-2}(t+1) = \Yc {i-1}(t) = \Yb {i-1}(t)$, $\Yc i(t+1)= \Yb i(t+1) = \Yc {i+1}(t)$, and $\Yc {i-1}(t+1) = \Yb {i-1}(t+1) = \Yc i(t) = \Yb i(t)$.
\end{proof}

\begin{theorem}
  If both $i$ and $i-1$ are special, the expression
  $$\frac{\Yb {i-2}(t) \Yc {i+1}(t) + \Yb i(t) \Yc {i-1}(t)}{\Yb {i-1}(t) \Yc i(t)}$$
  is a conserved quantity of the Laurent recurrence system $(Y(t),P)$.
\end{theorem}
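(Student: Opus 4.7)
The plan is to compute $Q_i(t+1)$ directly, convert everything to quantities at time $t$ together with $\Yc i(t{+}1)$ and $\Yc{i\pm1}(t{+}1)$, and then recognize the resulting expression via the toggle identity that was just verified.

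First, I would invoke the uniform identity $\Yb{j}(t+1)=\Yc{j+1}(t)$, valid for all $j\in[n]$ by Proposition~\ref{prop:subgraph_Y_recurrence} (it holds both in and outside $\spec$). Applying this at $j=i-2$ and $j=i-1$ and $j=i$ collapses the numerator and denominator of
\[Q_i(t+1)=\frac{\Yb{i-2}(t+1)\Yc{i+1}(t+1)+\Yb{i}(t+1)\Yc{i-1}(t+1)}{\Yb{i-1}(t+1)\Yc{i}(t+1)}\]
to
\[Q_i(t+1)=\frac{\Yc{i-1}(t)\Yc{i+1}(t+1)+\Yc{i+1}(t)\Yc{i-1}(t+1)}{\Yc{i}(t)\Yc{i}(t+1)}.\]

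The next step is to recall the toggle relation at the vertex $i$ in the case where both $i$ and $i-1$ lie in $\spec$, which was established during the proof of the previous theorem (see~\eqref{eq:subgraph_toggling_both_special} and its simplification). After substituting $\Yb{i-1}(t+1)=\Yc i(t)$, $\Yb{i-2}(t+1)=\Yc{i-1}(t)$, $\Yb{i}(t+1)=\Yc{i+1}(t)$, that identity becomes
\[\Yb{i-1}(t)\bigl(\Yc{i-1}(t)\Yc{i+1}(t+1)+\Yc{i+1}(t)\Yc{i-1}(t+1)\bigr)=\Yc{i}(t+1)\bigl(\Yb{i}(t)\Yc{i-1}(t)+\Yb{i-2}(t)\Yc{i+1}(t)\bigr).\]
Dividing both sides by $\Yb{i-1}(t)\Yc i(t)\Yc i(t+1)$ and using the expression for $Q_i(t+1)$ above, the left-hand side becomes $Q_i(t+1)$ while the right-hand side becomes $Q_i(t)$, yielding the desired equality $Q_i(t+1)=Q_i(t)$.

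There is no real obstacle here beyond bookkeeping: the assumption that both $i$ and $i-1$ are special is exactly what makes the two-term toggle at $i$ nontrivial on both sides, and it is the content of~\eqref{eq:subgraph_toggling_both_special} that expresses precisely the proportionality required. If one wishes, the same computation can be presented more symmetrically by verifying that $Q_i(t)$ equals $\Yc i(t+1)/\Yb{i-1}(t)$ times the reciprocal of the product $\Yc i(t)/\ldots$, but the cleanest route is the one above: a substitution using $\Yb{j}(t+1)=\Yc{j+1}(t)$ followed by a single application of the already-proven toggle relation.
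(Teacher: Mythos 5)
Your proof is correct and follows essentially the same route as the paper: the paper simply states that the conserved-quantity identity ``is just another way of writing'' the toggle relation~\eqref{eq:subgraph_toggling_both_special}, and your argument makes that explicit by substituting $\Yb{j}(t+1)=\Yc{j+1}(t)$ (valid for all $j$ by Proposition~\ref{prop:subgraph_Y_recurrence}) and then dividing the simplified form of~\eqref{eq:subgraph_toggling_both_special} by $\Yb{i-1}(t)\Yc{i}(t)\Yc{i}(t+1)$. The bookkeeping checks out; no gaps.
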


\begin{proof}
 The identity 
 $$\frac{\Yb {i-2}(t+1) \Yc {i+1}(t+1) + \Yb i(t+1) \Yc {i-1}(t+1)}{\Yb {i-1}(t+1) \Yc i(t+1)} = \frac{\Yb {i-2}(t) \Yc {i+1}(t) + \Yb i(t) \Yc {i-1}(t)}{\Yb {i-1}(t) \Yc i(t)}$$
 is just another way of writing~\eqref{eq:subgraph_toggling_both_special}. 
\end{proof}

\begin{conjecture}\label{conj:subgraph_strong_tau}
The Laurent recurrence system $(Y(t), P)$ we constructed is a strong $\tau$-sequence for the $R$-system.  
\end{conjecture}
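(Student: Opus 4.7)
The plan to upgrade the already-established weak $\tau$-sequence to a strong one has two parts: (a) verify dominance of the monomial map $\phi:\field^{n+n'}\bto\RP{V}$ induced by the substitution~\eqref{eq:subgraph_X} at $t=0$, and (b) verify that each entry $Y_v(t)$ is either a Laurent monomial or an irreducible Laurent polynomial in the initial cluster $\{Y_v(0)\}_v$, and that any two distinct such Laurent polynomials are coprime in $\numberfield[Y_v(0)^{\pm1}]$.

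The dominance of $\phi$ reduces to a rank computation on its exponent matrix $A=(a_{iv})$, whose column indexed by $v\in V$ is the signed incidence vector in $\Z^{n+n'}$ of the edge $\vb{v-1}$--$\vc{v}$ on the vertex set of $Q(G)$. These $n$ edges form an auxiliary graph $H$ in which consecutive edges $v$ and $v+1$ share a vertex if and only if $\vc{v}=\vb{v}$, i.e., iff $v\notin\spec$; hence $H$ consists of $n'$ disjoint arcs, one for each special index. Applying the standard rank formula for signed incidence matrices yields $\operatorname{rank}(A)=(n+n')-n'=n$, so the image of $A$ is all of $\Z^{V}$ and $\phi$ is dominant.

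For the irreducibility and coprimality, the key observation is Proposition~\ref{prop:mutations_subgraphs_bidirected_cycle}: every $Y_v(t)$ is obtained from the initial cluster via a finite sequence of cluster mutations in the cluster algebra associated with the quiver $Q(G)$. Thus the whole collection $\{Y_v(t)\}_{v,t}$ consists of cluster variables in a single connected skew-symmetric cluster algebra of geometric type with trivial frozen part. The plan is then to invoke the folklore statement (a special case of Conjecture~4.27 in~\cite{FZ2}, proven in many special cases) that every cluster variable in such an algebra is an irreducible Laurent polynomial in the initial cluster, and distinct cluster variables are pairwise coprime. Combined with the dominance computed above, this would immediately yield the strong $\tau$-sequence property.

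The main obstacle is establishing this irreducibility/coprimality statement uniformly for the quiver $Q(G)$, which is typically non-acyclic whenever $|\spec|\geq 2$, placing it outside the reach of classical acyclic cluster algebra techniques such as those of Caldero-Keller. A hands-on alternative would be to produce an explicit combinatorial formula expressing each $\Yc{i}(t)$ as a partition function of perfect matchings on a bipartite planar graph associated with $G$, in the spirit of Speyer's formula for the octahedron recurrence or of Musiker's formulas for cluster variables from surfaces; irreducibility and coprimality could then be read off by identifying the distinguished extremal matchings and inspecting the corresponding lowest- and highest-degree monomials. Carrying out such a combinatorial construction uniformly in $n$, $\spec$, and $t$ is the principal technical challenge in executing this plan.
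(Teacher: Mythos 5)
This statement is stated in the paper as a conjecture, and the paper does not prove it: the authors prove only the dominance half (the Proposition immediately following the conjecture, by explicitly solving for the $\Yb{}$'s and $\Yc{}$'s given the $X_i(0)$'s and arbitrary nonvanishing $\Yc i(0)$ for $i\in\spec$), and they explicitly flag that the remaining content of the conjecture is that the $Y(t)$ consist of irreducible, pairwise coprime Laurent polynomials. Your treatment of dominance is correct and is a legitimate alternative to the paper's: your auxiliary graph $H$ is a forest with $n+n'$ vertices and $n'$ components, so its signed incidence matrix has rank $n$ and the exponent vectors $e_{\vb{v-1}}-e_{\vc v}$ are linearly independent, which gives dominance of the monomial map to $\RP{V}$. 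This buys a cleaner conceptual statement than the paper's hand-by-hand construction, at the cost of invoking the incidence-matrix rank formula.

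The genuine gap is in part (b). You reduce irreducibility and pairwise coprimality of the $Y_v(t)$ to the assertion that distinct cluster variables of the cluster algebra of $Q(G)$ are irreducible and pairwise coprime as Laurent polynomials in the initial cluster, and you cite this as a ``folklore statement ... proven in many special cases.'' That assertion is not a theorem in the generality needed here: $Q(G)$ is typically non-acyclic, and the known irreducibility results (e.g.\ irreducibility of cluster variables as elements of the cluster algebra, or positivity-based arguments for surface or acyclic types) do not yield irreducibility as Laurent polynomials in a fixed initial seed for an arbitrary quiver. You acknowledge this yourself by calling it ``the main obstacle'' and ``the principal technical challenge,'' which means the proposal is a proof plan rather than a proof. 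This is precisely the point the authors could not resolve --- it is why the statement is a conjecture rather than a theorem --- so your attempt does not close the gap; it relocates it into an unproven general statement about cluster algebras. Your suggested fallback (a perfect-matching partition-function formula in the style of Speyer or Musiker, with extremal matchings controlling irreducibility) is a plausible route, especially since the small cases $Q(G)$ include the Somos-$4$, Somos-$5$, and $dP3$ quivers which do arise from brane tilings, but it is not carried out.
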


Conjecture~\ref{conj:subgraph_strong_tau} includes that $Y(t)$ consists of irreducible Laurent polynomials for any $t$. It is easy to see that the other needed condition from Definition~\ref{dfn:tau} holds. 

\begin{proposition}
 The map $(\Yb i(0), \Yc i(0))_{i\in [n]} \mapsto (X_i(0))_{i\in[n]}$ given by $X_i(0) = \frac{\Yb {i-1}(0)}{\Yc i(0)}$ is dominant. 
\end{proposition}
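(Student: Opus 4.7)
The plan is to construct an explicit pointwise right-inverse to $\phi$, which proves that $\phi$ is surjective onto $\RP n$ and in particular dominant. First I would fix a representative $(X_1,\dots,X_n)\in(\field^*)^n$ of a given target point in $\RP n$. The defining equations $X_i=\Yb{i-1}/\Yc i$, combined with the identifications $\Yb j=\Yc j$ for $j\notin\spec$, split into two kinds: when $i-1\in\spec$ the equation merely defines $\Yb{i-1}$ freely in terms of $\Yc i$, whereas when $i-1\notin\spec$ it becomes a genuine linear constraint $\Yc{i-1}=X_i\Yc i$ between two consecutive $\Yc$-values. Hence the whole problem reduces to solving the $\Yc$-constraints indexed by $\spec^c$.

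Next I would exploit the structure of $\spec$. Since $0<n'<n$, $\spec$ is both nonempty and proper, so the special vertices cut the cycle $[n]$ into maximal arcs of consecutive non-special indices, each bounded on the right by a special vertex. Along such an arc the chain of relations $\Yc{i-1}=X_i\Yc i$ can be iteratively back-solved from the terminating special vertex, expressing every non-special $\Yc j$ as a monomial in the $X$'s times the anchoring $\Yc{s(j)}$, where $s(j)$ denotes the first special vertex strictly after $j$ cyclically. Concretely, I would choose $\Yc s\in\field^*$ arbitrarily for each $s\in\spec$ (these are $n'$ free parameters), set $\Yc j:=X_{j+1}\cdots X_{s(j)}\Yc{s(j)}$ for $j\notin\spec$, and finally put $\Yb j:=\Yc j$ for $j\notin\spec$ and $\Yb j:=X_{j+1}\Yc{j+1}$ for $j\in\spec$. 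The verification that $X_i=\Yb{i-1}/\Yc i$ then splits into four cases according to whether each of $i-1,i$ lies in $\spec$, each handled by a direct substitution.

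The only thing to watch is cyclic closure. Had $\spec$ been empty, iterating the constraints $\Yc{i-1}=X_i\Yc i$ around the whole cycle would force $\prod_i X_i=1$, which is a genuine obstruction to surjectivity and reflects the periodic behavior of pure directed cycles observed in Section~\ref{sec:directed-cycles}. The hypothesis $\spec\neq\emptyset$ breaks the cycle into independent arcs, so no closure condition appears, and this is the only subtlety in what is otherwise a bookkeeping verification.
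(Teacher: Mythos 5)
Your proof is correct and takes essentially the same route as the paper's: fix $\Yc{s}(0)$ arbitrarily for the special indices $s\in\spec$ and back-solve all remaining variables from the relations $X_i(0)=\Yb{i-1}(0)/\Yc{i}(0)$, with the nonemptiness of $\spec$ breaking the cycle so that no closure condition such as $\prod_i X_i(0)=1$ arises. The only difference is presentational — you write the back-substitution in closed form arc by arc, anchored at each special vertex, while the paper phrases it as one sequential pass around the cycle starting from a single special vertex.
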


\begin{proof}
 Take the Zariski dense subset where the variables $\Yc i(0)$ do not vanish. Fix arbitrary values of $\Yc i(0)$ for $i\in\spec$. Start with some special $i$ and let $\Yb {i-1}(0) = X_i(0) \Yc i(0)$. Then let $\Yb {i-2}(0) = X_{i-1} (0)\Yc {i-1}(0)$, etc., until we complete the cycle to get $\Yb {i}(0) = X_{i+1}(0) \Yc {i+1}(0)$.
 It is clear that we define each of the $n+n'$ variables exactly once this way, and that $X_j(0) = \frac{\Yb {j-1}(0)}{\Yc j(0)}$ for all $j\in[n]$. Since we did not make any assumptions on the $X_j(0)$-s other than them being non-zero, the claim follows. 
\end{proof}

\section{Cylindric posets}\label{sec:cylindr-posets-octag}

Let $n\geq 3$, $m \geq 2$. Consider a digraph $G_{n,m}$ with vertices $A_{i,j}$, $1 \leq i \leq n$, $1 \leq j \leq m+2$, and one extra vertex $\tb$. The edges are as follows, for all $i\in[n]$ (where the index $i$ is always treated modulo $n$):
\begin{itemize}
 \item $(A_{i,j}, A_{i,j+1})$, for $1 \leq j \leq m+1$;
 \item $(A_{i,j}, A_{i-1,j+1})$, for $1 \leq j \leq m+1$;
 \item $(A_{i,m+2}, \tb)$;
 \item $(\tb, A_{i,1})$.
\end{itemize} 
If we omit the vertex $\tb$, we can think of the remaining graph as a Hasse diagram of a poset naturally embeddable on a cylinder.

\begin{figure}
  
\def\rad{1.2}
\def\scl{0.3}
\def\stp{1.2}
\def\textscl{0.8}

\begin{tabular}{c|c}

\newcommand{\nodeA}[3]{\node[scale=\scl,draw,circle,fill=black,label=#3:{\scalebox{\textscl}{$A_{#1,#2}$}}] (A#1#2) at ({\rad*cos(#1*120+#2*60-60)},{\rad*sin(#1*120+#2*60-60)},{#2*\stp-\stp}) { };}

\tdplotsetmaincoords{80}{20}
\begin{tikzpicture}[tdplot_main_coords]
  \foreach \j in {0,...,4}{
\fill[blue,opacity=0.1] (0,0,{\j*\stp}) circle (\rad);
}
\foreach \i/\j in {2/1,1/2,1/3,3/4,3/5,2/2,1/4} {
       \nodeA{\i}{\j}{left}
    }
 
\foreach \i/\j in {3/1,3/2,2/3,2/4,1/5,1/1,3/3,2/5} {
       \nodeA{\i}{\j}{right}
    }   
  \def\positn{0.7}
\draw[postaction={decorate},dashed]         (A11) -- (A32);
\draw[postaction={decorate},dashed]         (A11) -- (A12);
\draw[postaction={decorate}]         (A31) -- (A22);
\draw[postaction={decorate}]         (A31) -- (A32);
\draw[postaction={decorate}]         (A21) -- (A22);
\draw[postaction={decorate}]         (A21) -- (A12);

\draw[postaction={decorate},dashed]         (A12) -- (A33);
\draw[postaction={decorate}]         (A12) -- (A13);
\draw[postaction={decorate}]         (A32) -- (A23);
\draw[postaction={decorate},dashed]         (A32) -- (A33);
\draw[postaction={decorate}]         (A22) -- (A23);
\draw[postaction={decorate}]         (A22) -- (A13);

\draw[postaction={decorate}]         (A13) -- (A34);
\draw[postaction={decorate}]         (A13) -- (A14);
\draw[postaction={decorate},dashed]         (A33) -- (A24);
\draw[postaction={decorate},dashed]         (A33) -- (A34);
\draw[postaction={decorate}]         (A23) -- (A24);
\draw[postaction={decorate}]         (A23) -- (A14);

\draw[postaction={decorate}]         (A14) -- (A35);
\draw[postaction={decorate}]         (A14) -- (A15);
\draw[postaction={decorate},dashed]         (A34) -- (A25);
\draw[postaction={decorate}]         (A34) -- (A35);
\draw[postaction={decorate},dashed]         (A24) -- (A25);
\draw[postaction={decorate}]         (A24) -- (A15);
\end{tikzpicture}
  &
    \begin{tikzpicture}[scale=0.6]
\def\textscl{0.8}
\node[scale=0.3,draw,circle,fill=black,label=left:{\scalebox{\textscl}{$A_{1,1}$}}] (A11) at (1.50,2.00) { };
\node[scale=0.3,draw,circle,fill=black,label=left:{\scalebox{\textscl}{$A_{2,1}$}}] (A31) at (4.50,2.00) { };
\node[scale=0.3,draw,circle,fill=black,label=left:{\scalebox{\textscl}{$A_{3,1}$}}] (A51) at (7.50,2.00) { };
\node[scale=0.3,draw,circle,fill=black,label=left:{\scalebox{\textscl}{$A_{1,1}$}}] (A71) at (10.50,2.00) { };
\node[scale=0.3,draw,circle,fill=black,label=left:{\scalebox{\textscl}{$A_{2,1}$}}] (A91) at (13.50,2.00) { };
\node[scale=\textscl] (A02) at (0.00,4.00) {$\dots$};
\node[scale=0.3,draw,circle,fill=black,label=left:{\scalebox{\textscl}{$A_{1,2}$}}] (A22) at (3.00,4.00) { };
\node[scale=0.3,draw,circle,fill=black,label=left:{\scalebox{\textscl}{$A_{2,2}$}}] (A42) at (6.00,4.00) { };
\node[scale=0.3,draw,circle,fill=black,label=left:{\scalebox{\textscl}{$A_{3,2}$}}] (A62) at (9.00,4.00) { };
\node[scale=0.3,draw,circle,fill=black,label=left:{\scalebox{\textscl}{$A_{1,2}$}}] (A82) at (12.00,4.00) { };
\node[scale=\textscl] (A102) at (15.00,4.00) {$\dots$};
\node[scale=0.3,draw,circle,fill=black,label=left:{\scalebox{\textscl}{$A_{3,3}$}}] (A13) at (1.50,6.00) { };
\node[scale=0.3,draw,circle,fill=black,label=left:{\scalebox{\textscl}{$A_{1,3}$}}] (A33) at (4.50,6.00) { };
\node[scale=0.3,draw,circle,fill=black,label=left:{\scalebox{\textscl}{$A_{2,3}$}}] (A53) at (7.50,6.00) { };
\node[scale=0.3,draw,circle,fill=black,label=left:{\scalebox{\textscl}{$A_{3,3}$}}] (A73) at (10.50,6.00) { };
\node[scale=0.3,draw,circle,fill=black,label=left:{\scalebox{\textscl}{$A_{1,3}$}}] (A93) at (13.50,6.00) { };
\node[scale=\textscl] (A04) at (0.00,8.00) {$\dots$};
\node[scale=0.3,draw,circle,fill=black,label=left:{\scalebox{\textscl}{$A_{3,4}$}}] (A24) at (3.00,8.00) { };
\node[scale=0.3,draw,circle,fill=black,label=left:{\scalebox{\textscl}{$A_{1,4}$}}] (A44) at (6.00,8.00) { };
\node[scale=0.3,draw,circle,fill=black,label=left:{\scalebox{\textscl}{$A_{2,4}$}}] (A64) at (9.00,8.00) { };
\node[scale=0.3,draw,circle,fill=black,label=left:{\scalebox{\textscl}{$A_{3,4}$}}] (A84) at (12.00,8.00) { };
\node[scale=\textscl] (A104) at (15.00,8.00) {$\dots$};
\node[scale=0.3,draw,circle,fill=black,label=left:{\scalebox{\textscl}{$A_{2,5}$}}] (A15) at (1.50,10.00) { };
\node[scale=0.3,draw,circle,fill=black,label=left:{\scalebox{\textscl}{$A_{3,5}$}}] (A35) at (4.50,10.00) { };
\node[scale=0.3,draw,circle,fill=black,label=left:{\scalebox{\textscl}{$A_{1,5}$}}] (A55) at (7.50,10.00) { };
\node[scale=0.3,draw,circle,fill=black,label=left:{\scalebox{\textscl}{$A_{2,5}$}}] (A75) at (10.50,10.00) { };
\node[scale=0.3,draw,circle,fill=black,label=left:{\scalebox{\textscl}{$A_{3,5}$}}] (A95) at (13.50,10.00) { };
\draw[postaction={decorate}] (A02) -- (A13);
\draw[postaction={decorate}] (A04) -- (A15);
\draw[postaction={decorate}] (A11) -- (A22);
\draw[postaction={decorate}] (A11) -- (A02);
\draw[postaction={decorate}] (A13) -- (A24);
\draw[postaction={decorate}] (A13) -- (A04);
\draw[postaction={decorate}] (A22) -- (A33);
\draw[postaction={decorate}] (A22) -- (A13);
\draw[postaction={decorate}] (A24) -- (A35);
\draw[postaction={decorate}] (A24) -- (A15);
\draw[postaction={decorate}] (A31) -- (A42);
\draw[postaction={decorate}] (A31) -- (A22);
\draw[postaction={decorate}] (A33) -- (A44);
\draw[postaction={decorate}] (A33) -- (A24);
\draw[postaction={decorate}] (A42) -- (A53);
\draw[postaction={decorate}] (A42) -- (A33);
\draw[postaction={decorate}] (A44) -- (A55);
\draw[postaction={decorate}] (A44) -- (A35);
\draw[postaction={decorate}] (A51) -- (A62);
\draw[postaction={decorate}] (A51) -- (A42);
\draw[postaction={decorate}] (A53) -- (A64);
\draw[postaction={decorate}] (A53) -- (A44);
\draw[postaction={decorate}] (A62) -- (A73);
\draw[postaction={decorate}] (A62) -- (A53);
\draw[postaction={decorate}] (A64) -- (A75);
\draw[postaction={decorate}] (A64) -- (A55);
\draw[postaction={decorate}] (A71) -- (A82);
\draw[postaction={decorate}] (A71) -- (A62);
\draw[postaction={decorate}] (A73) -- (A84);
\draw[postaction={decorate}] (A73) -- (A64);
\draw[postaction={decorate}] (A82) -- (A93);
\draw[postaction={decorate}] (A82) -- (A73);
\draw[postaction={decorate}] (A84) -- (A95);
\draw[postaction={decorate}] (A84) -- (A75);
\draw[postaction={decorate}] (A91) -- (A102);
\draw[postaction={decorate}] (A91) -- (A82);
\draw[postaction={decorate}] (A93) -- (A104);
\draw[postaction={decorate}] (A93) -- (A84);
\draw[postaction={decorate}] (A102) -- (A93);
\draw[postaction={decorate}] (A104) -- (A95);
\end{tikzpicture}
\end{tabular}

    \caption{The digraph $G_{n,m}$ for $n=3$, $m=3$, with vertex $\tb$ omitted (left). The same digraph drawn periodically on the universal cover of the cylinder (right).}
    \label{fig:cylindric_G_3_3}
\end{figure}

\begin{example}
 In Figure \ref{fig:cylindric_G_3_3} we see an example of $G_{n,m}$ for $n=3$, $m=3$. The right hand side of the figure shows the same poset but drawn in a \emph{strip} (i.e., on the universal cover of the cylinder) in a periodic way. 
\end{example}

We are going to create an initial seed of a Laurent Phenomenon algebra associated with $G_{n,m}$, and use it to build a $\tau$-sequence. To a large extent, the LP algebra we construct resembles a cluster algebra. We will point out explicitly the details where the differences with cluster algebras come into play. 

Construct a seed with variables $\alpha_{i,j}(t)$, $\beta_{i}(t)$, $\beta_{i}(t+1)$, $\gamma_{i}(t)$, and $\gamma_{i}(t+1)$, with the following exchange polynomials for $i\in[n]$ and $2\leq j\leq m-1$.
\begin{equation*}
  \begin{split}
F_{\alpha_{i,1}(t)}&= \alpha_{i-1,2}(t) \beta_{i+1}(t) \beta_{i+1}(t+1) + \alpha_{i,2}(t) \beta_{i}(t) \beta_{i}(t+1),\\
F_{\alpha_{i,j}(t)}&= \alpha_{i-1,j}(t) \alpha_{i,j+1}(t) \alpha_{i+1,j-1}(t) + \alpha_{i-1,j+1}(t) \alpha_{i+1,j}(t) \alpha_{i,j-1}(t)\\
F_{\alpha_{i,m}(t)}&= \alpha_{i+1,m-1}(t) \gamma_{i}(t) \gamma_{i}(t+1) \alpha_{i-1,m}(t) + \alpha_{i,m-1}(t) \gamma_{i-1}(t) \gamma_{i-1}(t+1) \alpha_{i+1,m}(t),\\
F_{\beta_{i}(t)} &= F_{\beta_{i}(t+1)} = \alpha_{i,1}(t) \beta_{i-1}(t) + \alpha_{i-1,1}(t) \beta_{i+1}(t),\\
F_{\gamma_{i}(t)} &= F_{\gamma_{i}(t+1)} = \alpha_{i,m}(t) \gamma_{i+1}(t) + \alpha_{i+1,m}(t) \gamma_{i-1}(t).\\
  \end{split}
\end{equation*}
It is convenient to think of the variables being associated to the faces of the graph $G_{n,m}$, as shown in Figure \ref{fig:cylindric_Q_3_3}. Then the exchange polynomials are essentially encoded by the quiver $Q_{n,m}$ in this figure. 
\begin{figure}
\scalebox{1.4}{
\begin{tikzpicture}[scale=0.6]
\def\textscl{0.6}
\def\positn{0.7}
\node[scale=0.3,draw=blue,circle,fill=blue,label=above:{\scalebox{\textscl}{$\beta_{1}(t)$}}] (A01) at (0.00,2.30) { };
\node[scale=0.3,draw=blue,circle,fill=blue,label=below:{\scalebox{\textscl}{$\beta_{1}(t+1)$}}] (AA01) at (0.00,1.70) { };
\coordinate (D11) at (1.50,2.00);
\node[scale=0.3,draw=blue,circle,fill=blue,label=above:{\scalebox{\textscl}{$\beta_{2}(t)$}}] (A21) at (3.00,2.30) { };
\node[scale=0.3,draw=blue,circle,fill=blue,label=below:{\scalebox{\textscl}{$\beta_{2}(t+1)$}}] (AA21) at (3.00,1.70) { };
\coordinate (D31) at (4.50,2.00);
\node[scale=0.3,draw=blue,circle,fill=blue,label=above:{\scalebox{\textscl}{$\beta_{3}(t)$}}] (A41) at (6.00,2.30) { };
\node[scale=0.3,draw=blue,circle,fill=blue,label=below:{\scalebox{\textscl}{$\beta_{3}(t+1)$}}] (AA41) at (6.00,1.70) { };
\coordinate (D51) at (7.50,2.00);
\node[scale=0.3,draw=blue,circle,fill=blue,label=above:{\scalebox{\textscl}{$\beta_{1}(t)$}}] (A61) at (9.00,2.30) { };
\node[scale=0.3,draw=blue,circle,fill=blue,label=below:{\scalebox{\textscl}{$\beta_{1}(t+1)$}}] (AA61) at (9.00,1.70) { };
\coordinate (D71) at (10.50,2.00);
\node[scale=0.3,draw=blue,circle,fill=blue,label=above:{\scalebox{\textscl}{$\beta_{2}(t)$}}] (A81) at (12.00,2.30) { };
\node[scale=0.3,draw=blue,circle,fill=blue,label=below:{\scalebox{\textscl}{$\beta_{2}(t+1)$}}] (AA81) at (12.00,1.70) { };
\coordinate (D91) at (13.50,2.00);
\node[scale=0.3,draw=blue,circle,fill=blue,label=above:{\scalebox{\textscl}{$\beta_{3}(t)$}}] (A101) at (15.00,2.30) { };
\node[scale=0.3,draw=blue,circle,fill=blue,label=below:{\scalebox{\textscl}{$\beta_{3}(t+1)$}}] (AA101) at (15.00,1.70) { };
\node[scale=0.8,opacity=0.2] (D02) at (0.00,4.00) {$\dots$};
\node[scale=0.3,draw=blue,circle,fill=blue] (A12) at (1.50,4.00) { };
\coordinate (D22) at (3.00,4.00);
\node[scale=0.3,draw=blue,circle,fill=blue] (A32) at (4.50,4.00) { };
\coordinate (D42) at (6.00,4.00);
\node[scale=0.3,draw=blue,circle,fill=blue] (A52) at (7.50,4.00) { };
\coordinate (D62) at (9.00,4.00);
\node[scale=0.3,draw=blue,circle,fill=blue] (A72) at (10.50,4.00) { };
\coordinate (D82) at (12.00,4.00);
\node[scale=0.3,draw=blue,circle,fill=blue] (A92) at (13.50,4.00) { };
\node[scale=0.8,opacity=0.2] (D102) at (15.00,4.00) {$\dots$};
\node[scale=0.3,draw=blue,circle,fill=blue] (A03) at (0.00,6.00) { };
\coordinate (D13) at (1.50,6.00);
\node[scale=0.3,draw=blue,circle,fill=blue] (A23) at (3.00,6.00) { };
\coordinate (D33) at (4.50,6.00);
\node[scale=0.3,draw=blue,circle,fill=blue] (A43) at (6.00,6.00) { };
\coordinate (D53) at (7.50,6.00);
\node[scale=0.3,draw=blue,circle,fill=blue] (A63) at (9.00,6.00) { };
\coordinate (D73) at (10.50,6.00);
\node[scale=0.3,draw=blue,circle,fill=blue] (A83) at (12.00,6.00) { };
\coordinate (D93) at (13.50,6.00);
\node[scale=0.3,draw=blue,circle,fill=blue] (A103) at (15.00,6.00) { };
\node[scale=0.8,opacity=0.2] (D04) at (0.00,8.00) {$\dots$};
\node[scale=0.3,draw=blue,circle,fill=blue] (A14) at (1.50,8.00) { };
\coordinate (D24) at (3.00,8.00);
\node[scale=0.3,draw=blue,circle,fill=blue] (A34) at (4.50,8.00) { };
\coordinate (D44) at (6.00,8.00);
\node[scale=0.3,draw=blue,circle,fill=blue] (A54) at (7.50,8.00) { };
\coordinate (D64) at (9.00,8.00);
\node[scale=0.3,draw=blue,circle,fill=blue] (A74) at (10.50,8.00) { };
\coordinate (D84) at (12.00,8.00);
\node[scale=0.3,draw=blue,circle,fill=blue] (A94) at (13.50,8.00) { };
\node[scale=0.8,opacity=0.2] (D104) at (15.00,8.00) {$\dots$};
\node[scale=0.3,draw=blue,circle,fill=blue,label=above:{\scalebox{\textscl}{$\gamma_{2}(t+1)$}}] (AA05) at (0.00,10.30) { };
\node[scale=0.3,draw=blue,circle,fill=blue,label=below:{\scalebox{\textscl}{$\gamma_{2}(t)$}}] (A05) at (0.00,9.70) { };
\coordinate (D15) at (1.50,10.00);
\node[scale=0.3,draw=blue,circle,fill=blue,label=above:{\scalebox{\textscl}{$\gamma_{3}(t+1)$}}] (AA25) at (3.00,10.30) { };
\node[scale=0.3,draw=blue,circle,fill=blue,label=below:{\scalebox{\textscl}{$\gamma_{3}(t)$}}] (A25) at (3.00,9.70) { };
\coordinate (D35) at (4.50,10.00);
\node[scale=0.3,draw=blue,circle,fill=blue,label=above:{\scalebox{\textscl}{$\gamma_{1}(t+1)$}}] (AA45) at (6.00,10.30) { };
\node[scale=0.3,draw=blue,circle,fill=blue,label=below:{\scalebox{\textscl}{$\gamma_{1}(t)$}}] (A45) at (6.00,9.70) { };
\coordinate (D55) at (7.50,10.00);
\node[scale=0.3,draw=blue,circle,fill=blue,label=above:{\scalebox{\textscl}{$\gamma_{2}(t+1)$}}] (AA65) at (9.00,10.30) { };
\node[scale=0.3,draw=blue,circle,fill=blue,label=below:{\scalebox{\textscl}{$\gamma_{2}(t)$}}] (A65) at (9.00,9.70) { };
\coordinate (D75) at (10.50,10.00);
\node[scale=0.3,draw=blue,circle,fill=blue,label=above:{\scalebox{\textscl}{$\gamma_{3}(t+1)$}}] (AA85) at (12.00,10.30) { };
\node[scale=0.3,draw=blue,circle,fill=blue,label=below:{\scalebox{\textscl}{$\gamma_{3}(t)$}}] (A85) at (12.00,9.70) { };
\coordinate (D95) at (13.50,10.00);
\node[scale=0.3,draw=blue,circle,fill=blue,label=above:{\scalebox{\textscl}{$\gamma_{1}(t+1)$}}] (AA105) at (15.00,10.30) { };
\node[scale=0.3,draw=blue,circle,fill=blue,label=below:{\scalebox{\textscl}{$\gamma_{1}(t)$}}] (A105) at (15.00,9.70) { };
\draw[dashed,line width=1.0,opacity=0.2] (D02) -- (D13);
\draw[dashed,line width=1.0,opacity=0.2] (D04) -- (D15);
\draw[dashed,line width=1.0,opacity=0.2] (D11) -- (D22);
\draw[dashed,line width=1.0,opacity=0.2] (D11) -- (D02);
\draw[dashed,line width=1.0,opacity=0.2] (D13) -- (D24);
\draw[dashed,line width=1.0,opacity=0.2] (D13) -- (D04);
\draw[dashed,line width=1.0,opacity=0.2] (D22) -- (D33);
\draw[dashed,line width=1.0,opacity=0.2] (D22) -- (D13);
\draw[dashed,line width=1.0,opacity=0.2] (D24) -- (D35);
\draw[dashed,line width=1.0,opacity=0.2] (D24) -- (D15);
\draw[dashed,line width=1.0,opacity=0.2] (D31) -- (D42);
\draw[dashed,line width=1.0,opacity=0.2] (D31) -- (D22);
\draw[dashed,line width=1.0,opacity=0.2] (D33) -- (D44);
\draw[dashed,line width=1.0,opacity=0.2] (D33) -- (D24);
\draw[dashed,line width=1.0,opacity=0.2] (D42) -- (D53);
\draw[dashed,line width=1.0,opacity=0.2] (D42) -- (D33);
\draw[dashed,line width=1.0,opacity=0.2] (D44) -- (D55);
\draw[dashed,line width=1.0,opacity=0.2] (D44) -- (D35);
\draw[dashed,line width=1.0,opacity=0.2] (D51) -- (D62);
\draw[dashed,line width=1.0,opacity=0.2] (D51) -- (D42);
\draw[dashed,line width=1.0,opacity=0.2] (D53) -- (D64);
\draw[dashed,line width=1.0,opacity=0.2] (D53) -- (D44);
\draw[dashed,line width=1.0,opacity=0.2] (D62) -- (D73);
\draw[dashed,line width=1.0,opacity=0.2] (D62) -- (D53);
\draw[dashed,line width=1.0,opacity=0.2] (D64) -- (D75);
\draw[dashed,line width=1.0,opacity=0.2] (D64) -- (D55);
\draw[dashed,line width=1.0,opacity=0.2] (D71) -- (D82);
\draw[dashed,line width=1.0,opacity=0.2] (D71) -- (D62);
\draw[dashed,line width=1.0,opacity=0.2] (D73) -- (D84);
\draw[dashed,line width=1.0,opacity=0.2] (D73) -- (D64);
\draw[dashed,line width=1.0,opacity=0.2] (D82) -- (D93);
\draw[dashed,line width=1.0,opacity=0.2] (D82) -- (D73);
\draw[dashed,line width=1.0,opacity=0.2] (D84) -- (D95);
\draw[dashed,line width=1.0,opacity=0.2] (D84) -- (D75);
\draw[dashed,line width=1.0,opacity=0.2] (D91) -- (D102);
\draw[dashed,line width=1.0,opacity=0.2] (D91) -- (D82);
\draw[dashed,line width=1.0,opacity=0.2] (D93) -- (D104);
\draw[dashed,line width=1.0,opacity=0.2] (D93) -- (D84);
\draw[dashed,line width=1.0,opacity=0.2] (D102) -- (D93);
\draw[dashed,line width=1.0,opacity=0.2] (D104) -- (D95);
\draw[postaction={decorate},blue] (A03) -- (A14);
\draw[postaction={decorate},blue] (A03) -- (A12);
\draw[postaction={decorate},blue] (A05) -- (A14);
\draw[postaction={decorate},blue] (AA05) -- (A14);
\draw[postaction={decorate},blue] (A12) -- (A23);
\draw[postaction={decorate},blue] (A14) -- (A25);
\draw[postaction={decorate},blue] (A14) -- (AA25);
\draw[postaction={decorate},blue] (A14) -- (A23);
\draw[postaction={decorate},blue] (A23) -- (A34);
\draw[postaction={decorate},blue] (A23) -- (A32);
\draw[postaction={decorate},blue] (A23) -- (A03);
\draw[postaction={decorate},blue] (A25) -- (A34);
\draw[postaction={decorate},blue] (AA25) -- (A34);
\draw[postaction={decorate},blue] (A25) -- (A05);
\draw[postaction={decorate},blue] (A32) -- (A43);
\draw[postaction={decorate},blue] (A34) -- (A45);
\draw[postaction={decorate},blue] (A34) -- (AA45);
\draw[postaction={decorate},blue] (A34) -- (A43);
\draw[postaction={decorate},blue] (A34) -- (A14);
\draw[postaction={decorate},blue] (A43) -- (A54);
\draw[postaction={decorate},blue] (A43) -- (A52);
\draw[postaction={decorate},blue] (A43) -- (A23);
\draw[postaction={decorate},blue] (A45) -- (A54);
\draw[postaction={decorate},blue] (AA45) -- (A54);
\draw[postaction={decorate},blue] (A45) -- (A25);
\draw[postaction={decorate},blue] (A52) -- (A63);
\draw[postaction={decorate},blue] (A54) -- (A65);
\draw[postaction={decorate},blue] (A54) -- (AA65);
\draw[postaction={decorate},blue] (A54) -- (A63);
\draw[postaction={decorate},blue] (A54) -- (A34);
\draw[postaction={decorate},blue] (A63) -- (A74);
\draw[postaction={decorate},blue] (A63) -- (A72);
\draw[postaction={decorate},blue] (A63) -- (A43);
\draw[postaction={decorate},blue] (A65) -- (A74);
\draw[postaction={decorate},blue] (AA65) -- (A74);
\draw[postaction={decorate},blue] (A65) -- (A45);
\draw[postaction={decorate},blue] (A72) -- (A83);
\draw[postaction={decorate},blue] (A74) -- (A85);
\draw[postaction={decorate},blue] (A74) -- (AA85);
\draw[postaction={decorate},blue] (A74) -- (A83);
\draw[postaction={decorate},blue] (A74) -- (A54);
\draw[postaction={decorate},blue] (A83) -- (A94);
\draw[postaction={decorate},blue] (A83) -- (A92);
\draw[postaction={decorate},blue] (A83) -- (A63);
\draw[postaction={decorate},blue] (A85) -- (A94);
\draw[postaction={decorate},blue] (AA85) -- (A94);
\draw[postaction={decorate},blue] (A85) -- (A65);
\draw[postaction={decorate},blue] (A92) -- (A103);
\draw[postaction={decorate},blue] (A94) -- (A105);
\draw[postaction={decorate},blue] (A94) -- (AA105);
\draw[postaction={decorate},blue] (A94) -- (A103);
\draw[postaction={decorate},blue] (A94) -- (A74);
\draw[postaction={decorate},blue] (A103) -- (A83);
\draw[postaction={decorate},blue] (A105) -- (A85);
\draw[postaction={decorate},blue] (A12) -- (A01);
\draw[postaction={decorate},blue] (A12) -- (AA01);
\draw[postaction={decorate},blue] (A01) -- (A21);
\draw[postaction={decorate},red] (A01) -- (AA21);
\draw[postaction={decorate},red] (AA01) -- (A21);
\draw[postaction={decorate},blue] (A32) -- (A21);
\draw[postaction={decorate},blue] (A32) -- (AA21);
\draw[postaction={decorate},blue] (A21) -- (A12);
\draw[postaction={decorate},blue] (AA21) -- (A12);
\draw[postaction={decorate},blue] (A21) -- (A41);
\draw[postaction={decorate},red] (A21) -- (AA41);
\draw[postaction={decorate},red] (AA21) -- (A41);
\draw[postaction={decorate},blue] (A52) -- (A41);
\draw[postaction={decorate},blue] (A52) -- (AA41);
\draw[postaction={decorate},blue] (A41) -- (A32);
\draw[postaction={decorate},blue] (AA41) -- (A32);
\draw[postaction={decorate},blue] (A41) -- (A61);
\draw[postaction={decorate},red] (A41) -- (AA61);
\draw[postaction={decorate},red] (AA41) -- (A61);
\draw[postaction={decorate},blue] (A72) -- (A61);
\draw[postaction={decorate},blue] (A72) -- (AA61);
\draw[postaction={decorate},blue] (A61) -- (A52);
\draw[postaction={decorate},blue] (AA61) -- (A52);
\draw[postaction={decorate},blue] (A61) -- (A81);
\draw[postaction={decorate},red] (A61) -- (AA81);
\draw[postaction={decorate},red] (AA61) -- (A81);
\draw[postaction={decorate},blue] (A92) -- (A81);
\draw[postaction={decorate},blue] (A92) -- (AA81);
\draw[postaction={decorate},blue] (A81) -- (A72);
\draw[postaction={decorate},blue] (AA81) -- (A72);
\draw[postaction={decorate},blue] (A81) -- (A101);
\draw[postaction={decorate},red] (A81) -- (AA101);
\draw[postaction={decorate},red] (AA81) -- (A101);
\draw[postaction={decorate},blue] (A101) -- (A92);
\draw[postaction={decorate},blue] (AA101) -- (A92);
\draw[postaction={decorate},red] (A25) -- (AA05);
\draw[postaction={decorate},red] (AA25) -- (A05);
\draw[postaction={decorate},red] (A45) -- (AA25);
\draw[postaction={decorate},red] (AA45) -- (A25);
\draw[postaction={decorate},red] (A65) -- (AA45);
\draw[postaction={decorate},red] (AA65) -- (A45);
\draw[postaction={decorate},red] (A85) -- (AA65);
\draw[postaction={decorate},red] (AA85) -- (A65);
\draw[postaction={decorate},red] (A105) -- (AA85);
\draw[postaction={decorate},red] (AA105) -- (A85);
\node[scale=\textscl,inner sep=2pt,anchor=north] (AAA) at (A03.center) {$\alpha_{3,2}(t)$};
\node[scale=\textscl,inner sep=2pt,anchor=north] (AAA) at (A12.center) {$\alpha_{1,1}(t)$};
\node[scale=\textscl,inner sep=2pt,anchor=north] (AAA) at (A14.center) {$\alpha_{3,3}(t)$};
\node[scale=\textscl,inner sep=2pt,anchor=north] (AAA) at (A23.center) {$\alpha_{1,2}(t)$};
\node[scale=\textscl,inner sep=2pt,anchor=north] (AAA) at (A32.center) {$\alpha_{2,1}(t)$};
\node[scale=\textscl,inner sep=2pt,anchor=north] (AAA) at (A34.center) {$\alpha_{1,3}(t)$};
\node[scale=\textscl,inner sep=2pt,anchor=north] (AAA) at (A43.center) {$\alpha_{2,2}(t)$};
\node[scale=\textscl,inner sep=2pt,anchor=north] (AAA) at (A52.center) {$\alpha_{3,1}(t)$};
\node[scale=\textscl,inner sep=2pt,anchor=north] (AAA) at (A54.center) {$\alpha_{2,3}(t)$};
\node[scale=\textscl,inner sep=2pt,anchor=north] (AAA) at (A63.center) {$\alpha_{3,2}(t)$};
\node[scale=\textscl,inner sep=2pt,anchor=north] (AAA) at (A72.center) {$\alpha_{1,1}(t)$};
\node[scale=\textscl,inner sep=2pt,anchor=north] (AAA) at (A74.center) {$\alpha_{3,3}(t)$};
\node[scale=\textscl,inner sep=2pt,anchor=north] (AAA) at (A83.center) {$\alpha_{1,2}(t)$};
\node[scale=\textscl,inner sep=2pt,anchor=north] (AAA) at (A92.center) {$\alpha_{2,1}(t)$};
\node[scale=\textscl,inner sep=2pt,anchor=north] (AAA) at (A94.center) {$\alpha_{1,3}(t)$};
\node[scale=\textscl,inner sep=2pt,anchor=north] (AAA) at (A103.center) {$\alpha_{2,2}(t)$};
\end{tikzpicture}}
    \caption{The quiver $Q_{n,m}$ for $n=3$, $m=3$.}
    \label{fig:cylindric_Q_3_3}
\end{figure}
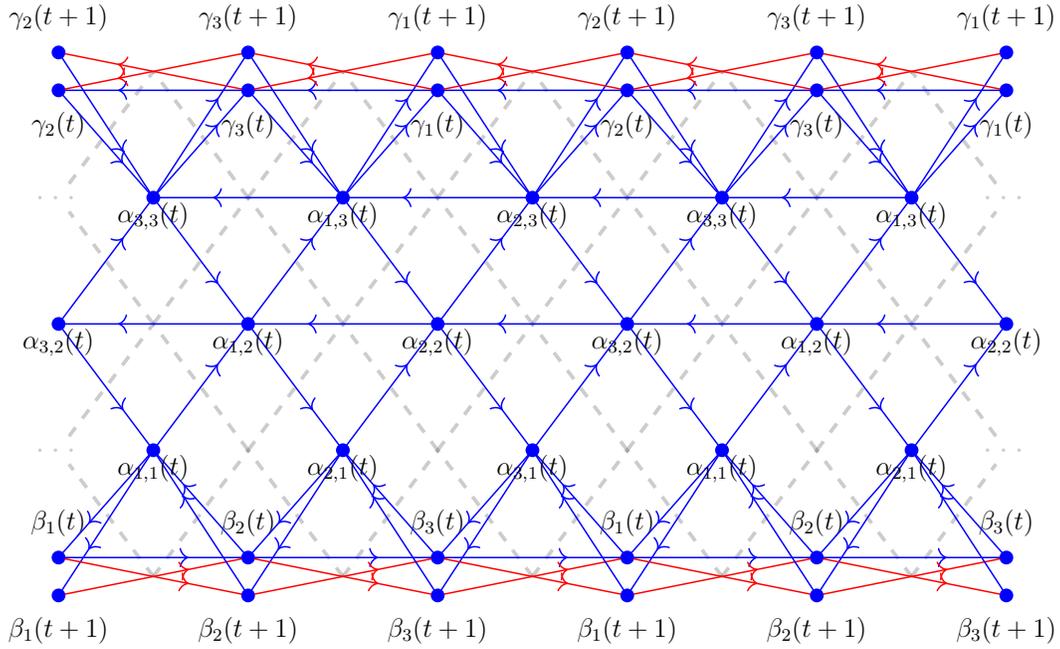
The only place to be careful with are the arrows connecting $\beta$-s and $\gamma$-s: those arrows may contribute to exchange polynomials of only one of their ends, not both. For example, the arrow connecting $\beta_2(t+1)$ to $\beta_3(t)$ contributes to the exchange polynomial
$\beta_1(t) \alpha_{2,1}(t) + \alpha_{1,1}(t) \beta_3(t)$ of $\beta_2(t+1)$, but not to the exchange polynomial $\beta_2(t) \alpha_{3,1}(t) + \alpha_{2,1}(t) \beta_1(t)$ of $\beta_3(t)$. 

To obtain the seed at time $t+1$, we perform the following mutations (in that exact order). 
\begin{itemize}
 \item Mutate all the $\alpha_{i,1}(t)$-s. 
 \item Mutate all the $\alpha_{i,2}(t)$-s. 
 \item $\ldots$
 \item Mutate all the $\alpha_{i,m}(t)$-s.
 \item Mutate all the $\gamma_{i}(t)$-s. 
 \item Mutate all the $\beta_{i}(t)$-s.
\end{itemize}
Note that $\alpha_{i,j}(t)$ mutates into $\alpha_{i,j}(t+1)$, however $\beta_{i}(t)$ mutates into $\beta_{i}(t+2)$, and $\gamma_{i}(t)$ mutates into $\gamma_{i}(t+2)$. Note also that according to \cite[Theorem 2.4]{LP2}, mutations at the variables not appearing in each others' exchange polynomials commute. Thus, the following theorem implies that the way we define the sequence of mutations indeed does not depend on the exact order of mutations within each step, as implicitly suggested above. 

\begin{theorem} \label{thm:cylpoly}
  At the moment when each variable is mutated, its exchange polynomial is given by
  \begin{equation*}
    \begin{split}
 F_{\alpha_{i,1}(t)} &= \alpha_{i-1,2}(t) \beta_{i+1}(t) \beta_{i+1}(t+1) + \alpha_{i,2}(t) \beta_{i}(t) \beta_{i}(t+1),\\
 F_{\alpha_{i,j}(t)} &= \alpha_{i,j+1}(t) \alpha_{i+1,j-1}(t+1) + \alpha_{i-1,j+1}(t) \alpha_{i,j-1}(t+1),\quad \text { for } 1 < j < m,\\
 F_{\alpha_{i,m}(t)} &= \alpha_{i+1,m-1}(t+1) \gamma_{i-1}(t) \gamma_{i-1}(t+1) + \alpha_{i,m-1}(t+1) \gamma_{i}(t) \gamma_{i}(t+1),\\
 F_{\gamma_{i}(t)} &= \alpha_{i,m}(t+1) \gamma_{i+1}(t+1) + \alpha_{i+1,m}(t+1) \gamma_{i-1}(t+1),\\
 F_{\beta_{i}(t)} &= \alpha_{i,1}(t+1) \beta_{i-1}(t+1) + \alpha_{i-1,1}(t+1) \beta_{i+1}(t+1).
    \end{split}
  \end{equation*}
 The seed at time $t+1$ that one gets after applying the above mutations can be obtained from the seed at time $t$ via a substitution $t \mapsto t+1$. In other words, after performing the mutation sequence, the exchange polynomials retain their shape.  
\end{theorem}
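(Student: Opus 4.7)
The plan is to proceed by induction on the position of a variable within the prescribed mutation sequence, verifying at each batch that (i) the variables mutated simultaneously have no arrows between them (equivalently, do not appear in each other's current exchange polynomials), so the intra-batch order is immaterial by~\cite{LP2}, and (ii) each variable's exchange polynomial at the moment it is mutated takes the form claimed in the theorem. All intermediate seeds arise from double quivers in the sense of Definition~\ref{dfn:double_quiver}, so we may apply the simplified LP mutation algorithm from Section~\ref{sec:defin-tau-sequ}, with the subtlety that the shared polynomials $F_{\beta_i(t)} = F_{\beta_i(t+1)}$ and $F_{\gamma_i(t)} = F_{\gamma_i(t+1)}$ force formula~\eqref{eq:LP_mutation_2} with $|J|=2$ instead of~\eqref{eq:LP_mutation_1} at the corresponding mutations.

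For the first batch I would mutate all $\alpha_{i,1}(t)$: their initial exchange polynomials are exactly the ones displayed in the seed. The LP update rule then propagates through every exchange polynomial containing some $\alpha_{i,1}(t)$, namely those of $\alpha_{i,2}(t)$, $\beta_i(t)$, and $\beta_i(t+1)$. A direct substitution $\alpha_{i,1}(t) \mapsto \tilde{F}_{\alpha_{i,1}(t)}/\alpha_{i,1}(t+1)$ followed by stripping the unique monomial factor shows that $F_{\alpha_{i,2}(t)}$ becomes $\alpha_{i,3}(t)\alpha_{i+1,1}(t+1) + \alpha_{i-1,3}(t)\alpha_{i,1}(t+1)$, while $F_{\beta_i(t)} = F_{\beta_i(t+1)}$ updates to $\alpha_{i,1}(t+1)\beta_{i-1}(t) + \alpha_{i-1,1}(t+1)\beta_{i+1}(t)$, and no other polynomial changes. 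Iterating this for $j=2,3,\dots,m$, at each row the hypothesis furnishes the claimed form for $F_{\alpha_{i,j}(t)}$, and the LP update rule replaces the $t$-variables of row $j-1$ inside surviving exchange polynomials by their new $(t+1)$-counterparts without affecting anything else. The boundary rows require only that one interpret the $\alpha_{i,m+1}$ slot in $F_{\alpha_{i,m}(t)}$ as $\gamma_i(t)\gamma_i(t+1)$ and the $\alpha_{i,0}$ slot in $F_{\alpha_{i,1}(t)}$ as $\beta_i(t)\beta_i(t+1)$.

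The main obstacle, and the most delicate computation, concerns the final two batches: mutating all $\gamma_i(t)$ (producing $\gamma_i(t+2)$) and then all $\beta_i(t)$ (producing $\beta_i(t+2)$) via formula~\eqref{eq:LP_mutation_2}. One must check that these shared exchange polynomials are preserved through every prior $\alpha$-substitution — they are, because the substitution of $\alpha_{i,m}(t)$ into the common polynomial of $\gamma_{i-1}(t)$ and $\gamma_{i-1}(t+1)$ is symmetric in $t$ and $t+1$, and similarly at the bottom — and that after both batches the surviving variables $\beta_i(t+1), \gamma_i(t+1)$, together with the new $\alpha_{i,j}(t+1), \beta_i(t+2), \gamma_i(t+2)$, assemble into a seed whose exchange polynomials match the initial ones under $t\mapsto t+1$. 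Once this final shape check is performed, the induction closes and the periodicity of the mutation pattern follows, so establishing Theorem~\ref{thm:cylpoly} is essentially a matter of careful organized bookkeeping rather than any single conceptual leap.
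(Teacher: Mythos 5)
Your overall strategy is the same as the paper's: verify the claimed exchange polynomials directly via the LP mutation algorithm, organized batch by batch, with the intra-batch order handled by the commutation result of \cite{LP2} and the shared polynomials $F_{\beta_i(t)}=F_{\beta_i(t+1)}$, $F_{\gamma_i(t)}=F_{\gamma_i(t+1)}$ forcing~\eqref{eq:LP_mutation_2}. The paper likewise dismisses the pure-$\alpha$ interactions as ordinary quiver mutations and concentrates on the $\alpha$--$\beta$ and $\alpha$--$\gamma$ interactions, so there is no methodological divergence.

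However, the one computation you actually commit to is wrong, and it is wrong at exactly the delicate point. You claim that after the batch of mutations at the $\alpha_{i,1}(t)$-s, the polynomial $F_{\beta_i(t)}$ becomes $\alpha_{i,1}(t+1)\beta_{i-1}(t)+\alpha_{i-1,1}(t+1)\beta_{i+1}(t)$. The correct output is $\alpha_{i,1}(t+1)\beta_{i-1}(t+1)+\alpha_{i-1,1}(t+1)\beta_{i+1}(t+1)$: mutating $\alpha_{i,1}(t)$ substitutes $\alpha_{i,1}(t)\leftarrow \alpha_{i-1,2}(t)\beta_{i+1}(t)\beta_{i+1}(t+1)/\alpha_{i,1}(t+1)$ and the cancellation of the common factor $\beta_{i+1}(t)$ trades $\beta_{i+1}(t)$ for $\beta_{i+1}(t+1)$ in the first term; the subsequent mutation at $\alpha_{i-1,1}(t)$, after cancelling $\beta_{i-1}(t)\alpha_{i-1,2}(t)$, does the same for $\beta_{i-1}$. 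This time-shift on the $\beta$'s is the whole point: with your formula, $F_{\beta_i(t)}$ at the moment of the final batch would still depend on $\beta_{i\pm1}(t)$, so the $\beta$-mutations would \emph{not} commute with one another (contradicting your own item (i)), and the resulting exchange relation would not be the one stated in the theorem. Since everything else in your proposal is deferred to ``bookkeeping,'' and the single piece of bookkeeping you exhibit misses the monomial cancellations that drive the result, the proposal as written does not establish the theorem; you need to redo the $\beta$ (and symmetric $\gamma$) updates carefully, tracking the Laurent-monomial normalization in step (5) of the LP mutation algorithm.
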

Thus, this sequence of mutations is an analog of a $T$-system, except inside an LP algebra. 

Before we give some details of the proof, note the following. 
At the time of the mutation, the variables $\gamma_{i}(t), \gamma_{i}(t+1)$, and also the variables $\beta_{i}(t), \beta_{i}(t+1)$ have the same exchange polynomials. Because of that, the corresponding mutations are given by~\eqref{eq:LP_mutation_2} and therefore are not a special case of a cluster algebra:
 $$\gamma_{i}(t) \gamma_{i}(t+1) \gamma_{i}(t+2) = \alpha_{i,m}(t+1) \gamma_{i+1}(t+1) + \alpha_{i+1,m}(t+1) \gamma_{i-1}(t+1),$$
 $$\beta_{i}(t) \beta_{i}(t+1) \beta_{i}(t+2) = \alpha_{i,1}(t+1) \beta_{i-1}(t+1) + \alpha_{i-1,1}(t+1) \beta_{i+1}(t+1).$$
The rest of the variables mutate as they would in a cluster algebra with the same exchange polynomials.

\begin{proof}[Proof of Theorem~\ref{thm:cylpoly}]
 The mutations of $\alpha$-s happen as in a cluster algebra, and in fact the usual quiver mutations can be used to verify what happens to their exchange polynomials. The only non-trivial part of the process is the part involving $\beta$-s and $\gamma$-s. 
 We describe how mutations of $\alpha$-s impact the exchange polynomials of $\beta$-s, and vice versa. A similar computation works for $\gamma$-s. 
 
 At the time when $\alpha_{i,1}(t)$-s are mutated, the exchange polynomials of $\beta_i(t)$, $\alpha_{i,1}(t)$, and $\alpha_{i-1,1}(t)$ are as follows:
 \begin{equation*}
   \begin{split}
F_{\beta_i(t)} &= \alpha_{i,1}(t) \beta_{i-1}(t) + \alpha_{i-1,1}(t) \beta_{i+1}(t),\\
F_{\alpha_{i,1}(t)} &= \alpha_{i-1,2}(t) \beta_{i+1}(t) \beta_{i+1}(t+1) + \alpha_{i,2}(t) \beta_{i}(t) \beta_{i}(t+1),\\
F_{\alpha_{i-1,1}(t)} &= \alpha_{i-2,2}(t) \beta_{i}(t) \beta_{i}(t+1) + \alpha_{i-1,2}(t) \beta_{i-1}(t) \beta_{i-1}(t+1).\\
   \end{split}
 \end{equation*}
 Let us mutate $\alpha_{i,1}(t)$ first. Setting $\beta_i(t)=0$ in $F_{\alpha_{i,1}(t)}$, we obtain the following substitution:
\[
   F_{\beta_i(t)}\mid_{ \alpha_{i,1}(t) \leftarrow \frac{\alpha_{i-1,2}(t) \beta_{i+1}(t) \beta_{i+1}(t+1)}{\alpha_{i,1}(t+1)}     } =\frac{\left(\begin{aligned}
       &\beta_{i-1}(t) \alpha_{i-1,2}(t) \beta_{i+1}(t) \beta_{i+1}(t+1) +\\
       &+\alpha_{i,1}(t+1) \alpha_{i-1,1}(t) \beta_{i+1}(t)     
     \end{aligned}\right)}{\alpha_{i,1}(t+1)} .
\]

 Removing the denominator and the common monomial factor $\beta_{i+1}(t)$, we see that the new exchange polynomial of $\beta_i(t)$ is
\[F_{\beta_i(t)} = \beta_{i-1}(t) \alpha_{i-1,2}(t) \beta_{i+1}(t+1) + \alpha_{i,1}(t+1) \alpha_{i-1,1}(t).\]
Let us now mutate $\alpha_{i-1,1}(t)$. Setting $\beta_i(t)=0$ in $F_{\alpha_{i-1,1}(t)}$, we obtain the following substitution:
\begin{equation*}
  F_{\beta_i(t)}\mid_{ \alpha_{i-1,1}(t) \leftarrow \frac{\alpha_{i-1,2}(t) \beta_{i-1}(t) \beta_{i-1}(t+1)}{\alpha_{i-1,1}(t+1)}     } = \frac{\left( \begin{aligned}
        &\beta_{i-1}(t) \alpha_{i-1,2}(t) \beta_{i+1}(t+1) \alpha_{i-1,1}(t+1) +\\
        & +\alpha_{i,1}(t+1) \alpha_{i-1,2}(t) \beta_{i-1}(t)  \beta_{i-1}(t+1)
      \end{aligned}\right)}{\alpha_{i-1,1}(t+1)} .      
\end{equation*}

Removing the denominator and the common monomial factor $\beta_{i-1}(t) \alpha_{i-1,2}(t)$, we see that the new exchange polynomial of $\beta_i(t)$ is 
\[F_{\beta_i(t)} = \beta_{i+1}(t+1) \alpha_{i-1,1}(t+1) + \alpha_{i,1}(t+1) \beta_{i-1}(t+1),\]
as claimed in the theorem.
It is easy to see that the exchange polynomials of $\beta_i(t+1)$-s change in the same way as those of $\beta_i(t)$-s.

At the time $\beta_{i}(t)$-s are mutated, the exchange polynomials of $\alpha_{i,1}(t+1)$-s, $\beta_i(t)$-s, and $\beta_{i+1}(t)$-s are as follows:
\begin{equation*}
  \begin{split}
    F_{\alpha_{i,1}(t+1)} =& \beta_i(t) \beta_i(t+1) \alpha_{i-1,2}(t+1) \alpha_{i+1,1}(t+1) +\\
                       &\beta_{i+1}(t) \beta_{i+1}(t+1) \alpha_{i-1,1}(t+1) \alpha_{i-1,2}(t+1),\\
F_{\beta_i(t)} =& \alpha_{i,1}(t+1) \beta_{i-1}(t+1) + \alpha_{i-1,1}(t+1) \beta_{i+1}(t+1),\\
F_{\beta_{i+1}(t)} =& \alpha_{i+1,1}(t+1) \beta_{i}(t+1) + \alpha_{i,1}(t+1) \beta_{i+2}(t+1).
  \end{split}
\end{equation*}

Let us mutate $\beta_{i}(t)$ first. Setting $\alpha_{i,1}(t+1)=0$ in $F_{\beta_{i}(t)}$, we obtain the following substitution:

\begin{equation*}
\resizebox{1\textwidth}{!} 
{$\displaystyle
    F_{\alpha_{i,1}(t+1)} \mid_{ \beta_{i}(t) \leftarrow \frac{\alpha_{i-1,1}(t+1) \beta_{i+1}(t+1)}{\beta_{i}(t+1) \beta_{i}(t+2)   }} = \frac{\left(
  \begin{aligned}
    &\alpha_{i-1,2}(t+1) \alpha_{i+1,1}(t+1) \alpha_{i-1,1}(t+1) \beta_{i+1}(t+1) + \\
    &+\beta_{i+1}(t) \beta_{i+1}(t+1) \alpha_{i-1,1}(t+1) \alpha_{i,2}(t+1) \beta_i(t+2)     
  \end{aligned}\right)
}{\beta_{i}(t+1) \beta_{i}(t+2)}.
$}
\end{equation*}

Removing the denominator and the common monomial factor $\beta_{i+1}(t+1) \alpha_{i-1,1}(t+1)$, we see that the new exchange polynomial of $\alpha_{i,1}(t+1)$ is
\[F_{\alpha_{i,1}(t+1)}= \alpha_{i-1,2}(t+1) \alpha_{i+1,1}(t+1) + \beta_{i+1}(t) \alpha_{i,2}(t+1) \beta_i(t+2).\]
Let us now mutate $\beta_{i+1}(t)$. Setting $\alpha_{i,1}(t+1)=0$ in $F_{\beta_{i+1}(t)}$, we obtain the following substitution:

\begin{equation*}
\resizebox{1\textwidth}{!} 
{$\displaystyle
  F_{\alpha_{i,1}(t+1)} \mid_{ \beta_{i+1}(t) \leftarrow \frac{\alpha_{i+1,1}(t+1) \beta_{i}(t+1)}{\beta_{i+1}(t+1) \beta_{i+1}(t+2)   }} = \frac{ \left(
      \begin{aligned}
        &\beta_{i+1}(t+1) \beta_{i+1}(t+2) \alpha_{i-1,2}(t+1) \alpha_{i+1,1}(t+1) + \\
        &+\beta_{i}(t+1) \beta_{i}(t+2) \alpha_{i+1,1}(t+1) \alpha_{i,2}(t+1)
      \end{aligned}
 \right)}{\beta_{i+1}(t+1) \beta_{i+1}(t+2)}.$
}
\end{equation*}
Removing the denominator and the common monomial factor $\alpha_{i+1,1}(t+1)$, we see that the new exchange polynomial of $\alpha_{i,1}(t+1)$ is
$$F_{\alpha_{i,1}(t+1)} = \beta_{i+1}(t+1) \beta_{i+1}(t+2) \alpha_{i-1,2}(t+1)  + \beta_{i}(t+1) \beta_{i}(t+2) \alpha_{i,2}(t+1),$$
thus finishing the proof of the theorem. 
\end{proof}

Now let us give a reduction to the $R$-system of $G_{n,m}$. Define the $X_{i,j}(t)$ for each $i\in[n]$ as follows.
\begin{equation}\label{eq:cylindric_reduction}
  \begin{split}
X_{i,1}(t) &= \frac{\alpha_{i,1}(t)}{\beta_{i}(t) \beta_{i+1}(t)},\\
X_{i,2}(t) &= \frac{\alpha_{i,2}(t)}{\beta_{i+1}(t) \beta_{i+1}(t+1)},\\
X_{i,j}(t) &= \frac{\alpha_{i,j}(t)}{\alpha_{i+1,j-2}(t+1)}, \quad\text{for } 3 \leq j \leq m,\\
X_{i,m+1}(t) &= \frac{\gamma_{i}(t) \gamma_{i}(t+1)}{\alpha_{i+1,m-1}(t+1)},\\
X_{i,m+2}(t) &= \frac{\gamma_{i}(t+1) \gamma_{i+1}(t+1)}{\alpha_{i+1,m}(t+1)}.
  \end{split}
\end{equation}

\begin{example}
The above reduction for our running example of $G_{3,3}$ is shown in Figure \ref{fig:cylindric_reduction}.
\end{example}

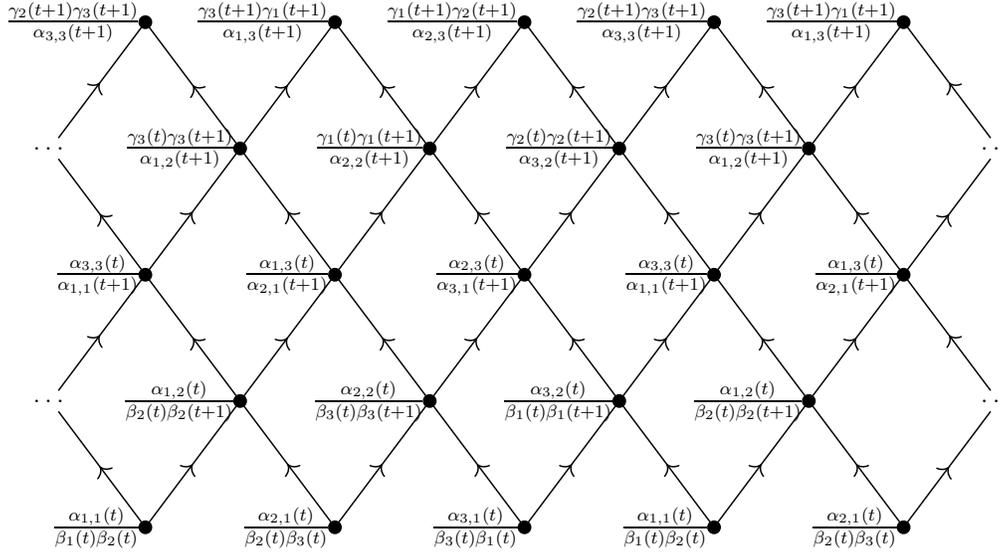
\begin{figure}
  \scalebox{1.4}{
\begin{tikzpicture}[scale=0.6]
\def\textscl{0.6}
\def\positn{0.5}
\node[scale=0.3,draw,circle,fill=black] (A11) at (1.50,2.00) { };
\node[scale=\textscl,inner sep=2pt,anchor=east] (AAA) at (A11.center) {$\frac{\alpha_{1,1}(t)}{\beta_{1}(t)\beta_{2}(t)}$};
\node[scale=0.3,draw,circle,fill=black] (A31) at (4.50,2.00) { };
\node[scale=\textscl,inner sep=2pt,anchor=east] (AAA) at (A31.center) {$\frac{\alpha_{2,1}(t)}{\beta_{2}(t)\beta_{3}(t)}$};
\node[scale=0.3,draw,circle,fill=black] (A51) at (7.50,2.00) { };
\node[scale=\textscl,inner sep=2pt,anchor=east] (AAA) at (A51.center) {$\frac{\alpha_{3,1}(t)}{\beta_{3}(t)\beta_{1}(t)}$};
\node[scale=0.3,draw,circle,fill=black] (A71) at (10.50,2.00) { };
\node[scale=\textscl,inner sep=2pt,anchor=east] (AAA) at (A71.center) {$\frac{\alpha_{1,1}(t)}{\beta_{1}(t)\beta_{2}(t)}$};
\node[scale=0.3,draw,circle,fill=black] (A91) at (13.50,2.00) { };
\node[scale=\textscl,inner sep=2pt,anchor=east] (AAA) at (A91.center) {$\frac{\alpha_{2,1}(t)}{\beta_{2}(t)\beta_{3}(t)}$};
\node[scale=\textscl] (A02) at (0.00,4.00) {$\dots$};
\node[scale=0.3,draw,circle,fill=black] (A22) at (3.00,4.00) { };
\node[scale=\textscl,inner sep=2pt,anchor=east] (AAA) at (A22.center) {$\frac{\alpha_{1,2}(t)}{\beta_{2}(t)\beta_{2}(t+1)}$};
\node[scale=0.3,draw,circle,fill=black] (A42) at (6.00,4.00) { };
\node[scale=\textscl,inner sep=2pt,anchor=east] (AAA) at (A42.center) {$\frac{\alpha_{2,2}(t)}{\beta_{3}(t)\beta_{3}(t+1)}$};
\node[scale=0.3,draw,circle,fill=black] (A62) at (9.00,4.00) { };
\node[scale=\textscl,inner sep=2pt,anchor=east] (AAA) at (A62.center) {$\frac{\alpha_{3,2}(t)}{\beta_{1}(t)\beta_{1}(t+1)}$};
\node[scale=0.3,draw,circle,fill=black] (A82) at (12.00,4.00) { };
\node[scale=\textscl,inner sep=2pt,anchor=east] (AAA) at (A82.center) {$\frac{\alpha_{1,2}(t)}{\beta_{2}(t)\beta_{2}(t+1)}$};
\node[scale=\textscl] (A102) at (15.00,4.00) {$\dots$};
\node[scale=0.3,draw,circle,fill=black] (A13) at (1.50,6.00) { };
\node[scale=\textscl,inner sep=2pt,anchor=east] (AAA) at (A13.center) {$\frac{\alpha_{3,3}(t)}{\alpha_{1,1}(t+1)}$};
\node[scale=0.3,draw,circle,fill=black] (A33) at (4.50,6.00) { };
\node[scale=\textscl,inner sep=2pt,anchor=east] (AAA) at (A33.center) {$\frac{\alpha_{1,3}(t)}{\alpha_{2,1}(t+1)}$};
\node[scale=0.3,draw,circle,fill=black] (A53) at (7.50,6.00) { };
\node[scale=\textscl,inner sep=2pt,anchor=east] (AAA) at (A53.center) {$\frac{\alpha_{2,3}(t)}{\alpha_{3,1}(t+1)}$};
\node[scale=0.3,draw,circle,fill=black] (A73) at (10.50,6.00) { };
\node[scale=\textscl,inner sep=2pt,anchor=east] (AAA) at (A73.center) {$\frac{\alpha_{3,3}(t)}{\alpha_{1,1}(t+1)}$};
\node[scale=0.3,draw,circle,fill=black] (A93) at (13.50,6.00) { };
\node[scale=\textscl,inner sep=2pt,anchor=east] (AAA) at (A93.center) {$\frac{\alpha_{1,3}(t)}{\alpha_{2,1}(t+1)}$};
\node[scale=\textscl] (A04) at (0.00,8.00) {$\dots$};
\node[scale=0.3,draw,circle,fill=black] (A24) at (3.00,8.00) { };
\node[scale=\textscl,inner sep=2pt,anchor=east] (AAA) at (A24.center) {$\frac{\gamma_{3}(t)\gamma_{3}(t+1)}{\alpha_{1,2}(t+1)}$};
\node[scale=0.3,draw,circle,fill=black] (A44) at (6.00,8.00) { };
\node[scale=\textscl,inner sep=2pt,anchor=east] (AAA) at (A44.center) {$\frac{\gamma_{1}(t)\gamma_{1}(t+1)}{\alpha_{2,2}(t+1)}$};
\node[scale=0.3,draw,circle,fill=black] (A64) at (9.00,8.00) { };
\node[scale=\textscl,inner sep=2pt,anchor=east] (AAA) at (A64.center) {$\frac{\gamma_{2}(t)\gamma_{2}(t+1)}{\alpha_{3,2}(t+1)}$};
\node[scale=0.3,draw,circle,fill=black] (A84) at (12.00,8.00) { };
\node[scale=\textscl,inner sep=2pt,anchor=east] (AAA) at (A84.center) {$\frac{\gamma_{3}(t)\gamma_{3}(t+1)}{\alpha_{1,2}(t+1)}$};
\node[scale=\textscl] (A104) at (15.00,8.00) {$\dots$};
\node[scale=0.3,draw,circle,fill=black] (A15) at (1.50,10.00) { };
\node[scale=\textscl,inner sep=2pt,anchor=east] (AAA) at (A15.center) {$\frac{\gamma_{2}(t+1)\gamma_{3}(t+1)}{\alpha_{3,3}(t+1)}$};
\node[scale=0.3,draw,circle,fill=black] (A35) at (4.50,10.00) { };
\node[scale=\textscl,inner sep=2pt,anchor=east] (AAA) at (A35.center) {$\frac{\gamma_{3}(t+1)\gamma_{1}(t+1)}{\alpha_{1,3}(t+1)}$};
\node[scale=0.3,draw,circle,fill=black] (A55) at (7.50,10.00) { };
\node[scale=\textscl,inner sep=2pt,anchor=east] (AAA) at (A55.center) {$\frac{\gamma_{1}(t+1)\gamma_{2}(t+1)}{\alpha_{2,3}(t+1)}$};
\node[scale=0.3,draw,circle,fill=black] (A75) at (10.50,10.00) { };
\node[scale=\textscl,inner sep=2pt,anchor=east] (AAA) at (A75.center) {$\frac{\gamma_{2}(t+1)\gamma_{3}(t+1)}{\alpha_{3,3}(t+1)}$};
\node[scale=0.3,draw,circle,fill=black] (A95) at (13.50,10.00) { };
\node[scale=\textscl,inner sep=2pt,anchor=east] (AAA) at (A95.center) {$\frac{\gamma_{3}(t+1)\gamma_{1}(t+1)}{\alpha_{1,3}(t+1)}$};
\def\op{1}
\draw[postaction={decorate},opacity=\op] (A02) -- (A13);
\draw[postaction={decorate},opacity=\op] (A04) -- (A15);
\draw[postaction={decorate},opacity=\op] (A11) -- (A22);
\draw[postaction={decorate},opacity=\op] (A11) -- (A02);
\draw[postaction={decorate},opacity=\op] (A13) -- (A24);
\draw[postaction={decorate},opacity=\op] (A13) -- (A04);
\draw[postaction={decorate},opacity=\op] (A22) -- (A33);
\draw[postaction={decorate},opacity=\op] (A22) -- (A13);
\draw[postaction={decorate},opacity=\op] (A24) -- (A35);
\draw[postaction={decorate},opacity=\op] (A24) -- (A15);
\draw[postaction={decorate},opacity=\op] (A31) -- (A42);
\draw[postaction={decorate},opacity=\op] (A31) -- (A22);
\draw[postaction={decorate},opacity=\op] (A33) -- (A44);
\draw[postaction={decorate},opacity=\op] (A33) -- (A24);
\draw[postaction={decorate},opacity=\op] (A42) -- (A53);
\draw[postaction={decorate},opacity=\op] (A42) -- (A33);
\draw[postaction={decorate},opacity=\op] (A44) -- (A55);
\draw[postaction={decorate},opacity=\op] (A44) -- (A35);
\draw[postaction={decorate},opacity=\op] (A51) -- (A62);
\draw[postaction={decorate},opacity=\op] (A51) -- (A42);
\draw[postaction={decorate},opacity=\op] (A53) -- (A64);
\draw[postaction={decorate},opacity=\op] (A53) -- (A44);
\draw[postaction={decorate},opacity=\op] (A62) -- (A73);
\draw[postaction={decorate},opacity=\op] (A62) -- (A53);
\draw[postaction={decorate},opacity=\op] (A64) -- (A75);
\draw[postaction={decorate},opacity=\op] (A64) -- (A55);
\draw[postaction={decorate},opacity=\op] (A71) -- (A82);
\draw[postaction={decorate},opacity=\op] (A71) -- (A62);
\draw[postaction={decorate},opacity=\op] (A73) -- (A84);
\draw[postaction={decorate},opacity=\op] (A73) -- (A64);
\draw[postaction={decorate},opacity=\op] (A82) -- (A93);
\draw[postaction={decorate},opacity=\op] (A82) -- (A73);
\draw[postaction={decorate},opacity=\op] (A84) -- (A95);
\draw[postaction={decorate},opacity=\op] (A84) -- (A75);
\draw[postaction={decorate},opacity=\op] (A91) -- (A102);
\draw[postaction={decorate},opacity=\op] (A91) -- (A82);
\draw[postaction={decorate},opacity=\op] (A93) -- (A104);
\draw[postaction={decorate},opacity=\op] (A93) -- (A84);
\draw[postaction={decorate},opacity=\op] (A102) -- (A93);
\draw[postaction={decorate},opacity=\op] (A104) -- (A95);
\end{tikzpicture}}
    \caption{The reduction to the $R$-system associated with $G_{3,3}$ from its $\tau$-sequence using~\eqref{eq:cylindric_reduction}.}
    \label{fig:cylindric_reduction}
\end{figure}

In what follows, we assume that the values of the $R$-system are rescaled so that the value at the vertex $\tb$ of $G_{n,m}$ is equal to $1$ for all $t$.
\begin{theorem} \label{thm:mapcyl}
 The $X_{i,j}(t)$-s satisfy the toggle relations~\eqref{eq:toggle} for the $R$-system associated with for $G_{n,m}$. Thus, the Laurent recurrence system with variables $\alpha, \beta, \gamma$ and polynomials $P$ given by Theorem \ref{thm:cylpoly} is a weak $\tau$-sequence for the $R$-system.  
\end{theorem}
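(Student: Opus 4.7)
\medskip

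\noindent\textbf{Proof proposal for Theorem~\ref{thm:mapcyl}.}

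The plan is a direct case-by-case verification: at every vertex $A_{i,j}$ of $G_{n,m}$, I will substitute the reduction formulas~\eqref{eq:cylindric_reduction} into the toggle relation~\eqref{eq:toggle} and then use the exchange relations from Theorem~\ref{thm:cylpoly} to transform each side into the same Laurent monomial in the $\alpha,\beta,\gamma$ variables. Since the vertex $\tb$ is assigned the value $1$ throughout, the toggle at $\tb$ automatically follows from Theorem~\ref{thm:arb}, so only the toggles at $A_{i,j}$ need to be checked. The list of cases to handle, according to how the reduction formulas and the incoming/outgoing neighborhoods look, is $j\in\{1,2,3,m-1,m,m+1,m+2\}$ together with the generic range $4\le j\le m-2$. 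In each case the incoming and outgoing edges come from $A_{i,j\pm1}$ and $A_{i\pm 1,j\pm1}$ (plus $\tb$ at the two ends), and the corresponding rational expressions split the verification into a ``numerator identity'' (over outgoing neighbors at time $t$) and a ``denominator identity'' (over incoming neighbors at time $t+1$).

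To illustrate the mechanism, take the generic interior case $4\le j\le m-2$, where $X_{i,j}(t)=\alpha_{i,j}(t)/\alpha_{i+1,j-2}(t+1)$ and all neighbors also use the general formula. Plugging in, the numerator of the RHS of~\eqref{eq:toggle} reads
\begin{equation*}
\frac{\alpha_{i,j+1}(t)}{\alpha_{i+1,j-1}(t+1)}+\frac{\alpha_{i-1,j+1}(t)}{\alpha_{i,j-1}(t+1)}=\frac{\alpha_{i,j+1}(t)\alpha_{i,j-1}(t+1)+\alpha_{i-1,j+1}(t)\alpha_{i+1,j-1}(t+1)}{\alpha_{i,j-1}(t+1)\alpha_{i+1,j-1}(t+1)},
\end{equation*}
whose numerator is exactly $\alpha_{i,j}(t)\alpha_{i,j}(t+1)$ by the interior exchange relation of Theorem~\ref{thm:cylpoly}. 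Similarly, the inverse-sum in the denominator, after shifting indices $(i,j,t)\mapsto (i+1,j-2,t+1)$, collapses to $\alpha_{i+1,j-2}(t+1)\alpha_{i+1,j-2}(t+2)$ divided by a symmetric monomial. After cancellation, the RHS equals $\alpha_{i,j}(t)\alpha_{i,j}(t+1)/(\alpha_{i+1,j-2}(t+1)\alpha_{i+1,j-2}(t+2))$, which is exactly $X_{i,j}(t)X_{i,j}(t+1)$.

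The remaining cases are handled in the same way, but now one has to invoke the ``boundary'' exchange relations of Theorem~\ref{thm:cylpoly}. Specifically, for $j=1$ the incoming edge is $(\tb,A_{i,1})$ with $X_\tb\equiv 1$ and the numerator identity reduces to the $\beta$ recurrence $\beta_i(t)\beta_i(t+1)\beta_i(t+2)=\alpha_{i,1}(t+1)\beta_{i-1}(t+1)+\alpha_{i-1,1}(t+1)\beta_{i+1}(t+1)$; for $j=2,3$ one uses the same $\beta$-relation together with the $F_{\alpha_{i,1}}$ and $F_{\alpha_{i,2}}$ exchange relations, because~\eqref{eq:cylindric_reduction} has different monomial form at these levels. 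The cases $j=m,m+1,m+2$ are symmetric: they use the $F_{\alpha_{i,m}}$ and $F_{\alpha_{i,m-1}}$ exchange relations together with the $\gamma$ recurrence $\gamma_i(t)\gamma_i(t+1)\gamma_i(t+2)=\alpha_{i,m}(t+1)\gamma_{i+1}(t+1)+\alpha_{i+1,m}(t+1)\gamma_{i-1}(t+1)$; at $j=m+2$ the outgoing edge goes to $\tb$ so the numerator identity trivializes.

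The main obstacle will simply be the bookkeeping: matching the index shifts between the $R$-system toggle at $A_{i,j}$ and the correctly shifted exchange relation in Theorem~\ref{thm:cylpoly}, especially at the transition layers $j\in\{2,3\}$ and $j\in\{m,m+1\}$ where~\eqref{eq:cylindric_reduction} changes form and where the three-term mutations of $\beta$ and $\gamma$ (arising from~\eqref{eq:LP_mutation_2}) come into play. Once the index shifts are lined up correctly, each case is a one-line application of an exchange relation, so there is no conceptual difficulty; the payoff is that Laurent-ness of $(\alpha,\beta,\gamma)$ then upgrades to a weak $\tau$-sequence for the $R$-system via Definition~\ref{dfn:tau}.
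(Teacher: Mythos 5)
Your proposal follows essentially the same route as the paper: substitute the reduction~\eqref{eq:cylindric_reduction} into the toggle relation at each vertex, split into a numerator identity over outgoing neighbors and a denominator identity over incoming neighbors, and match each against an exchange relation from Theorem~\ref{thm:cylpoly}; the paper likewise verifies two representative cases ($j=1$ and $j=2$) and leaves the rest as analogous bookkeeping. One small correction to your case assignments: at $j=1$ the incoming edge comes from $\tb$ so the denominator sum is trivial, and after clearing the $\beta$-factors the toggle relation is \emph{exactly} the two-term exchange relation $\alpha_{i,1}(t)\alpha_{i,1}(t+1)=\alpha_{i,2}(t)\beta_i(t)\beta_i(t+1)+\alpha_{i-1,2}(t)\beta_{i+1}(t)\beta_{i+1}(t+1)$ — the three-term recurrence $\beta_i(t)\beta_i(t+1)\beta_i(t+2)=\cdots$ first enters at $j=2$, where it collapses the denominator sum over the incoming neighbors $A_{i,1}$ and $A_{i+1,1}$.
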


\begin{proof}
We wish to verify the toggle relations for all the $X_{i,j}(t)$. There are several cases to consider, depending on the value of $j$, and also on the size of the parameter $m$. We do the computation for two of those cases, the other cases can be checked in a similar way. 

Let us assume $j=1$. The toggle relation is 
\[\frac{\alpha_{i,1}(t)}{\beta_i(t) \beta_{i+1}(t)} \frac{\alpha_{i,1}(t+1)}{\beta_i(t+1) \beta_{i+1}(t+1)} = \frac{\alpha_{i,2}(t)}{\beta_{i+1}(t) \beta_{i+1}(t+1)} + \frac{\alpha_{i-1,2}(t)}{\beta_{i}(t) \beta_{i}(t+1)}.\]
This is true since it is clearly equivalent to the exchange relation
\[\alpha_{i,1}(t) \alpha_{i,1}(t+1) = \alpha_{i,2}(t) \beta_{i}(t) \beta_{i}(t+1) + \alpha_{i-1,2}(t) \beta_{i+1}(t) \beta_{i+1}(t+1).\]

Let us now assume $j=2$ and $m \geq 3$. The toggle relation is 
\[\frac{\alpha_{i,2}(t)}{\beta_{i+1}(t) \beta_{i+1}(t+1)} \frac{\alpha_{i,2}(t)}{\beta_{i+1}(t) \beta_{i+1}(t+1)} = \frac{  \frac{\alpha_{i,3}(t)}{\alpha_{i+1,1}(t+1)} + \frac{\alpha_{i-1,3}(t)}{\alpha_{i,1}(t+1)}      }
  {   \frac{\beta_i(t+1) \beta_{i+1}(t+1)}{\alpha_{i,1}(t+1)}  + \frac{\beta_{i+1}(t+1) \beta_{i+2}(t+1)}{\alpha_{i+1,1}(t+1)}     }.\]
This is seen to be true by taking ratio of two exchange relations (and adjusting the denominator by an extra factor of $\beta_{i+1}(t+1)$):
\begin{align*}
    \alpha_{i,2}(t) \alpha_{i,2}(t+1) &= \alpha_{i,3}(t) \alpha_{i,1}(t+1) + \alpha_{i-1,3}(t) \alpha_{i+1,1}(t+1),\\
    \beta_{i+1}(t) \beta_{i+1}(t+1) \beta_{i+1}(t+2) &= \alpha_{i+1,1}(t+1) \beta_{i}(t+1) + \alpha_{i,1}(t+1) \beta_{i+2}(t+1).\qedhere
  \end{align*}
\end{proof}

\begin{theorem}
 For odd $n$, the map~\eqref{eq:cylindric_reduction} is dominant. 
\end{theorem}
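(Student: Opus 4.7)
\emph{Plan.} I would prove dominance by explicitly inverting the reduction~\eqref{eq:cylindric_reduction} modulo a $2n$‑dimensional gauge. The initial‑seed variables of the $\tau$‑sequence are $\alpha_{i,j}(0)$ for $1\le j\le m$ together with $\beta_i(0),\beta_i(1),\gamma_i(0),\gamma_i(1)$ for $i\in[n]$, giving $n(m+4)$ parameters in total, while after setting $X_\tb=1$ the target $\mathbb{P}^V$ is identified with an affine space of dimension $n(m+2)$. First I would cut out an $n(m+2)$‑dimensional slice $S$ of the source by imposing the $2n$ gauge conditions $\beta_i(0)=\beta_i(1)=1$ for all $i$. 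It then suffices to show that the restriction $\phi|_S$ has generically finite fibers, since by dimension count this forces $\phi|_S$ (and hence $\phi$) to be dominant.

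On $S$, the equations~$(1)$--$(2)$ of~\eqref{eq:cylindric_reduction} give $\alpha_{i,1}(0)=X_{i,1}(0)$ and $\alpha_{i,2}(0)=X_{i,2}(0)$ immediately, and the exchange relation (D1) yields $\alpha_{i,1}(1)=\bigl(X_{i,2}(0)+X_{i-1,2}(0)\bigr)/X_{i,1}(0)$. I would then induct on $j$: for $j=3,\dots,m$, equation~$(3)$ gives $\alpha_{i,j}(0)=X_{i,j}(0)\,\alpha_{i+1,j-2}(1)$, and for $1<j<m$ relation (D2) determines $\alpha_{i,j}(1)$ from quantities already computed. Hence $\alpha_{i,j}(0)$ for $1\le j\le m$ and $\alpha_{i,j}(1)$ for $1\le j\le m-1$ are all expressed as rational functions of the $X$'s. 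Substituting the two identities $\gamma_{i-1}(0)\gamma_{i-1}(1)=X_{i-1,m+1}(0)\,\alpha_{i,m-1}(1)$ and $\gamma_{i}(0)\gamma_{i}(1)=X_{i,m+1}(0)\,\alpha_{i+1,m-1}(1)$ (both consequences of~$(4)$) into (D3) cancels the $\gamma$'s and gives
\[\alpha_{i,m}(1)=\alpha_{i,m-1}(1)\,\alpha_{i+1,m-1}(1)\bigl(X_{i-1,m+1}(0)+X_{i,m+1}(0)\bigr)/\alpha_{i,m}(0),\]
again a rational function of the $X$'s. Feeding this into~$(5)$ yields the cyclic system $\gamma_i(1)\gamma_{i+1}(1)=c_i$ for $i\in[n]$, where each $c_i$ is now known.

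The parity of $n$ enters precisely at this last step. Iterating $\gamma_{i+1}(1)=c_i/\gamma_{i}(1)$ around the cycle and imposing the closure $\gamma_{n+1}(1)=\gamma_1(1)$ produces a single scalar equation of the form $\gamma_1(1)^{(-1)^n}=\gamma_1(1)\cdot (\text{known})$; for odd $n$ this reads $\gamma_1(1)^2=(\text{known})$, determining all $\gamma_i(1)$ up to an overall sign, after which~$(4)$ pins down each $\gamma_i(0)$ uniquely. The main obstacle will be to verify that no hidden consistency condition is secretly imposed along the way — in particular that the value of $\alpha_{i,m}(1)$ derived from (D3) above agrees with the one that would be computed from the dynamics applied to the $(\alpha(0),\beta,\beta',\gamma,\gamma')$-data we have just reconstructed. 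This should follow by a bookkeeping check: each of the $n(m+2)$ equations~$(1)$--$(5)$ is used exactly once to solve for exactly one of the $n(m+2)$ remaining unknowns on $S$, and the only auxiliary identities invoked are the $\tau$-sequence's own exchange relations (D1)--(D3), so no extra equation is imposed. Consequently $\phi|_S$ is generically $2$-to-$1$ onto its image, and $\phi$ is dominant when $n$ is odd.
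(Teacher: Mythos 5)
Your argument is correct and follows essentially the same route as the paper's proof: fix the $\beta$-variables arbitrarily (your gauge $\beta_i(0)=\beta_i(1)=1$ plays the role of the paper's choice of arbitrary positive real $\beta$-s), solve for the $\alpha$-s from the reduction equations together with the exchange relations, and then invert the cyclic system $\gamma_i(t+1)\gamma_{i+1}(t+1)=c_i$, which is exactly where the parity of $n$ enters. Your write-up merely makes explicit the triangular order of the solve and the consistency bookkeeping that the paper leaves implicit.
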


\begin{proof}
 Take $X_{i,j}(t)$-s to be positive real numbers. This is a Zariski dense set. Pick arbitrary positive real $\beta$-s. Using $X_{i,j}(t)$-s for $j \leq m$ and exchange relations, we can uniquely solve for $\alpha_{i,j}(t)$-s. If $n$ is odd, we can use $X_{i,m+2}(t)$-s to solve for $\gamma_i(t+1)$-s, and then use  $X_{i,m+1}(t)$-s to solve for $\gamma_i(t)$-s. 
\end{proof}

\begin{conjecture}
The $R$-systems associated with cylindric posets $G_{n,m}$ have zero algebraic entropy.
\end{conjecture}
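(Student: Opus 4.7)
The plan is to deduce the conjecture from the weak $\tau$-sequence established in Theorem~\ref{thm:mapcyl} by analyzing the tropical (max-plus) dynamics of the underlying LP-algebra variables. By Theorem~\ref{thm:mapcyl} and the reduction~\eqref{eq:cylindric_reduction}, each $X_{i,j}(t)$ is a Laurent monomial in the variables $\alpha_{i',j'}(t')$, $\beta_{i'}(t')$, $\gamma_{i'}(t')$ at bounded shifts $t'\in\{t,t+1\}$ and bounded shifts of $(i,j)$. Consequently, tropical $X$-values are bounded by max-plus combinations of tropical $\alpha,\beta,\gamma$ values, and by the general principle recalled in Section~\ref{sec:tropical-dynamics} relating tropical dynamics to degree growth, it suffices to show that the Laurent polynomials $\alpha_{i,j}(t),\beta_i(t),\gamma_i(t)$ grow polynomially in degree as $t\to\infty$.

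The first and main step focuses on the $\alpha$-variables. Their exchange relation (valid for $1<j<m$)
$$\alpha_{i,j}(t)\,\alpha_{i,j}(t+1) \;=\; \alpha_{i,j+1}(t)\,\alpha_{i+1,j-1}(t+1) \;+\; \alpha_{i-1,j+1}(t)\,\alpha_{i,j-1}(t+1)$$
is precisely the octahedron recurrence (Hirota--Miwa equation) restricted to the cylindrical domain $\Z_n\times[m]$ (with appropriate boundary conditions coupling to $\beta$ and $\gamma$ at $j=1,m$). I would pass to the universal cover, where the planar octahedron recurrence admits Speyer's combinatorial formula~\cite{Sp} expressing each entry as a subtraction-free sum over perfect matchings of a finite Aztec-diamond-like graph. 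The polynomial (in fact quadratic) growth of the number of matchings, combined with periodicity of initial data, would yield $\deg\alpha_{i,j}(t)=O(t^2)$.

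The second step handles $\beta$ and $\gamma$. Their three-fold relations
$$\beta_i(t)\beta_i(t+1)\beta_i(t+2)=\alpha_{i,1}(t+1)\beta_{i-1}(t+1)+\alpha_{i-1,1}(t+1)\beta_{i+1}(t+1)$$
(and the analogous one for $\gamma$) tropicalize to piecewise-linear recurrences that are \emph{linear} in the $\beta$- and $\gamma$-degrees once the $\alpha$-degrees are treated as known inhomogeneities. An induction on $t$, using the $O(t^2)$ bound on $\deg\alpha$, yields a polynomial bound $\deg\beta_i(t),\deg\gamma_i(t)=O(t^c)$ for some constant $c$. Substituting these polynomial bounds into~\eqref{eq:cylindric_reduction} gives $\deg X_{i,j}(t)=O(t^c)$, hence
$$d(G_{n,m})=\lim_{t\to\infty}\frac{\log\deg X(t)}{t}\;\leq\;\lim_{t\to\infty}\frac{c\log t}{t}\;=\;0.$$

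The hard part will be rigorously handling the cylindric octahedron recurrence: Speyer's formula is established for the planar setting, and wrapping around the cycle of length $n$ can in principle introduce long-range interactions not captured by finite Aztec diamonds. Two plausible workarounds are (i) to develop a cylindric analogue of Speyer's matching formula by summing planar contributions over all homotopy classes of paths on the cover, with a transfer-matrix whose spectral radius must be shown to equal $1$, or (ii) to use the superpotential of Section~\ref{sec:mirr-symm-superp} (together with any additional conserved quantities one can extract from the LP-algebra $T$-system structure of Theorem~\ref{thm:cylpoly}) to constrain the tropical orbit to a bounded-codimension invariant subvariety, forcing subexponential growth. A secondary technical nuisance is the non-cluster ``triple-product'' exchange relation for $\beta,\gamma$, which rules out a direct appeal to cluster-algebraic results, but tropically it still produces well-defined piecewise-linear dynamics that can be controlled by hand.
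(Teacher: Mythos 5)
This statement is left as an open conjecture in the paper: no proof is given, so there is nothing to compare your argument against, and the relevant question is whether your proposal actually closes the problem. It does not. Your overall strategy --- push the degree count through the weak $\tau$-sequence of Theorem~\ref{thm:mapcyl} and show that the LP-algebra variables $\alpha_{i,j}(t),\beta_i(t),\gamma_i(t)$ have polynomially growing degrees --- is the natural one (it mirrors how the paper handles Gale--Robinson sequences via the octahedron and cube recurrences), but the central step is exactly the part you defer. Speyer's matching formula is proved for the octahedron recurrence on $\Z^3$ with suitable initial data, not for the recurrence wrapped on a cylinder $\Z_n\times[m]$ and coupled at $j=1$ and $j=m$ to the $\beta$- and $\gamma$-variables through the non-cluster triple-product relations $\beta_i(t)\beta_i(t+1)\beta_i(t+2)=\cdots$. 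Neither of your two ``plausible workarounds'' is carried out: the transfer-matrix/homotopy-class summation would require proving the spectral radius is $1$, which is precisely a restatement of the zero-entropy claim, and the superpotential gives a single conserved quantity, far from enough to confine the tropical orbit. As written, the argument is circular at its core.

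There is also a gap you do not mention. To transfer a polynomial degree bound from a \emph{weak} $\tau$-sequence to the $R$-system itself, the monomial map $\phi$ from the $\tau$-variables to the initial data must be dominant (this is the second bullet of the Proposition in Section~\ref{sec:defin-tau-sequ}); otherwise the $\tau$-sequence only controls the dynamics on a proper subvariety, where degrees can be strictly smaller than generic. The paper proves dominance of the reduction~\eqref{eq:cylindric_reduction} only for odd $n$, so even granting polynomial growth of the $\alpha,\beta,\gamma$ degrees, your conclusion would not follow for even $n$ without an additional argument. Finally, note that algebraic entropy is defined in the paper for the \emph{universal $R$-system with coefficients}, while the cylindric $\tau$-sequence is constructed in the coefficient-free setting; reconciling these is a further (if more minor) point your sketch leaves open.
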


\subsection{Octagons}

Consider a square grid, which it will be convenient to orient so that the lines of the grid form angles $\pi/4$ and $3 \pi/4$ with the horizontal line. We consider all edges to be oriented upwards.
An {\it {octagon}} is a subset of all nodes enclosed by several lines forming angles $0, \pi/4, \pi/2, 3\pi/4$ with the horizontal line. Each octagon can be interpreted as a Hasse diagram of a partially ordered set. 
Note that an octagon can be highly degenerate, and have a de facto number of sides much smaller than eight. 
\begin{figure}

\scalebox{0.9}{
\begin{tikzpicture}[scale=0.6]
\def\positn{0.5}
\node[scale=0.3,draw,circle,fill=black] (A00) at (0.00,0.00) { };
\node[scale=0.3,draw,circle,fill=black] (A20) at (3.00,0.00) { };
\node[scale=0.3,draw,circle,fill=black] (A40) at (6.00,0.00) { };
\node[scale=0.3,draw,circle,fill=black] (A60) at (9.00,0.00) { };
\node[scale=0.3,draw,circle,fill=black] (A80) at (12.00,0.00) { };
\node[scale=0.3,draw,circle,fill=black] (A100) at (15.00,0.00) { };
\node[scale=0.3,draw,circle,fill=black] (A120) at (18.00,0.00) { };
\node[scale=0.3,draw,circle,fill=black] (A140) at (21.00,0.00) { };
\node[scale=0.3,draw,circle,fill=black] (A160) at (24.00,0.00) { };
\node[scale=0.3,draw,circle,fill=black] (A11) at (1.50,2.00) { };
\node[scale=0.3,draw,circle,fill=black] (A31) at (4.50,2.00) { };
\node[scale=0.3,draw,circle,fill=black] (A51) at (7.50,2.00) { };
\node[scale=0.3,draw,circle,fill=black] (A71) at (10.50,2.00) { };
\node[scale=0.3,draw,circle,fill=black] (A91) at (13.50,2.00) { };
\node[scale=0.3,draw,circle,fill=black] (A111) at (16.50,2.00) { };
\node[scale=0.3,draw,circle,fill=black] (A131) at (19.50,2.00) { };
\node[scale=0.3,draw,circle,fill=black] (A151) at (22.50,2.00) { };
\node[scale=0.3,draw,circle,fill=black] (A171) at (25.50,2.00) { };
\node[scale=0.3,draw,circle,fill=black] (A02) at (0.00,4.00) { };
\node[scale=0.3,draw,circle,fill=black] (A22) at (3.00,4.00) { };
\node[scale=0.3,draw,circle,fill=black] (A42) at (6.00,4.00) { };
\node[scale=0.3,draw,circle,fill=black] (A62) at (9.00,4.00) { };
\node[scale=0.3,draw,circle,fill=black] (A82) at (12.00,4.00) { };
\node[scale=0.3,draw,circle,fill=black] (A102) at (15.00,4.00) { };
\node[scale=0.3,draw,circle,fill=black] (A122) at (18.00,4.00) { };
\node[scale=0.3,draw,circle,fill=black] (A142) at (21.00,4.00) { };
\node[scale=0.3,draw,circle,fill=black] (A162) at (24.00,4.00) { };
\node[scale=0.3,draw,circle,fill=black] (A13) at (1.50,6.00) { };
\node[scale=0.3,draw,circle,fill=black] (A33) at (4.50,6.00) { };
\node[scale=0.3,draw,circle,fill=black] (A53) at (7.50,6.00) { };
\node[scale=0.3,draw,circle,fill=black] (A73) at (10.50,6.00) { };
\node[scale=0.3,draw,circle,fill=black] (A93) at (13.50,6.00) { };
\node[scale=0.3,draw,circle,fill=black] (A113) at (16.50,6.00) { };
\node[scale=0.3,draw,circle,fill=black] (A133) at (19.50,6.00) { };
\node[scale=0.3,draw,circle,fill=black] (A153) at (22.50,6.00) { };
\node[scale=0.3,draw,circle,fill=black] (A173) at (25.50,6.00) { };
\node[scale=0.3,draw,circle,fill=black] (A04) at (0.00,8.00) { };
\node[scale=0.3,draw,circle,fill=black] (A24) at (3.00,8.00) { };
\node[scale=0.3,draw,circle,fill=black] (A44) at (6.00,8.00) { };
\node[scale=0.3,draw,circle,fill=black] (A64) at (9.00,8.00) { };
\node[scale=0.3,draw,circle,fill=black] (A84) at (12.00,8.00) { };
\node[scale=0.3,draw,circle,fill=black] (A104) at (15.00,8.00) { };
\node[scale=0.3,draw,circle,fill=black] (A124) at (18.00,8.00) { };
\node[scale=0.3,draw,circle,fill=black] (A144) at (21.00,8.00) { };
\node[scale=0.3,draw,circle,fill=black] (A164) at (24.00,8.00) { };
\node[scale=0.3,draw,circle,fill=black] (A15) at (1.50,10.00) { };
\node[scale=0.3,draw,circle,fill=black] (A35) at (4.50,10.00) { };
\node[scale=0.3,draw,circle,fill=black] (A55) at (7.50,10.00) { };
\node[scale=0.3,draw,circle,fill=black] (A75) at (10.50,10.00) { };
\node[scale=0.3,draw,circle,fill=black] (A95) at (13.50,10.00) { };
\node[scale=0.3,draw,circle,fill=black] (A115) at (16.50,10.00) { };
\node[scale=0.3,draw,circle,fill=black] (A135) at (19.50,10.00) { };
\node[scale=0.3,draw,circle,fill=black] (A155) at (22.50,10.00) { };
\node[scale=0.3,draw,circle,fill=black] (A175) at (25.50,10.00) { };
\node[scale=0.3,draw,circle,fill=black] (A06) at (0.00,12.00) { };
\node[scale=0.3,draw,circle,fill=black] (A26) at (3.00,12.00) { };
\node[scale=0.3,draw,circle,fill=black] (A46) at (6.00,12.00) { };
\node[scale=0.3,draw,circle,fill=black] (A66) at (9.00,12.00) { };
\node[scale=0.3,draw,circle,fill=black] (A86) at (12.00,12.00) { };
\node[scale=0.3,draw,circle,fill=black] (A106) at (15.00,12.00) { };
\node[scale=0.3,draw,circle,fill=black] (A126) at (18.00,12.00) { };
\node[scale=0.3,draw,circle,fill=black] (A146) at (21.00,12.00) { };
\node[scale=0.3,draw,circle,fill=black] (A166) at (24.00,12.00) { };
\node[scale=0.3,draw,circle,fill=black] (A17) at (1.50,14.00) { };
\node[scale=0.3,draw,circle,fill=black] (A37) at (4.50,14.00) { };
\node[scale=0.3,draw,circle,fill=black] (A57) at (7.50,14.00) { };
\node[scale=0.3,draw,circle,fill=black] (A77) at (10.50,14.00) { };
\node[scale=0.3,draw,circle,fill=black] (A97) at (13.50,14.00) { };
\node[scale=0.3,draw,circle,fill=black] (A117) at (16.50,14.00) { };
\node[scale=0.3,draw,circle,fill=black] (A137) at (19.50,14.00) { };
\node[scale=0.3,draw,circle,fill=black] (A157) at (22.50,14.00) { };
\node[scale=0.3,draw,circle,fill=black] (A177) at (25.50,14.00) { };
\node[scale=0.3,draw,circle,fill=black] (A08) at (0.00,16.00) { };
\node[scale=0.3,draw,circle,fill=black] (A28) at (3.00,16.00) { };
\node[scale=0.3,draw,circle,fill=black] (A48) at (6.00,16.00) { };
\node[scale=0.3,draw,circle,fill=black] (A68) at (9.00,16.00) { };
\node[scale=0.3,draw,circle,fill=black] (A88) at (12.00,16.00) { };
\node[scale=0.3,draw,circle,fill=black] (A108) at (15.00,16.00) { };
\node[scale=0.3,draw,circle,fill=black] (A128) at (18.00,16.00) { };
\node[scale=0.3,draw,circle,fill=black] (A148) at (21.00,16.00) { };
\node[scale=0.3,draw,circle,fill=black] (A168) at (24.00,16.00) { };
\node[scale=0.3,draw,circle,fill=black] (A19) at (1.50,18.00) { };
\node[scale=0.3,draw,circle,fill=black] (A39) at (4.50,18.00) { };
\node[scale=0.3,draw,circle,fill=black] (A59) at (7.50,18.00) { };
\node[scale=0.3,draw,circle,fill=black] (A79) at (10.50,18.00) { };
\node[scale=0.3,draw,circle,fill=black] (A99) at (13.50,18.00) { };
\node[scale=0.3,draw,circle,fill=black] (A119) at (16.50,18.00) { };
\node[scale=0.3,draw,circle,fill=black] (A139) at (19.50,18.00) { };
\node[scale=0.3,draw,circle,fill=black] (A159) at (22.50,18.00) { };
\node[scale=0.3,draw,circle,fill=black] (A179) at (25.50,18.00) { };
\draw[postaction={decorate}] (A00) -- (A11);
\draw[postaction={decorate}] (A02) -- (A13);
\draw[postaction={decorate}] (A04) -- (A15);
\draw[postaction={decorate}] (A06) -- (A17);
\draw[postaction={decorate}] (A08) -- (A19);
\draw[postaction={decorate}] (A11) -- (A22);
\draw[postaction={decorate}] (A11) -- (A02);
\draw[postaction={decorate}] (A13) -- (A24);
\draw[postaction={decorate}] (A13) -- (A04);
\draw[postaction={decorate}] (A15) -- (A26);
\draw[postaction={decorate}] (A15) -- (A06);
\draw[postaction={decorate}] (A17) -- (A28);
\draw[postaction={decorate}] (A17) -- (A08);
\draw[postaction={decorate}] (A20) -- (A31);
\draw[postaction={decorate}] (A20) -- (A11);
\draw[postaction={decorate}] (A22) -- (A33);
\draw[postaction={decorate}] (A22) -- (A13);
\draw[postaction={decorate}] (A24) -- (A35);
\draw[postaction={decorate}] (A24) -- (A15);
\draw[postaction={decorate}] (A26) -- (A37);
\draw[postaction={decorate}] (A26) -- (A17);
\draw[postaction={decorate}] (A28) -- (A39);
\draw[postaction={decorate}] (A28) -- (A19);
\draw[postaction={decorate}] (A31) -- (A42);
\draw[postaction={decorate}] (A31) -- (A22);
\draw[postaction={decorate}] (A33) -- (A44);
\draw[postaction={decorate}] (A33) -- (A24);
\draw[postaction={decorate}] (A35) -- (A46);
\draw[postaction={decorate}] (A35) -- (A26);
\draw[postaction={decorate}] (A37) -- (A48);
\draw[postaction={decorate}] (A37) -- (A28);
\draw[postaction={decorate}] (A40) -- (A51);
\draw[postaction={decorate}] (A40) -- (A31);
\draw[postaction={decorate}] (A42) -- (A53);
\draw[postaction={decorate}] (A42) -- (A33);
\draw[postaction={decorate}] (A44) -- (A55);
\draw[postaction={decorate}] (A44) -- (A35);
\draw[postaction={decorate}] (A46) -- (A57);
\draw[postaction={decorate}] (A46) -- (A37);
\draw[postaction={decorate}] (A48) -- (A59);
\draw[postaction={decorate}] (A48) -- (A39);
\draw[postaction={decorate}] (A51) -- (A62);
\draw[postaction={decorate}] (A51) -- (A42);
\draw[postaction={decorate}] (A53) -- (A64);
\draw[postaction={decorate}] (A53) -- (A44);
\draw[postaction={decorate}] (A55) -- (A66);
\draw[postaction={decorate}] (A55) -- (A46);
\draw[postaction={decorate}] (A57) -- (A68);
\draw[postaction={decorate}] (A57) -- (A48);
\draw[postaction={decorate}] (A60) -- (A71);
\draw[postaction={decorate}] (A60) -- (A51);
\draw[postaction={decorate}] (A62) -- (A73);
\draw[postaction={decorate}] (A62) -- (A53);
\draw[postaction={decorate}] (A64) -- (A75);
\draw[postaction={decorate}] (A64) -- (A55);
\draw[postaction={decorate}] (A66) -- (A77);
\draw[postaction={decorate}] (A66) -- (A57);
\draw[postaction={decorate}] (A68) -- (A79);
\draw[postaction={decorate}] (A68) -- (A59);
\draw[postaction={decorate}] (A71) -- (A82);
\draw[postaction={decorate}] (A71) -- (A62);
\draw[postaction={decorate}] (A73) -- (A84);
\draw[postaction={decorate}] (A73) -- (A64);
\draw[postaction={decorate}] (A75) -- (A86);
\draw[postaction={decorate}] (A75) -- (A66);
\draw[postaction={decorate}] (A77) -- (A88);
\draw[postaction={decorate}] (A77) -- (A68);
\draw[postaction={decorate}] (A80) -- (A91);
\draw[postaction={decorate}] (A80) -- (A71);
\draw[postaction={decorate}] (A82) -- (A93);
\draw[postaction={decorate}] (A82) -- (A73);
\draw[postaction={decorate}] (A84) -- (A95);
\draw[postaction={decorate}] (A84) -- (A75);
\draw[postaction={decorate}] (A86) -- (A97);
\draw[postaction={decorate}] (A86) -- (A77);
\draw[postaction={decorate}] (A88) -- (A99);
\draw[postaction={decorate}] (A88) -- (A79);
\draw[postaction={decorate}] (A91) -- (A102);
\draw[postaction={decorate}] (A91) -- (A82);
\draw[postaction={decorate}] (A93) -- (A104);
\draw[postaction={decorate}] (A93) -- (A84);
\draw[postaction={decorate}] (A95) -- (A106);
\draw[postaction={decorate}] (A95) -- (A86);
\draw[postaction={decorate}] (A97) -- (A108);
\draw[postaction={decorate}] (A97) -- (A88);
\draw[postaction={decorate}] (A100) -- (A111);
\draw[postaction={decorate}] (A100) -- (A91);
\draw[postaction={decorate}] (A102) -- (A113);
\draw[postaction={decorate}] (A102) -- (A93);
\draw[postaction={decorate}] (A104) -- (A115);
\draw[postaction={decorate}] (A104) -- (A95);
\draw[postaction={decorate}] (A106) -- (A117);
\draw[postaction={decorate}] (A106) -- (A97);
\draw[postaction={decorate}] (A108) -- (A119);
\draw[postaction={decorate}] (A108) -- (A99);
\draw[postaction={decorate}] (A111) -- (A122);
\draw[postaction={decorate}] (A111) -- (A102);
\draw[postaction={decorate}] (A113) -- (A124);
\draw[postaction={decorate}] (A113) -- (A104);
\draw[postaction={decorate}] (A115) -- (A126);
\draw[postaction={decorate}] (A115) -- (A106);
\draw[postaction={decorate}] (A117) -- (A128);
\draw[postaction={decorate}] (A117) -- (A108);
\draw[postaction={decorate}] (A120) -- (A131);
\draw[postaction={decorate}] (A120) -- (A111);
\draw[postaction={decorate}] (A122) -- (A133);
\draw[postaction={decorate}] (A122) -- (A113);
\draw[postaction={decorate}] (A124) -- (A135);
\draw[postaction={decorate}] (A124) -- (A115);
\draw[postaction={decorate}] (A126) -- (A137);
\draw[postaction={decorate}] (A126) -- (A117);
\draw[postaction={decorate}] (A128) -- (A139);
\draw[postaction={decorate}] (A128) -- (A119);
\draw[postaction={decorate}] (A131) -- (A142);
\draw[postaction={decorate}] (A131) -- (A122);
\draw[postaction={decorate}] (A133) -- (A144);
\draw[postaction={decorate}] (A133) -- (A124);
\draw[postaction={decorate}] (A135) -- (A146);
\draw[postaction={decorate}] (A135) -- (A126);
\draw[postaction={decorate}] (A137) -- (A148);
\draw[postaction={decorate}] (A137) -- (A128);
\draw[postaction={decorate}] (A140) -- (A151);
\draw[postaction={decorate}] (A140) -- (A131);
\draw[postaction={decorate}] (A142) -- (A153);
\draw[postaction={decorate}] (A142) -- (A133);
\draw[postaction={decorate}] (A144) -- (A155);
\draw[postaction={decorate}] (A144) -- (A135);
\draw[postaction={decorate}] (A146) -- (A157);
\draw[postaction={decorate}] (A146) -- (A137);
\draw[postaction={decorate}] (A148) -- (A159);
\draw[postaction={decorate}] (A148) -- (A139);
\draw[postaction={decorate}] (A151) -- (A162);
\draw[postaction={decorate}] (A151) -- (A142);
\draw[postaction={decorate}] (A153) -- (A164);
\draw[postaction={decorate}] (A153) -- (A144);
\draw[postaction={decorate}] (A155) -- (A166);
\draw[postaction={decorate}] (A155) -- (A146);
\draw[postaction={decorate}] (A157) -- (A168);
\draw[postaction={decorate}] (A157) -- (A148);
\draw[postaction={decorate}] (A160) -- (A171);
\draw[postaction={decorate}] (A160) -- (A151);
\draw[postaction={decorate}] (A162) -- (A173);
\draw[postaction={decorate}] (A162) -- (A153);
\draw[postaction={decorate}] (A164) -- (A175);
\draw[postaction={decorate}] (A164) -- (A155);
\draw[postaction={decorate}] (A166) -- (A177);
\draw[postaction={decorate}] (A166) -- (A157);
\draw[postaction={decorate}] (A168) -- (A179);
\draw[postaction={decorate}] (A168) -- (A159);
\draw[postaction={decorate}] (A171) -- (A162);
\draw[postaction={decorate}] (A173) -- (A164);
\draw[postaction={decorate}] (A175) -- (A166);
\draw[postaction={decorate}] (A177) -- (A168);
\draw[blue,rounded corners,line width=1pt] (4.418820584978071,1.7386874070247247)--(13.58117941502193,1.7386874070247247)--(15.195984444731458,3.8917607799707605)--(15.195984444731458,8.108239220029239)--(10.581179415021928,14.261312592975276)--(4.418820584978071,14.261312592975276)--(1.3040155552685435,10.108239220029239)--(1.3040155552685435,5.8917607799707605)--cycle;
\draw[red,rounded corners,line width=1pt] (19.35,1.8)--(25.950000000000003,1.8)--(22.65,6.2)--(19.35,6.2)--(17.700000000000003,4.0)--cycle;
\draw[green!70!black,rounded corners,line width=1pt] (24.0,7.6)--(25.799999999999997,10.0)--(21.0,16.4)--(19.200000000000003,14.0)--cycle;
\end{tikzpicture}}
    \caption{Several octagon posets.}
    \label{fig:octagons}
\end{figure}
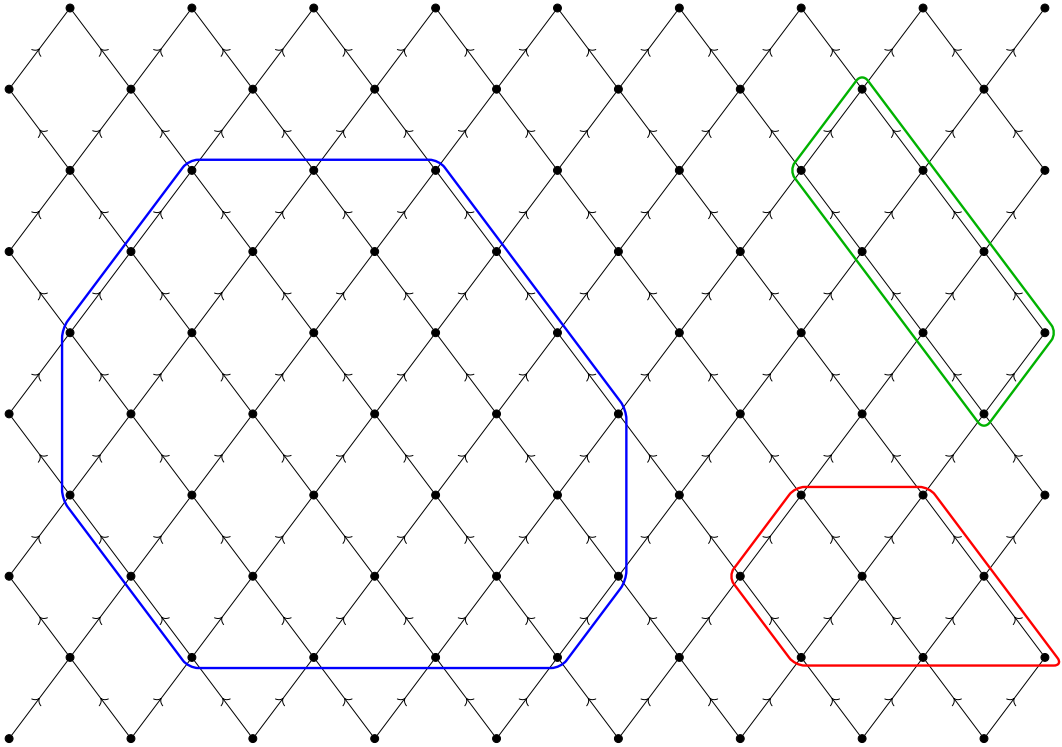

\begin{example}
 In Figure \ref{fig:octagons}, we see three octagons. The blue octagon is \emph{generic}, i.e., all of its sides are non-degenerate. The green and the purple octagons are degenerate.
\end{example}

The following conjecture says that the $R$-systems associated with octagon posets have the singularity confinement and zero algebraic entropy properties. 
\begin{conjecture}
 For each octagon poset $P$, the $R$-system associated with $G(P)$ admits a strong $\tau$-sequence which consists of (irreducible) Laurent polynomials whose degrees grow at most quadratically.
\end{conjecture}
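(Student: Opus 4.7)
The plan is to extend the Laurent phenomenon algebra construction developed for cylindric posets in Section~\ref{sec:cylindr-posets-octag} to the more general setting of octagons. For each octagon $P$, the four pairs of parallel sides (at angles $0, \pi/4, \pi/2, 3\pi/4$) suggest introducing four families of boundary variables, generalizing the $\beta$ and $\gamma$ pairs: specifically, each of the eight sides of the octagon contributes its own family of variables indexed by the parallel lattice lines along it, and at each time step $t$ each such variable appears in two copies, one at time $t$ and one at $t+1$, following the pattern that made the cylindric dynamics periodic after an appropriate renumbering. Interior $\alpha$-variables are assigned to interior faces of $G(P)$ as before, and their exchange polynomials are the standard octahedron-recurrence binomials. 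Degenerate sides of the octagon would be handled by merging the corresponding boundary families, which is consistent with the cylindric case being a limit where two pairs of opposite sides are collapsed.

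First, I would write down the initial seed by specifying the exchange polynomial at each variable. Interior vertices of the octagon produce the generic $\alpha_{i-1,j}\alpha_{i,j+1}\alpha_{i+1,j-1}+\alpha_{i-1,j+1}\alpha_{i+1,j}\alpha_{i,j-1}$ type polynomial; vertices within one lattice row of a given side use that side's boundary variables as multiplicative factors, mirroring the roles of $\beta_i(t)\beta_i(t+1)$ and $\gamma_i(t)\gamma_i(t+1)$ in Section~\ref{sec:cylindr-posets-octag}; and the boundary variables themselves have binomial exchange polynomials of the same shape as $F_{\beta_i(t)}$ and $F_{\gamma_i(t)}$. Next, I would define the sequence of mutations that evolves the seed from time $t$ to $t+1$: mutate all interior $\alpha_{i,j}(t)$ layer by layer, from the bottom side of the octagon upward, and then mutate the boundary variables on each side in an order consistent with the non-commutation structure of the exchange polynomials. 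The analog of Theorem~\ref{thm:cylpoly} asserts that, tracked carefully, every exchange polynomial transforms by the substitution $t\mapsto t+1$; the verification is local and reduces, for each pair of mutated variables, to the same kind of term-by-term cancellation carried out in the cylindric proof.

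Second, I would define the reduction to the $R$-system on $G(P)$ in direct analogy with~\eqref{eq:cylindric_reduction}: for an interior vertex of the poset at level $j$ we set $X_v(t)$ equal to a ratio of an $\alpha$-variable to another $\alpha$-variable shifted in time and space; for vertices near a given side we insert that side's boundary variables in the denominator in the appropriate pattern (one or two copies, and which time indices, being dictated by the distance from the side). Verifying the toggle relations then amounts to checking, case by case, that each toggle relation is equivalent to a ratio of two exchange relations from the LP seed, exactly as in Theorem~\ref{thm:mapcyl}. Dominance of the resulting monomial map from $(\alpha,\beta,\gamma,\dots)$ to $\X$ can be established, as at the end of Section~\ref{sec:cylindr-posets-octag}, by freely specifying the boundary variables and inductively solving for the $\alpha$-s from the $R$-system values along each lattice row.

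The main obstacle, I expect, will be the treatment of the \emph{corners} of the octagon, where two boundary sides (at a $\pi/4$ angle) meet: the LP exchange polynomial at such a corner must simultaneously involve two different families of boundary variables, and the mutation-compatibility argument that worked locally in the cylindric case must be reverified for each of the eight possible corner types. A related difficulty is proving irreducibility of every Laurent polynomial $Y(t)$ in the seed, which is needed for a \emph{strong} $\tau$-sequence rather than merely a weak one; the standard LP algebra theory guarantees the Laurent property but not irreducibility, so one would likely need a direct argument using the Newton polytopes of the $Y(t)$ together with an inductive valuation argument in the spirit of~\cite{FZCube,KMMT}. Finally, quadratic degree growth should follow by tropicalizing the recurrence (cf.\ Section~\ref{sec:tropical-dynamics}) and bounding the tropical orbits by an argument analogous to those of Speyer~\cite{Sp} and Carroll--Speyer~\cite{CS} for the octahedron and cube recurrences, to which the octagon dynamics reduces in a bounded neighborhood of each face.
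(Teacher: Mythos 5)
The statement you are addressing is stated in the paper as a \emph{conjecture}: the authors offer no proof, only the remark that the special case of rectangles is known to be periodic by Grinberg--Roby \cite{GR2}. So there is no argument in the paper to compare yours against, and the only question is whether your proposal actually settles the conjecture. It does not: it is a reasonable research program, but each of the three properties the conjecture demands --- existence of a Laurent $\tau$-sequence, irreducibility (needed for the sequence to be \emph{strong}), and quadratic degree growth --- is left open at precisely the point where the octagon case differs from the cylindric case you are modeling it on.

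Concretely: (1) the entire new difficulty of an octagon relative to the cylinder $G_{n,m}$ is the presence of corners where two non-parallel sides meet; the construction in Section~\ref{sec:cylindr-posets-octag} leans on the translational symmetry in the $i$-direction, which a finite octagon does not have. You defer the corner analysis (``must be reverified for each of the eight possible corner types'') rather than carrying it out, and it is not clear that an LP seed with the required mutation-periodicity exists at all near a corner: staying within the double-quiver generality after each mutation, and having the seed return to its own shape under a relabeling, are genuine constraints that the cylindric proof verifies by explicit computation and that you have not verified in any corner configuration. (2) Irreducibility is required for a strong $\tau$-sequence, and as you yourself note the LP Laurent phenomenon provides nothing of the sort; your proposed Newton-polytope/valuation argument is not sketched in a checkable form. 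Note that even in the cylindric case the paper only establishes a \emph{weak} $\tau$-sequence and leaves zero entropy as a conjecture, so the template you are generalizing does not itself reach the strength you need. (3) The assertion that the octagon dynamics ``reduces in a bounded neighborhood of each face'' to the octahedron or cube recurrence, from which quadratic growth would follow via \cite{Sp,CS}, is not substantiated by any exhibited reduction; those quadratic-growth results apply to specific recurrences with explicit combinatorial formulas. In short, your proposal identifies the right analogies but proves none of the three assertions of the conjecture, which remains open.
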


\begin{example}
 A special case of this conjecture is already known. Specifically, {\it {rectangles}} similar to the green octagon in Figure~\ref{fig:octagons} have been shown by Grinberg and Roby to be periodic, see \cite{GR2}.
 Note that Grinberg and Roby conjecture periodicity of birational rowmotion for some other families of posets, all of which are degenerate octagons in our terminology. 
\end{example}

\begin{remark}
 Note that our cylindric posets $G_{n,m}$ can be viewed as ``periodic octagons''. 
\end{remark}

\section{Toric digraphs}\label{sec:toric-graphs}
In this section, we continue looking at more and more symmetric digraphs. Namely, we study the coefficient-free $R$-system associated with a \emph{toric digraph} which gives rise to a very symmetric recurrence sequence that conjecturally has the Laurent property.

\newcommand \cl[2]{\overline{(#1,#2)}}
\newcommand \cll[1]{[#1]}

\def\L{\Lambda}
\begin{definition}
  Let $\L\subset \Z^2$ be a lattice of rank $2$ that does not contain the basis vectors $e_1:=(1,0)$, $e_2:=(0,1)$, and $e_2\pm e_1$. Then the \emph{toric digraph} $G:=G(\L)=(V,E)$ is a digraph with vertex set $V:=\Z^2/\L$ and edge set
  \[E=\{(\cll v,\cll{v+e_1}),(\cll v,\cll{v+e_2})\mid v\in\Z^2\}.\]
  Here $\cll v$ denotes the class of $v\in\Z^2$ in $\Z^2/\L$.
\end{definition}

For example, see Figure~\ref{fig:toric_4}. The assumption $e_1,e_2\notin\L$ implies that $G$ has no loops and having $e_2-e_1\notin\L$ means that $G$ does not have multiple edges. If $e_2+e_1\in\L$ then $G$ is a bidirected cycle which will be considered in the next section. As we will see, toric digraphs are the first class of digraphs with a transitive group of automorphisms for which the coefficient-free $R$-system dynamics is non-trivial.

\def\pr{\pi}
\def\WTT{\alpha}
We now define a family $(\Y(t))_{t\geq -2}$ of conjecturally irreducible polynomials in the variables $\x_V=(x_v)_{v\in V}$ that will describe the behavior of the $R$-system associated with $G$. We define the $\Y$-polynomials recursively as follows. Set
\[\Y_v(-2):=x_{v+e};\quad \Y_v(-1):=1,\]
where  $e:=e_1+e_2$ and $v+e\in V$ is defined by $[v'+e]$ for $v'\in\Z^2$ such that $[v']=v$. Now, for $v\in V$, we denote $p:=v+e_1$, $q:=v+e_2$, $r:=v+e$, and for $t\geq -1$, set
\begin{equation}\label{eq:big_sum_toric}
  \Y_r(t+1):=\frac{\left(\prod_{u\in V}x_u \right)\sum_{\arb\in\Arb(G,v)}\WTT(\arb)}{\left(\prod_{u\in V} \Y_u(t-1)\right)\left(\prod_{u\in V\setminus \{p,q,r\}}\Y_u(t)\right)},
\end{equation}
where $\WTT(\arb)$ is a certain expression in $\Y(t)$ and $\Y(t-1)$. Namely, define $G_v:=(V,E_v)$ to be the digraph obtained from $G$ by removing the edges $(v,p)$ and $(v,q)$. In other words, $E_v:=E\setminus\{(v,p),(v,q)\}$. Then
\[\WTT(\arb):=\prod_{(a,b)\in E_v} 
  \begin{cases}
    \Y_b(t-1), &\text{if $\arb(a)=b$,}\\
    \Y_b(t), &\text{otherwise.}\\
  \end{cases}\]

This defines $\Y_v(t+1)$ as a rational function. Our computations suggest the following Laurent phenomenon:
\begin{conjecture}\label{conj:toric}
The rational functions $\Y_v(t)$ are pairwise coprime irreducible polynomials\footnote{More precisely, they are irreducible and coprime as elements of $\Z[\x_V^{\pm1}]$, i.e., as Laurent polynomials, even though we conjecture that they are actual polynomials. In other words, they are pairwise coprime and irreducible up to some monomial factors.} in $\x_V$. 
\end{conjecture}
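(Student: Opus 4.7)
The plan is to proceed by induction on $t$, treating~\eqref{eq:big_sum_toric} as the Laurentification of the $R$-system dynamics. First I would exhibit an explicit monomial substitution of the form $\X_v(t)=\prod_{u,s}\Y_u(s)^{m(u,s,v,t)}$ (with $s$ ranging over a small window such as $\{-2,t-1,t\}$) and verify directly that, under this substitution, the toggle relations~\eqref{eq:toggle} for the $R$-system associated with $G$ become equivalent to~\eqref{eq:big_sum_toric}. This is no accident: by Theorem~\ref{thm:arb}, the arborescence sum in the numerator of~\eqref{eq:big_sum_toric} is precisely the denominator of the formula for $\rowm$, so the construction of the $\Y$'s is tailor-made to resolve the $R$-system into polynomial quantities. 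The base cases $t=-2,-1$ hold by definition.

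For the inductive step I would assume that each $\Y_v(s)$ with $s\leq t$ is an irreducible Laurent polynomial in $\x_V$ and that these are pairwise coprime. The key step is to prove that~\eqref{eq:big_sum_toric} actually defines a Laurent polynomial, i.e.\ that the displayed denominator divides the numerator. Expanded via the inductive hypothesis, this reduces to a divisibility identity for the weighted arborescence sum viewed as a polynomial in the preceding $\Y$'s: one must show that $\prod_{u\neq p,q,r}\Y_u(t-1)$ enters each $\arb \in \Arb(G,v)$ after suitable grouping. A natural approach is to partition the arborescences of $G$ according to their behavior near the vertex $v$ and to exhibit the required cancellation within each class, in the spirit of combinatorial proofs of the Matrix--Tree theorem; the connection with sandpile theory indicated in Remark~\ref{rmk:sand} may prove useful.

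For irreducibility and pairwise coprimality, I would adapt the specialization arguments used by Fomin and Zelevinsky in~\cite{FZCube} for the Somos and Gale--Robinson sequences: set a carefully chosen subset of the initial variables $\x_V$ to specific values and show that the resulting specialization of $\Y_v(t+1)$ admits no unexpected factorization. The transitive action of $V=\Z^2/\L$ on itself collapses many potential factorizations into a single case, which should simplify the analysis considerably. Once irreducibility and coprimality are established at each new time step, a Caterpillar-Lemma-style argument in the spirit of~\cite{FZ} propagates the Laurent property across the orbit of seeds generated by iterating the recurrence.

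The principal obstacle is that~\eqref{eq:big_sum_toric} is not an exchange relation of the type seen in cluster algebras or LP algebras: the right-hand side is a sum over potentially many arborescences, and the left-hand side is a product over all but three vertices of $V$. As the authors note in the introduction, $R$-systems on toric digraphs fall outside all Laurent-phenomenon frameworks studied so far, so a genuinely new technique will be required. I expect the divisibility step to be the heart of the difficulty, and that any resolution will rest on a combinatorial identity for weighted arborescence sums on $G$ that remains to be discovered. A sensible starting point is the smallest non-trivial example (for instance $\L$ generated by $(1,2)$ and $(2,-1)$, a $5$-vertex toric digraph), where the $\Y_v(t)$'s can be computed explicitly and the cancellation pattern extracted by hand.
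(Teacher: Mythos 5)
You should first note that the statement you are trying to prove is stated in the paper as a \emph{conjecture}: the authors do not prove it, and the only rigorous progress they offer is Proposition~\ref{prop:toric_divide}, which shows conditionally that $\Y_v(t+1)$ is a Laurent polynomial \emph{provided} all earlier $\Y_u(t')$ are Laurent polynomials that are pairwise coprime. Your plan for the divisibility step is genuinely close to what the paper does there: the paper also groups arborescences and exhibits cancellation within each group. But the concrete ingredient you are missing is the identity~\eqref{eq:P_recurrence}, which the authors derive not combinatorially but by observing that a certain ratio of $\Y$'s is a conserved quantity of the $R$-system (a consequence of the toggle relations alone, via Theorem~\ref{thm:arb}, with no polynomiality input). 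With~\eqref{eq:P_recurrence} in hand, the pairing is organized around each vertex $u$ \emph{away from} $v$, not ``near $v$'' as you propose: the arborescences $\arb$ whose weight misses the factor $\Y_{u+e}(t)$ (resp.\ $\Y_u(t-1)$) are exactly those with $\arb(u+e_1)=\arb(u+e_2)=u+e$ (resp.\ those where $u$ has indegree zero), each such $\arb$ has a unique partner $\arb'$ differing only in $\arb(u)$, and $\WTT(\arb)+\WTT(\arb')$ is divisible by $\Y_{u+e_2}(t)\Y_{u+e_1}(t-1)+\Y_{u+e_1}(t)\Y_{u+e_2}(t-1)$, hence by the required factor via~\eqref{eq:P_recurrence}. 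Your suggestion to partition according to behavior near $v$ would not isolate the relevant factors.

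The more serious gap is logical. Your induction needs irreducibility and pairwise coprimality at every stage, and these are exactly the parts you leave to ``specialization arguments'' in the spirit of~\cite{FZCube} without any specifics. This is not a detail one can defer: Proposition~\ref{prop:toric_divide} consumes coprimality of the earlier terms as a hypothesis in order to conclude that the numerator is divisible by the \emph{product} of the denominator factors (divisibility by each factor separately does not suffice), so without an independent argument for coprimality the induction is circular and does not even yield Laurent-ness, let alone irreducibility. There is no analogue here of the Caterpillar Lemma of~\cite{FZ}, because, as you yourself observe, \eqref{eq:big_sum_toric} is not an exchange relation of cluster or LP type; the authors explicitly position toric digraphs as lying outside all known Laurent-phenomenon frameworks, and the conjecture remains open precisely at the point where your proposal becomes vague.
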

Note that every monomial $\WTT(\arb)$ in the numerator is always divisible by $\Y_v(t-1)$ which however is included in the product in the denominator.

\begin{proposition}
  Suppose that Conjecture~\ref{conj:toric} holds. Then the following is true:
  \begin{enumerate}[\normalfont (i)]
  \item\label{item:toric_R} The values of the universal coefficient-free $R$-system associated with $G$ are given by
\begin{equation}\label{eq:toric_R}
    R_v(t+1)=\frac{\Y_{v+te}(t-1)}{\Y_{v+te}(t)}
\end{equation}
    for all $t\geq -1$. In particular, it has the singularity confinement property, and $\Y(t)$ is a strong $\tau$-sequence for this $R$-system.
  \item\label{item:toric_degree} For each $v\in V$ and $t\geq -2$, $\Y_v(t)$ is a sum of $\kappa^{t+2\choose 2}$ monomials, each of degree
    \[\frac12(nt^2+(3n-2)t+(2n-2)),\]
     where $n=|V|$ is the number of vertices of $G$ and $\kappa:=|\Arb(G,u)|$ is the \emph{complexity} of $G$ (which clearly does not depend on $u$).
  \item\label{item:toric_entropy} In particular, the $R$-system associated with $G$ has zero algebraic entropy.
  \item\label{item:toric_P} For each $v\in V$ and $t\geq -1$, the polynomials $\Y_u(t)$ satisfy
    \begin{equation}\label{eq:P_recurrence}
      (x_{q+e}+x_{p+e}) \Y_r(t) \Y_v(t-1)=x_r(\Y_{q}(t)\Y_{p}(t-1)+\Y_{p}(t)\Y_{q}(t-1)),
    \end{equation}
    where $p:=v+e_1$, $q:=v+e_2$, and $r:=v+e$ as before.
  \end{enumerate}
\end{proposition}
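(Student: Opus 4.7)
The plan is to reduce everything to two algebraic identities derived from the defining recursion~\eqref{eq:big_sum_toric}. The first step is to rewrite the arborescence sum as a weighted Matrix--Tree cofactor. Since each $a\neq v$ has exactly two out-edges in $E_v$, the weight $\alpha(T)$ factors as
\[\alpha(T) = \prod_{(a,b)\in E_v}\Y_b(t)\cdot\prod_{a\neq v}\frac{\Y_{T(a)}(t-1)}{\Y_{T(a)}(t)} = \frac{(\prod_u\Y_u(t))^2}{\Y_p(t)\Y_q(t)}\cdot\prod_{a\neq v}w(a,T(a)),\]
where $w(a,b):=\Y_b(t-1)/\Y_b(t)$. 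Writing $S_v(t) := \sum_{T\in\Arb(G,v)}\prod_{a\neq v}w(a,T(a))$, summing over $T$ and substituting into~\eqref{eq:big_sum_toric} collapses it to the clean recursion
\[\frac{\Y_r(t+1)}{\Y_r(t)} = \Bigl(\prod_u x_u\Bigr)\cdot S_v(t)\cdot\frac{\prod_u\Y_u(t)}{\prod_u\Y_u(t-1)}, \tag{$\ast$}\]
in which $S_v(t) = \det L(t)_{\hat v\hat v}$ is the cofactor of the weighted Laplacian $L(t)$ built from the $w(a,b)$.

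I would then prove (iv) by induction on $t$; the base case $t=-1$ is a direct substitution of the initial values. For the inductive step, apply $(\ast)$ with roots $v$, $v-e_2$, $v-e_1$ (producing $\Y_r(t+1)$, $\Y_p(t+1)$, $\Y_q(t+1)$ respectively) and form the ratio
\[\frac{\Y_p(t+1)\Y_q(t)+\Y_q(t+1)\Y_p(t)}{\Y_r(t+1)\Y_v(t)} = \frac{\Y_p(t)\Y_q(t)}{\Y_r(t)\Y_v(t)}\cdot\frac{S_{v-e_1}(t)+S_{v-e_2}(t)}{S_v(t)}.\]
The crucial tool is the Matrix--Tree left-kernel identity: the vector $(S_w(t))_{w\in V}$ spans the left kernel of $L(t)$, so $\sum_w S_w(t)L(t)_{wu}=0$ for every $u$. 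Specializing at $u=v$ and using that only $L_{vv}$, $L_{v-e_1,v}$, $L_{v-e_2,v}$ are nonzero in that column yields
\[\frac{S_{v-e_1}(t)+S_{v-e_2}(t)}{S_v(t)} = \frac{\Y_v(t)}{\Y_v(t-1)}\cdot\frac{\Y_p(t-1)\Y_q(t)+\Y_q(t-1)\Y_p(t)}{\Y_p(t)\Y_q(t)}.\]
Combining the two displays with the inductive hypothesis (which identifies $\bigl(\Y_p(t-1)\Y_q(t)+\Y_q(t-1)\Y_p(t)\bigr)/\bigl(\Y_r(t)\Y_v(t-1)\bigr)$ with $(x_{p+e}+x_{q+e})/x_r$) produces (iv) at time $t+1$.

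Part (i) then follows: taking the ratio of (iv) at times $t+1$ and $t$ is, after the substitution $R_v(t+1)=\Y_{v+te}(t-1)/\Y_{v+te}(t)$, precisely the toggle relation~\eqref{eq:toggle}. Equivalently, plugging this substitution into~\eqref{eq:arb} and simplifying via $(\ast)$ shows that $\rowm(R(t+1))_v$ differs from $R_v(t+2)$ only by the global scalar $\prod_u x_u$, which disappears in projective space. Pairwise coprimality from Conjecture~\ref{conj:toric} implies the canonical form of $R(t+1)$ has no common factors, giving singularity confinement and the strong $\tau$-sequence property. For (ii), the big formula inductively forces $\Y_v(t)$ to be homogeneous, and balancing degrees on the two sides of~\eqref{eq:big_sum_toric} gives $D(t+1)=2D(t)-D(t-1)+n$; solving with $D(-2)=1$, $D(-1)=0$ yields the claimed quadratic. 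Counting monomials in the same way produces $N(t+1)=\kappa N(t)^2/N(t-1)$, which solves to $\kappa^{\binom{t+2}{2}}$. Part (iii) is then immediate: the canonical form of $R(t+1)$ has entries of degree quadratic in $t$, hence zero algebraic entropy.

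The main obstacle is the monomial-counting step in (ii): one must establish that no ``accidental'' cancellations occur when expanding the products and quotient in~\eqref{eq:big_sum_toric}, so that each monomial of $\Y_r(t+1)$ corresponds bijectively to the choice of an arborescence together with a monomial from each constituent $\Y$-factor. This depends essentially on the pairwise coprimality and irreducibility asserted by Conjecture~\ref{conj:toric}, together with a careful combinatorial tracking of monomials through the cancellation against $\prod_u\Y_u(t-1)\prod_{u\neq p,q,r}\Y_u(t)$.
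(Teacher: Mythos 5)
Your proposal is correct, but it routes the argument differently from the paper. The paper takes part (i) to be essentially tautological: the recursion~\eqref{eq:big_sum_toric} is designed so that the substitution~\eqref{eq:toric_R} is a verbatim restatement of the arborescence formula~\eqref{eq:arb}, whose validity is already guaranteed by Theorem~\ref{thm:arb}. It then \emph{deduces} part (iv) from part (i): writing the toggle relation~\eqref{eq:toggle} at the vertex $v-(t-1)e$ at time $t$ and substituting~\eqref{eq:toric_R} shows that $\bigl(\Y_p(t-1)\Y_q(t)+\Y_q(t-1)\Y_p(t)\bigr)/\bigl(\Y_v(t-1)\Y_{v+e}(t)\bigr)$ is a conserved quantity, which is then evaluated at $t=0$ to give $(x_{p+e}+x_{q+e})/x_{v+e}$. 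You instead prove (iv) first, by induction on $t$, using the reduction of $\alpha(\arb)$ to the weights $w(a,b)=\Y_b(t-1)/\Y_b(t)$ and the left-kernel identity $\sum_w S_w(t)L(t)_{wv}=0$ for the weighted Laplacian, and then recover (i) from (iv); I checked the bookkeeping (the factorization of $\alpha(T)$, the identity $(\ast)$, the roots $v$, $v-e_2$, $v-e_1$ producing $\Y_r$, $\Y_p$, $\Y_q$, and the base case at $t=-1$) and it is all correct. The two arguments are ultimately two faces of the same Matrix--Tree fact — your left-kernel identity is exactly what underlies the proof of Theorem~\ref{thm:arb}, which the paper simply reuses — so the paper's route is shorter, while yours is self-contained at the level of the $\Y$-recursion and makes the mechanism of the conserved quantity explicit. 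Your treatment of (ii) and (iii) matches the paper's (same degree recursion $D(t+1)=2D(t)-D(t-1)+n$ and monomial-count recursion), and your closing caveat about possible cancellations in the monomial count is fair: the paper's own proof of (ii) is equally terse on this point, and both arguments lean on the conditional nature of the proposition.
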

\begin{proof}
  Part~\eqref{item:toric_R} is just a restatement of~\eqref{eq:arb}. Part~\eqref{item:toric_degree} follows by induction from~\eqref{eq:big_sum_toric} (note that $\WTT(\arb)$ contains exactly $n-1$ factors of the form $\Y_b(t-1)$ and $n-1$ factors of the form $\Y_b(t)$). Part~\eqref{item:toric_entropy} is a direct consequence of Part~\eqref{item:toric_degree}. Finally, to show Part~\eqref{item:toric_P}, let us write down the toggle relations~\eqref{eq:toggle} for the $R$-system at vertex $v-(t-1)e$ at time $t$:
  \[R_{v-(t-1)e}(t)R_{v-(t-1)e}(t+1)=\frac{R_{p-(t-1)e}(t)+R_{q-(t-1)e}(t)}{R_{p-te}(t+1)^{-1}+R_{q-te}(t+1)^{-1}}. \]
  Applying~\eqref{eq:toric_R} yields
  \[\frac{\Y_{v}(t-2)}{\Y_{v}(t-1)}\cdot\frac{\Y_{v+e}(t-1)}{\Y_{v+e}(t)}=
    \frac{\Y_{p}(t-2)\Y_{q}(t-1)+\Y_{q}(t-2)\Y_{p}(t-1)}{\Y_{p}(t-1)\Y_{q}(t)+\Y_{q}(t-1)\Y_{p}(t)}.\]
  In other words, we get that
  \[\frac{\Y_{p}(t-1)\Y_{q}(t)+\Y_{q}(t-1)\Y_{p}(t)}{\Y_{v}(t-1)\Y_{v+e}(t)}
   = \frac{\Y_{p}(t-2)\Y_{q}(t-1)+\Y_{q}(t-2)\Y_{p}(t-1)}{\Y_{v}(t-2)\Y_{v+e}(t-1)}\]
  is a conserved quantity, and thus is equal to its value at $t=0$, namely,
 \[\frac{\Y_{p}(-2)\Y_{q}(-1)+\Y_{q}(-2)\Y_{p}(-1)}{\Y_{v}(-2)\Y_{v+e}(-1)}=\frac{x_{p+e}+x_{q+e}}{x_{v+e}}.\]
This finishes the proof of~\eqref{item:toric_P}.  
\end{proof}

Let us also give a certain step towards proving Conjecture~\ref{conj:toric}.

\def\bigsum{Q}
\begin{proposition}\label{prop:toric_divide}
Let $t\geq -2$ and suppose that $\Y_v(t')$ is a Laurent polynomial for all $v\in V$ and $-2\leq t'\leq t$. Moreover, assume that any two such Laurent polynomials are coprime. Then $\Y_v(t+1)$ is also a Laurent polynomial.
\end{proposition}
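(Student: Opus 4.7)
By the induction hypothesis, $\Z[\x_V^{\pm 1}]$ is a UFD in which the family $\{\Y_u(t'):u\in V,\,-2\le t'\le t\}$ consists of pairwise coprime irreducible Laurent polynomials, each coprime to every variable $x_u$. Proving that $\Y_r(t+1)$ as defined by~\eqref{eq:big_sum_toric} is a Laurent polynomial therefore reduces to showing that the numerator $N:=(\prod_{u\in V}x_u)\sum_{T\in\Arb(G,v)}\alpha(T)$ is divisible, separately, by each $\Y_u(t-1)$ for $u\in V$ and by each $\Y_u(t)$ for $u\in V\setminus\{p,q,r\}$.

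The key algebraic input is the identity~\eqref{eq:P_recurrence}, which is an unconditional rational-function identity since its derivation used only the $R$-system toggle relations. Applied at vertex $u$, it yields, upon reduction modulo $\Y_u(t-1)$,
\begin{equation*}
\Y_{u+e_1}(t)\,\Y_{u+e_2}(t-1) + \Y_{u+e_2}(t)\,\Y_{u+e_1}(t-1) \equiv 0 \pmod{\Y_u(t-1)};
\end{equation*}
applied instead at vertex $u-e$ (so that $r=u$ in the notation of~\eqref{eq:P_recurrence}), it gives, modulo $\Y_u(t)$,
\begin{equation*}
\Y_{u-e_1}(t)\,\Y_{u-e_2}(t-1) + \Y_{u-e_2}(t)\,\Y_{u-e_1}(t-1) \equiv 0 \pmod{\Y_u(t)}.
\end{equation*}

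To exploit these I rewrite the arborescence sum via the Matrix-Tree theorem: with weights $w(a,b):=\Y_b(t-1)/\Y_b(t)$ on the edges of $G_v$, we have $\sum_T\alpha(T)=\bigl[\prod_{(a,b)\in E_v}\Y_b(t)\bigr]\cdot\det M$, where $M$ is the reduced Laplacian of $G_v$ with these weights, row and column $v$ removed. For the divisibility $\Y_u(t-1)\mid N$ with $u\ne v$, I reduce $M$ modulo $\Y_u(t-1)$: the off-diagonal entries in column $u$ are proportional to $w(a,u)=\Y_u(t-1)/\Y_u(t)$ and hence vanish, while the diagonal entry $M_{uu}=\Y_{u+e_1}(t-1)/\Y_{u+e_1}(t)+\Y_{u+e_2}(t-1)/\Y_{u+e_2}(t)$ vanishes by the first congruence above (after clearing the coprime factor $\Y_{u+e_1}(t)\Y_{u+e_2}(t)$). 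So column $u$ of $M$ is zero modulo $\Y_u(t-1)$, giving $\det M\equiv 0\pmod{\Y_u(t-1)}$. The case $u=v$ is handled directly: modulo $\Y_v(t-1)$ the matrix $M$ is the reduced Laplacian of the digraph obtained from $G_v$ by deleting the incoming edges of $v$, in which $v$ is unreachable, so $\det M\equiv 0$ by Matrix-Tree.

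The divisibility by $\Y_u(t)$ for $u\notin\{p,q,r\}$ will be the main obstacle, because $\Y_u(t)$ appears in the \emph{denominators} of $M$. The plan is to clear those denominators by replacing $M$ with $MD$, where $D=\operatorname{diag}(\Y_b(t):b\ne v)$; this makes the off-diagonal entries of column $u$ into $-\Y_u(t-1)$, removes $\Y_u(t)$ from the off-diagonal entries of the other columns, and multiplies $\det M$ by $\prod_{b\ne v}\Y_b(t)$. Combined with the prefactor $\prod_{(a,b)\in E_v}\Y_b(t)=\Y_p(t)\Y_q(t)\prod_{b\ne p,q}\Y_b(t)^2$ (which already supplies $\Y_u(t)^2$ for $u\ne p,q$), a careful accounting of the $\Y_u(t)$-valuations contributed by the scaled diagonal $\Y_b(t)\,M_{bb}$ reduces the required divisibility to a modular vanishing for $\det(MD)$ modulo $\Y_u(t)$. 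Using the second congruence above to produce a column-rank drop in $MD\bmod \Y_u(t)$ then yields the needed divisibility, completing the proof.
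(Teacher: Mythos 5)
Your reduction to showing that the numerator is divisible by each factor of the denominator, and your identification of~\eqref{eq:P_recurrence} as the key input, both match the paper. (Note the hypothesis gives only pairwise coprimality, not irreducibility, of the $\Y_u(t')$ --- but coprimality is all that is needed.) Your treatment of the factors $\Y_u(t-1)$ via the Matrix--Tree theorem is a correct alternative to what the paper does: the paper instead pairs up the arborescences in which $u$ has indegree zero, two such arborescences being paired when they differ only in the image of $u$, and applies the congruence to each pair sum. Your column-$u$ argument is fine once one works in the localization of $\Z[\x_V^{\pm1}]$ at the multiplicative set generated by the $\Y_b(t)$, which is legitimate because these are coprime to $\Y_u(t-1)$ by hypothesis.

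The gap is in the second half, the divisibility by $\Y_u(t)$ for $u\in V\setminus\{p,q,r\}$, which you explicitly leave as a ``plan.'' The proposed mechanism does not work as stated: the diagonal entries of $MD$ in rows $u-e_1$ and $u-e_2$ equal $\Y_a(t)\bigl(\Y_{a+e_1}(t-1)/\Y_{a+e_1}(t)+\Y_{a+e_2}(t-1)/\Y_{a+e_2}(t)\bigr)$ and still carry $\Y_u(t)$ in their denominators, so ``$MD\bmod\Y_u(t)$'' is not defined and no column-rank-drop argument applies directly. What is actually needed is a bound on the pole order of $\det M$ at the prime $\Y_u(t)$: the prefactor contributes $\Y_u(t)^2$, while $\det M$ has a pole of order up to $2$ coming from the four entries in rows $u-e_1,u-e_2$ that involve $w(\cdot,u)=\Y_u(t-1)/\Y_u(t)$, so one must show that the order-$(-2)$ coefficient is itself divisible by $\Y_u(t)$. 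By multilinearity in rows, that coefficient is (up to the unit $\Y_u(t-1)^2$) the minor obtained by replacing rows $u-e_1$ and $u-e_2$ with $e_{u-e_1}-e_u$ and $e_{u-e_2}-e_u$, i.e.\ the sum of $\prod_a w(a,T(a))$ over precisely those arborescences with $T(u-e_1)=T(u-e_2)=u$ --- exactly the terms of the arborescence sum not already divisible by $\Y_u(t)$. Proving that this sum is divisible by $\Y_u(t)$ is the whole content of the paper's argument, which it carries out by the involution $T\mapsto T'$ toggling the image of $u-e$ between its two out-neighbors and applying~\eqref{eq:P_recurrence} at $u-e$; your determinant reformulation does not bypass this step, it only relocates it. You would need to supply this pairing (or an equivalent manipulation of that minor) to complete the proof.
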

\begin{proof}
Denote by $\bigsum$ the sum in the numerator of~\eqref{eq:big_sum_toric}:
  \[\bigsum:=\sum_{\arb\in\Arb(G,v)}\WTT(\arb).\]
  It suffices to show that $\bigsum$ is divisible by each term in the denominator since these terms are pairwise coprime. Note that in the proof of~\eqref{eq:P_recurrence}, we did not use Conjecture~\ref{conj:toric}, we only used~\eqref{eq:toric_R}. Thus we can use~\eqref{eq:P_recurrence} here and it implies that for any vertex $u\in V$, the (Laurent) polynomial
  \[\Y_{u+e_2}(t)\Y_{u+e_1}(t-1)+\Y_{u+e_1}(t)\Y_{u+e_2}(t-1)\]
  is divisible by $\Y_{u+e}(t)$ and $\Y_u(t-1)$.

  Fix some vertex $u$ such that $u+e\in V\setminus \{p,q,r\}$. Note that for $\arb\in\Arb(G,v)$, $\WTT(\arb)$ does not contain $\Y_{u+e}(t)$ as a factor if and only if $\arb(u+e_1)=\arb(u+e_2)=u+e$ (note also that neither one of $u$, $u+e_1$, and $u+e_2$ is equal to $v$). For every such arborescence $\arb$, there is a unique arborescence $\arb'\in\Arb(G,v)$ such that $\arb'(w)= \arb(w)$ if and only if $w\neq u$. Note also that $\WTT(\arb)+\WTT(\arb')$ is divisible by $(\Y_{u+e_2}(t)\Y_{u+e_1}(t-1)+\Y_{u+e_1}(t)\Y_{u+e_2}(t-1))$ and thus by $\Y_{u+e}(t)$. This shows that $\bigsum$ is divisible by $\Y_{u+e}(t)$.

  As we have already mentioned, $\bigsum$ is divisible by $\Y_v(t-1)$, since every term of $\bigsum$ is divisible by it. Let now $u\neq v$ be a vertex and we would like to show that $\bigsum$ is divisible by $\Y_u(t-1)$. For $\arb\in\Arb(G,v)$, $\WTT(\arb)$ is divisible by $\Y_u(t-1)$ unless $u$ has indegree $0$ in $\arb$. But then there is again a unique arborescence $\arb'\in\Arb(G,v)$ such that $\arb'(w)= \arb(w)$ if and only if $w\neq u$. And again $\WTT(\arb)+\WTT(\arb')$ is divisible by $(\Y_{u+e_2}(t)\Y_{u+e_1}(t-1)+\Y_{u+e_1}(t)\Y_{u+e_2}(t-1))$ and thus by $\Y_{u}(t-1)$. We have shown that $\bigsum$ is divisible by each $\Y_u(t-1)$ and thus $\Y_v(t+1)$ is a Laurent polynomial.
\end{proof}

\begin{example}\label{ex:toric_4}
  \begin{figure}
  \def\nodesc{0.7}
  \def\labelsc{1}
  \def\tikzscx{1.3}
  \def\tikzscy{1.3}
  \def\arrsc{0.6}
  \def\sclbx{0.7}
  \def\sclbxbig{1.4}
  \def\bnd{10}
 \begin{tabular}{cc}
   \scalebox{\sclbxbig}{
   \begin{tikzpicture}[xscale=\tikzscx,yscale=\tikzscy]
     \draw[white] (0,-0.5) -- (0,1.5);
     \draw[white] (-1,0) -- (2,0);
    \node[scale=\nodesc,draw,circle] (A) at (0,0) {$A$};
    \node[scale=\nodesc,draw,circle] (B) at (1,0) {$B$};
    \node[scale=\nodesc,draw,circle] (C) at (1,1) {$C$};
    \node[scale=\nodesc,draw,circle] (D) at (0,1) {$D$};
    \draw[postaction={decorate}] (A)--(B);
    \draw[postaction={decorate}] (B)--(C);
    \draw[postaction={decorate}] (C)--(D);
    \draw[postaction={decorate}] (D)--(A);
    \def\positn{0.3}
    \draw[postaction={decorate}] (A) to[bend right=\bnd] (C);
    \draw[postaction={decorate}] (C) to[bend right=\bnd] (A);
    \draw[postaction={decorate}] (B) to[bend right=\bnd] (D);
    \draw[postaction={decorate}] (D) to[bend right=\bnd] (B);
    \def\positn{0.5}
    
  \end{tikzpicture}}&
   \scalebox{\sclbx}{
    \begin{tikzpicture}[xscale=\tikzscx,yscale=\tikzscy]
    \node[scale=\nodesc] (X1) at (-1,0) {$\dots$};
    \node[scale=\nodesc,draw,circle] (A1) at (0,0) {$A$};
    \node[scale=\nodesc,draw,circle] (B1) at (1,0) {$B$};
    \node[scale=\nodesc,draw,circle] (C1) at (2,0) {$C$};
    \node[scale=\nodesc,draw,circle] (D1) at (3,0) {$D$};
    \node[scale=\nodesc,draw,circle] (A11) at (4,0) {$A$};
    \node[scale=\nodesc,draw,circle] (B11) at (5,0) {$B$};
    \node[scale=\nodesc] (Y1) at (6,0) {$\dots$};
    \node[scale=\nodesc] (X2) at (-1,1) {$\dots$};
    \node[scale=\nodesc,draw,circle] (C2) at (0,1) {$C$};
    \node[scale=\nodesc,draw,circle] (D2) at (1,1) {$D$};
    \node[scale=\nodesc,draw,circle] (A2) at (2,1) {$A$};
    \node[scale=\nodesc,draw,circle] (B2) at (3,1) {$B$};
    \node[scale=\nodesc,draw,circle] (C22) at (4,1) {$C$};
    \node[scale=\nodesc,draw,circle] (D22) at (5,1) {$D$};
    \node[scale=\nodesc] (Y2) at (6,1) {$\dots$};
    \node[scale=\nodesc] (X3) at (-1,2) {$\dots$};
    \node[scale=\nodesc,draw,circle] (A3) at (0,2) {$A$};
    \node[scale=\nodesc,draw,circle] (B3) at (1,2) {$B$};
    \node[scale=\nodesc,draw,circle] (C3) at (2,2) {$C$};
    \node[scale=\nodesc,draw,circle] (D3) at (3,2) {$D$};
    \node[scale=\nodesc,draw,circle] (A33) at (4,2) {$A$};
    \node[scale=\nodesc,draw,circle] (B33) at (5,2) {$B$};
    \node[scale=\nodesc] (Y3) at (6,2) {$\dots$};
    \node[scale=\nodesc] (S0) at (0,-1) {$\dots$};
    \node[scale=\nodesc] (S1) at (1,-1) {$\dots$};
    \node[scale=\nodesc] (S2) at (2,-1) {$\dots$};
    \node[scale=\nodesc] (S3) at (3,-1) {$\dots$};
    \node[scale=\nodesc] (S4) at (4,-1) {$\dots$};
    \node[scale=\nodesc] (S5) at (5,-1) {$\dots$};
    \node[scale=\nodesc] (N0) at (0,3) {$\dots$};
    \node[scale=\nodesc] (N1) at (1,3) {$\dots$};
    \node[scale=\nodesc] (N2) at (2,3) {$\dots$};
    \node[scale=\nodesc] (N3) at (3,3) {$\dots$};
    \node[scale=\nodesc] (N4) at (4,3) {$\dots$};
    \node[scale=\nodesc] (N5) at (5,3) {$\dots$};
    
    \draw[postaction={decorate}] (X1)--(A1);
    \draw[postaction={decorate}] (A1)--(B1);
    \draw[postaction={decorate}] (B1)--(C1);
    \draw[postaction={decorate}] (C1)--(D1);
    \draw[postaction={decorate}] (D1)--(A11);
    \draw[postaction={decorate}] (A11)--(B11);
    \draw[postaction={decorate}] (B11)--(Y1);
    \draw[postaction={decorate}] (X2)--(C2);
    \draw[postaction={decorate}] (C2)--(D2);
    \draw[postaction={decorate}] (D2)--(A2);
    \draw[postaction={decorate}] (A2)--(B2);
    \draw[postaction={decorate}] (B2)--(C22);
    \draw[postaction={decorate}] (C22)--(D22);
    \draw[postaction={decorate}] (D22)--(Y2);
    \draw[postaction={decorate}] (X3)--(A3);
    \draw[postaction={decorate}] (A3)--(B3);
    \draw[postaction={decorate}] (B3)--(C3);
    \draw[postaction={decorate}] (C3)--(D3);
    \draw[postaction={decorate}] (D3)--(A33);
    \draw[postaction={decorate}] (A33)--(B33);
    \draw[postaction={decorate}] (B33)--(Y3);
    \draw[postaction={decorate}] (S0)--(A1);
    \draw[postaction={decorate}] (S1)--(B1);
    \draw[postaction={decorate}] (S2)--(C1);
    \draw[postaction={decorate}] (S3)--(D1);
    \draw[postaction={decorate}] (S4)--(A11);
    \draw[postaction={decorate}] (S5)--(B11);
    \draw[postaction={decorate}] (A1)--(C2);
    \draw[postaction={decorate}] (B1)--(D2);
    \draw[postaction={decorate}] (C1)--(A2);
    \draw[postaction={decorate}] (D1)--(B2);
    \draw[postaction={decorate}] (A11)--(C22);
    \draw[postaction={decorate}] (B11)--(D22);
    \draw[postaction={decorate}] (C2)--(A3);
    \draw[postaction={decorate}] (D2)--(B3);
    \draw[postaction={decorate}] (A2)--(C3);
    \draw[postaction={decorate}] (B2)--(D3);
    \draw[postaction={decorate}] (C22)--(A33);
    \draw[postaction={decorate}] (D22)--(B33);
    \draw[postaction={decorate}] (A3)--(N0);
    \draw[postaction={decorate}] (B3)--(N1);
    \draw[postaction={decorate}] (C3)--(N2);
    \draw[postaction={decorate}] (D3)--(N3);
    \draw[postaction={decorate}] (A33)--(N4);
    \draw[postaction={decorate}] (B33)--(N5);
    
  \end{tikzpicture}}

\end{tabular}
    \caption{\label{fig:toric_4} The toric digraph $G$ (left) from Example~\ref{ex:toric_4}. The same digraph drawn on the \emph{universal cover} of the torus in a periodic way (right).}
  \end{figure}
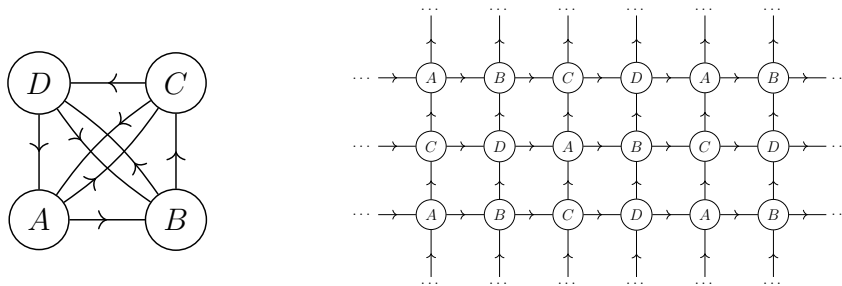
  The smallest toric digraph $G$ comes from $\L$ spanned by the vectors $(3,1)$ and $(0,2)$, see Figure~\ref{fig:toric_4}. Thus $G$ has four vertices and $\kappa=5$ arborescences towards each vertex. Let us denote the vertices of $G$ by $A,B,C,D$ as in Figure~\ref{fig:toric_4}, and let $a:=x_A,b:=x_B,c:=x_C,d:=x_D$ be the vertex variables. Then we have:
  \[\Y_A(-2)=d,\quad \Y_B(-2)=a,\quad \Y_C(-2)=b,\quad \Y_D(-2)=c;\]
  \[\Y_A(-1)=\Y_B(-1)=\Y_C(-1)=\Y_D(-1)=1;\]
  \[\Y_A(0)=(ac + bc + ad + cd + d^2)a,\]
  \[\Y_B(0)=(a^2 + ab + ad + bd + cd)b,\]
  \[\Y_C(0)=(ab + b^2 + ac + bc + ad)c,\]
  \[\Y_D(0)=(ab + bc + c^2 + bd + cd)d.\]
  Let us check~\eqref{eq:P_recurrence} for example for $v=B$ and $t=0$. We have
  \[p=C,\quad q=D,\quad r=A,\quad p+e=B,\quad q+e=C,\]
  and therefore~\eqref{eq:P_recurrence} becomes
\begin{equation}\label{eq:toric_4_P}
 (b+c) \Y_A(0) =a(\Y_D(0)+\Y_C(0)).
\end{equation}
One easily verifies that this is indeed the case.

  The next generation of the $\Y$-polynomials is already too big to write down explicitly. For example,
  \[\Y_D(1)=\frac{abcd\cdot \bigsum }{Y_A(0)},\]
  where $\bigsum$ is
\begin{equation*}
\Y_B(0)\Y_D(0)^2+\Y_A(0)\Y_D(0)^2+\Y_B(0)\Y_C(0)\Y_D(0)+\Y_A(0)\Y_B(0)\Y_D(0)+\Y_A(0)\Y_B(0)\Y_C(0).
\end{equation*}
To see that it is divisible by $\Y_A(0)$, note that its only two terms which do not contain $\Y_A(0)$ as a factor are $\Y_B(0)\Y_D(0)^2$ and $\Y_B(0)\Y_C(0)\Y_D(0)$. By~\eqref{eq:toric_4_P}, their sum becomes
\[\Y_B(0)\Y_D(0)(\Y_D(0)+\Y_C(0))=\frac{b+c}{a}\Y_A(0)\Y_B(0)\Y_D(0),\]
and thus we compute that
\[\Y_D(1)=abcd\cdot(\Y_D(0)^2+\Y_B(0)\Y_D(0)+\Y_B(0)\Y_C(0))+bcd(b+c)\Y_B(0)\Y_D(0).\]
In particular, $\Y_D(1)$ is indeed a polynomial that contains $125=\kappa^{1+2\choose 2}$ monomials, each of degree
\[10=\frac12(4+10+6).\] 
\end{example}

\def\arbsum{\gamma}
\begin{remark}
It follows from~\eqref{eq:big_sum_toric} that we have
\[\Y_v(0)=\arbsum(v):=\sum_{\arb\in\Arb(G,v)} \prod_{u\in V\setminus\{v\}} x_{T(u)}.\]
Rewriting~\eqref{eq:P_recurrence} for $t=0$ yields
\[(x_{r+e_1}+x_{r+e_2})\arbsum(r)=x_r(\arbsum(p)+\arbsum(q)).\]
One may ask for a bijective proof of this identity and in fact there is a simple argument: both sides of the above equation count \emph{unicycle configurations} of the form $(v,\rho)$, see~\cite[Section~3]{HLMPP}. For $t=1$, one can give a combinatorial interpretation to $\Y_v(1)$ using the proof of Proposition~\ref{prop:toric_divide}, but it remains an open problem to give a combinatorial interpretation to the $\kappa^{t+2\choose 2}$ monomials appearing in $\Y_v(t)$ for $t\geq2$.
\end{remark}

\def\Gnot{G_0}
\def\Enot{E_0}
\section{Bidirected digraphs}\label{sec:bidirected-graphs}
Finally, we study the case when the digraph $G=(V,E)$ is \emph{bidirected}, i.e., it is obtained from an undirected connected digraph $\Gnot=(V,\Enot)$ by orienting each edge in both directions:
\[E=\{(u,v),(v,u)\mid \{u,v\}\in \Enot\}.\]
Surprisingly enough, for all bidirected digraphs, the $R$-system behaves in a very similar way. We first treat the coefficient-free case.

\begin{proposition}\label{prop:bidirected_periodic}
Let $G$ be a bidirected digraph. Then the universal coefficient-free $R$-system associated with $G$ is periodic with period $2$, and its values are monomials in $\x_V$.
\end{proposition}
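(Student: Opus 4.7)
The plan is to guess and verify an explicit closed form for the map $\rowm$: I claim that $\rowm(\X)=(\X_v^{-1})_{v\in V}$ as an element of $\RP V$. Once this is established, both conclusions are immediate: applying $\rowm$ twice yields $\rowm^2(\X)=(\X_v)_{v\in V}=\X$, so the period divides $2$; and clearing denominators in the representative $(\X_v^{-1})_{v\in V}$ by multiplying through by $\prod_{u\in V}\X_u$ yields the projectively equivalent representative $\bigl(\prod_{u\neq v}\X_u\bigr)_{v\in V}$, whose entries are monomials in $\x_V$.

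To verify the claim, I would first note that since $G$ is bidirected, the outgoing and incoming neighborhoods of any $v\in V$ coincide; call this common set $N(v)$. Consequently, the right-hand side of the toggle relation~\eqref{eq:toggle} at $v$ takes the manifestly symmetric form
\[
\left(\sum_{w\in N(v)}\X_w\right)\left(\sum_{u\in N(v)}\frac{1}{\X'_u}\right)^{-1}.
\]
Substituting the candidate $\X'_v:=1/\X_v$, the left-hand side becomes $\X_v\cdot \X_v^{-1}=1$, while the right-hand side becomes $\bigl(\sum_{w\in N(v)}\X_w\bigr)\bigl(\sum_{u\in N(v)}\X_u\bigr)^{-1}=1$. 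Hence $(\X_v^{-1})_{v\in V}$ solves the system~\eqref{eq:toggle}, and by the uniqueness part of Theorem~\ref{thm:arb} this solution is the unique element of $\RP V$ with that property, i.e.\ it is $\rowm(\X)$.

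This is all that is needed. There is no real obstacle: the only ``idea'' is to notice that in the bidirected case the symmetry between incoming and outgoing neighbors makes the inversion $\X_v\mapsto \X_v^{-1}$ the obvious fixed involution, and then to verify the toggle identity termwise. The universal coefficient-free $R$-system starts from $R(0)=\x_V$, and the above computation shows that $R(1)=(\prod_{u\neq v}x_u)_{v\in V}$ and $R(2t)=R(0)$, $R(2t+1)=R(1)$ for all $t\geq 0$, confirming both assertions of the proposition.
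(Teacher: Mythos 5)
Your proposal is correct and is essentially the same argument as the paper's: the paper likewise exhibits the explicit solution $R_v(t)=x_v$ for even $t$ and $1/x_v$ for odd $t$, checks the toggle relations (which in the bidirected, coefficient-free case reduce to $1=1$ exactly as in your computation), and invokes the uniqueness part of Theorem~\ref{thm:arb}. Your added remark that one may clear denominators to get the representative $\bigl(\prod_{u\neq v}x_u\bigr)_{v\in V}$ is a harmless cosmetic difference.
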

\begin{proof}
  It is obvious that setting
  \[R_v(t)=
    \begin{cases}
      x_v, &\text{if $t$ is even,}\\
      1/x_v, &\text{if $t$ is odd}\\
    \end{cases} \]
  satisfies~\eqref{eq:toggle} for all $t$ and thus provides a (unique by Theorem~\ref{thm:arb}) solution to the universal coefficient-free $R$-system associated with $G$. 
\end{proof}

In contrast, $R$-systems with coefficients associated to bidirected digraphs possess rich and complicated structure. Our story will be very similar to the one in the previous section. Recall that the values of the universal $R$-system with coefficients are rational functions in the vertex variables $\x_V$ as well as the edge variables $\x_E$.

\def\Neigh{N}
For $v\in V$ and $t\geq-2$, we define a rational function $\Y_v(t)$. We set
\[\Y_v(-2)=x_v,\quad \Y_v(-1)=1\]
for all $v\in V$, and for $t\geq -1$ we let
\begin{equation}\label{eq:big_sum_bidirected}
  \Y_v(t+1):=\frac{\sum_{\arb\in\Arb(G,v)}\WTT(\arb)}{\left(\prod_{u\in V} \Y_u(t-1)\right)\left(\prod_{u\in V\setminus \Neigh(v)}\Y_u(t)\right)},
\end{equation}
where
\[\Neigh(v)=\{v\}\cup\{u\in V\mid (v,u)\in E\}\]
and 
\[\WTT(\arb):=\prod_{(a,b)\in E_v} 
  \begin{cases}
    \Y_b(t-1), &\text{if $\arb(a)=b$,}\\
    \Y_b(t), &\text{otherwise.}\\
  \end{cases}\]
Here again $E_v=\{(a,b)\in E\mid a\neq v\}$. So far everything is almost identical to the formulas we had in the previous section. Note however that for bidirected digraphs, $\WTT(\arb)$ is always divisible by
\[\Y_v(t-1)\left(\prod_{u\in V\setminus \Neigh(v)}\Y_u(t)\right)\]
because for any $u\in V\setminus \Neigh(v)$, we have $\arb(w)\neq u$ for $w=\arb(u)$, and thus the edge $(w,u)\in E_v$ contributes $\Y_u(t)$ to $\WTT(\arb)$. Thus the ``actual'' denominator in~\ref{eq:big_sum_bidirected} is $\prod_{u\neq v} \Y_u(t-1)$. 

\begin{conjecture}\label{conj:bidirected}
For any bidirected digraph $G$ with at least $3$ vertices\footnote{If $G$ has two vertices then it is a directed cycle and this example has been considered in Section~\ref{sec:small-examples}.}, the rational functions $\Y_v(t)$ are pairwise coprime Laurent polynomials in $(\x_V,\x_E)$.
\end{conjecture}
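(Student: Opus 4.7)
The plan is to mirror the treatment of toric digraphs in Section~\ref{sec:toric-graphs}, taking Proposition~\ref{prop:toric_divide} as the primary template. First I would verify that setting
\[R_v(t+1) = \frac{\Y_v(t-1)}{\Y_v(t)}\]
(up to a uniform rescaling by edge-weight factors) defines a solution of the universal $R$-system with coefficients. This follows directly by plugging the ansatz into the arborescence formula~\eqref{eq:arb}: the definition~\eqref{eq:big_sum_bidirected} is precisely what one obtains from Theorem~\ref{thm:arb}. With this identification, the toggle relation~\eqref{eq:toggle_sym} at vertex $v$ becomes a rational identity among the $\Y_u(t-1)$ and $\Y_u(t)$ for $u$ in the closed neighborhood of $v$.

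Next I would derive an analog of identity~\eqref{eq:P_recurrence} by observing that the toggle relation forces a certain explicit quadratic expression in the $\Y$'s to be a conserved quantity of the dynamics. Evaluating this conserved quantity at $t=0$ (where $\Y_v(-1)=1$ and $\Y_v(-2)=x_v$) yields a polynomial identity of the form
\[L_v \cdot \Y_v(t)\,\Y_v(t-1) = x_v \sum_{u \sim v}\wtt{u}{v}\, \Y_u(t)\,\Y_u(t-1) \qquad (\ast)\]
for an explicit linear form $L_v$ in the vertex and edge variables. Laurentness would then be proved by induction on $t$, exactly as in Proposition~\ref{prop:toric_divide}: assuming the claim up through time $t$ and that the polynomials constructed so far are pairwise coprime, it suffices to show that the numerator $\sum_{\arb \in \Arb(G,v)} \alpha(\arb)$ of~\eqref{eq:big_sum_bidirected} is divisible by $\Y_u(t-1)$ for each $u \neq v$. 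Arborescences in which $u$ has positive indegree automatically contribute a factor of $\Y_u(t-1)$, so one is reduced to arborescences in which $u$ is a leaf pointing toward some neighbor $\arb(u) = w$. Grouping these arborescences by their restriction to $V\setminus\{u\}$ and summing over the choices of $w$ produces a sum divisible by $\sum_{w \sim u}\wtt{u}{w}\,\Y_w(t)\,\Y_w(t-1)$, which by $(\ast)$ applied at vertex $u$ is divisible by $\Y_u(t-1)$.

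The main obstacle is establishing pairwise coprimality. This is where the bidirected case genuinely departs from the toric case and from the cluster-algebraic setups of Sections~\ref{sec:circles-with-doubled} and~\ref{sec:cylindr-posets-octag}: bidirected $R$-systems were introduced precisely to go \emph{beyond} those frameworks, so no structural mutation argument is available. One promising route is to extract a distinguished monomial from each $\Y_v(t)$ via a term order on $(\x_V,\x_E)$ adapted to the initial data, verify that no two such monomials coincide across the sequence, and deduce coprimality directly. An alternative is to exploit specialization maps from the complete bidirected graph to its subgraphs, as hinted in the universality discussion of the introduction, lifting coprimality from the already-handled Somos, Gale-Robinson, toric, and cylindric cases back to the universal bidirected setting. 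Making either approach rigorous, and locating the correct algebraic framework beyond cluster and LP-algebras in which bidirected $\tau$-sequences live, is the hardest component of the conjecture.
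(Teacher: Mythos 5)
The statement you are addressing is stated in the paper as a conjecture and is not proved there; the closest the paper comes is Proposition~\ref{prop:bidirected_divide}, a conditional induction step, so there is no complete argument to compare against. Your outline of the Laurentness induction does follow that proposition, but the way you propose to close it contains a genuine error. You claim that the toggle relation forces a conserved quantity which, evaluated at $t=0$ (where $\Y_v(-1)=1$ and $\Y_v(-2)=x_v$), yields an identity $(\ast)$ with an \emph{explicit} linear form $L_v$ in the initial variables. That is exactly what happens in the toric case (identity~\eqref{eq:P_recurrence}), but it fails for bidirected digraphs: the paper notes explicitly that the relevant ratio $Z_v(t)/\Y_v(t)$ is \emph{not} a conserved quantity, even for vertices of degree $2$, so no evaluation at $t=0$ is available. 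The correct identity is~\eqref{eq:P_recurrence_bidirected}, whose two sides define a genuinely $t$-dependent quantity $Z_v(t)$, and the divisibility of the arborescence sum by $\Y_u(t-1)$ requires knowing that $Z_u(t)$ is itself a Laurent polynomial --- which is the separate open Conjecture~\ref{conj:Z_Laurent} and must be carried along as an additional hypothesis, as Proposition~\ref{prop:bidirected_divide} does. Moreover, the shape of your identity $(\ast)$ is wrong: grouping the arborescences in which $u$ is a source by their restriction to $V\setminus\{u\}$ produces the factor $\sum_{(u,w)\in E}\wtt{u}{w}\,\Y_w(t-1)\prod_{w'\neq w}\Y_{w'}(t)$ (one out-neighbor at time $t-1$, the remaining out-neighbors at time $t$), not $\sum_{u\sim v}\wtt{u}{v}\,\Y_u(t)\,\Y_u(t-1)$.

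On coprimality you are right that this is where the problem genuinely leaves the cluster and LP-algebra frameworks, and your two suggested routes (leading-monomial term orders, or specialization from the complete bidirected graph) are plausible directions, but neither is carried out by you or by the paper. So your proposal, like the paper, leaves the statement open; the one step you sketch in detail would not go through as written, because it imports a conserved-quantity argument that is specific to the toric setting.
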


\def\degs{\delta}
\def\sizes{\theta}
\begin{proposition}\label{prop:bidirected}
  Suppose that Conjecture~\ref{conj:bidirected} holds for a bidirected digraph $G$ with at least three vertices. Then the following is true:
  \begin{enumerate}[\normalfont (i)]
  \item\label{item:bidirected_R} The values of the universal $R$-system with coefficients associated with $G$ are given by
\begin{equation}\label{eq:bidirected_R}
    R_v(t+1)=\frac{\Y_{v}(t-1)}{\Y_{v}(t)}
\end{equation}
    for all $t\geq -1$. In particular, it has the singularity confinement property, and $\Y(t)$ is a strong $\tau$-sequence for this $R$-system.
  \item\label{item:bidirected_entropy} Suppose that $\Gnot$ is \emph{regular}, i.e., each vertex of $\Gnot$ has the same degree $d$ in $\Gnot$. Let
    \[h:=n(d-2)+2,\quad\text{and}\quad \l:=\frac12\left(h+\sqrt{h^2-4}\right).\]
    Then the $R$-system associated with $G$ has algebraic entropy equal to $\log(\l)$. In particular, it is zero if and only if $\Gnot$ is a cycle (i.e., $d=2$). The Laurent polynomial $\Y_v(t)$ is the sum of  $\kappa^{\sizes(t)}$ Laurent monomials each of degree $\degs(t)$ in $(\x_V,\x_E)$, where $\kappa=\Arb(G,u)$ is the complexity of $G$ and $\degs(t),\sizes(t)$ are sequences of integers defined by
    \begin{equation*}
      \begin{split}
    \degs(-2)&=1,\quad \degs(-1)=0,\quad \degs(t+1)=h\degs(t)-\degs(t-1)+(n-1),\quad \text{for $t\geq -1$};\\
    \sizes(-2)&=0,\quad \sizes(-1)=0,\quad \sizes(t+1)=h\sizes(t)-\sizes(t-1)+1,\quad \text{for $t\geq -1$}.
      \end{split}
    \end{equation*}
    Just as before, $n$ denotes the number of vertices of $G$.

  \item\label{item:bidirected_P} For each $v\in V$ and $t\geq -1$, the polynomials $\Y_u(t)$ satisfy
    \begin{equation}\label{eq:P_recurrence_bidirected}
\resizebox{1\textwidth}{!} 
{
$\displaystyle
\frac{\displaystyle \sum_{(u,v)\in E}\wtt u v\Y_u(t+1)\prod_{\substack{(u',v)\in E\\u'\neq u}}\Y_{u'}(t)}
  {\Y_v(t+1)}=
  \frac{\displaystyle \sum_{(v,w)\in E}\wtt v w\Y_w(t-1)\prod_{\substack{(v,w')\in E\\w'\neq w}}\Y_{w'}(t)}
  {\Y_v(t-1)}.$
  }
    \end{equation}
  \end{enumerate}
\end{proposition}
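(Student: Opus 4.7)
The three parts will be proved in the order (i), (iii), (ii). Part~(i) will be established by induction on $t$. The base case $R_v(0)=x_v=\Y_v(-2)/\Y_v(-1)$ is immediate from the initial conditions. For the inductive step, assuming $R_v(t+1)=\Y_v(t-1)/\Y_v(t)$ as an element of $\RP{V}$, we verify via Theorem~\ref{thm:arb} that $R_v(t+2)$ agrees projectively with $\Y_v(t)/\Y_v(t+1)$. Substituting $R_u(t+1)=\Y_u(t-1)/\Y_u(t)$ into $\wt(T;R(t+1))$ and grouping factors by vertex using the child-counts $c_b(T)=|T^{-1}(b)|$, the arborescence weight factorizes as $\wt(T;R(t+1))=K\cdot\WTT(T)$, where $\WTT(T)$ is as in~\eqref{eq:big_sum_bidirected} and $K$ is a $T$-independent Laurent monomial in the $\Y$'s; the $T$-independence relies on the identity $\sum_b c_b(T)=n-1$ (valid for every arborescence). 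Summing over $T$ and using~\eqref{eq:big_sum_bidirected} to replace $\sum_T\WTT(T)$ by $\Y_v(t+1)\prod_u\Y_u(t-1)\prod_{u\notin N(v)}\Y_u(t)$, direct exponent tracking shows that $R_v(t+2)\cdot\Y_v(t+1)/\Y_v(t)$ equals a Laurent monomial of the form $\prod_b\Y_b(t)^{d-2}$ in the regular case (and an analogous expression in general), which is independent of $v$. This yields the required projective equality.

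Part~(iii) will follow as a direct rewriting. Substituting the expression from~(i) into the toggle relation~\eqref{eq:toggle} at vertex $v$ and time $t+1$, cross-multiplying, and multiplying both sides by $\prod_{(v,u)\in E}\Y_u(t)$ turns~\eqref{eq:toggle} line-for-line into~\eqref{eq:P_recurrence_bidirected}. Thus (iii) is simply a restatement of the toggle relation under the substitution from (i).

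For Part~(ii), we invoke Conjecture~\ref{conj:bidirected} so that each $\Y_v(t)$ is a Laurent polynomial whose degree $\degs(t)$ and monomial count $\kappa^{\sizes(t)}$ are well-defined (and independent of $v$ by symmetry of the regular $\Gnot$). Tracking degrees in~\eqref{eq:big_sum_bidirected}: each $\WTT(T)$ contains $n-1$ tree edge-weight factors (contributing $n-1$), $n-1$ factors $\Y_b(t-1)$ (contributing $(n-1)\degs(t-1)$), and $|E_v|-(n-1)=(d-1)(n-1)$ factors $\Y_b(t)$ (contributing $(d-1)(n-1)\degs(t)$); subtracting the denominator's degree $n\degs(t-1)+(n-d-1)\degs(t)$ yields
\[
\degs(t+1)=h\,\degs(t)-\degs(t-1)+(n-1), \qquad h=n(d-2)+2.
\]
A parallel count for monomials, using Conjecture~\ref{conj:bidirected} to rule out cancellations across distinct summands $\WTT(T)$ and within products of $\Y$'s, gives $\sizes(t+1)=h\,\sizes(t)-\sizes(t-1)+1$. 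The characteristic polynomial $z^2-hz+1$ has roots $\l$ and $1/\l$; when $\Gnot$ is not a cycle ($d>2$) we have $h>2$ and $\l>1$, so $\degs(t)\sim C\l^t$ and the algebraic entropy equals $\log\l$. For $d=2$, $\l=1$ and $\degs(t)$ grows quadratically, giving zero entropy.

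The main obstacle will be the bookkeeping in Part~(i): verifying both the $T$-independence of $K$ and the $v$-independence of the residual scaling factor relating $R_v(t+2)$ to $\Y_v(t)/\Y_v(t+1)$. Both depend on the precise choice of denominator in~\eqref{eq:big_sum_bidirected}: the $\Y_b(t)^{d^-_v(b)}$ contributions arising in $K$ from the arborescence formula must cancel exactly against the factor $\prod_u\Y_u(t-1)\prod_{u\notin N(v)}\Y_u(t)$, leaving only a common Laurent monomial that does not depend on $v$. Once this careful matching of exponents is secured, Parts~(iii) and~(ii) follow by the routine manipulations outlined above.
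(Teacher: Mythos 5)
Your proposal is correct and takes essentially the same route as the paper's (deliberately terse) proof: part (i) is direct verification against the arborescence formula~\eqref{eq:arb}, part (iii) is a line-for-line restatement of the toggle relation~\eqref{eq:toggle_sym} under the substitution from (i), and part (ii) reads the two linear recurrences off~\eqref{eq:big_sum_bidirected} and extracts the entropy from the dominant root of $z^2-hz+1$. One cosmetic remark: the $T$-independence of your factor $K$ does not come from $\sum_b c_b(T)=n-1$ but simply from the fact that both $\wt(T;R(t+1))$ and $\WTT(T)$ carry $\Y_b(t-1)/\Y_b(t)$ to the same power $c_b(T)$, so their ratio is the fixed monomial $\prod_{u\neq v}\bigl(\Y_u(t)/\Y_u(t-1)\bigr)\cdot\prod_b\Y_b(t)^{-d_b}$ with $d_b$ the indegree of $b$ in $G_v$, and the resulting global constant is $\prod_b\Y_b(t)^{d-2}$ exactly as you state.
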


We denote both sides of~\eqref{eq:P_recurrence_bidirected} by $Z_v(t)$ (in particular, we let $Z_v(-2)$ be the left hand side of~\eqref{eq:P_recurrence_bidirected}). Note that $\frac{Z_v(t)}{\Y_v(t)}$ is \emph{not} a conserved quantity even when the degree of $v$ is equal to $2$: if we denote by $a$ and $b$ the only neighbors of $v$ in $G$ then we have
\begin{equation*}
  \begin{split}
\frac{Z_v(t)}{\Y_v(t)}&=\frac{\wtt{a} v\Y_a(t+1)\Y_b(t)+\wtt b v \Y_b(t+1)\Y_a(t)}{\Y_v(t+1)\Y_v(t)},\\
\frac{Z_v(t+1)}{\Y_v(t+1)}&=\frac{\wtt v a\Y_a(t)\Y_b(t+1)+\wtt v b \Y_b(t)\Y_a(t+1)}{\Y_v(t+1)\Y_v(t)}.
  \end{split}
\end{equation*}
The two expressions become equal if the edge weights satisfy $\wt(a,v)=\wt(v,b)$ and $\wt(b,v)=\wt(v,a)$.

\begin{conjecture}\label{conj:Z_Laurent}
For each $v\in V$ and $t\geq -2$, $Z_v(t)$ is a Laurent polynomial in $(\x_V,\x_E)$.
\end{conjecture}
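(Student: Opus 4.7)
The plan is to deduce Conjecture~\ref{conj:Z_Laurent} as an essentially formal consequence of Conjecture~\ref{conj:bidirected} together with the identity~\eqref{eq:P_recurrence_bidirected}. Since Proposition~\ref{prop:bidirected}\eqref{item:bidirected_P} is itself conditional on Conjecture~\ref{conj:bidirected}, I would first verify~\eqref{eq:P_recurrence_bidirected}. This reduces, via the substitution~\eqref{eq:bidirected_R}, to the toggle relation~\eqref{eq:toggle} at $v$, after clearing the common factor $\Y_v(t-1)\Y_v(t+1)$; this step is a direct computation once the $\Y_v(s)$ are known to satisfy the recursion~\eqref{eq:big_sum_bidirected}. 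Once~\eqref{eq:P_recurrence_bidirected} is in hand, the divisibility argument below finishes the proof.

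Set
\[
A_v(t) := \sum_{(v,w)\in E}\wtt v w\,\Y_w(t-1)\prod_{\substack{(v,w')\in E\\w'\neq w}}\Y_{w'}(t),\qquad
B_v(t) := \sum_{(u,v)\in E}\wtt u v\,\Y_u(t+1)\prod_{\substack{(u',v)\in E\\u'\neq u}}\Y_{u'}(t),
\]
so that $Z_v(t)=A_v(t)/\Y_v(t-1)=B_v(t)/\Y_v(t+1)$. The base cases $t=-2,-1$ are immediate: since $\Y_v(-1)=1$, one gets $Z_v(-2)=B_v(-2)$, a polynomial, and $Z_v(-1)=A_v(-1)/\Y_v(-2)=\bigl(\sum_{(v,w)\in E}\wtt v w\, x_w\bigr)/x_v$ is visibly a Laurent polynomial.

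For $t\geq 0$, both $A_v(t)$ and $B_v(t)$ lie in $\ring[\x_V^{\pm1},\x_E^{\pm1}]$ by Conjecture~\ref{conj:bidirected}, and~\eqref{eq:P_recurrence_bidirected} reads
\[
A_v(t)\,\Y_v(t+1)=B_v(t)\,\Y_v(t-1)
\]
in that ring. Since the Laurent polynomial ring is a UFD and $\Y_v(t-1),\Y_v(t+1)$ are coprime by Conjecture~\ref{conj:bidirected}, unique factorization forces $\Y_v(t-1)\mid A_v(t)$, and therefore $Z_v(t)=A_v(t)/\Y_v(t-1)$ is a Laurent polynomial, as required.

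The real obstacle is Conjecture~\ref{conj:bidirected} itself, to which the above argument reduces the problem. An unconditional attack on Conjecture~\ref{conj:Z_Laurent} would need to exhibit Laurentness of $Z_v(t)$ directly from the recursion~\eqref{eq:big_sum_bidirected}, presumably by grouping the arborescences appearing in~\eqref{eq:big_sum_bidirected} so that the numerator of the RHS of~\eqref{eq:P_recurrence_bidirected} visibly absorbs the factor $\Y_v(t-1)$, in the spirit of the involutive pairing of arborescences used in the proof of Proposition~\ref{prop:toric_divide}. I expect this combinatorial grouping, rather than the ring-theoretic divisibility step above, to be the main technical difficulty, as it requires controlling cancellations among monomials whose indexing is already expected to underlie the as-yet unproven Conjecture~\ref{conj:bidirected}.
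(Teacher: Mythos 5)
This statement is left as an open conjecture in the paper, so there is no proof of it to compare your attempt against; and your proposal does not supply one either. What you have written is a (logically valid) reduction of Conjecture~\ref{conj:Z_Laurent} to Conjecture~\ref{conj:bidirected}, which the paper also leaves open. The individual steps are fine as far as they go: the identity \eqref{eq:P_recurrence_bidirected} does hold unconditionally as an identity of rational functions, since its proof only uses the substitution \eqref{eq:bidirected_R} and the toggle relations (the paper makes the analogous remark explicit for \eqref{eq:P_recurrence} in the toric case); your base cases $t=-2,-1$ agree with the worked computation for the bidirected $4$-cycle; and in the UFD $\Q[\x_V^{\pm1},\x_E^{\pm1}]$ the relation $A_v(t)\,\Y_v(t+1)=B_v(t)\,\Y_v(t-1)$, together with coprimality of $\Y_v(t-1)$ and $\Y_v(t+1)$, does force $\Y_v(t-1)\mid A_v(t)$.

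The gap is that this reduction inverts the logical architecture of the paper and thereby discards the entire content of the statement. Conjecture~\ref{conj:Z_Laurent} is introduced precisely to serve as a hypothesis of Proposition~\ref{prop:bidirected_divide}, i.e., as an ingredient in the inductive step $t\to t+1$ of a hoped-for proof of Conjecture~\ref{conj:bidirected}: Laurentness of $Z_u(t')$ for $t'\le t$, combined with Laurentness and pairwise coprimality of the $\Y_u(t')$ for $t'\le t$, is what is supposed to yield Laurentness of $\Y_v(t+1)$. Your derivation of the Laurentness of $Z_v(t)$ uses $\Y_v(t+1)$ --- both to make $B_v(t)$ a Laurent polynomial and in the coprimality step --- so it cannot be fed into that induction; chaining it with Proposition~\ref{prop:bidirected_divide} is circular. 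A genuine proof would have to establish $\Y_v(t-1)\mid A_v(t)$ from the data at times $\le t$ only, presumably by grouping the terms of $A_v(t)$ (or, upstream, the arborescences in \eqref{eq:big_sum_bidirected}) in the spirit of the proofs of Propositions~\ref{prop:toric_divide} and~\ref{prop:bidirected_divide}. You correctly flag this as the real difficulty in your closing paragraph, but it is the only difficulty, and the proposal makes no progress on it.
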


We also note that Bellon-Viallet~\cite{BV} conjectured that the algebraic entropy is always the logarithm of an algebraic integer. We see that it is certainly true for the case of regular graphs as follows from Proposition~\ref{prop:bidirected}~\eqref{item:bidirected_entropy} (assuming Conjecture~\ref{conj:bidirected}).

\begin{proof}[Proof of Proposition~\ref{prop:bidirected}]
  Just as for toric digraphs, Part~\eqref{item:bidirected_R} is a restatement of the toggle relations~\eqref{eq:toggle}. The recurrences for $\degs(t)$ and $\sizes(t)$ are clear from~\eqref{eq:big_sum_bidirected}. The fact that these sequences grow exponentially in the case $d>2$ follows since $\l>1$ becomes the dominant eigenvalue for the corresponding linear recurrence. When $d=2$ and $\l=1$, we have
  \begin{equation}\label{eq:sizes_degs}
\sizes(t)={t+2\choose 2},\quad \degs(t)=\frac12((n-1)t^2+(3n-5)t+(2n-4)),
  \end{equation}
  which is easily proven by induction. And thus the statement about the algebraic entropy follows in this case as well.

  Finally,~\eqref{item:bidirected_P} is shown in a similar way to its counterpart for toric digraphs so we omit the proof.
\end{proof}

We also give an analogue of Proposition~\ref{prop:toric_divide}:
\begin{proposition}\label{prop:bidirected_divide}
Let $t\geq -2$ and suppose that $\Y_u(t')$ is a Laurent polynomial for all $u\in V$ and $-2\leq t'\leq t$. Assume that any two such Laurent polynomials are coprime. Suppose in addition that $Z_u(t')$ is a Laurent polynomial for all $u\in V, -2\leq t'\leq t$. Then $\Y_v(t+1)$ is also a Laurent polynomial for all $v\in V$.
\end{proposition}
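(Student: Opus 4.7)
The plan is to mimic the argument given for Proposition~\ref{prop:toric_divide}, but using the $Z$-identity~\eqref{eq:P_recurrence_bidirected} in place of~\eqref{eq:P_recurrence}. Write the numerator of~\eqref{eq:big_sum_bidirected} as
\[
\bigsum(v,t) := \sum_{\arb\in\Arb(G,v)} \WTT(\arb).
\]
As observed in the excerpt, every monomial $\WTT(\arb)$ is automatically divisible by $\Y_v(t-1)\cdot\prod_{u\in V\setminus\Neigh(v)}\Y_u(t)$, because for any $u\notin\Neigh(v)$ and its predecessor $w=\arb(u)$ in $\arb$, the edge $(w,u)\in E_v$ contributes a factor $\Y_u(t)$. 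So it remains to prove that $\bigsum(v,t)$ is divisible by $\prod_{u\in V\setminus\{v\}}\Y_u(t-1)$; by the pairwise coprimality assumption, it is enough to show divisibility by each individual $\Y_u(t-1)$ for $u\neq v$.

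Fix $u\neq v$. Inspection of the definition of $\WTT(\arb)$ shows that $\Y_u(t-1)$ divides $\WTT(\arb)$ whenever $u$ has positive indegree in the arborescence $\arb$, i.e.\ whenever some edge $(a,u)$ lies in $\arb$. So the only troublesome terms are the \emph{leaf-at-$u$} arborescences, namely those $\arb\in\Arb(G,v)$ in which $u$ has indegree zero. Partition these into groups by fixing $\arb|_{V\setminus\{u\}}$, i.e.\ the edges $\arb(a)$ for all $a\notin\{u,v\}$; within one group the only remaining freedom is the choice of the outgoing edge $\arb(u)=w$, which may be any $w$ with $(u,w)\in E$ (the resulting digraph is still an arborescence rooted at $v$, because removing $(u,\arb(u))$ from a leaf-at-$u$ tree leaves $u$ as an isolated component, so any neighbour $w$ of $u$ lies in the component of $v$).

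Within one such group, the factors of $\WTT(\arb)$ that do not involve any edge incident to $u$ depend only on $\arb|_{V\setminus\{u\}}$, call their product $A$. The incoming edges $(w'',u)\in E_v$ contribute a factor $\Y_u(t)^{C_u}$, where $C_u$ is the number of such edges; this also does not depend on $w$. The remaining $w$-dependent factor is exactly
\[
\sum_{w:\,(u,w)\in E}\wtt{u}{w}\,\Y_w(t-1)\prod_{\substack{(u,w')\in E\\ w'\neq w}}\Y_{w'}(t),
\]
which by~\eqref{eq:P_recurrence_bidirected} equals $Z_u(t)\cdot\Y_u(t-1)$. Since $Z_u(t)$ is assumed to be a Laurent polynomial, the contribution of each group is divisible by $\Y_u(t-1)$. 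Summing over groups shows that $\bigsum(v,t)$ is divisible by $\Y_u(t-1)$.

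Doing this for every $u\in V\setminus\{v\}$ and using pairwise coprimality yields divisibility of $\bigsum(v,t)$ by $\prod_{u\neq v}\Y_u(t-1)$, hence by the full denominator in~\eqref{eq:big_sum_bidirected}. Therefore $\Y_v(t+1)$ is a Laurent polynomial. The main technical point — and the step where all the hypotheses are used simultaneously — is the identification of the group sum as $Z_u(t)\cdot\Y_u(t-1)$; apart from careful bookkeeping of the edge weights and of the cases $u\in\Neigh(v)$ versus $u\notin\Neigh(v)$, the argument is otherwise parallel to the toric case.
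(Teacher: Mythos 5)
Your proof is correct and follows essentially the same route as the paper's: you isolate the arborescences in which $u$ has indegree zero, group them by their restriction to $V\setminus\{u\}$, identify the sum over each group (up to a common factor) with $\sum_{(u,w)\in E}\wtt uw\Y_w(t-1)\prod_{w'\neq w}\Y_{w'}(t)=Z_u(t)\Y_u(t-1)$ via~\eqref{eq:P_recurrence_bidirected}, and then invoke coprimality to pass from divisibility by each $\Y_u(t-1)$ to divisibility by their product. The only difference is that you spell out slightly more of the bookkeeping (e.g.\ why every out-neighbour $w$ of $u$ yields a valid arborescence in the group), which the paper leaves implicit.
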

\begin{proof}
  Fix a vertex $v\in V$ and let
  \[\bigsum:=\sum_{\arb\in\Arb(G,v)}\WTT(\arb).\]
  We need to show that $\bigsum$ is divisible by $\Y_u(t-1)$ for each vertex $u\neq v$. Let $u$ be such a vertex. Then the only arborescences $\arb\in\Arb(G,v)$ such that  $\Y_u(t-1)$ does not appear as a factor in $\WTT(\arb)$ are the ones where $u$ has indegree zero. Let us say that two such arborescences $\arb,\arb'\in\Arb(G,v)$ are \emph{equivalent} if we have $\arb(w)=\arb'(w)$ for all $w\in V\setminus\{u,v\}$, and let $C$ be an equivalence class of such arborescences. Then the number of elements in $C$ equals the number of neighbors of $u$ in $\Gnot$ and 
  $\sum_{\arb\in C} \WTT(\arb)$ is divisible by
  \[ \sum_{(u,w)\in E}\wtt u w\Y_w(t-1)\prod_{\substack{(u,w')\in E\\w'\neq w}}\Y_{w'}(t).\]
  Thus by~\eqref{eq:P_recurrence_bidirected}, it is divisible by $\Y_u(t-1)$, and therefore so is $\bigsum$. We are done with the proof.
\end{proof}

\begin{example}
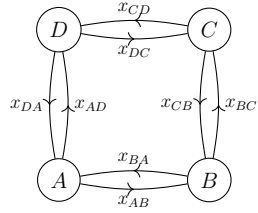
\begin{figure}
  \def\nodesc{0.7}
  \def\labelsc{1}
  \def\tikzscx{2}
  \def\tikzscy{2}
  \def\arrsc{0.6}
  \def\sclbx{0.7}
  \def\sclbxbig{1.4}
  \def\bnd{10}
  \def\nodepos{0.5}
   \begin{tikzpicture}[xscale=\tikzscx,yscale=\tikzscy]
    \node[scale=\nodesc,draw,circle] (A) at (0,0) {$A$};
    \node[scale=\nodesc,draw,circle] (B) at (1,0) {$B$};
    \node[scale=\nodesc,draw,circle] (C) at (1,1) {$C$};
    \node[scale=\nodesc,draw,circle] (D) at (0,1) {$D$};
    \draw[postaction={decorate}] (A) to[bend right=\bnd] node[pos=\nodepos,below,scale=\arrsc] {$x_{AB}$} (B) ;
    \draw[postaction={decorate}] (B) to[bend right=\bnd] node[pos=\nodepos,right,scale=\arrsc] {$x_{BC}$} (C);
    \draw[postaction={decorate}] (C) to[bend right=\bnd] node[pos=\nodepos,above,scale=\arrsc] {$x_{CD}$} (D);
    \draw[postaction={decorate}] (D) to[bend right=\bnd] node[pos=\nodepos,left,scale=\arrsc] {$x_{DA}$} (A);
    \draw[postaction={decorate}] (D) to[bend right=\bnd] node[pos=\nodepos,below,scale=\arrsc] {$x_{DC}$} (C) ;
    \draw[postaction={decorate}] (A) to[bend right=\bnd] node[pos=\nodepos,right,scale=\arrsc] {$x_{AD}$} (D);
    \draw[postaction={decorate}] (B) to[bend right=\bnd] node[pos=\nodepos,above,scale=\arrsc] {$x_{BA}$} (A);
    \draw[postaction={decorate}] (C) to[bend right=\bnd] node[pos=\nodepos,left,scale=\arrsc] {$x_{CB}$} (B);
    
  \end{tikzpicture}

  \caption{\label{fig:bidirected_4} A bidirected $4$-cycle $G$ shown together with the edge weights.}
\end{figure}
Let $G$ be a bidirected $4$-cycle shown in Figure~\ref{fig:bidirected_4}. Denote the vertices of $G$ by $A,B,C,D$, the vertex variables by $\x_V:=(a,b,c,d)$, and the edge variables by

\[\x_E:=(x_{AB},x_{BC},x_{CD},x_{DA},x_{AD},x_{DC},x_{CB},x_{BA}).\]
Let us compute the first few $\Y$-polynomials:

\[\Y_A(-2)=a;\quad \Y_B(-2)=b;\quad \Y_C(-2)=c;\quad\Y_D(-2)=d;\]
\[\Y_A(-1)=\Y_B(-1)=\Y_C(-1)=\Y_D(-1)=1;\]
Using~\eqref{eq:big_sum_bidirected}, we obtain
\[\Y_A(0)=\frac{x_{BA}x_{CB}x_{DA}ab + x_{BA}x_{CB}x_{DC}bc + x_{BA}x_{CD}x_{DA}ad + x_{BC}x_{CD}x_{DA}cd}{bcd},\]

  \[\Y_B(0)=\frac{x_{AB}x_{CB}x_{DA}ab + x_{AB}x_{CB}x_{DC}bc + x_{AB}x_{CD}x_{DA}ad + x_{AD}x_{CB}x_{DC}cd}{acd},\]
  \[\Y_C(0)=\frac{x_{AB}x_{BC}x_{DA}ab + x_{AB}x_{BC}x_{DC}bc + x_{AD}x_{BA}x_{DC}ad + x_{AD}x_{BC}x_{DC}cd}{abd},\]
  \[\Y_D(0)=\frac{x_{AD}x_{BA}x_{CB}ab + x_{AB}x_{BC}x_{CD}bc + x_{AD}x_{BA}x_{CD}ad + x_{AD}x_{BC}x_{CD}cd}{abc}.\]
We can now calculate the $Z$-polynomials. For example, for $v=A$, we get
  \[Z_A(-2)=x_{BA}d+x_{DA}b;\quad Z_A(-1)=\frac{x_{AB}b+x_{AD}d}{a},\]
  where $Z_A(-2)$ (resp., $Z_A(-1)$) was computed using the left (resp., right) hand side of~\eqref{eq:P_recurrence_bidirected}. We can now verify that the left hand side of~\eqref{eq:P_recurrence_bidirected} yields the same result for $t=-1$:
  \[Z_A(-1)=\frac{x_{BA}\Y_B(0)+x_{DA}\Y_D(0)}{\Y_A(0)}.\]
  We see that the cancellation indeed happens and we get $\frac{x_{AB}b+x_{AD}d}{a}$, confirming both~\eqref{eq:P_recurrence_bidirected} and Conjecture~\ref{conj:Z_Laurent}. Let us now attempt to calculate $\Y_A(1)$. We first write down a general formula for $\Y_A(t+1)$. Using~\eqref{eq:big_sum_bidirected}, after dividing through by $\Y_A(t-1)\Y_C(t)$, we get that $\Y_A(t+1)$ is equal to
\begin{equation*}
\resizebox{1\textwidth}{!} 
{
$\displaystyle\frac{\left(\begin{aligned} &x_{BA}x_{CB}x_{DC}\Y_B(t-1)\Y_C(t-1)\Y_A(t)\Y_D(t)+x_{DA}x_{BA}x_{CB}\Y_A(t-1)\Y_B(t-1)\Y_C(t)\Y_D(t)+\\
        &+x_{BA}x_{DA}x_{CD}\Y_A(t-1)\Y_D(t-1)\Y_B(t)\Y_C(t)+x_{BC}x_{CD}x_{DA}\Y_C(t-1)\Y_D(t-1)\Y_B(t)\Y_A(t)
      \end{aligned}\right)}{\Y_B(t-1)\Y_C(t-1)\Y_D(t-1)}$
}
\end{equation*}

Thus for $t=0$ we obtain
  \[\begin{aligned}\Y_A(1)= &x_{BA}x_{CB}x_{DC}\Y_A(0)\Y_D(0)+x_{DA}x_{BA}x_{CB}\Y_C(0)\Y_D(0)+\\
        &x_{BA}x_{DA}x_{CD}\Y_B(0)\Y_C(0)+x_{BC}x_{CD}x_{DA}\Y_B(0)\Y_A(0).
      \end{aligned}\]
    Let us explain how the proof of Proposition~\ref{prop:bidirected_divide} works for $\Y_A(t+1)$. We need to divide by $\Y_B(t-1)\Y_C(t-1)\Y_D(t-1)$. Out of four terms in the numerator, two already are divisible by $\Y_B(t-1)$. The sum of the other two is equal to
    \[x_{DA}x_{CD}\Y_D(t-1)\Y_B(t)(x_{BA}\Y_A(t-1)\Y_C(t)+x_{BC}\Y_C(t-1)\Y_A(t))\]
    which by~\eqref{eq:P_recurrence_bidirected} equals
    \[x_{DA}x_{CD}\Y_D(t-1)\Y_B(t)Z_B(t)\Y_B(t-1).\]
This shows that the numerator $\bigsum$ of the expression for $\Y_A(t+1)$ is divisible by $\Y_B(t-1)$ if $Z_B(t)$ is a Laurent polynomial. Similarly, one shows that it is divisible by $\Y_C(t-1)$ and $\Y_D(t-1)$, but to conclude that it is divisible by the product, we need to assume that $\Y_B(t-1)$, $\Y_C(t-1)$, and $\Y_D(t-1)$ are coprime.

Let us also mention that $\Y_A(0)$ and $\Y_A(1)$ contain $4=\kappa^1$ and $64=\kappa^3$ Laurent monomials respectively, where $\kappa=4$ is the complexity of $\Gnot$, i.e., the number of its spanning trees. Each Laurent monomial is of degree
\[2=\delta(0)=\frac12(0+0+4),\quad\text{resp.,}\quad 7=\delta(1)=\frac12(3+7+4),\]
in agreement with~\eqref{eq:sizes_degs}.
\end{example}

One particularly interesting case is that of the complete bidirected digraph $K_n$ with $n$ vertices, since any $R$-system is obtained from the $R$-system associated with $K_n$ after specializing some edge weights to $0$ and some edge weights to $1$. Thus understanding how the $\Y$-polynomials for $K_n$ factor into irreducibles after such a specialization is equivalent to understanding the behavior of the specialized $R$-system. Note that $K_n$ is regular of degree $n-1$ and thus we obtain the following upper bound on the algebraic entropy of any $R$-system:

\begin{corollary}
  Suppose that Conjecture~\ref{conj:bidirected} is true for $K_n$ and assume that the limit $d(G)$ given by~\eqref{eq:entropy_def} exists for a digraph $G$ with $n$ vertices. Then this limit is finite and satisfies
  \[0\leq d(G)\leq \log\left(\frac{h+\sqrt{h^2-4}}2\right),\quad \text{where}\quad h=n^2-3n+2.\]
\end{corollary}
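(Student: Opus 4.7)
The approach is to embed any strongly connected digraph on $n$ vertices into the complete bidirected digraph $K_n$, realize its $R$-system as a specialization of the universal $R$-system for $K_n$, and use that specializing variables cannot increase the degree of a Laurent polynomial. The upper bound then follows from the explicit algebraic entropy computation of Proposition~\ref{prop:bidirected}~\eqref{item:bidirected_entropy} applied to $K_n$.

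First, I would identify the vertex set $V$ of $G=(V,E,\wt)$ with that of $K_n$, so that $E\subseteq E(K_n)$. Consider the universal $R$-system with coefficients for $K_n$, whose values $R^{K_n}(t)$ live in $\RP V$. Define a specialization $\rho$ by setting $x_e\mapsto 0$ for every $e\in E(K_n)\setminus E$ while keeping $x_e$ as a variable for $e\in E$. Plugging $\rho$ into the toggle relations~\eqref{eq:toggle} for $K_n$ kills exactly the contributions from the absent edges and recovers the toggle relations for $G$. Since both $G$ and $K_n$ are strongly connected, Theorem~\ref{thm:arb} applies to each, and uniqueness of $\rowm$ as a map on $\RP V$ gives $\rho(R^{K_n}(t))=R^G(t)$ for all $t\geq 0$.

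Second, I would observe that specialization does not increase degrees. Writing $R^{K_n}(t)$ in canonical form $(\Xt_v)_{v\in V}$, each entry is a polynomial of total degree at most $\deg(R^{K_n}(t))$ in the variables $\x_V\cup\x_{E(K_n)}$. Substituting $0$ for any subset of these variables either kills monomials or leaves their degrees unchanged, so $\deg(\rho(\Xt_v))\leq\deg(\Xt_v)$ for every $v$. Dividing the tuple $(\rho(\Xt_v))_v$ by its gcd to pass to the canonical form of $R^G(t)$ can only decrease the degree further, so
\[\deg(R^G(t))\;\leq\;\deg(R^{K_n}(t))\qquad\text{for all }t\geq 0.\]

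Third, I would apply Proposition~\ref{prop:bidirected}~\eqref{item:bidirected_entropy} to $K_n$: it comes from a regular undirected graph of degree $d=n-1$, yielding $h=n(d-2)+2=n^2-3n+2$, and under the assumption that Conjecture~\ref{conj:bidirected} holds for $K_n$, the algebraic entropy of the universal $R$-system for $K_n$ equals $\log(\l)$ with $\l=(h+\sqrt{h^2-4})/2$. Taking logarithms and dividing by $t$ in the inequality above, and passing to the limit (which exists by assumption for $G$ and by Proposition~\ref{prop:bidirected}~\eqref{item:bidirected_entropy} for $K_n$), yields $d(G)\leq\log(\l)$. The lower bound $d(G)\geq 0$ is immediate from $\deg(R^G(t))\geq 0$ together with $\log$ being nonnegative on inputs $\geq 1$.

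The most delicate point will be verifying that the projective equality $\rho(R^{K_n}(t))=R^G(t)$ survives the canonical form procedure, so that the degree inequality persists after normalization on each side. This reduces to checking that rescaling by a common nonzero Laurent monomial commutes with the specialization $\rho$, and that $\rho$ applied to the gcd of the $K_n$-canonical form is compatible with extracting the gcd of the specialized tuple; both are routine once a consistent choice of representatives in the two polynomial rings is fixed.
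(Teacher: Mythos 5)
Your proposal is correct and follows exactly the route the paper intends: the sentence preceding the corollary states that any $R$-system is obtained from the universal $R$-system for $K_n$ by specializing edge weights (to $0$ for absent edges), and the bound is then read off from Proposition~\ref{prop:bidirected}~\eqref{item:bidirected_entropy} with $d=n-1$, giving $h=n(n-3)+2=n^2-3n+2$. Your explicit verification that specialization commutes with the toggle relations (via uniqueness in Theorem~\ref{thm:arb}) and cannot increase the degree of the canonical form simply fills in details the paper leaves implicit.
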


\subsection{Antimorphic digraphs}

We note that there is a certain class of digraphs that includes both toric and bidirected digraphs and, according to our computations, the $R$-systems associated with such graphs exhibit singularity confinement with a similar pattern.

\begin{definition}
For a digraph $G=(V,E)$, a bijection $\eta: V \rightarrow V$ is called an {\it {antimorphism}} if  for any $v,u \in V$, we have an edge $(v,u) \in E$ if and only if we have an edge $(u,\eta(v)) \in E$.
\end{definition}

Note that we allow $\eta(v)=v$. We call a digraph {\it {antimorphic}} if it possesses an antimorphism $\eta$. 

\begin{example}
 Toric digraphs are antimorphic with $\eta(v) = v + e$ for all $v \in V$.
\end{example}

\begin{example}
 Bidirected digraphs are antimorphic with $\eta(v)=v$ for all $v \in V$.
\end{example}

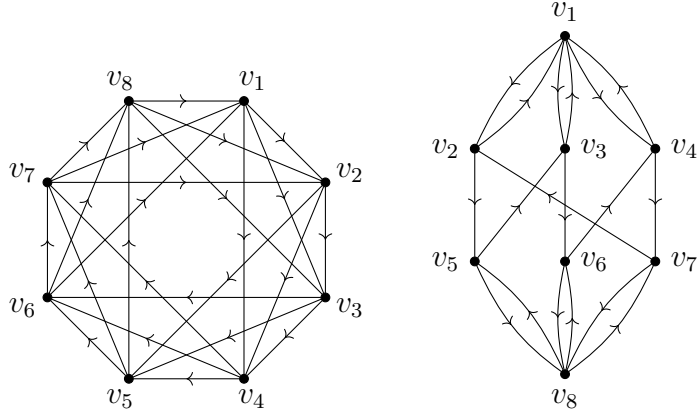
\begin{figure}

\begin{tabular}{cc}
\def\scl{0.3}
\def\rad{2}
\def\textscl{1}
\begin{tikzpicture}
  \foreach \i in {1,2,...,8}{
\node[draw, circle, scale=\scl, fill=black]  (\i) at ({112.5-45*\i}:\rad) { };
\node[scale=\textscl,anchor={180+112.5-45*\i}]  (L\i) at (\i.center) {$v_{\i}$};
}
\draw[postaction={decorate}] (8)--(1);
\draw[postaction={decorate}] (1)--(2);
\draw[postaction={decorate}] (2)--(3);
\draw[postaction={decorate}] (3)--(4);
\draw[postaction={decorate}] (4)--(5);
\draw[postaction={decorate}] (5)--(6);
\draw[postaction={decorate}] (6)--(7);
\draw[postaction={decorate}] (7)--(8);

\draw[postaction={decorate}] (8)--(2);
\draw[postaction={decorate}] (1)--(3);
\draw[postaction={decorate}] (2)--(4);
\draw[postaction={decorate}] (3)--(5);
\draw[postaction={decorate}] (4)--(6);
\draw[postaction={decorate}] (5)--(7);
\draw[postaction={decorate}] (6)--(8);
\draw[postaction={decorate}] (7)--(1);

\draw[postaction={decorate}] (8)--(3);
\draw[postaction={decorate}] (1)--(4);
\draw[postaction={decorate}] (2)--(5);
\draw[postaction={decorate}] (3)--(6);
\draw[postaction={decorate}] (4)--(7);
\draw[postaction={decorate}] (5)--(8);
\draw[postaction={decorate}] (6)--(1);
\draw[postaction={decorate}] (7)--(2);
\end{tikzpicture}

&

\def\scl{0.3}
\def\bnd{15}
\begin{tikzpicture}[yscale=1.5,xscale=1.2]
  \node[draw, circle, scale=\scl, fill=black,label=above:{$v_1$}] (1) at (0,3) { };
  \node[draw, circle, scale=\scl, fill=black,label=left:{$v_2$}] (2) at (-1,2) { };
  \node[draw, circle, scale=\scl, fill=black,label=right:{$v_3$}] (3) at (0,2) { };
  \node[draw, circle, scale=\scl, fill=black,label=right:{$v_4$}] (4) at (1,2) { };
  \node[draw, circle, scale=\scl, fill=black,label=left:{$v_5$}] (5) at (-1,1) { };
  \node[draw, circle, scale=\scl, fill=black,label=right:{$v_6$}] (6) at (0,1) { };
  \node[draw, circle, scale=\scl, fill=black,label=right:{$v_7$}] (7) at (1,1) { };
  \node[draw, circle, scale=\scl, fill=black,label=below:{$v_8$}] (8) at (0,0) { };
  \draw[postaction={decorate}] (1) to[bend right=\bnd] (2);
  \draw[postaction={decorate}] (2) to[bend right=\bnd] (1);
  \draw[postaction={decorate}] (1) to[bend right=\bnd] (3);
  \draw[postaction={decorate}] (3) to[bend right=\bnd] (1);
  \draw[postaction={decorate}] (1) to[bend right=\bnd] (4);
  \draw[postaction={decorate}] (4) to[bend right=\bnd] (1);

  \draw[postaction={decorate}] (8) to[bend right=\bnd] (5);
  \draw[postaction={decorate}] (5) to[bend right=\bnd] (8);
  \draw[postaction={decorate}] (8) to[bend right=\bnd] (6);
  \draw[postaction={decorate}] (6) to[bend right=\bnd] (8);
  \draw[postaction={decorate}] (8) to[bend right=\bnd] (7);
  \draw[postaction={decorate}] (7) to[bend right=\bnd] (8);

  \draw[postaction={decorate}] (2) -- (5);
  \draw[postaction={decorate}] (5) -- (3);
  \def\positn{0.65}
  \draw[postaction={decorate}] (3) -- (6);
  \def\positn{0.5}
  \draw[postaction={decorate}] (6) -- (4);
  \draw[postaction={decorate}] (4) -- (7);
  \def\positn{0.6}
  \draw[postaction={decorate}] (7) -- (2);
  \def\positn{0.5}
\end{tikzpicture}
\end{tabular}
    \caption{Two antimorphic digraphs.}
    \label{fig:toggle8}
\end{figure}

\begin{example}
 In Figure \ref{fig:toggle8}, we can see two more examples of antimorphic digraphs. For the digraph on the left, the antimorphism is given by $\eta(v_i) = v_{i+4}$ where the indices are taken modulo $8$. For the digraph on the right, we have $\eta(v_1)=v_1$, $\eta(v_8) = v_8$, $\eta(v_2)=v_3$, $\eta(v_3)=v_4$, $\eta(v_4)=v_2$, $\eta(v_5)=v_6$,
 $\eta(v_6)=v_7$, $\eta(v_7)=v_5$.
\end{example}

\begin{conjecture}
  $R$-systems associated with antimorphic digraphs possess strong $\tau$-sequences $Y(t)$ with the substitution given by
  \[R_v(t+1)=\frac{Y_{\eta^t(v)}(t-1)}{Y_{\eta^t(v)}(t)}.\]
\end{conjecture}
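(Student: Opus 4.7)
The plan is to generalize the constructions of Sections~\ref{sec:toric-graphs} and~\ref{sec:bidirected-graphs}, both of which are special cases of the antimorphic setting (with $\eta(v)=v+e$ for toric digraphs and $\eta=\operatorname{Id}$ for bidirected digraphs). I would begin by setting the initial data
\[
Y_v(-2):=x_{\eta(v)},\qquad Y_v(-1):=1,
\]
chosen so that the claimed substitution at $t=-1$ recovers $R_v(0)=Y_{\eta^{-1}(v)}(-2)/Y_{\eta^{-1}(v)}(-1)=x_v$, matching the universal coefficient-free initial condition. By Theorem~\ref{thm:arb}, the identity $R_v(t+2)=R_v(t+1)/\sum_{\arb\in\Arb(G,v)}\wt(\arb;R(t+1))$ combined with the target substitution $R_v(t+1)=Y_{\eta^t(v)}(t-1)/Y_{\eta^t(v)}(t)$ forces a recursion of the form
\[
Y_{\eta^{t+1}(v)}(t+1)=\frac{Y_{\eta^{t+1}(v)}(t)\,Y_{\eta^t(v)}(t)}{Y_{\eta^t(v)}(t-1)}\cdot\sum_{\arb\in\Arb(G,v)}\wt\!\bigl(\arb;R(t+1)\bigr).
\]
After substituting $R_u(t+1)=Y_{\eta^t(u)}(t-1)/Y_{\eta^t(u)}(t)$ and clearing ratios, this becomes an arborescence sum at $v$ divided by a product of $Y$-values at times $t-1$ and $t$, uniformly generalizing the recurrences in the toric and bidirected sections. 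The antimorphism condition is exactly what makes this bookkeeping work: it places the out-neighbors of $v$ in bijection with the in-neighbors of $\eta(v)$, which is the combinatorial input needed for the toggle relation~\eqref{eq:toggle} at $v$ to translate cleanly into a recursion for $Y_{\eta(v)}$ in terms of previous $Y$-values.

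The heart of the argument is singularity confinement: each $Y_v(t)$ must be shown to be a Laurent polynomial in $(\x_V,\x_E)$, with distinct $Y$-values pairwise coprime and irreducible. I would induct on $t$, following the divisibility strategy of Propositions~\ref{prop:toric_divide} and~\ref{prop:bidirected_divide}. Assuming the claim through time $t$, the task is to show that the arborescence numerator is divisible by every factor in the denominator. The standard mechanism---pairing arborescences $\arb,\arb'$ that differ only at the image of a single vertex $u$---reduces divisibility by each $Y_u(t-1)$ or $Y_u(t)$ to a local \emph{conservation identity} that simultaneously generalizes~\eqref{eq:P_recurrence} and~\eqref{eq:P_recurrence_bidirected}. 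The antimorphism $\eta$ is again the key ingredient: it intertwines the in- and out-neighborhoods of $u$, so the local exchange at $u$ is dual to one at $\eta(u)$ and the pair is balanced by a single identity.

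The main obstacle will be formulating and proving this conservation identity in the general antimorphic setting. For toric digraphs the identity is a clean four-term relation involving only vertex variables; for bidirected digraphs it uses the auxiliary quantities $Z_v(t)$, themselves only conjecturally Laurent (Conjecture~\ref{conj:Z_Laurent}). In the mixed antimorphic case, fixed points of $\eta$ behave bidirected-like while nontrivial $\eta$-orbits behave toric-like, and it is not a priori clear that the local identities at $u$ and $\eta(u)$ remain compatible along a common $\eta$-orbit, or that a universal family of auxiliary $Z$-sequences of Laurent type exists. I would first attempt the case in which $\langle\eta\rangle$ acts transitively on $V$, where a single local conservation law should suffice, before tackling the general case.

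Finally, promoting the construction to a \emph{strong} $\tau$-sequence requires the initialization map $(Y_v(-2))_{v\in V}\mapsto R(0)\in\RP V$ to be dominant, and this is automatic: since $Y_v(-2)=x_{\eta(v)}$ is merely a permutation of the vertex variables and $R_v(0)=x_v$, the resulting map is essentially the identity on $\field^V$. Thus, once the Laurent and coprimality steps are in place, the conjecture follows.
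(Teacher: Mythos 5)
The statement you are addressing appears in the paper only as a conjecture, supported by the authors' computations; no proof is given there, so your proposal is being measured against an open problem rather than against an existing argument. As an outline, your setup is consistent with the two special cases the paper does treat: the initialization $Y_v(-2)=x_{\eta(v)}$, $Y_v(-1)=1$ and the recursion you extract from Theorem~\ref{thm:arb} specialize correctly to~\eqref{eq:big_sum_toric} (where $\eta(v)=v+e$) and to~\eqref{eq:big_sum_bidirected} (where $\eta=\Id$), and your observation that the antimorphism matches the out-neighbors of $v$ with the in-neighbors of $\eta(v)$ is the right structural reason the toggle relation~\eqref{eq:toggle} can be rewritten as a recursion for $Y_{\eta(v)}$. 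The dominance of the initialization map is also genuinely immediate, as you say, since $Y_v(-2)$ is just a permutation of the vertex variables.

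However, there is a real gap, and it is exactly the gap that keeps the statement a conjecture: the Laurent property, irreducibility, and pairwise coprimality of the $Y_v(t)$ are nowhere established --- not by your argument and not by the paper, even in the special cases. Conjectures~\ref{conj:toric} and~\ref{conj:bidirected} are precisely these assertions for toric and bidirected digraphs; every unconditional statement in Sections~\ref{sec:toric-graphs} and~\ref{sec:bidirected-graphs} is either conditional on them or provides only a single inductive step (Propositions~\ref{prop:toric_divide} and~\ref{prop:bidirected_divide}) whose hypotheses include coprimality at all earlier times, which is never verified, and which in the bidirected case additionally require the auxiliary quantities $Z_v(t)$ to be Laurent --- itself only Conjecture~\ref{conj:Z_Laurent}. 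Your plan inherits all of these obligations and adds a new one: a ``mixed'' conservation identity interpolating between~\eqref{eq:P_recurrence} and~\eqref{eq:P_recurrence_bidirected} along a general $\eta$-orbit, which you candidly flag as unformulated. Note also that the derivations of~\eqref{eq:P_recurrence} and~\eqref{eq:P_recurrence_bidirected} in the paper presuppose the substitution formula for $R_v(t+1)$, so in the general antimorphic case you would first need to prove that the $Y$-recursion you wrote down actually solves the $R$-system before the conservation identity is even available for the divisibility argument. What you have, then, is a sensible reduction of the conjecture to a family of local identities together with Laurentness, coprimality, and irreducibility claims --- a reasonable research program, but not a proof.
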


\bibliographystyle{alpha}
\bibliography{toggle_bib}{}

\newcommand{\etalchar}[1]{$^{#1}$}
\begin{thebibliography}{HHvdKQ17}

\bibitem[ACH16]{ACH}
Joshua Alman, Cesar Cuenca, and Jiaoyang Huang.
\newblock Laurent phenomenon sequences.
\newblock {\em J. Algebraic Combin.}, 43(3):589--633, 2016.

\bibitem[BCFKvS00]{BCKS}
Victor~V. Batyrev, Ionu\c~t Ciocan-Fontanine, Bumsig Kim, and Duco van Straten.
\newblock Mirror symmetry and toric degenerations of partial flag manifolds.
\newblock {\em Acta Math.}, 184(1):1--39, 2000.

\bibitem[BFZ05]{FZ3}
Arkady Berenstein, Sergey Fomin, and Andrei Zelevinsky.
\newblock Cluster algebras. {III}. {U}pper bounds and double {B}ruhat cells.
\newblock {\em Duke Math. J.}, 126(1):1--52, 2005.

\bibitem[Big99]{Biggs}
N.~L. Biggs.
\newblock Chip-firing and the critical group of a graph.
\newblock {\em J. Algebraic Combin.}, 9(1):25--45, 1999.

\bibitem[BN07]{BN}
Matthew Baker and Serguei Norine.
\newblock Riemann-{R}och and {A}bel-{J}acobi theory on a finite graph.
\newblock {\em Adv. Math.}, 215(2):766--788, 2007.

\bibitem[BS74]{BS}
A.~E. Brouwer and A.~Schrijver.
\newblock {\em On the period of an operator, defined on antichains}.
\newblock Mathematisch Centrum, Amsterdam, 1974.
\newblock Mathematisch Centrum Afdeling Zuivere Wiskunde ZW 24/74.

\bibitem[BV99]{BV}
M.~P. Bellon and C.-M. Viallet.
\newblock Algebraic entropy.
\newblock {\em Comm. Math. Phys.}, 204(2):425--437, 1999.

\bibitem[Cha82]{Chaiken}
Seth Chaiken.
\newblock A combinatorial proof of the all minors matrix tree theorem.
\newblock {\em SIAM J. Algebraic Discrete Methods}, 3(3):319--329, 1982.

\bibitem[CS04]{CS}
Gabriel~D. Carroll and David~E. Speyer.
\newblock The cube recurrence.
\newblock {\em Electron. J. Combin.}, 11(1):Research Paper 73, 31 pp.
  (electronic), 2004.

\bibitem[DTvdKQ12]{DTKQ}
D.~K. Demskoi, D.~T. Tran, P.~H. van~der Kamp, and G.~R.~W. Quispel.
\newblock A novel {$n$}th order difference equation that may be integrable.
\newblock {\em J. Phys. A}, 45(13):135202, 10, 2012.

\bibitem[EP13]{PEfull}
David Einstein and James Propp.
\newblock Combinatorial, piecewise-linear, and birational homomesy for products
  of two chains.
\newblock {\em \arxiv{1310.5294}}, 2013.

\bibitem[EP14]{PEabstr}
David Einstein and James Propp.
\newblock Piecewise-linear and birational toggling.
\newblock In {\em 26th {I}nternational {C}onference on {F}ormal {P}ower
  {S}eries and {A}lgebraic {C}ombinatorics ({FPSAC} 2014)}, Discrete Math.
  Theor. Comput. Sci. Proc., AT, pages 513--524. Assoc. Discrete Math. Theor.
  Comput. Sci., Nancy, 2014.

\bibitem[FDF93]{FDF}
D.~G. Fon-Der-Flaass.
\newblock Orbits of antichains in ranked posets.
\newblock {\em European J. Combin.}, 14(1):17--22, 1993.

\bibitem[FHHU01]{FHHU}
Bo~Feng, Amihay Hanany, Yang-Hui He, and Angel~M. Uranga.
\newblock Toric duality as {S}eiberg duality and brane diamonds.
\newblock {\em J. High Energy Phys.}, (12):Paper 35, 29, 2001.

\bibitem[FZ02a]{FZ}
Sergey Fomin and Andrei Zelevinsky.
\newblock Cluster algebras. {I}. {F}oundations.
\newblock {\em J. Amer. Math. Soc.}, 15(2):497--529 (electronic), 2002.

\bibitem[FZ02b]{FZCube}
Sergey Fomin and Andrei Zelevinsky.
\newblock The {L}aurent phenomenon.
\newblock {\em Adv. in Appl. Math.}, 28(2):119--144, 2002.

\bibitem[FZ03]{FZ2}
Sergey Fomin and Andrei Zelevinsky.
\newblock Cluster algebras. {II}. {F}inite type classification.
\newblock {\em Invent. Math.}, 154(1):63--121, 2003.

\bibitem[FZ07]{FZ4}
Sergey Fomin and Andrei Zelevinsky.
\newblock Cluster algebras. {IV}. {C}oefficients.
\newblock {\em Compos. Math.}, 143(1):112--164, 2007.

\bibitem[Gal91]{Gale}
Mathematical entertainments.
\newblock {\em The Mathematical Intelligencer}, 13(1):40--43, Dec 1991.

\bibitem[Gal17]{Cube}
Pavel Galashin.
\newblock Periodicity and integrability for the cube recurrence.
\newblock {\em \arxiv{1704.05570}}, 2017.

\bibitem[Giv97]{Givental}
Alexander Givental.
\newblock Stationary phase integrals, quantum {T}oda lattices, flag manifolds
  and the mirror conjecture.
\newblock In {\em Topics in singularity theory}, volume 180 of {\em Amer. Math.
  Soc. Transl. Ser. 2}, pages 103--115. Amer. Math. Soc., Providence, RI, 1997.

\bibitem[GP16]{GP1}
Pavel Galashin and Pavlo Pylyavskyy.
\newblock The classification of {Z}amolodchikov periodic quivers.
\newblock {\em Amer. J. Math.}, to appear, 2016.
\newblock \textit\arxiv{1603.03942}.

\bibitem[GP17]{GP3}
Pavel Galashin and Pavlo Pylyavskyy.
\newblock Quivers with additive labelings: classification and algebraic
  entropy.
\newblock {\em \arxiv{1704.05024}}, 2017.

\bibitem[GR15]{GR2}
Darij Grinberg and Tom Roby.
\newblock Iterative properties of birational rowmotion {II}: rectangles and
  triangles.
\newblock {\em Electron. J. Combin.}, 22(3):Paper 3.40, 49, 2015.

\bibitem[GR16]{GR1}
Darij Grinberg and Tom Roby.
\newblock Iterative properties of birational rowmotion {I}: generalities and
  skeletal posets.
\newblock {\em Electron. J. Combin.}, 23(1):Paper 1.33, 40, 2016.

\bibitem[GRP91]{GRP}
B.~Grammaticos, A.~Ramani, and V.~Papageorgiou.
\newblock Do integrable mappings have the {P}ainlev\'e property?
\newblock {\em Phys. Rev. Lett.}, 67(14):1825--1828, 1991.

\bibitem[Guy04]{Robinson}
Richard~K. Guy.
\newblock {\em Unsolved problems in number theory}.
\newblock Problem Books in Mathematics. Springer-Verlag, New York, third
  edition, 2004.

\bibitem[HHvdKQ17]{HoneQRT}
K.~Hamad, A.~N.~W. Hone, P.~H. van~der Kamp, and G.~R.~W. Quispel.
\newblock {QRT} maps and related {L}aurent systems.
\newblock {\em \arxiv{1702.07047}}, 2017.

\bibitem[HLM{\etalchar{+}}08]{HLMPP}
Alexander~E. Holroyd, Lionel Levine, Karola M\'esz\'aros, Yuval Peres, James
  Propp, and David~B. Wilson.
\newblock Chip-firing and rotor-routing on directed graphs.
\newblock In {\em In and out of equilibrium. 2}, volume~60 of {\em Progr.
  Probab.}, pages 331--364. Birkh\"auser, Basel, 2008.

\bibitem[Hon07]{HoneSuper}
Andrew N.~W. Hone.
\newblock Laurent polynomials and superintegrable maps.
\newblock {\em SIGMA Symmetry Integrability Geom. Methods Appl.}, 3:Paper 022,
  18, 2007.

\bibitem[KB95]{KB}
A.~N. Kirillov and A.~D. Berenstein.
\newblock Groups generated by involutions, {G}el'fand-{T}setlin patterns, and
  combinatorics of {Y}oung tableaux.
\newblock {\em Algebra i Analiz}, 7(1):92--152, 1995.

\bibitem[Kel13]{Keller}
Bernhard Keller.
\newblock The periodicity conjecture for pairs of {D}ynkin diagrams.
\newblock {\em Ann. of Math. (2)}, 177(1):111--170, 2013.

\bibitem[Kir01]{Kirillov}
Anatol~N. Kirillov.
\newblock Introduction to tropical combinatorics.
\newblock In {\em Physics and combinatorics, 2000 ({N}agoya)}, pages 82--150.
  World Sci. Publ., River Edge, NJ, 2001.

\bibitem[KMMT14]{KMMT}
Masataka Kanki, Jun Mada, Takafumi Mase, and Tetsuji Tokihiro.
\newblock Irreducibility and co-primeness as an integrability criterion for
  discrete equations.
\newblock {\em J. Phys. A}, 47(46):465204, 15, 2014.

\bibitem[Knu70]{Knuth}
Donald~E. Knuth.
\newblock Permutations, matrices, and generalized {Y}oung tableaux.
\newblock {\em Pacific J. Math.}, 34:709--727, 1970.

\bibitem[Kou76]{Kouchnirenko}
A.~G. Kouchnirenko.
\newblock Poly\`edres de {N}ewton et nombres de {M}ilnor.
\newblock {\em Invent. Math.}, 32(1):1--31, 1976.

\bibitem[LMNT14]{Musiker}
Megan Leoni, Gregg Musiker, Seth Neel, and Paxton Turner.
\newblock Aztec castles and the d{P}3 quiver.
\newblock {\em J. Phys. A}, 47(47):474011, 32, 2014.

\bibitem[LP16a]{LP}
Thomas Lam and Pavlo Pylyavskyy.
\newblock Laurent phenomenon algebras.
\newblock {\em Camb. J. Math.}, 4(1):121--162, 2016.

\bibitem[LP16b]{LP2}
Thomas Lam and Pavlo Pylyavskyy.
\newblock Linear {L}aurent phenomenon algebras.
\newblock {\em Int. Math. Res. Not. IMRN}, (10):3163--3203, 2016.

\bibitem[LR16]{LR}
Thomas Lam and Konstanze Rietsch.
\newblock Total positivity, {S}chubert positivity, and geometric {S}atake.
\newblock {\em J. Algebra}, 460:284--319, 2016.

\bibitem[LT17]{LT}
Thomas Lam and Nicolas Templier.
\newblock {The mirror conjecture for minuscule flag varieties}.
\newblock {\em \arxiv{1705.00758}}, 2017.

\bibitem[Miw82]{Miwa}
Tetsuji Miwa.
\newblock On {H}irota's difference equations.
\newblock {\em Proc. Japan Acad. Ser. A Math. Sci.}, 58(1):9--12, 1982.

\bibitem[MR13]{MR}
Robert Marsh and Konstanze Rietsch.
\newblock {The B-model connection and mirror symmetry for Grassmannians}.
\newblock {\em \arxiv{1307.1085}}, 2013.

\bibitem[Nak11]{Nakanishi}
Tomoki Nakanishi.
\newblock Periodicities in cluster algebras and dilogarithm identities.
\newblock In {\em Representations of algebras and related topics}, EMS Ser.
  Congr. Rep., pages 407--443. Eur. Math. Soc., Z\"urich, 2011.

\bibitem[NY04]{NY}
Masatoshi Noumi and Yasuhiko Yamada.
\newblock Tropical {R}obinson-{S}chensted-{K}nuth correspondence and birational
  {W}eyl group actions.
\newblock In {\em Representation theory of algebraic groups and quantum
  groups}, volume~40 of {\em Adv. Stud. Pure Math.}, pages 371--442. Math. Soc.
  Japan, Tokyo, 2004.

\bibitem[OSZ14]{OCSZ}
Neil O'Connell, Timo Sepp\"al\"ainen, and Nikos Zygouras.
\newblock Geometric {RSK} correspondence, {W}hittaker functions and symmetrized
  random polymers.
\newblock {\em Invent. Math.}, 197(2):361--416, 2014.

\bibitem[OTGR99]{OTGR}
Y.~Ohta, K.~M. Tamizhmani, B.~Grammaticos, and A.~Ramani.
\newblock Singularity confinement and algebraic entropy: the case of the
  discrete {P}ainlev\'e equations.
\newblock {\em Phys. Lett. A}, 262(2-3):152--157, 1999.

\bibitem[Pan09]{Pan}
Dmitri~I. Panyushev.
\newblock On orbits of antichains of positive roots.
\newblock {\em European J. Combin.}, 30(2):586--594, 2009.

\bibitem[Pos09]{Postnikov_Perm}
Alexander Postnikov.
\newblock Permutohedra, associahedra, and beyond.
\newblock {\em Int. Math. Res. Not. IMRN}, (6):1026--1106, 2009.

\bibitem[RGH91]{RGH}
A.~Ramani, B.~Grammaticos, and J.~Hietarinta.
\newblock Discrete versions of the {P}ainlev\'e equations.
\newblock {\em Phys. Rev. Lett.}, 67(14):1829--1832, 1991.

\bibitem[Rie06]{Rietsch1}
Konstanze Rietsch.
\newblock A mirror construction for the totally nonnegative part of the
  {P}eterson variety.
\newblock {\em Nagoya Math. J.}, 183:105--142, 2006.

\bibitem[Rie08]{Rietsch2}
Konstanze Rietsch.
\newblock A mirror symmetric construction of {$qH^\ast_T(G/P)_{(q)}$}.
\newblock {\em Adv. Math.}, 217(6):2401--2442, 2008.

\bibitem[Rie12]{Rietsch3}
Konstanze Rietsch.
\newblock A mirror symmetric solution to the quantum {T}oda lattice.
\newblock {\em Comm. Math. Phys.}, 309(1):23--49, 2012.

\bibitem[RW15]{RW}
Konstanze Rietsch and Lauren Williams.
\newblock {Cluster duality and mirror symmetry for Grassmannians}.
\newblock {\em \arxiv{1507.07817}}, 2015.

\bibitem[Sch61]{Schensted}
C.~Schensted.
\newblock Longest increasing and decreasing subsequences.
\newblock {\em Canad. J. Math.}, 13:179--191, 1961.

\bibitem[Sch72]{Schutznbrgr}
M.~P. Sch\"utzenberger.
\newblock Promotion des morphismes d'ensembles ordonn\'es.
\newblock {\em Discrete Math.}, 2:73--94, 1972.

\bibitem[Spe07]{Sp}
David~E. Speyer.
\newblock Perfect matchings and the octahedron recurrence.
\newblock {\em J. Algebraic Combin.}, 25(3):309--348, 2007.

\bibitem[Sta09]{StanleyPE}
Richard~P. Stanley.
\newblock Promotion and evacuation.
\newblock {\em Electron. J. Combin.}, 16(2, Special volume in honor of Anders
  Bj\"orner):Research Paper 9, 24, 2009.

\bibitem[SW12]{SW}
Jessica Striker and Nathan Williams.
\newblock Promotion and rowmotion.
\newblock {\em European J. Combin.}, 33(8):1919--1942, 2012.

\bibitem[Vol07]{Volkov}
Alexandre~Yu. Volkov.
\newblock On the periodicity conjecture for {$Y$}-systems.
\newblock {\em Comm. Math. Phys.}, 276(2):509--517, 2007.

\end{thebibliography}

\end{document}